\newtheorem{lemma}{Lemma}[section]
\newtheorem{proposition}[lemma]{Proposition}
\newtheorem{theorem}{Theorem}
\newtheorem{mainthm}{Main Theorem}
\newtheorem{corollary}[lemma]{Corollary}
\newtheorem{conjecture}[lemma]{Conjecture}
\theoremstyle{definition}
\newtheorem{example}[lemma]{Example}
\newtheorem{definition}[lemma]{Definition}
\newtheorem{remark}[lemma]{Remark}
\begin{document}

\newcommand{\eps}{{\varepsilon}}
\newcommand{\proofend}{$\Box$\bigskip}
\newcommand{\C}{{\mathbb C}}
\newcommand{\Q}{{\mathbb Q}}
\newcommand{\R}{{\mathbb R}}
\newcommand{\Z}{{\mathbb Z}}
\newcommand{\RP}{{\mathbb {RP}}}
\newcommand{\CP}{{\mathbb {CP}}}
\newcommand{\KP}{{\mathbb {KP}}}
\newcommand{\PP}{{\mathbb{P}}}
\newcommand{\KK}{{\mathbb{K}}}
\newcommand{\Tr}{\operatorname{tr}}
\newcommand{\HH}{{\mathbb H}}

\newcommand{\AP}{\operatorname{AP}}
\newcommand{\Id}{\operatorname{Id}}
\renewcommand{\emptyset}{\varnothing}
\renewcommand{\epsilon}{\varepsilon}
\newcommand{\SL}{\operatorname{SL}}
\renewcommand{\sl}{\operatorname{sl}}
\newcommand{\GL}{\operatorname{GL}}
\newcommand{\PGL}{\operatorname{PGL}}
\newcommand{\PSL}{\operatorname{PSL}}

\newcommand{\T}{\stackrel{\alpha}{\sim}}
\newcommand{\TT}{\stackrel{\beta}{\sim}}
\newcommand{\pP}{\mathbf{P}}
\newcommand{\pQ}{\mathbf{Q}}
\newcommand{\pR}{\mathbf{R}}
\newcommand{\pS}{\mathbf{S}}
\newcommand{\cP}{\mathcal{P}_n}
\newcommand{\cT}{\mathcal{T}_n}
\newcommand{\cF}{\mathcal{F}_n}
\newcommand{\cV}{\mathcal{V}_n}

\newcommand{\dist}{\operatorname{dist}}
\newcommand{\artanh}{\operatorname{artanh}}

\newcommand{\pc}[1]{\marginpar{\small\color{red} #1}}

\title{Cross-ratio dynamics on ideal polygons}

\author{Maxim Arnold\footnote{
Department of Mathematics, 
University of Texas, 
800 West Campbell Road,
Richardson, TX 75080;
maxim.arnold@utdallas.edu}
\and
Dmitry Fuchs\footnote{
Department of Mathematics, 
University of California, 
Davis, CA 95616;
 fuchs@math.ucdavis.edu}
\and
Ivan Izmestiev\footnote{
Department of Mathematics, 
University of Fribourg,
Chemin du Mus\'ee 23,
CH-1700 Fribourg;
ivan.izmestiev@unifr.ch 
}
\and
Serge Tabachnikov\footnote{
Department of Mathematics,
Pennsylvania State University,
University Park, PA 16802;
tabachni@math.psu.edu}}

\date{}
\maketitle

\begin{abstract}
Two ideal polygons, $(p_1,\ldots,p_n)$ and $(q_1,\ldots,q_n)$,
in the hyperbolic plane or in hyperbolic space are said to be $\alpha$-related if the cross-ratio $[p_i,p_{i+1},q_i,q_{i+1}] = \alpha$ for all $i$ (the vertices lie on the projective line, real or complex, respectively). 
For example, if $\alpha = -1$, the respective sides of the two polygons are orthogonal. 
This relation extends to twisted ideal polygons, that is, polygons with monodromy, and it descends to the moduli space of M\"obius-equivalent polygons. We prove that this relation, which is, generically, a 2-2 map, is completely integrable in the sense of Liouville. We describe integrals and invariant Poisson structures, and show that these relations, with different values of the constants $\alpha$, commute, in an appropriate sense. We investigate the case of small-gons, describe the exceptional ideal polygons, that possess infinitely many $\alpha$-related polygons, and study  the ideal polygons that are  $\alpha$-related to themselves (with a cyclic shift of the indices). 
\end{abstract}

\tableofcontents

\section{Introduction} \label{Intro}
\subsection{Motivation: iterations of evolutes} \label{motivsection}
The motivation for this work comes from our recent study of iterations of the evolutes and involutes of smooth curves and polygons \cite{AFITT,FT}. 

The evolute of a curve is the locus of the centers of its osculating circles, 
that is, the circles that pass through three ``consecutive" points of the 
curve. One of the definitions of the evolute of a polygon $\pP$ is that it is 
the polygon $\pQ$ formed by the centers of the circles that pass through 
consecutive triples of vertices of $\pP$. In other words, the vertices of 
$\pQ$ 
are 
the intersection points of the perpendicular bisectors of the adjacent sides 
of $\pP$, see Figure \ref{polyevol} left.  

\begin{figure}[hbtp]
\centering
\includegraphics[height=2in]{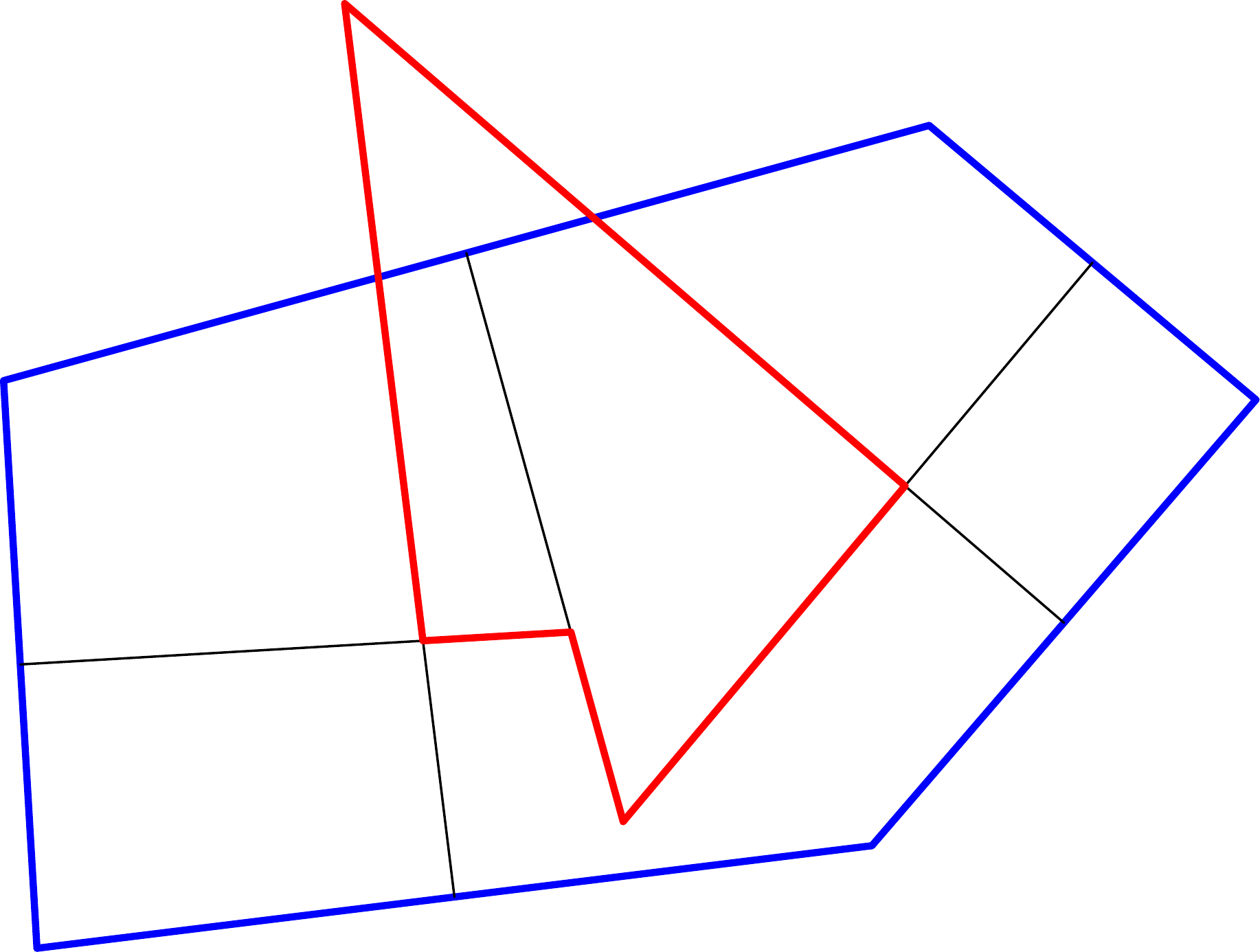}\qquad
\includegraphics[height=2in]{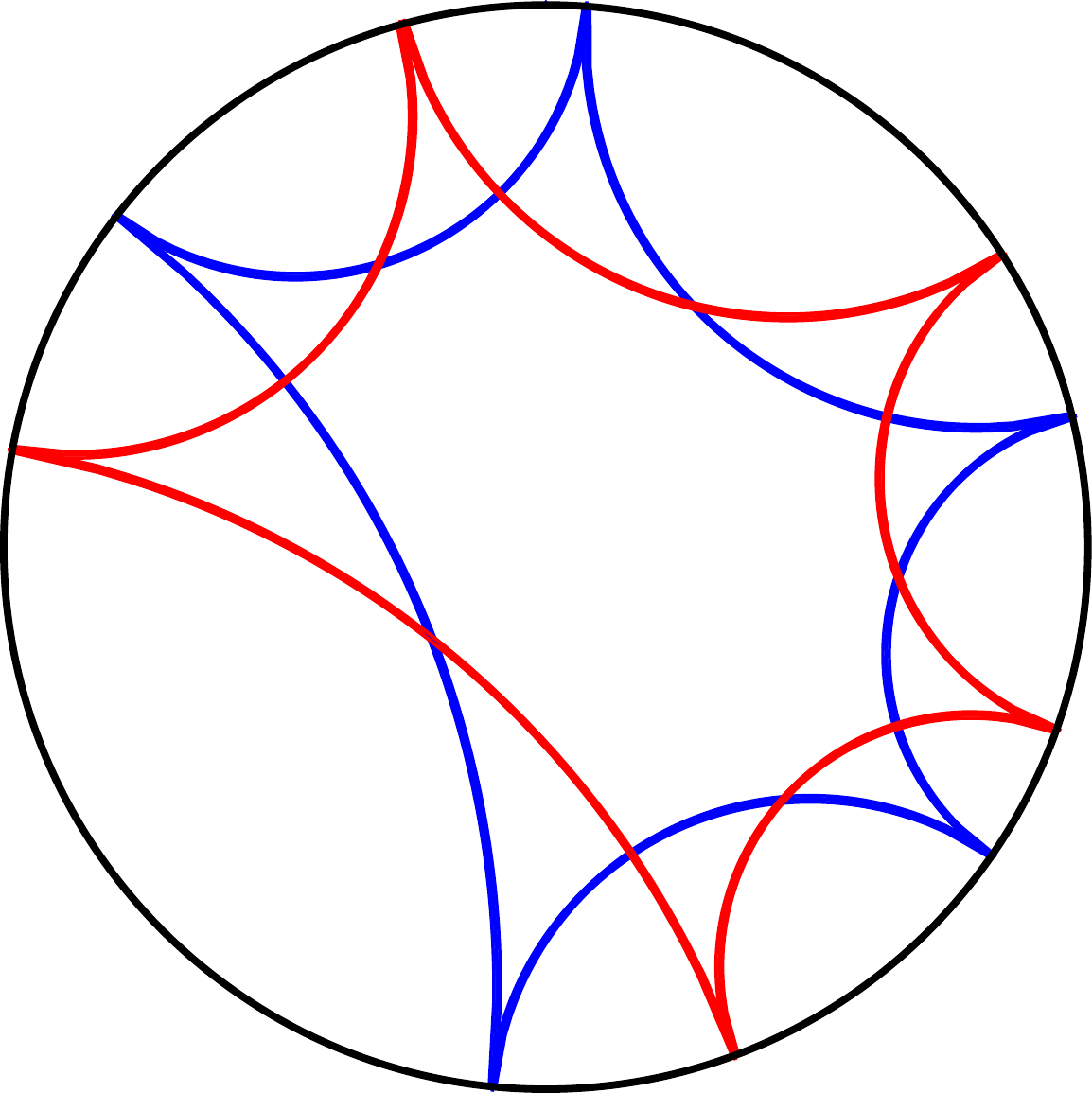}
\caption{Left: a convex pentagon and its evolute. Right: a pair of orthogonal ideal pentagons in the hyperbolic plane (in the Poincar\'e disk model).}
\label{polyevol}
\end{figure}

It is natural to investigate the dynamics of the evolute transformation on polygons in other geometries, in particular, in the hyperbolic plane. As a simplification, we consider the case of ideal polygons whose vertices lie on the circle at infinity. The notion of perpendicular bisector does not make sense anymore because the sides have infinite length, but one still can consider pairs of ideal polygons whose respective sides are perpendicular, see Figure \ref{polyevol} right. Call two such ideal $n$-gons orthogonal. 

In the projective model of the hyperbolic plane two lines are perpendicular if one passes through the pole of the other. Therefore two ideal $n$-gons are orthogonal if the extensions of the sides of one of them pass through the poles of the respective  sides of the other (and vice versa), that is, if each polygon circumscribes the polar of the other, see Figure \ref{fig:Castillon}. This provides a relation with the classical Cramer-Castillon problem \cite[\S 16.3.10.3]{Berger},\cite{Izm,Wanner}: inscribe a polygon in a circle whose sides pass through given points.

\begin{figure}[hbtp]
\centering
\includegraphics[height=2.2in]{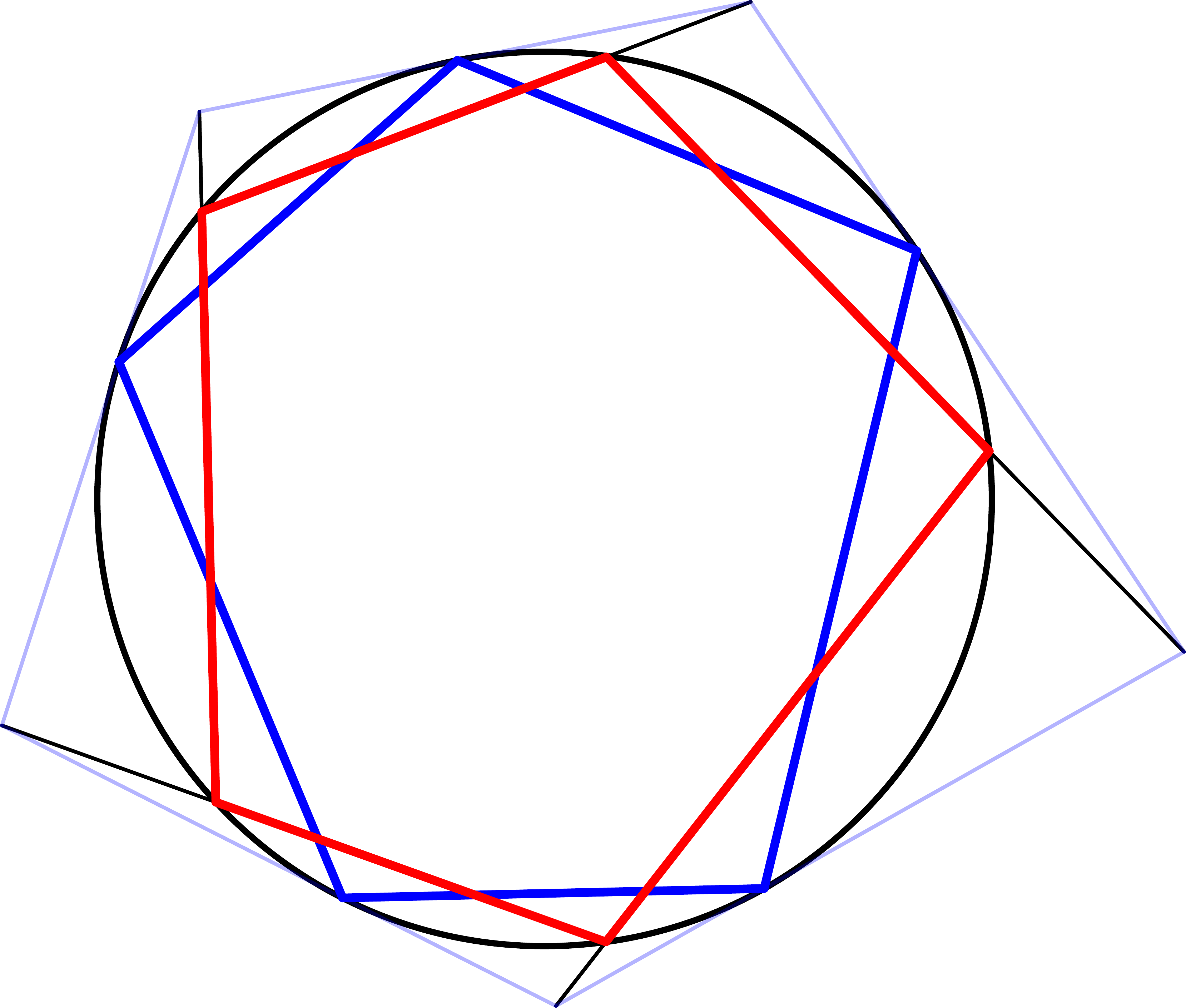}
\caption{Two ideal pentagons circumscribed about each other's polars.}
\label{fig:Castillon}
\end{figure}

The relation of being orthogonal generically is a 2-2 map on the space of ideal polygons. This paper concerns the geometry and dynamics of this map and its generalizations.

\subsection{Plan of the paper and main results} \label{resultsection}

An ideal $n$-gon $\pP$ is an ordered collection of points  $p_1,\ldots,p_n$ in the projective line $\PP^1$ over $\R$ or $\C$, the boundary of  the hyperbolic plane or hyperbolic space, respectively. In this paper, we use the following definition of cross-ratio:
$$
[z_1,z_2,z_3,z_4]=\frac{(z_1-z_3)(z_2-z_4)}{(z_1-z_4)(z_2-z_3)}.
$$
Define a family of relations $\pP \T \pQ$ by the formula
\begin{equation} \label{reldef}
[p_i,p_{i+1},q_i,q_{i+1}]=\alpha,\ i=1,\ldots,n;
\end{equation}
where the indices are understood cyclically, with $n+1 = 1$.\footnote{When dealing with closed polygons, we alsways understand the indices cyclically, unless explicitly specified otherwise.}
Two polygons are orthogonal if $\pP \T \pQ$ with $\alpha= -1$, that is, if $(p_i,p_{i+1},q_i,q_{i+1})$ is a harmonic quadruple for all $i$. Generically, $\T$ is a 2-2 relation; by ``generic" we mean a property that holds in a Zariski open subset. 

Along with closed polygons, we consider twisted ones. A twisted ideal $n$-gon is a bi-infinite sequence of points $\ldots, p_{-1}, p_0, p_1, p_2, \ldots$ in $\PP^1$, together with a M\"obius transformation $\Phi$, its {monodromy}, satisfying  $p_{i+n} = \Phi(p_i)$ for all $i$. For twisted $n$-gons $\pP$ and $\pQ$, we say that $\pP \T \pQ$ if, in addition to \eqref{reldef}, the polygons $\pP$ and $\pQ$ have the same monodromy.

The M\"obius group acts diagonally on the spaces of ideal $n$-gons and ideal twisted $n$-gons; the quotient spaces are the moduli spaces of ideal $n$-gons and ideal twisted $n$-gons, respectively. These moduli spaces have dimensions $n-3$ and $n$, respectively. 
As coordinates in the moduli spaces, we use $n$-periodic sequences of cross-ratios $c_i = [p_i, p_{i+1}, p_{i-1}, p_{i+2}]$.

The transformations and the relations that we study here are defined in open dense subsets of the spaces of ideal polygons (closed or twisted). That is, we assume that our polygons are non-degenerate in an appropriate sense (see Section \ref{sec:ClosedTwisted} for definitions). This is analogous to the  situation with  birational maps that are defined in Zariski open subsets. In particular, when considering infinite orbits of ideal polygons under  $\T$, we assume that all iterations are non-degenerate. 
\medskip

The contents and  main results of the paper are as follows. 

In Section \ref{spaces}, we associate with an ideal polygon a 1-parameter family of M\"obius transformations that we call its Lax matrices. A Lax matrix is the composition of the loxodromic transformations with a fixed parameter about the consecutive sides of the polygon, composed with the inverse of the monodromy in the twisted case. 

In Section \ref{monotw}, we determine the monodromy of a twisted polygon as a function of its coordinates $c_i$ and calculate its conjugacy invariant, the normalized trace (Theorem \ref{thm:trmono}). This material is closely related to the classical theory of continuants that goes back to Euler and, when $n$ is odd, with Coxeter's frieze patterns. 

The main result of Section \ref{sec:IntTwist} is that the relations $\T$ on the moduli space of twisted $n$-gons are completely integrable in the sense of Liouville   (Main Theorem \ref{thm:TwistedModuli}). This Main Theorem comprises a number of results (Theorems \ref{thm:LaxMatrix},\ref{thm:Integrals},\ref{thm:IntIndependence},\ref{thm:TraceComponents},\ref{thm:InvBracket},\ref{thm:FCommute}).
Its ingredients are as follows.

All the relations $\T$ share ${\lfloor \frac{n}{2} \rfloor}+1$ integrals, obtained from the Lax matrices of a twisted polygon; an equivalent set of integrals is provided by the homogeneous components of the normalized trace of the monodromy. These integrals are polynomials in $c_i$, generically independent. The moduli space of twisted $n$-gons carries a 1-parameter family of compatible Poisson structures, and the integrals are in involution with respect to all Poisson brackets in this family. 

For every $\alpha$, there is a Poisson structure in this pencil that is invariant under $\T$. If $n$ is odd, this structure has corank 1, and if $n$ is even, its corank equals 2. The space of the Hamiltonian vector fields of the integrals is independent on the choice of the Poisson bracket in the pencil.    

In addition, the relations $\T$ commute, in an appropriate sense (Theorem \ref{Bianchi}). We also give a criterion for a twisted $n$-gon $\pP$ to be exceptional in the sense that, for a given $\alpha$, there exist infinitely many $n$-gons $\pQ$ with $\pP \T \pQ$ (Theorem \ref{thm:InfAlpha}).

Section \ref{sec:IntClosed} concerns complete integrability of the relations $\T$ on the moduli space of closed $n$-gons.  
Its main result (Main Theorem \ref{thm:mainclosed}) is that a generic point of this moduli space belongs to a ${\lfloor \frac{n-3}{2} \rfloor}$-dimensional  submanifold, invariant under $\T$ and carrying an invariant affine structure, in which $\T$ is a parallel translation.   This foliation whose leaves are invariant manifolds and the affine structures on its leaves is the same for all values of $\alpha$.

An ingredient of this result is a description of the relations between the restrictions of the integrals to the moduli space of closed polygons (Theorem \ref{thm:RestrictIntegrals}) and an interpretation of the integrals in terms of multi-ratios (Theorem \ref{thm:IntegralsClosed}) and their geometric description in terms of alternating perimeters of ideal even-gons (Section \ref{sec:AltPer}). 

If $n$ is odd, the moduli space of closed $n$-gons carries a symplectic structure, known from the theory of  cluster algebras.  In Theorems \ref{thm:PoissonEmbedding} and \ref{Cluster}, we show that the inclusion of closed polygons to twisted ones,
endowed with a specific Poisson brackets from the pencil, is a Poisson map. For $n$ odd, we consider the limit of the relations $\T$ as $\alpha \to 0$:
this is a vector field on the moduli space of $n$-gons, Hamiltonian with respect to its symplectic structure  (Theorem \ref{thm:InfMap}).

    In Section \ref{twoaddsection} we consider the relations $\T$ on the space of (closed) ideal $n$-gons (before factorization by the M\"obius group). The space of $n$-gons carries a closed differential 2-form, invariant under $\T$, symplectic if $n$ is even and having a 1-dimensional kernel if $n$ is odd (Theorem \ref{2form}). This 2-form is M\"obius-invariant, but it does not descend to the moduli space of $n$-gons. 
    The relations $\T$ have two additional integrals whose Hamiltonian vector fields are infinitesimal generators of the M\"obius group action (Theorem \ref{upint}). The geometrical meaning of these additional integrals is that one can assign a line to an ideal polygon, that  we call its axis, and this line is invariant under the relations $\T$.
    
    We analyze the case of ``small-gons" in Section \ref{smallsection}. If two ideal quadrilaterals are in the relation $\T$, then they are isometric and they share their axes (Theorem \ref{thm:quad}). The moduli space of ideal pentagons is a surface with an area form, invariant under all the relations $\T$ and foliated by the level curves of their common integral (Theorem \ref{areapr}); the relations $\T$ on the 5-dimensional space of pentagons (before factorization by the M\"obius group) are integrable as well (Theorem \ref{transl}).
    
In Theorems \ref{thm:PentaInf} and \ref{Thm:exchexa}  we give a compete description of $\alpha$-exceptional ideal pentagons and hexagons, that admit infinitely many $\alpha$-related ideal pentagons and hexagons, respectively.   
    
    Finally, we study ``loxogons", the polygons that are in the relation $\T$ with themselves. More precisely, an $(n,k)$-loxogon is an ideal $n$-gon  such that  $[p_i,p_{i+1},p_{i+k},p_{i+k+1}]=\beta$ 	
for all $i=1,\ldots,n$ and some constant $\beta$.  A projectively regular ideal $n$-gon is an $(n,k)$-loxogon for all $k$; we address the question whether there exist non-regular loxogons.

    In Theorem \ref{triv} we answer this question for some pairs $(n,k)$: in some cases, one has rigidity (the only loxogons are projectively regular ones), and in other cases, one has examples of non-regular loxogons. The general question remains open.  Theorem \ref{infinites} is an infinitesimal rigidity result: for odd $n$ and any $k$,  there do not exist non-trivial deformations of a regular ideal $n$-gon in the class of $(n, k)$-loxogons.
    
The Appendix discusses certain collections of loxodromic transformations along the edges of an ideal tetrahedron,
which are related to our derivation of Lax matrices in Section \ref{spaces}.
Also, we connect this to the cubical diagram of 3D-consistency for the cross-ratio system from \cite{BS}, \cite[Section 6.6]{BSb}.

\subsection{Related work} \label{previous}

The cross-ratio dynamics on polygons in the projective line have been studied by many authors, and their integrability was established by different methods. We hope that our work sheds new light on these systems and adds to their understanding. Let us mention several relevant works; this list is by no means complete. 

In Bobenko and Pinkall's paper \cite{BP} maps $f \colon \Z^2 \to \C$ satisfying the property
\[
[f_{i,j}, f_{i+1,j}, f_{i,j+1}, f_{i+1,j+1}] = -1
\]
were called discrete holomorphic maps.
In these terms, the $\alpha = -1$ case of cross-ratio dynamics can be viewed as a periodic discrete holomorphic map $\Z^2/n\Z \to \C$.

The paper \cite{HMNP}  concerns algebraic-geometric integrability of periodic discrete holomorphic (conformal) maps; in particular, explicit solutions are given in terms of the Riemann theta function. 

Nijhoff and  Capel's paper \cite{NC} discusses the cross-ratio dynamics as space and time discretizations of the Schwarzian Korteweg-de Vries equation. 
 
The cross-ratio dynamical system is an example of an integrable system on quad-graphs \cite{BS}. In the ABS (Adler-Bobenko-Suris) classification of integrable, in the sense of consistency, systems on quad-graphs, this is the $Q1(0)$ case, see \cite{ABS}. We also refer to the book  \cite{BSb} where this subject is considered in the context of discrete differential geometry. 

Let us also mention relations with dressing chains of Veselov-Shabat \cite{SV}; see also the recent follow-up paper \cite{Evr}.

We would like to mention certain similarity of cross-ratio dynamical systems with the pentagram map, a discrete completely integrable system on the moduli space of projective equivalence classes of polygons in the projective plane that has recently attracted a considerable attention, see, e.g.,  \cite{Sch,OST1,OST2}. 

For example, the integrals of the pentagram map are obtained from homogeneous components of conjugacy invariants of the monodromy of a twisted polygon (which is a projective transformation of the plane). In this sense, our work is similar to the approach to the pentagram map developed in the above mentioned papers \cite{Sch,OST1,OST2}, whereas the algebraic-geometric approach to the cross-ratio dynamics in \cite{HMNP} is similar to the work of Soloviev \cite{Sol} on the pentagram map.

An important discovery, starting with the paper of Glick \cite{Gl}, was a close relation of the pentagram map with the theory of cluster algebras, see, e.g., the book \cite{{GSV}}. Is the cross-ratio system also related to the cluster dynamics?

\subsection{Acknowledgements} \label{ack}

We are grateful to V. Fock, M. Gekhtman, A. Izosimov, B. Khesin, S. Morier-Genoud, V. Ovsienko, M. Shapiro, and Yu. Suris for stimulating discussions.

Part of this material is based upon work supported by the National Science Foundation under grant DMS-1439786
while the authors were in residence at the Institute for Computational and Experimental Research in Mathematics in Providence, RI, during the Collaborate@ICERM  program in summer of 2017.
II was supported by the Swiss National Science Foundation grant 200021\_169391.
ST was  supported by NSF grant DMS-1510055.
Part of this material is based upon work supported by the National Science Foundation under Grant DMS-1440140
while ST was in residence at the Mathematical Sciences Research Institute in Berkeley, California, during the Fall 2018 semester. DF is grateful to MPIM Bonn for its invariable hospitality.

\section{Spaces and maps} \label{spaces}
\subsection{Cross-ratio and relative position of lines in hyperbolic geometry}

As was mentioned in Introduction, the circle at infinity of the hyperbolic plane $\HH^2$ is identified with the real projective line $\RP^1$. If $\pP$ is an ideal $n$-gon in $\HH^2$ with the vertices $p_1,\ldots,p_n$, we have $p_i \in \R \cup \infty$. 

We reiterate that we are concerned with the relations $\pP \T \pQ$ on ideal $n$-gons given by the equations
\begin{equation*}
[p_i,p_{i+1},q_i,q_{i+1}]=\alpha,\ i=1,\ldots,n.
\end{equation*}
Note that the relation $\pP \T \pQ$ is symmetric.

One can consider these equations over reals, that is, when $p_i,q_i \in \RP^1$, or over complex numbers, when $p_i,q_i \in \CP^1$;
the constant $\alpha$ is real or complex, accordingly.
The complex case corresponds to ideal polygons in the hyperbolic space $\HH^3$ where the sphere at infinity is the Riemann sphere $\CP^1$.
Most of our results hold in both cases and, when it does not lead to confusion,
we denote the projective line by $\PP^1$ and the projective linear group by $\PGL(2)$. 

Geometrically, two ideal polygons in $\HH^3$ satisfy $\pP \T \pQ$ if the {\it complex distance} between their respective sides is constant (depending on $\alpha$). 

\begin{figure}[hbtp]
\centering
\includegraphics[height=1.8in]{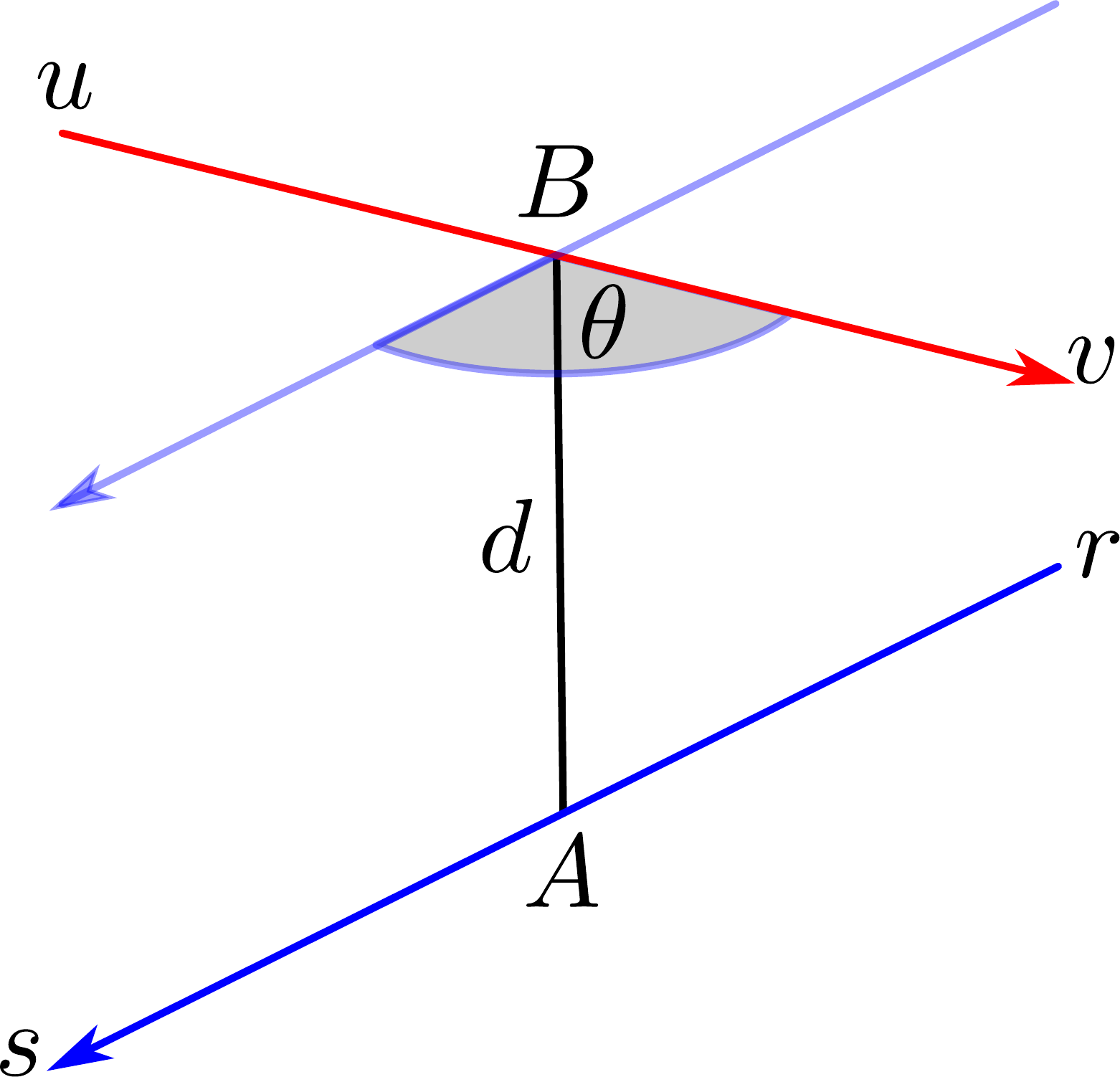}
\caption{Complex distance between lines.}
\label{skew}
\end{figure}

More precisely, let $\ell=uv$ and $m=rs$ be two oriented lines in $\HH^3$, with $u,v,r,s \in \CP^1$.
Let $A$ and $B$ be the intersection points of the lines with their common perpendicular.
Orient this common perpendicular, and let $d=\pm |AB|$ be the signed distance between the lines: the sign is positive if the segment $AB$ runs in the positive direction. Let  $\theta$ be the angle between the line though point $B$, coplanar with $rs$ and orthogonal to $AB$, and line $uv$, that is, the angle through which one must rotate the former line  to the latter one when looking in the direction of the common perpendicular; see Figure \ref{skew}. 
The complex distance is defined as follows: $\chi(\ell,m) = d + i \theta$, and its relation with cross-ratio is given by 
\begin{equation}
\label{eqn:DistCR}
\tanh^2 \left(\frac{\chi(\ell,m)}{2}\right) = [r,s,u,v],
\end{equation}
see  \cite{Ma}, pp. 354--355, for details.


\subsection{Closed and twisted $n$-gons}
\label{sec:ClosedTwisted}

We consider only {\it non-degenerate} polygons with distinct consecutive vertices and distinct consecutive sides, that is, we assume that $p_i\ne p_{i+1}$ and $p_i \neq p_{i+2}$ for all $i$. These conditions define a Zariski open subset of the set of  $n$-tuples of points on $\PP^1$.
The space of such non-degenerate ideal $n$-gons is denoted by $\widetilde\cP$, and we use $(p_1,\ldots,p_n)$ as coordinates therein.

The group $\PGL(2)$ acts on $\widetilde\cP$ diagonally; denote by $\cP$ the quotient space of this action.
We note that $\cP$ contains, as an open dense subset, the famous moduli space ${\mathcal M}_{0,n}$ of distinct $n$-tuples of points in the projective line modulo projective equivalence.
The relation $\T$ descends to $\cP$ and, slightly abusing notation, we continue to denote it by $\T$.
The dimensions of $\widetilde\cP$ and $\cP$ over the corresponding fields, $\R$ or $\C$, are equal to $n$ and $n-3$, respectively.

Consider the cross-ratios of quadruples of consecutive points:
\begin{equation}
\label{eqn:cCoord}
c_i = [p_i, p_{i+1}, p_{i-1}, p_{i+2}] = \frac{(p_{i-1} - p_i)(p_{i+1} - p_{i+2})}{(p_{i-1} - p_{i+1})(p_i - p_{i+2})}.
\end{equation}
Since by our assumption the points $p_{i-1}$, $p_i$, $p_{i+1}$ are distinct, the cross-ratio $c_i$ is well-defined.
Besides, since $p_{i+2}$ is distinct from $p_i$ and from $p_{i+1}$, we have $c_i \notin \{0, \infty\}$.
Due to the projective invariance of the cross-ratio, the functions $c_i$ descend to the space $\cP$.
It is easy to see that $c_1, \ldots, c_n$ determine a projective class of the polygon uniquely, providing 
an embedding of $\cP$ into $(\KK^*)^n$, where $\KK = \R$ or $\C$, according to the situation;
an explicit description of the image is given in Section \ref{calcmono}. 

\begin{remark}
If we allow consecutive sides to coincide then, in the situation $p_{i-1} = p_{i+1} \ne p_i = p_{i+2}$, the cross-ratio $c_i$ is not defined.
On the other hand, if $\pP \T \pQ$ and $p_{i-1} = p_{i+1}$ for some $i$, then
\[
[p_{i-1}, p_i, q_{i-1}, q_i] = \alpha = [p_i, p_{i+1}, q_i, q_{i+1}] \Rightarrow q_{i-1} = q_{i+1}.
\]
By removing the coincident pairs of sides, one obtains non-degenerate polygons $\pP' \T \pQ'$.
\end{remark}

We also consider a larger space of \emph{twisted $n$-gons} $\widetilde\cT$.
A twisted $n$-gon consists of a bi-infinite sequence of points $\ldots, p_{-1}, p_0, p_1, p_2, \ldots$ in $\PP^1$
and of a projective transformation $\Phi \in \PGL(2)$, called the {\it monodromy}, such that $p_{i+n} = \Phi(p_i)$ for all $i$. For $\pP, \pQ \in \widetilde\cT$, we write $\pP \T \pQ$ if, in addition to \eqref{reldef}, the polygons $\pP$ and $\pQ$ have the same monodromy. We consider only non-degenerate twisted polygons, subject to the constraints $p_i \neq p_{i+1}$ and $p_i \neq p_{i+2}$ for all $i$.

Twisted polygons are acted upon by $\PGL(2)$, and we denote the moduli space by $\cT$.
The following lemma is straighforward.
\begin{lemma}
\label{lem:MonodromyConj}
Let $\pP$ be a twisted polygon with monodromy $\Phi$.
Then for every $\Psi \in \PGL(2)$ the polygon $\Psi(\pP)$ has monodromy $\Psi \Phi \Psi^{-1}$.
\end{lemma}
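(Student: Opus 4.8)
The plan is to verify the monodromy relation directly from the definitions. Write $p'_i := \Psi(p_i)$ for the vertices of the transformed polygon $\Psi(\pP)$, recalling that the $\PGL(2)$-action is diagonal on the bi-infinite vertex sequence. By definition of a twisted polygon, $\pP$ satisfies $p_{i+n} = \Phi(p_i)$ for all $i$, and I want to exhibit a single element $\Phi' \in \PGL(2)$, independent of $i$, with $p'_{i+n} = \Phi'(p'_i)$, then identify it as $\Psi \Phi \Psi^{-1}$.

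First I would compute the left-hand side, using that $\PGL(2)$ acts on $\PP^1$ as a group: $p'_{i+n} = \Psi(p_{i+n}) = \Psi(\Phi(p_i)) = (\Psi\Phi)(p_i)$. Next, evaluating the candidate monodromy on $p'_i$ gives $(\Psi\Phi\Psi^{-1})(p'_i) = (\Psi\Phi\Psi^{-1})(\Psi(p_i)) = (\Psi\Phi)(p_i)$, where I used $\Psi^{-1}\Psi = \Id$. Comparing the two lines yields $p'_{i+n} = (\Psi\Phi\Psi^{-1})(p'_i)$ for every $i$, so $\Psi(\pP)$ is a twisted polygon whose monodromy is $\Psi\Phi\Psi^{-1}$.

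There is essentially no obstacle here: beyond the one-line conjugation identity, the only thing to observe is that $\Psi(\pP)$ remains non-degenerate, which is immediate since $\Psi$ is a bijection of $\PP^1$ and hence preserves the distinctness constraints $p_i \neq p_{i+1}$ and $p_i \neq p_{i+2}$. The associativity of composition in $\PGL(2)$ is precisely what makes the conjugation collapse cleanly, and this constitutes the entire content of the lemma, which is why it is labeled straightforward.
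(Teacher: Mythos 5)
Your proof is correct and follows exactly the computation the paper has in mind: the paper states this lemma without proof, calling it straightforward, and your direct verification $p'_{i+n} = \Psi(\Phi(p_i)) = (\Psi\Phi\Psi^{-1})(\Psi(p_i)) = (\Psi\Phi\Psi^{-1})(p'_i)$ is the standard (indeed, essentially the only) argument. The remark on non-degeneracy being preserved is a nice touch but not required for the statement as formulated.
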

It follows that the relation $\T$ descends to the space $\cT$.

The dimensions of $\widetilde\cT$ and $\cT$ are equal to $n+3$ and $n$, respectively.
We use the cross-ratios \eqref{eqn:cCoord} as coordinates in the space $\cT$; they identify  $\cT$ with an open dense subspace in $(\KK^*)^n$.

The cross-ratio coordinates $c_i$ are discrete analogs of projective curvature: they determine each next point $p_i$, given the preceding triple.  Thus an $n$-periodic sequence $c_i$ and an initial triple of vertices determine a twisted polygon uniquely. We shall study  the monodromy of a twisted polygon as a function of $c_i$ in detail in Section \ref{calcmono}. An analogous investigation of the monodromy of twisted polygons in the projective plane played a central role in the study of the pentagram map, see, e.g.,  \cite{Sch,OST1}.

\subsection{Loxodromic transformations and their matrices}
\label{sec:Loxodromics}
As we already mentioned, the complex projective line $\CP^1$ can be viewed as the sphere at infinity of the hyperbolic space;
the action of $\PGL(2, \C)$ on $\CP^1$ extends to the action on $\HH^3$ by orientation-preserving isometries.
The subgroup $\PGL(2, \R)$ fixes $\RP^1$; its elements correspond to isometries of $\HH^2 \subset \HH^3$.
Recall that $\PSL(2, \C) \cong \PGL(2, \C)$ but $\PSL(2, \R)$ is a subgroup of index $2$ in $\PGL(2, \R)$,
corresponding to the orientation-preserving isometries of $\HH^2$.

Hyperbolic isometries with two fixed points at the boundary at infinity are called \emph{loxodromic transformations}.
A loxodromic transformation in $\HH^3$ is similar to a Euclidean screw motion;
a loxodromic transformation in $\HH^2$ is similar to a translation or a glide reflection.
(In the hyperbolic plane loxodromic transformations are usually called isometries of hyperbolic type.)

Let $p, q \in \PP^1$ and $\lambda$ be a non-zero complex or real 
number.
A loxodromic transformation with the axis $pq$ and parameter $\lambda$ is a hyperbolic ``screw motion'' along $pq$
whose translational part in the direction from $p$ to $q$ is $\log |\lambda|$ and the rotation angle is $\arg \lambda$;
we denote it by $L_\lambda(p, q)$.
(In the real case, we have a translation along $pq$ if $\lambda > 0$ and a glide reflection if $\lambda < 0$.)
We have $L_\lambda(p,q) L_\mu(p,q) = L_{\lambda+\mu}(p,q)$, in particular, loxodromic transformations with the same axis commute.
Loxodromic transformations with different axes, but the same parameter, are conjugate. We refer, e.g., to \cite{Ma} for this material.

The next lemma makes it possible to express the relations $\T$, that is, equation (\ref{reldef}), in terms of loxodromic transformations.

\begin{lemma} \label{loxodr}
For every $z \in \PP^1$, one has
$$
[p, q, z, L_\lambda(p,q) (z)] = \frac{1}{\lambda}.
$$
\end{lemma}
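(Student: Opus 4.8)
The plan is to exploit the $\PGL(2)$-invariance of the cross-ratio to reduce the computation to a normal form in which both the axis endpoints and the loxodromic transformation are as simple as possible. The cross-ratio is unchanged under the diagonal action of any M\"obius map, so I may move the pair $(p,q)$ to the standard pair $(0,\infty)$.

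First I would pick $\Psi \in \PGL(2)$ with $\Psi(p) = 0$ and $\Psi(q) = \infty$, and set $w = \Psi(z)$. Invariance of the cross-ratio then gives
$$[p, q, z, L_\lambda(p,q)(z)] = [\Psi(p), \Psi(q), \Psi(z), \Psi(L_\lambda(p,q)(z))] = [0, \infty, w, (\Psi L_\lambda(p,q) \Psi^{-1})(w)].$$
Since $\Psi$ extends to an orientation-preserving isometry of $\HH^3$ carrying the oriented geodesic $pq$ to the oriented geodesic $0\infty$ and preserving translation length and rotation angle, the conjugate $\Psi L_\lambda(p,q) \Psi^{-1}$ is again loxodromic with axis $0\infty$ and the \emph{same} parameter $\lambda$; this is the conjugation property of loxodromic transformations recorded above.

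Next I would identify this normalized transformation explicitly. A loxodromic transformation fixing $0$ and $\infty$ has the form $w \mapsto cw$; in the upper half-space model its restriction to the geodesic $0\infty$ is a translation by $\log|c|$ in the direction from $0$ to $\infty$, together with a rotation by angle $\arg c$. Matching this against the definition of the parameter (translational part $\log|\lambda|$, rotation angle $\arg\lambda$, oriented from $p$ to $q$) forces $c = \lambda$, so $L_\lambda(0,\infty)$ is precisely the map $w \mapsto \lambda w$. Verifying that the sign and orientation conventions pin down $\lambda$ rather than $\lambda^{-1}$ is the one point that requires genuine care, and I expect it to be the main obstacle; the rest is formal.

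It then remains to substitute $z_1 = 0$, $z_2 = \infty$, $z_3 = w$, $z_4 = \lambda w$ into the definition of the cross-ratio. The two factors involving $\infty$, namely $(z_2 - z_4)$ and $(z_2 - z_3)$, have ratio tending to $1$, so
$$[0, \infty, w, \lambda w] = \frac{(0 - w)}{(0 - \lambda w)} = \frac{1}{\lambda},$$
which, combined with the invariance established above, yields the claim.
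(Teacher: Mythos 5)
Your proposal is correct and follows essentially the same route as the paper's proof: reduce to the normal form $p=0$, $q=\infty$ by conjugation and cross-ratio invariance, identify $L_\lambda(0,\infty)$ as $w \mapsto \lambda w$, and compute $[0,\infty,w,\lambda w] = 1/\lambda$. The only difference is that you spell out the orientation conventions justifying $L_\lambda(0,\infty)(w)=\lambda w$, which the paper simply asserts.
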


\begin{proof}
A loxodromic transformation along any axis is conjugate to a loxodromic transformation along $0\infty$.
The cross-ratio is invariant under M\"obius transformations.
Therefore it suffices to prove the above identity for $p=0$ and $q=\infty$.
Due to $L_\lambda(0,\infty)(z) = \lambda z$, it boils down to
$$
[0,\infty,z,\lambda z] = [0,\infty,1,\lambda] = \frac{1}{\lambda},
$$
as claimed.
\end{proof}

The loxodromic transformation $L_\lambda(p,q)$ is represented by a linear transformation of the two-dimensional vector space
whose eigenvectors are representatives of $p$ and $q$, and whose eigenvalues are $1$ and $\lambda$, respectively.
In the standard affine chart on $\PP^1$, that associates to a number $p$ the vector $(p, 1)$ and to $\infty$ the vector $(1,0)$, the matrix of this linear transformation has the following form:
\begin{equation*}
\label{eqn:Acuv}
A_\lambda(p,q) = \frac{1}{p-q}
\begin{pmatrix}
p - q\lambda & pq(\lambda-1)\\
1 - \lambda & p\lambda - q
\end{pmatrix},
\end{equation*}
\[
A_\lambda(\infty, q) =
\begin{pmatrix}
1 & q(\lambda-1)\\ 0 & \lambda
\end{pmatrix},
\quad
A_\lambda(p, \infty) =
\begin{pmatrix}
\lambda & p(1-\lambda)\\
0 & 1
\end{pmatrix}.
\]
Note that $\det A_\lambda(p,q) = \lambda$.

The definition of $L_\lambda(p,q)$ takes into account the order of the points $p, q$.
Obviously, $L_\lambda(p,q) = L_{\lambda^{-1}}(q,p)$.
At the same time, the following lemma (verified by a direct calculation) shows that the corresponding matrices are different for $\lambda \ne \pm 1$.
\begin{lemma} \label{loxoprop}
One has
$$
A_{\lambda^{-1}} (p,q) = A_\lambda^{-1} (p,q) = \lambda^{-1} A_\lambda (q,p).
$$
\end{lemma}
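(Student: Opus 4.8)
The plan is to avoid grinding through the entries of matrix products and instead exploit the characterization recorded just above the statement: $A_\lambda(p,q)$ is the \emph{unique} linear endomorphism of $\KK^2$ whose eigenvector $(p,1)$ (or $(1,0)$ if $p=\infty$) has eigenvalue $1$, and whose eigenvector $(q,1)$ (or $(1,0)$ if $q=\infty$) has eigenvalue $\lambda$. Since $p\ne q$, these two representative vectors are linearly independent, so a $2\times 2$ matrix is completely determined by the eigenvalue it assigns to each of them. Thus an identity between such matrices reduces to checking that the two sides assign the same eigenvalue to each of the two common eigenvectors. I would first isolate this uniqueness principle as the single tool used throughout; it has the pleasant side effect of handling the cases $p=\infty$ or $q=\infty$ on exactly the same footing as the generic case.

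For the first equality I simply read off eigendata. The matrix $A_\lambda(p,q)$ fixes $(p,1)$ and scales $(q,1)$ by $\lambda$, so its inverse $A_\lambda^{-1}(p,q)$ fixes $(p,1)$ and scales $(q,1)$ by $\lambda^{-1}$. These are precisely the eigenvalues $1$ and $\lambda^{-1}$ that define $A_{\lambda^{-1}}(p,q)$ on the same two eigenvectors, so the uniqueness principle yields $A_{\lambda^{-1}}(p,q)=A_\lambda^{-1}(p,q)$ immediately.

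For the second equality the only delicate point is the scalar factor. By definition $A_\lambda(q,p)$ fixes $(q,1)$ and scales $(p,1)$ by $\lambda$; multiplying by $\lambda^{-1}$ therefore produces a matrix that fixes $(p,1)$ and scales $(q,1)$ by $\lambda^{-1}$. This is exactly the eigendata of $A_\lambda^{-1}(p,q)$ found in the previous step, whence $A_\lambda^{-1}(p,q)=\lambda^{-1}A_\lambda(q,p)$.

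The main thing to get right, and essentially the only obstacle, is the bookkeeping of eigenvalues together with the scalar normalization in the second identity; the eigendata viewpoint makes both transparent and removes any temptation to multiply out matrices or to split off the $\infty$ cases. As a sanity check one can use the recorded value $\det A_\lambda(p,q)=\lambda$: it gives $\det A_\lambda^{-1}(p,q)=\lambda^{-1}$, $\det A_{\lambda^{-1}}(p,q)=\lambda^{-1}$, and $\det(\lambda^{-1}A_\lambda(q,p))=\lambda^{-2}\cdot\lambda=\lambda^{-1}$, so all three matrices have the correct determinant, consistent with the asserted equalities. Of course one could equally well verify the lemma by the direct matrix multiplication suggested in the statement; the eigendata argument is merely a shorter route to the same conclusion.
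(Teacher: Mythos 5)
Your proof is correct, and it takes a genuinely different route from the paper's: the paper dismisses this lemma as ``verified by a direct calculation,'' meaning entry-wise multiplication of the explicit matrices (with the cases $p=\infty$ and $q=\infty$ handled by their separate formulas), whereas you argue from the defining eigendata alone. Your key point is sound: since $p\neq q$, the representative vectors of $p$ and $q$ are linearly independent, so a linear map on $\KK^2$ is uniquely determined by the eigenvalues it assigns to these two vectors, and the paper's definition of $A_\lambda(p,q)$ (eigenvalue $1$ on the representative of $p$, eigenvalue $\lambda$ on that of $q$) is precisely such a prescription. Then $A_\lambda^{-1}(p,q)$ fixes the representative of $p$ and scales that of $q$ by $\lambda^{-1}$, which is exactly the defining data of $A_{\lambda^{-1}}(p,q)$; and $\lambda^{-1}A_\lambda(q,p)$ has the same eigendata, because $A_\lambda(q,p)$ fixes the representative of $q$ and multiplies that of $p$ by $\lambda$. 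What your argument buys is uniformity and brevity: the $\infty$ cases need no special treatment, since you never touch the matrix entries. What the direct computation buys is that it simultaneously confirms that the displayed matrices realize the claimed eigendata --- a fact your argument takes as given, legitimately so, since that is how the paper defines $A_\lambda(p,q)$ before writing down its matrix in the standard chart. The determinant check at the end is a pleasant consistency test, though not logically required.
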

Thus there is no continuous lift of the space of all loxodromic transformations to $\GL(2)$;
the domain of the map $L_\lambda(p,q) \mapsto A_\lambda(p,q)$ is the set of \emph{loxodromic transformations with oriented axes}.

\subsection{Lax matrix of an ideal  polygon}
\label{Laxmat}

Let $\pP$ be a twisted polygon with  monodromy $\Phi$.
Consider the  projective transformation of $\PP^1$ depending on a parameter $\lambda \ne 0$:
\[
L_\lambda(\pP) = \Phi^{-1} L_\lambda(p_n, p_{n+1}) \cdots L_\lambda(p_1, p_2),
\]
the composition of loxodromic transformations with parameter $\lambda$ along the edges, followed by the inverse of the monodromy.

\begin{definition}
A \emph{Lax matrix} $A_\lambda(\pP)$ of a twisted polygon $\pP$ is an element of $\GL(2)$ representing the above projective transformation.
In particular, we may write
\begin{equation}
\label{eqn:LaxMatrix}
A_\lambda(\pP) = M^{-1} A_\lambda(p_n, p_{n+1}) \cdots A_\lambda(p_1, p_2),
\end{equation}
where $M \in \GL(2)$ is a representative of the monodromy $\Phi$ of $\pP$.
\end{definition}
The Lax matrix of a twisted polygon is well-defined up to scaling. We shall justify the choice of terminology (Lax matrix) in Section \ref{sec:IntTwist}. 

The relation $\T$ is naturally expressed in terms of Lax matrices.

\begin{lemma}
\label{lem:FixedPoint}
If $\pP \T \pQ$, then $q_1$ is a fixed point of $L_{\alpha^{-1}}(\pP)$.
Conversely, if $q_1$ is a fixed point of $L_{\alpha^{-1}}(\pP)$ and the points
\begin{equation}
\label{eqn:FixedPoint}
q_{i+1} = L_{\alpha^{-1}}(p_i, p_{i+1})(q_i)
\end{equation}
form a non-degenerate twisted polygon $\pQ$, then $\pQ \T \pP$.
\end{lemma}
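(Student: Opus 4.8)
The plan is to unwind both directions using Lemma \ref{loxodr} together with the definition of the relation $\T$. The key observation is that equation \eqref{reldef} can be rewritten entirely in terms of loxodromic transformations. Indeed, by Lemma \ref{loxodr}, for any $z$ we have $[p_i,p_{i+1},z,L_\lambda(p_i,p_{i+1})(z)] = \lambda^{-1}$. Comparing with the defining equation $[p_i,p_{i+1},q_i,q_{i+1}] = \alpha$, and using the standard fact that the cross-ratio $[p_i,p_{i+1},q_i,\,\cdot\,]$ is a M\"obius function that takes the value $\alpha$ at a unique point, I would conclude that the condition $[p_i,p_{i+1},q_i,q_{i+1}] = \alpha$ is \emph{equivalent} to $q_{i+1} = L_{\alpha^{-1}}(p_i,p_{i+1})(q_i)$. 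This is the crux: the recurrence \eqref{eqn:FixedPoint} is just a restatement of the single-index cross-ratio equation.

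For the forward direction, suppose $\pP \T \pQ$. Then for each $i$ the relation gives $q_{i+1} = L_{\alpha^{-1}}(p_i,p_{i+1})(q_i)$ by the equivalence just established. Composing these for $i=1,\ldots,n$ yields
\[
q_{n+1} = L_{\alpha^{-1}}(p_n,p_{n+1})\cdots L_{\alpha^{-1}}(p_1,p_2)(q_1).
\]
Now I would invoke the hypothesis that $\pP$ and $\pQ$ share the same monodromy $\Phi$, so that $q_{n+1} = \Phi(q_1)$. Applying $\Phi^{-1}$ to the displayed equation gives
\[
q_1 = \Phi^{-1} L_{\alpha^{-1}}(p_n,p_{n+1})\cdots L_{\alpha^{-1}}(p_1,p_2)(q_1) = L_{\alpha^{-1}}(\pP)(q_1),
\]
which is exactly the statement that $q_1$ is a fixed point of $L_{\alpha^{-1}}(\pP)$.

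For the converse, suppose $q_1$ is fixed by $L_{\alpha^{-1}}(\pP)$ and define $q_2,\ldots,q_{n+1}$ by the recurrence \eqref{eqn:FixedPoint}, assuming the resulting configuration is a non-degenerate twisted polygon. The same composition argument shows $q_{n+1} = L_{\alpha^{-1}}(\pP)(q_1)\cdot(\text{applied via }\Phi)$; more precisely the fixed-point condition reads $\Phi^{-1}L_{\alpha^{-1}}(p_n,p_{n+1})\cdots L_{\alpha^{-1}}(p_1,p_2)(q_1) = q_1$, i.e.\ $q_{n+1} = \Phi(q_1)$, so $\pQ$ has the \emph{same} monodromy $\Phi$ as $\pP$. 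Finally, each step $q_{i+1} = L_{\alpha^{-1}}(p_i,p_{i+1})(q_i)$ translates back, via Lemma \ref{loxodr}, into $[p_i,p_{i+1},q_i,q_{i+1}] = \alpha$, so all $n$ cross-ratio equations hold and $\pQ \T \pP$ (using the symmetry of $\T$).

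The only genuinely delicate point is the converse's monodromy claim: one must check that extending $\pQ$ periodically by $\Phi$ is \emph{consistent} with the recurrence, i.e.\ that the $q_i$ defined on one period glue correctly to a $\Phi$-periodic bi-infinite sequence. This follows because the loxodromic factors $L_{\alpha^{-1}}(p_i,p_{i+1})$ are themselves $\Phi$-equivariant in the appropriate sense ($p_{i+n}=\Phi(p_i)$ forces $L_{\alpha^{-1}}(p_{i+n},p_{i+n+1}) = \Phi\, L_{\alpha^{-1}}(p_i,p_{i+1})\,\Phi^{-1}$), so the single period determines the whole twisted polygon compatibly. I expect this equivariance bookkeeping, rather than any hard computation, to be the main thing to verify carefully.
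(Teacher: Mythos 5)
Your proposal is correct and follows essentially the same route as the paper: Lemma \ref{loxodr} converts each cross-ratio equation into the recurrence \eqref{eqn:FixedPoint}, composing the loxodromic factors and using the shared monodromy gives the fixed-point statement, and the converse reverses the argument. The uniqueness remark (that $[p_i,p_{i+1},q_i,\cdot\,]$ is a M\"obius function) and the $\Phi$-equivariance bookkeeping you flag at the end are exactly the details the paper leaves implicit, and your treatment of them is sound.
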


\begin{proof}
If $\pP \T \pQ$ then, by Lemma \ref{loxodr}, equation \eqref{eqn:FixedPoint} holds.
It follows that $L_{\alpha^{-1}}(p_n, p_{n+1}) \circ \cdots \circ L_{\alpha^{-1}}(p_1, p_2)(q_1) = q_{n+1}$.
By definition of  monodromy, we have $\Phi^{-1}(q_{n+1}) = q_1$, which implies the first part of the lemma.
For the second part, taking a fixed point $q_1$ and defining the other vertices of $\pQ$ by \eqref{eqn:FixedPoint} one obtains a polygon with monodromy $\Phi$.
\end{proof}

\begin{corollary}
\label{cor:2-2}
For every twisted polygon $\pP$ and every $\alpha \ne \{\infty, 0, 1\}$, the number of twisted polygons $\pQ$ such that $\pP \T \pQ$ is $1$, $2$, or $\infty$
in the complex case, and $0$, $1$, $2$, or $\infty$ in the real case.
\end{corollary}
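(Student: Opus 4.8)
The plan is to reduce the enumeration of polygons $\pQ$ with $\pP \T \pQ$ to counting the fixed points of a single Möbius transformation, and then to invoke the standard classification of such fixed points. By Lemma \ref{lem:FixedPoint}, a twisted polygon $\pQ$ satisfies $\pP \T \pQ$ precisely when its first vertex $q_1$ is a fixed point of $L_{\alpha^{-1}}(\pP)$ and the points produced from $q_1$ by the recursion \eqref{eqn:FixedPoint} form a non-degenerate twisted polygon. Since $q_2, \dots, q_n$ are then completely determined by $q_1$, and the monodromy is automatically forced to equal $\Phi$, the assignment $\pQ \mapsto q_1$ is injective; its image is exactly the set of fixed points of $L_{\alpha^{-1}}(\pP)$ that yield non-degenerate polygons. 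Thus the number of admissible $\pQ$ equals the number of such fixed points, and in any case is at most the total number of fixed points of $L_{\alpha^{-1}}(\pP)$ on $\PP^1$.

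First I would analyze this fixed-point set through the Lax matrix. The transformation $L_{\alpha^{-1}}(\pP)$ is represented by $A_{\alpha^{-1}}(\pP) \in \GL(2)$, and since $\alpha \notin \{0, \infty\}$ we have $\det A_{\alpha^{-1}}(\pP) = \alpha^{-n}/\det M \ne 0$, so it is a genuine projective transformation whose fixed points are the projectivizations of the eigenvectors of the representing matrix. Over $\C$, a matrix in $\GL(2,\C)$ is either a scalar multiple of the identity, in which case $L_{\alpha^{-1}}(\pP)$ is the identity and fixes every point of $\CP^1$ (the $\infty$ case), or it is non-scalar with either two distinct eigenvalues (two eigenlines, hence two fixed points) or a repeated eigenvalue with a one-dimensional eigenspace (the parabolic case, one fixed point). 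This produces the list $1, 2, \infty$. Over $\R$ one further possibility arises: the characteristic polynomial may have a conjugate pair of non-real roots, so that the matrix has no real eigenvector and $L_{\alpha^{-1}}(\pP)$ has no fixed point on $\RP^1$; this accounts for the extra value $0$ and gives the list $0, 1, 2, \infty$.

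The step that will require the most care is the non-degeneracy bookkeeping. A fixed point $q_1$ could in principle generate a polygon violating $q_i \ne q_{i+1}$ or $q_i \ne q_{i+2}$; one checks that $q_i = q_{i+1}$ would force $q_i$ to be a fixed point of $L_{\alpha^{-1}}(p_i, p_{i+1})$, i.e. $q_i \in \{p_i, p_{i+1}\}$, so such degeneracies occur only for special positions of $q_1$ relative to $\pP$, and distinct fixed points always yield distinct polygons. Finally, I would explain the excluded values: for $\alpha \in \{0, \infty\}$ the parameter $\lambda = \alpha^{-1}$ makes the loxodromic transformation, and hence the Lax matrix, undefined or singular, while for $\alpha = 1$ the identity $[p_i, p_{i+1}, q_i, q_{i+1}] = 1$ forces $p_i = p_{i+1}$ or $q_i = q_{i+1}$, collapsing the relation; with these cases removed, the fixed-point enumeration above yields exactly the asserted possibilities.
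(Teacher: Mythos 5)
Your proposal is correct and follows essentially the same route as the paper: the paper's entire proof consists of invoking Lemma \ref{lem:FixedPoint} and observing that $1$, $2$, $\infty$ (over $\C$) and $0$, $1$, $2$, $\infty$ (over $\R$) are exactly the possible numbers of fixed points of a projective transformation of $\PP^1$. Your additional material---the determinant computation, the non-degeneracy bookkeeping, and the discussion of the excluded values of $\alpha$---is elaboration that the paper omits (note that the paper implicitly counts possibly degenerate $\pQ$ here, as is made explicit in Proposition \ref{true2-2}), but it does not change the argument.
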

\begin{proof}
These are all possible numbers of fixed points of a projective transformation of $\PP^1$.
\end{proof}

\begin{example}
There exists a closed ideal octagon whose Lax matrix with the parameter $-1$ is proportional to the identity, see Figure \ref{fig:Ortogon}.
This  octagon admits infinitely many orthogonal ideal octagons. See more in Section \ref{subsect:except}. 

\begin{figure}[htb]
\begin{center}
\includegraphics[width=.5\textwidth]{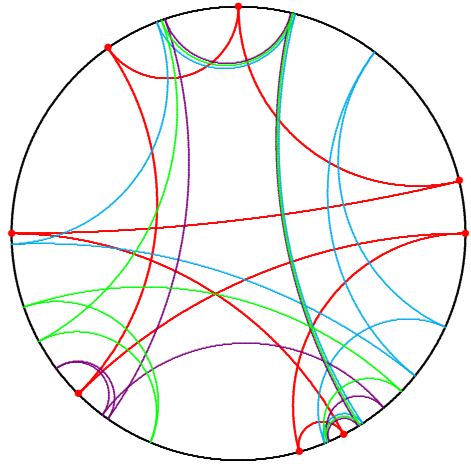}
\end{center}
\caption{The composition of the reflections in the consecutive sides of this ideal octagon is 
the identity. Three orthogonal ideal octagons are shown.}
\label{fig:Ortogon}
\end{figure}
\end{example}

If we work over $\C$, that is, with ideal polygons in $\HH^3$, then a generic  M\"obius transformation  has  two distinct fixed points. Therefore, for a generic $\pP$ and $\alpha$, one has two choices of $\pQ$ with $\pQ \T \pP$; generically, both of these choices are non-degenerate polygons. Choosing one of them makes the relation $\T$ into a map that we denote by $T_\alpha$; the other choice corresponds to the inverse map $T^{-1}_\alpha$. 

If we work over $\R$, that is, with ideal polygons in $\HH^2$, we need to assume that $L_{\alpha^{-1}}(\pP)$ has two fixed points. Then $\pQ$ exists and is in general non-degenerate. The map $L_{\alpha^{-1}}(\pQ)$ automatically has a fixed point (corresponding to $\pP$). Choosing a different fixed point of this M\"obius transformation makes it possible to continue, defining the map  $T_\alpha$.

The following are some basic properties of the Lax matrix needed in the sequel.

\begin{lemma}
The $\PGL$-action on the space of twisted polygons conjugates the Lax matrix:
for any $\Psi \in \PGL(2)$, we have
\[
L_\lambda(\Psi(\pP)) = \Psi L_\lambda(\pP) \Psi^{-1}.
\]
Accordingly, if $N \in \GL(2)$ is any representative of $\Psi$, then the matrices $A_\lambda(\Psi(\pP))$ and $NA_\lambda(\pP) N^{-1}$ are equal up to scaling.
\end{lemma}
\begin{proof}
We have $L_\lambda(\Psi(p_i), \Psi(p_{i+1})) = \Psi L_\lambda(p_i, p_{i+1}) \Psi^{-1}$.
Taking into account Lemma \ref{lem:MonodromyConj}, we obtain
\begin{multline*}
L_\lambda(\Psi(\pP)) = (\Psi\Phi\Psi^{-1}) (\Psi L_\lambda(p_n, p_{n+1}) \Psi^{-1}) \cdots (\Psi L_\lambda(p_1, p_2) \Psi^{-1})\\
= \Psi \Phi L_\lambda(p_n, p_{n+1}) \cdots L_\lambda(p_1, p_2) \Psi^{-1} = \Psi L_\lambda(\pP) \Psi^{-1}.
\end{multline*}
\end{proof}

From a closed or twisted polygon $\pP$, a new polygon $\pP^{+1}$ is obtained by the index shift: $p^{+1}_i = p_{i+1}$.

\begin{lemma} \label{cyclicshift}
The index shift conjugates the Lax matrix.
\end{lemma}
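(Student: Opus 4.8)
The plan is to make the statement precise as the identity
\[
L_\lambda(\pP^{+1}) = L_\lambda(p_1, p_2)\, L_\lambda(\pP)\, L_\lambda(p_1, p_2)^{-1},
\]
so that the conjugating transformation is the loxodromic motion along the first edge. As a preliminary step I would record that $\pP^{+1}$ has the same monodromy $\Phi$ as $\pP$: since $p^{+1}_{i+n} = p_{i+n+1} = \Phi(p_{i+1}) = \Phi(p^{+1}_i)$, the Lax transformations of both polygons are built with the same factor $\Phi^{-1}$ in front. This is what allows the two Lax transformations to be compared at all, and it is also the point where the closed case (where $\Phi = \Id$ and the index shift is a genuine cyclic permutation) is seen to be a special instance.

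Next I would write out the defining product for the shifted polygon using $p^{+1}_i = p_{i+1}$:
\[
L_\lambda(\pP^{+1}) = \Phi^{-1}\, L_\lambda(p_{n+1}, p_{n+2})\, L_\lambda(p_n, p_{n+1}) \cdots L_\lambda(p_2, p_3).
\]
Every factor here except the leftmost ``wrap-around'' edge $L_\lambda(p_{n+1}, p_{n+2})$ already occurs in $L_\lambda(\pP)$. This is exactly where the twisting enters: because $p_{n+1} = \Phi(p_1)$ and $p_{n+2} = \Phi(p_2)$, the conjugation property of loxodromics used in the previous lemma, $L_\lambda(\Psi(p), \Psi(q)) = \Psi L_\lambda(p, q) \Psi^{-1}$, converts the wrap-around factor into $L_\lambda(p_{n+1}, p_{n+2}) = \Phi\, L_\lambda(p_1, p_2)\, \Phi^{-1}$.

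The last step is a short bookkeeping cancellation. Abbreviating $Y = L_\lambda(p_n, p_{n+1}) \cdots L_\lambda(p_2, p_3)$ for the block of edges common to both products, one has $L_\lambda(\pP) = \Phi^{-1} Y\, L_\lambda(p_1, p_2)$, while substituting the wrap-around factor gives
\[
L_\lambda(\pP^{+1}) = \Phi^{-1}\,\Phi\, L_\lambda(p_1, p_2)\,\Phi^{-1}\, Y = L_\lambda(p_1, p_2)\,\bigl(\Phi^{-1} Y\bigr).
\]
Since $\Phi^{-1} Y = L_\lambda(\pP)\, L_\lambda(p_1, p_2)^{-1}$ by the first equation, these combine into the claimed conjugation, with conjugator $A_\lambda(p_1, p_2)$ on the matrix level (an equality valid up to scaling, which is all that is needed since the Lax matrix is defined only up to a scalar). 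I expect no geometric difficulty here; the only real obstacle is keeping the non-commuting factors in the correct order and verifying that the single $\Phi^{-1}\Phi$ cancellation lands precisely where it should. In essence the lemma is the observation that a cyclic shift of a product, twisted by the monodromy, yields a conjugate element.
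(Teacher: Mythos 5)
Your proof is correct and follows essentially the same route as the paper's: both pull out the wrap-around factor $L_\lambda(p_{n+1},p_{n+2})$, convert it via the equivariance $L_\lambda(\Phi(p),\Phi(q))=\Phi L_\lambda(p,q)\Phi^{-1}$ into $\Phi L_\lambda(p_1,p_2)\Phi^{-1}$, cancel against the leading $\Phi^{-1}$, and identify the remaining twisted product as $L_\lambda(\pP)\,L_\lambda^{-1}(p_1,p_2)$, arriving at the identical conjugation formula $L_\lambda(\pP^{+1}) = L_\lambda(p_1,p_2)\,L_\lambda(\pP)\,L_\lambda^{-1}(p_1,p_2)$. Your explicit check that $\pP^{+1}$ has the same monodromy is a nice touch the paper leaves implicit, but it does not change the argument.
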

\begin{proof}
We have
\begin{multline*}
L_\lambda(\pP^{+1}) = \Phi^{-1} L_\lambda(p_{n+1}, p_{n+2}) \cdots L_\lambda(p_2, p_3)\\
= (\Phi^{-1} L_\lambda(p_{n+1}, p_{n+2}) \Phi) \Phi^{-1} L_\lambda(p_{n-1}, p_n \cdots L_\lambda(p_2, p_3)\\
= L_\lambda(p_1, p_2) \Phi^{-1} L_\lambda(p_{n-1}, p_n) \cdots L_\lambda(p_2, p_3)\\
= L_\lambda(p_1, p_2) L_\lambda(\pP) L_\lambda^{-1}(p_1, p_2),
\end{multline*}
as needed.
\end{proof}

\section{Monodromy of a twisted polygon} 
\label{monotw} 

\subsection{Continuants}
\label{sec:CoCont}

Following T. Muir \cite{Muir}, we call the three-diagonal determinants 
\[
\begin{vmatrix}
a_1 & b_1 & 0 & \ldots & 0 & 0\\
c_1 & a_2 & b_2 & \ldots & 0 & 0\\
0 & c_2 & a_3 & \ldots & 0 & 0\\
\vdots & \vdots & \vdots & \ddots & \vdots & \vdots\\
0 & 0 & 0 & \ldots &a_{n-1} & b_{n-1}\\
0 & 0 & 0 & \ldots &c_{n-1} & a_n
\end{vmatrix}
\]
{\it continuants} (sometimes this term is reserved for the particular case when $b_i=c_i=1$ for all $i$). Chapter 13 of the classic Muir book is devoted to the properties of these determinants. See also \cite{ConOvs17} for the curious history and applications of continuants.

We shall use the following rule for calculating continuants, that goes back to Euler (item 545 in \cite{Muir}):
{\it one term of the continuant is $a_1a_2\ldots a_n$, and the other terms are obtained from it by replacing any number of disjoint pairs $(a_i a_{i+1})$ by $(-b_ic_i)$.}

Given an $n$-tuple $c_1,\ldots,c_n$, consider the continuants
\[
D_{i,j} =
\begin{vmatrix}
1 & \sqrt{c_i} & 0 & \ldots & 0 & 0\\
\sqrt{c_i} & 1 & \sqrt{c_{i+1}} & \ldots & 0 & 0\\
0 & \sqrt{c_{i+1}} & 1 & \ldots & 0 & 0\\
\vdots & \vdots & \vdots & \ddots & \vdots & \vdots\\
0 & 0 & 0 & \ldots & 1 & \sqrt{c_j}\\
0 & 0 & 0 & \ldots & \sqrt{c_j} & 1
\end{vmatrix},
\]
where $1\le i\le j\le n$.
For every non-zero $c_k$, one has two choices for the value of $\sqrt{c_k}$;  Euler's rule implies that the result is independent of these choices and is a polynomial in the variables $c_k$. When we need to emphasize the dependence of $c_1,\ldots,c_n$, we also write $D_{i,j} (c)$.

Expanding $D_{i,j} $ in the last or in the first row yields the recurrences
\begin{equation}
\label{eqn:DRecurrence}
D_{i,j} = D_{i,j-1} - c_j D_{i,j-2}, \quad D_{i,j} = D_{i+1,j} - c_i D_{i+2,j}.
\end{equation}
As the initial values, one may take $D_{i,i-2} = D_{i,i-1} = 1$.

We also need a scaled version of the above continuant:
\[
D_{i,j}(\lambda) =
\begin{vmatrix}
1 & \sqrt{c_i\lambda} & 0 & \ldots & 0 & 0\\
\sqrt{c_i\lambda} & 1 & \sqrt{c_{i+1}\lambda} & \ldots & 0 & 0\\
0 & \sqrt{c_{i+1}\lambda} & 1 & \ldots & 0 & 0\\
\vdots & \vdots & \vdots & \ddots & \vdots & \vdots\\
0 & 0 & 0 & \ldots & 1 & \sqrt{c_j\lambda}\\
0 & 0 & 0 & \ldots & \sqrt{c_j\lambda} & 1
\end{vmatrix}.
\]
In particular, $D_{i,j} = D_{i,j}(1)$ (and  later we will also write this continuant as $D_{i,j}(\lambda c)$ to indicate the dependence on the variables $c_i$). 

Introduce the following notation: for $I \subset \{1,\ldots,n\}$, denote by $c_I$ the product of $c_i$ for all $i\in I$. A set $I\subset [i,j]$ is called {\it sparse} if it contains no pairs of consecutive indices, and it is called {\it cyclically sparse} if it is sparse when the indices are understood cyclically mod $n$, that is, $1$ follows after $n$.

\begin{lemma}
\label{lem:CoCont}
One has
\[
D_{i,j}(\lambda) = \sum_{I \subset [i,j] \text{ sparse}} (-1)^{|I|} \lambda^{|I|} c_I.
\]
\end{lemma}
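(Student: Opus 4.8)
The plan is to prove the formula
\[
D_{i,j}(\lambda) = \sum_{I \subset [i,j] \text{ sparse}} (-1)^{|I|} \lambda^{|I|} c_I
\]
by a direct application of Euler's rule for continuants, which is already quoted in the excerpt. First I would recall that $D_{i,j}(\lambda)$ is the continuant with diagonal entries $a_k = 1$ and off-diagonal entries $b_k = c_k = \sqrt{c_k \lambda}$ (in Muir's notation). Euler's rule states that one term is the product of the diagonal entries, namely $a_i a_{i+1} \cdots a_j = 1$, and every other term is obtained by selecting a collection of \emph{disjoint} consecutive pairs $(a_k a_{k+1})$ and replacing each such pair by $-b_k c_k$. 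Here $-b_k c_k = -(\sqrt{c_k\lambda})^2 = -c_k \lambda$, where the index $k$ labels the superdiagonal entry sitting between rows $k$ and $k+1$.

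The key observation is the bijection between the combinatorial data in Euler's rule and the sparse subsets $I \subset [i,j]$. A choice of disjoint pairs $(a_k a_{k+1})$ corresponds exactly to choosing the set $I$ of left endpoints $k$ of the selected pairs; the requirement that the pairs be disjoint (no two pairs share an index) is precisely the condition that $I$ contain no two consecutive indices, i.e. that $I$ be sparse. Each selected pair contributes a factor $-c_k \lambda$, so a choice indexed by a sparse set $I$ with $|I|$ pairs contributes
\[
\prod_{k \in I} (-c_k \lambda) = (-1)^{|I|} \lambda^{|I|} c_I,
\]
while the unselected diagonal entries each contribute a factor of $1$. Summing over all admissible selections, i.e. over all sparse $I \subset [i,j]$ (with the empty set giving the leading term $1$), yields exactly the claimed formula.

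I would present this as the main argument, so the proof is essentially a translation of Euler's rule into the language of sparse sets. As a safeguard, or as an alternative if one prefers not to invoke Euler's rule as a black box, I would verify the formula by induction using the recurrence \eqref{eqn:DRecurrence}, which for the scaled continuant reads $D_{i,j}(\lambda) = D_{i,j-1}(\lambda) - c_j \lambda\, D_{i,j-2}(\lambda)$. The inductive step splits the sparse subsets of $[i,j]$ according to whether $j \in I$: if $j \notin I$ then $I$ is a sparse subset of $[i,j-1]$, accounting for $D_{i,j-1}(\lambda)$; if $j \in I$ then sparseness forces $j-1 \notin I$, so $I \setminus \{j\}$ is a sparse subset of $[i,j-2]$ and the factor $(-c_j \lambda)$ matches the second term. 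This matches the recurrence term by term, with base cases $D_{i,i-2}(\lambda) = D_{i,i-1}(\lambda) = 1$ corresponding to the empty sum (only $I = \emptyset$).

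The only genuine subtlety, rather than an obstacle, is bookkeeping of indices: one must be careful that the superdiagonal entry between rows $k$ and $k+1$ is $\sqrt{c_k\lambda}$ (not $\sqrt{c_{k+1}\lambda}$), so that replacing the pair $(a_k a_{k+1})$ produces $-c_k\lambda$ and the left endpoint $k$ is the correct label to record in $I$. Once this indexing is pinned down, both arguments are routine, and the independence of the sign choices for $\sqrt{c_k}$ is automatic since only the squares $(\sqrt{c_k\lambda})^2$ enter the final expression.
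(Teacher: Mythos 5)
Your proposal is correct and takes exactly the paper's approach: the paper's entire proof is the one-line remark that the formula "is a direct consequence of Euler's rule," which is precisely your main argument, spelled out with the correct index bookkeeping (the entry between rows $k$ and $k+1$ being $\sqrt{c_k\lambda}$, so disjoint pairs correspond to sparse subsets of $[i,j]$). The supplementary induction via the recurrence $D_{i,j}(\lambda) = D_{i,j-1}(\lambda) - c_j\lambda\, D_{i,j-2}(\lambda)$ is a valid alternative but not needed.
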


\begin{proof}
This is a direct consequence of Euler's rule.
\end{proof}

\subsection{Calculating the monodromy}
\label{calcmono}

\begin{lemma}
\label{lem:MonMat2}
Let $\pP = (p_i)_{i \in \Z}$ be a twisted $n$-gon with the cross-ratios $c_i = [p_i, p_{i+1}, p_{i-1}, p_{i+2}]$.
Then the monodromy matrix of $\pP$ in the projective basis $p_0 = 1$, $p_1 = \infty$, $p_2 = 0$ is given by the product
\begin{equation}
\label{eqn:MonMat2}
M_n = \begin{pmatrix} 0 & c_1\\ -1 & 1 \end{pmatrix} \cdots \begin{pmatrix} 0 & c_n\\ -1 & 1 \end{pmatrix}.
\end{equation}
\end{lemma}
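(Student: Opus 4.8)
The plan is to run the monodromy computation in a \emph{moving projective frame}, so that the product structure of $M_n$ emerges from a telescoping product of elementary transition maps, one per vertex, each carrying a single coordinate $c_i$.

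For each index $i$, let $\phi_i \in \PGL(2)$ be the unique M\"obius transformation carrying the ordered triple $(p_{i-1}, p_i, p_{i+1})$ to $(1, \infty, 0)$; non-degeneracy ($p_j \ne p_{j+1}$ and $p_j \ne p_{j+2}$) guarantees the three points are distinct, so $\phi_i$ exists and is unique. In the chosen normalization $p_0 = 1$, $p_1 = \infty$, $p_2 = 0$ we have $\phi_1 = \Id$. The first step is the local identity $\phi_i(p_{i+2}) = c_i$: since $\phi_i$ preserves the cross-ratio and sends $(p_i, p_{i+1}, p_{i-1})$ to $(\infty, 0, 1)$, one has $c_i = [p_i, p_{i+1}, p_{i-1}, p_{i+2}] = [\infty, 0, 1, \phi_i(p_{i+2})]$, and the elementary evaluation $[\infty, 0, 1, w] = w$ gives the claim.

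The core of the argument is the transition map $\psi_i := \phi_{i+1} \phi_i^{-1}$. Comparing the two frames on the triple $(p_i, p_{i+1}, p_{i+2})$ --- whose $\phi_i$-images are $(\infty, 0, c_i)$ and whose $\phi_{i+1}$-images are $(1, \infty, 0)$ --- shows that $\psi_i$ sends $\infty \mapsto 1$, $0 \mapsto \infty$, $c_i \mapsto 0$. Solving for the M\"obius map with this prescribed action yields $\psi_i(z) = 1 - c_i/z$, represented up to scale by $\left(\begin{smallmatrix} 1 & -c_i \\ 1 & 0 \end{smallmatrix}\right)$; inverting, $\psi_i^{-1}$ is represented up to scale by $\left(\begin{smallmatrix} 0 & c_i \\ -1 & 1 \end{smallmatrix}\right)$, precisely the $i$-th factor of $M_n$.

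Finally I would assemble the monodromy by telescoping. From $\phi_{i+1} = \psi_i \phi_i$ and $\phi_1 = \Id$ we get $\phi_{n+1} = \psi_n \cdots \psi_1$. On the other hand, $p_{n+j} = \Phi(p_j)$ shows that $\phi_{n+1}$ carries $(\Phi(1), \Phi(\infty), \Phi(0)) = (p_n, p_{n+1}, p_{n+2})$ to $(1, \infty, 0)$, so $\phi_{n+1} \Phi$ fixes $1, \infty, 0$ and therefore equals $\Id$; hence $\Phi = \phi_{n+1}^{-1} = \psi_1^{-1} \cdots \psi_n^{-1}$. Passing to representatives, $\Phi$ is represented up to scaling by $\prod_{i=1}^n \left(\begin{smallmatrix} 0 & c_i \\ -1 & 1\end{smallmatrix}\right) = M_n$, as claimed. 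I expect the only real obstacle to be bookkeeping rather than substance: keeping the composition order straight (in particular the order reversal when inverting the telescoping product) and remembering that each $\psi_i^{-1}$ is determined only up to a scalar, an ambiguity that is harmless because the monodromy matrix is itself defined only up to scale.
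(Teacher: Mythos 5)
Your proof is correct, and it is essentially the paper's own induction in dual form: your frame maps satisfy $\phi_{i+1} = M_i^{-1}$ where $M_i$ are the paper's partial products (which send $(1,\infty,0)$ to $(p_i,p_{i+1},p_{i+2})$), your local identity $\phi_i(p_{i+2}) = c_i$ is the same cross-ratio step the paper uses to get $M_n(c_{n+1}) = p_{n+3}$, and your telescoping of the transition matrices $\psi_i^{-1}$ reproduces the paper's inductive factorization $M_{n+1} = M_n \left(\begin{smallmatrix} 0 & c_{n+1}\\ -1 & 1\end{smallmatrix}\right)$. Both arguments rest on the same two facts, namely $[\infty,0,1,w]=w$ and the determination of a M\"obius transformation by its values at three points, so the difference is purely organizational.
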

\begin{proof}
Induction on $n$. For $n=1$ the matrix $M_1$ acts as follows:
\[
\begin{pmatrix} 0 & c_1\\ -1 & 1 \end{pmatrix} :
\begin{cases}
p_0 &\sim \begin{pmatrix} 1\\ 1 \end{pmatrix} \mapsto \begin{pmatrix} c_1\\ 0 \end{pmatrix} \sim p_1\\
p_1 &\sim \begin{pmatrix} 1\\ 0 \end{pmatrix} \mapsto \begin{pmatrix} 0\\ -1 \end{pmatrix} \sim p_2\\
p_2 &\sim \begin{pmatrix} 0\\ 1 \end{pmatrix} \mapsto \begin{pmatrix}  c_1\\ 1 \end{pmatrix} \sim p_3
\end{cases}
\]
(the last line follows from $c_1 = [\infty, 0, 1, p_3] = p_3$).
For the induction step assume that $M_n$ acts by
\[
1 \mapsto p_n, \quad \infty \mapsto p_{n+1}, \quad 0 \mapsto p_{n+2}.
\]
By the invariance of the cross-ratio, one has
\[
c_{n+1} = [\infty, 0, 1, c_{n+1}] = [p_{n+1}, p_{n+2}, p_n, M_n(c_{n+1})].
\]
Hence, by definition of $c_{n+1}$, one has $M_n(c_{n+1}) = p_{n+3}$.

It follows that
\[
M_{n+1} = M_n \begin{pmatrix} 0 & c_{n+1}\\ -1 & 1 \end{pmatrix} :
\begin{cases}
1 &\mapsto \infty \mapsto p_{n+1}\\
\infty &\mapsto 0 \mapsto p_{n+2}\\
0 &\mapsto c_{n+1} \mapsto p_{n+3}.
\end{cases}
\]
\end{proof}

\begin{lemma}
\label{lem:MonMat1}
In the projective basis $p_0 = 1, p_1 = \infty, p_2 = 0$, the monodromy of a twisted polygon is represented by the matrix
\[
M_n=
\begin{pmatrix}
-c_1 D_{3,n-1} & c_1 D_{3,n}\\
-D_{2,n-1} & D_{2,n}
\end{pmatrix}.
\]
\end{lemma}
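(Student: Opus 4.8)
The plan is to start from Lemma~\ref{lem:MonMat2}, which already presents the monodromy as the explicit matrix product
\[
M_n = \begin{pmatrix} 0 & c_1\\ -1 & 1 \end{pmatrix} \cdots \begin{pmatrix} 0 & c_n\\ -1 & 1 \end{pmatrix},
\]
and simply to evaluate this product, recognizing its entries as the continuants $D_{i,j}$. Since the off-diagonal shape of the elementary factors is exactly what generates the three-term continuant recurrence \eqref{eqn:DRecurrence}, this reduces to a purely algebraic identity about products of such matrices, which I would establish by induction on the number $n$ of factors, appending one factor at a time on the right.

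For the base case I would take $n = 2$, where a direct multiplication gives $\begin{pmatrix} -c_1 & c_1\\ -1 & 1-c_2\end{pmatrix}$, and compare it with the claimed matrix using the immediate continuant values $D_{3,1}=D_{3,2}=D_{2,1}=1$ and $D_{2,2}=1-c_2$ (all read off from $D_{i,i-2}=D_{i,i-1}=1$ and Lemma~\ref{lem:CoCont}). Beginning the induction at $n=2$ rather than $n=1$ is a deliberate choice: it sidesteps having to assign a value to the degenerate continuant $D_{3,0}$, which is the only mildly delicate boundary case.

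For the induction step, assuming $M_n$ has the claimed form, I would multiply on the right by the next factor and obtain
\[
M_{n+1} = M_n \begin{pmatrix} 0 & c_{n+1}\\ -1 & 1\end{pmatrix}
= \begin{pmatrix} -c_1 D_{3,n} & c_1\bigl(D_{3,n}-c_{n+1}D_{3,n-1}\bigr)\\ -D_{2,n} & D_{2,n}-c_{n+1}D_{2,n-1}\end{pmatrix}.
\]
The first-column entries $-c_1 D_{3,n}$ and $-D_{2,n}$ are already in the required form, while the two second-column entries are identified with $c_1 D_{3,n+1}$ and $D_{2,n+1}$ by applying the recurrence $D_{i,j}=D_{i,j-1}-c_j D_{i,j-2}$ from \eqref{eqn:DRecurrence} with $(i,j)=(3,n+1)$ and $(i,j)=(2,n+1)$. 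This closes the induction.

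The structural point that makes everything go through cleanly is that only two families of continuants ever appear, $D_{2,\bullet}$ and $D_{3,\bullet}$, both with fixed lower index, so that right-multiplication by the next factor advances the upper index by one and matches precisely the first recurrence in \eqref{eqn:DRecurrence}. I do not expect a genuine obstacle: once the correct form is guessed, the computation is routine, and the only step needing care is the bookkeeping of the continuant initial conditions at the base of the induction. If one wants uniform boundary behaviour, I would instead prove the more symmetric statement
\[
\begin{pmatrix} 0 & c_i\\ -1 & 1\end{pmatrix}\cdots\begin{pmatrix} 0 & c_j\\ -1 & 1\end{pmatrix} = \begin{pmatrix} -c_i D_{i+2,j-1} & c_i D_{i+2,j}\\ -D_{i+1,j-1} & D_{i+1,j}\end{pmatrix}
\]
by the same induction and then specialize to $i=1$, $j=n$.
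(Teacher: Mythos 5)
Your proof is correct and follows essentially the same route as the paper's: induction on $n$ via the product formula of Lemma~\ref{lem:MonMat2}, appending one elementary factor on the right and identifying the new second-column entries through the recurrence \eqref{eqn:DRecurrence}. The only difference is cosmetic: the paper starts the induction at $n=1$ (implicitly using the convention $D_{3,0}=0$, which is what the recurrence forces), while you start at $n=2$ to avoid that degenerate continuant.
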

\begin{proof}
Arguing by induction on $n$, we check the equation for $n=1$, and then compute, using equation (\ref{eqn:DRecurrence}),
\begin{multline*}
\begin{pmatrix}
-c_1 D_{3,n-1} & c_1 D_{3,n}\\
-D_{2,n-1} & D_{2,n}
\end{pmatrix}
\begin{pmatrix}
0 & c_{n+1}\\ -1 & 1
\end{pmatrix}\\
=
\begin{pmatrix}
-c_1 D_{3,n} & c_1(-c_{n+1} D_{3,n-1} + D_{3,n})\\
-D_{2,n} & -c_{n+1} D_{2,n-1} + D_{2,n}
\end{pmatrix}\\
=
\begin{pmatrix}
-c_1 D_{3,n} & c_1 D_{3,n+1}\\
-D_{2,n} & D_{2,n+1}
\end{pmatrix}.
\end{multline*}
\end{proof}

As we mentioned in Section \ref{sec:ClosedTwisted}, the dimension of the 
moduli space of closed polygons is $3$ less that the dimension of the 
moduli space of twisted polygons, and hence the $n$ cross-ratios of 
quadruples of consecutive vertices of a closed ideal $n$-gon must satisfy 
three independent relations.
We can now derive these relations by equating the monodromy to the identity.

Consider the numbers $c_1, \ldots, c_n$ defined in \eqref{eqn:cCoord} as an $n$-periodic sequence.

\begin{lemma}
\label{lem:DRelations}
For every closed $n$-gon, the numbers $c_1, \ldots, c_n$  satisfy the relations $D_{i, n-3+i} = 0$ for all $i$.
Conversely, every set of numbers $(c_i)$ satisfying these equations corresponds to a closed $n$-gon.
Besides, any three consecutive equations imply the rest.
\end{lemma}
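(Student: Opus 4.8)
The plan is to use the fact that a closed $n$-gon is precisely a twisted $n$-gon whose monodromy $\Phi$ is trivial, i.e., whose monodromy matrix is proportional to the identity. Write $T_i = \begin{pmatrix} 0 & c_i \\ -1 & 1\end{pmatrix}$, so that by Lemma \ref{lem:MonMat2} the monodromy is $M_n = T_1\cdots T_n$. The same induction that proves Lemma \ref{lem:MonMat1}, after shifting all indices by $k-1$ (continuants $D_{a,b}$ depend only on $c_a,\ldots,c_b$), gives
\[
M_n^{(k)} := T_k T_{k+1}\cdots T_{k+n-1} = \begin{pmatrix} -c_k D_{k+2,k+n-2} & c_k D_{k+2,k+n-1}\\ -D_{k+1,k+n-2} & D_{k+1,k+n-1}\end{pmatrix},
\]
where the continuants are formed from the $n$-periodic sequence $(c_i)$ and $M_n^{(k)}$ represents the monodromy computed from the base vertex $p_k$ (so $M_n^{(1)} = M_n$). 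Since $k+n-1 = (k+2)+(n-3)$ and $k+n-2 = (k+1)+(n-3)$, the two off-diagonal entries of $M_n^{(k)}$ are exactly $c_k D_{k+2,\,n-3+(k+2)}$ and $-D_{k+1,\,n-3+(k+1)}$; as every $c_i \ne 0$, their vanishing is equivalent to $D_{k+2,\,n-3+(k+2)} = 0$ and $D_{k+1,\,n-3+(k+1)} = 0$.

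For the forward direction, if $\pP$ is closed then its monodromy is the identity in every projective basis, so $M_n^{(k)}$ is scalar for all $k$. In particular its off-diagonal entries vanish, which by the previous remark gives $D_{i,\,n-3+i} = 0$ for $i = k+1$ and $i = k+2$. Letting $k$ range over all residues mod $n$ yields $D_{i,\,n-3+i} = 0$ for every $i$.

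For the converse it suffices, by this forward implication, to show that three consecutive relations $D_{i,\,n-3+i} = 0$ with $i = m, m+1, m+2$ already force the monodromy to be scalar. These three relations make both $M_n^{(m-1)}$ and $M_n^{(m)}$ diagonal: the off-diagonal entries of $M_n^{(m-1)}$ are governed by $i = m, m+1$, and those of $M_n^{(m)}$ by $i = m+1, m+2$. On the other hand, telescoping the product and using $n$-periodicity ($T_{m+n-1} = T_{m-1}$) gives the conjugation relation $M_n^{(m)} = T_{m-1}^{-1} M_n^{(m-1)} T_{m-1}$, equivalently $M_n^{(m-1)} T_{m-1} = T_{m-1} M_n^{(m)}$. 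Writing $M_n^{(m-1)} = \operatorname{diag}(a,b)$ and $M_n^{(m)} = \operatorname{diag}(a',b')$ and comparing entries (a $2\times 2$ computation using $c_{m-1}\ne 0$) forces $a = b$. Thus $M_n^{(m-1)}$ is scalar, the monodromy is trivial, and the $n$-periodic data $(c_i)$ — which determine a twisted polygon once an initial triple of vertices is fixed — in fact close up to a non-degenerate closed $n$-gon. Feeding this polygon back into the forward direction shows that the remaining relations $D_{i,\,n-3+i}=0$ hold automatically, so any three consecutive equations imply the rest.

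The main obstacle is the converse: vanishing of the two off-diagonal entries only makes $M_n$ diagonal, and one genuinely needs a third condition to conclude that the two diagonal entries coincide. The device that supplies it is cyclic symmetry — comparing $M_n$ with its index shift $M_n^{(2)}$ through the conjugation relation by the non-monomial matrix $T_1$ — rather than a direct manipulation of continuant identities, which is precisely why exactly three consecutive relations are needed.
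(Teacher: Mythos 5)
Your proof is correct, and its forward direction is essentially the paper's: cyclic shift conjugates the monodromy matrix, so triviality of the monodromy forces all the off-diagonal continuants $D_{i,n-3+i}$ to vanish. Where you genuinely diverge is in the converse and the ``three consecutive relations suffice'' claim. The paper does this in two steps: it shows the diagonal entries of $M_n$ coincide by a continuant identity (expanding $D_{2,n}+c_1D_{3,n-1}$ via the recurrences and using the relations for $i=1,2,3$), and it proves the three-consecutive claim geometrically, by observing that $D_{i,n-3+i}=0$ is equivalent to the monodromy fixing the vertex $p_{i-1}$, so three consecutive relations give three distinct fixed points and hence the identity transformation. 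You instead stay entirely at the matrix level: the three assumed relations make the two shifted monodromy matrices $M_n^{(m-1)}$ and $M_n^{(m)}$ diagonal, and the conjugation $M_n^{(m-1)}T_{m-1}=T_{m-1}M_n^{(m)}$ (valid by $n$-periodicity) forces both to be scalar --- the $2\times 2$ computation checks out, since comparing the $(1,2)$, $(2,1)$, $(2,2)$ entries gives $a=b'$, $b=a'$, $b=b'$, hence $a=b$, using only $c_{m-1}\neq 0$. This buys you a self-contained algebraic argument that needs no continuant identities beyond the shifted form of Lemma \ref{lem:MonMat1}, and it handles the converse and the three-consecutive claim in one stroke. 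What it loses is the conceptual content of the paper's argument: the fixed-point interpretation explains \emph{why} the magic number is three (a M\"obius transformation of $\PP^1$ with three fixed points is the identity), whereas in your version that count emerges somewhat opaquely from which off-diagonal entries happen to be governed by which relations. Both proofs exploit the same mechanism --- conjugation by the cyclic shift --- but at different levels, matrices in your case versus the projective transformation and its fixed points in the paper's.
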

\begin{proof}
For a closed $n$-gon one has $M_n \sim \Id$ which, by Lemma \ref{lem:MonMat1}, implies $D_{3,n} = 0$.
A cyclic permutation of the factors on the right hand side of \eqref{eqn:MonMat2} conjugates the matrix $M_n$ (Lemma \ref{cyclicshift}).
Hence $M_n \sim \Id$ implies $D_{i, n-3+i} = 0$ for all $i$.

In the opposite direction, if $D_{i, n-3+i} = 0$ for all $i$, then the off-diagonal entries of the matrix in Lemma \ref{lem:MonMat1} vanish.
The diagonal entries are equal because of
\begin{multline*}
D_{2,n} + c_1 D_{3,n-1} = (D_{1,n} + c_1 D_{3,n}) + (D_{2,n-1} - D_{1,n-1})\\
= D_{1,n} - D_{1,n-1} = -c_1 D_{1,n-2} = 0.
\end{multline*}

Equation $D_{2,n-1} = 0$ is equivalent to the condition that the projective transformation defined by $M_n$ fixes the point $\infty \in \PP^1$,
that is, the monodromy $\Phi$ of the polygon $\pP$ fixes the point $p_1$.
By a cyclic permutation it follows that $D_{i, n-3+i} = 0$ is equivalent to $\Phi(p_{i-1}) = p_{i-1}$.
Since any three consecutive vertices of $\pP$ are pairwise distinct, three consecutive equations of the form $D_{i, n-3+i} = 0$ imply $\Phi = \Id$.
\end{proof}

\begin{remark}
The dependence between four consecutive equations can be made explicit by using  recurrences \eqref{eqn:DRecurrence}:
\[
c_{i-1}D_{i, n-3+i} + (c_i - 1)D_{i+1, n-2+i} - (c_i - 1)D_{i+2, n-1+i} - c_{i+1}D_{i+3, n+i} = 0.
\]
\end{remark}


The monodromy of the projective equivalence class of a twisted polygon is defined up to conjugation (Lemma \ref{lem:MonodromyConj}).
A conjugacy invariant of a projective transformation, represented by a matrix $M$, is its normalized trace, ${\Tr^2 M}/{\det M}$.

\begin{theorem}
\label{thm:trmono}
The normalized trace of the monodromy of a twisted $n$-gon is given by
\begin{equation}
\label{eqn:TraceC}
\frac{\Tr^2 M_n}{\det M_n} = \frac{1}{c_{[n]}} \left( \sum_{k=0}^{\lfloor \frac{n}{2} \rfloor} (-1)^k F_k(c) \right)^2,
\end{equation}
where
\begin{equation}
\label{eqn:Integrals1}
F_k(c) = \sum_{|I|=k} c_I, \ I \subset [n] \text{ cyclically sparse and, by definition,}\ F_0=1.
\end{equation}
\end{theorem}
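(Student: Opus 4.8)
The plan is to extract $\Tr M_n$ and $\det M_n$ from the two formulas we already have for the monodromy matrix, and then to recognize the trace as the ``cyclic continuant,'' i.e.\ the signed generating function of cyclically sparse subsets of $[n]$. The determinant is immediate: by Lemma~\ref{lem:MonMat2}, $M_n$ is a product of $n$ matrices $\begin{pmatrix} 0 & c_i\\ -1 & 1 \end{pmatrix}$, each of determinant $c_i$, so multiplicativity gives
\[
\det M_n = \prod_{i=1}^n c_i = c_{[n]}.
\]

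Next I would read the trace off Lemma~\ref{lem:MonMat1}, obtaining $\Tr M_n = D_{2,n} - c_1 D_{3,n-1}$. The core of the proof is to identify this with
\[
S := \sum_{k=0}^{\lfloor n/2 \rfloor} (-1)^k F_k(c) = \sum_{I \text{ cyclically sparse}} (-1)^{|I|} c_I,
\]
the last equality being just the definition \eqref{eqn:Integrals1} reorganized by size (note that a cyclically sparse subset of an $n$-cycle has at most $\lfloor n/2 \rfloor$ elements, so the range of $k$ captures every such $I$). By Lemma~\ref{lem:CoCont} at $\lambda = 1$ we have $D_{2,n} = \sum_{I \subset [2,n] \text{ sparse}} (-1)^{|I|} c_I$ and $D_{3,n-1} = \sum_{I \subset [3,n-1] \text{ sparse}} (-1)^{|I|} c_I$.

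To match these, I would split the sum defining $S$ according to whether the index $1$ belongs to $I$. If $1 \notin I$, then the only cyclic adjacency that is not already an interval adjacency, namely the pair $\{n,1\}$, is vacuously satisfied, so $I$ ranges exactly over the ordinary sparse subsets of $\{2, \ldots, n\}$; these contribute $D_{2,n}$. If $1 \in I$, then cyclic sparseness forbids both $2$ and $n$, so $I = \{1\} \cup I'$ with $I' \subset \{3, \ldots, n-1\}$ an ordinary sparse subset; since $c_I = c_1 c_{I'}$ and $(-1)^{|I|} = -(-1)^{|I'|}$, these contribute $-c_1 D_{3,n-1}$. Summing the two cases gives $S = D_{2,n} - c_1 D_{3,n-1} = \Tr M_n$, whence $\Tr^2 M_n / \det M_n = S^2 / c_{[n]}$, which is exactly \eqref{eqn:TraceC}.

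The only delicate point is the bookkeeping in the splitting step: one must verify that the cyclic constraint degenerates to the correct interval constraint in each of the two cases, and that the sign $(-1)^{|I|}$ factors correctly when $c_1$ is pulled out of the $1 \in I$ sum. This is elementary, but it carries all the combinatorial content of the statement; everything else is formal.
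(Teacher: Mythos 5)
Your proposal is correct and is essentially identical to the paper's proof: the paper also obtains $\det M_n = c_{[n]}$ from Lemma~\ref{lem:MonMat2}, reads $\Tr M_n = D_{2,n} - c_1 D_{3,n-1}$ from Lemma~\ref{lem:MonMat1}, and identifies this with the signed sum over cyclically sparse subsets via Lemma~\ref{lem:CoCont} by splitting according to whether $1 \in I$ (contributing $-c_1 D_{3,n-1}$) or $1 \notin I$ (contributing $D_{2,n}$). Your write-up merely makes explicit the bookkeeping that the paper leaves as a two-line remark.
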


\begin{proof}
From Lemmas \ref{lem:MonMat1} and \ref{lem:CoCont} it follows that
\begin{equation}
\label{eqn:TraceMon}
\Tr M_n = D_{2,n} - c_1 D_{3,n-1} = \sum_{I \circlearrowleft\text{sparse}} (-1)^{|I|} c_I.
\end{equation}
Indeed, there are two types of cyclically sparse subsets of $[n]$: those which contain $1$ and those which do not.
The latter are accounted for in the term $D_{2,n}$, the former in the term $-c_1 D_{3,n-1}$.

Lemma \ref{lem:MonMat2} implies that $\det M_n = c_{[n]}$, and the theorem follows.
\end{proof}

\begin{corollary}
\label{cor:MonoPara}
The monodromy of a twisted $n$-gon is parabolic or the identity if and only if the cross-ratios satisfy the identity
\begin{equation}
\label{eqn:MonoPara}
\left( \sum_{k=0}^{\lfloor\frac{n}{2}\rfloor} (-1)^k F_k(c) \right)^2 = 4 c_{[n]}.
\end{equation}
\end{corollary}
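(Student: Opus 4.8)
The plan is to reduce the corollary to the standard eigenvalue criterion for parabolic elements of $\PGL(2)$ and then substitute the formula of Theorem \ref{thm:trmono}. First I would recall that a projective transformation represented by a matrix $M \in \GL(2)$ with eigenvalues $\mu_1, \mu_2$ is loxodromic precisely when $\mu_1 \ne \mu_2$ (two distinct fixed points on $\PP^1$), and that the remaining case $\mu_1 = \mu_2$ is exactly the dichotomy \emph{parabolic or identity}: it is the identity when $M$ is a scalar matrix and parabolic otherwise. The key observation is that both alternatives in the statement share the single algebraic feature of a repeated eigenvalue, so no case distinction between them is needed.

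The second step translates the condition $\mu_1 = \mu_2$ into the normalized trace. The characteristic polynomial of $M$ is $\mu^2 - (\Tr M)\mu + \det M$, whose discriminant equals $\Tr^2 M - 4 \det M$. Hence $\mu_1 = \mu_2$ if and only if $\Tr^2 M = 4 \det M$, and since $\det M \neq 0$ this is equivalent to $\frac{\Tr^2 M}{\det M} = 4$. I would note that the normalized trace is invariant under scaling $M \mapsto cM$, so it is a well-defined function of the projective class of the monodromy, consistent with Lemma \ref{lem:MonodromyConj}.

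The final step is to invoke Theorem \ref{thm:trmono}. Substituting
\[
\frac{\Tr^2 M_n}{\det M_n} = \frac{1}{c_{[n]}} \left( \sum_{k=0}^{\lfloor \frac{n}{2} \rfloor} (-1)^k F_k(c) \right)^2
\]
into the criterion $\frac{\Tr^2 M_n}{\det M_n} = 4$ and multiplying through by $c_{[n]}$ (which is nonzero because each $c_i \in \KK^*$) yields exactly \eqref{eqn:MonoPara}. Conversely, \eqref{eqn:MonoPara} divided by $c_{[n]}$ gives normalized trace $4$, forcing a repeated eigenvalue.

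I do not expect any genuine obstacle here: the content is entirely carried by Theorem \ref{thm:trmono}, and the corollary amounts to packaging the equality ``normalized trace $=4$'' as the vanishing of the discriminant. The only point requiring a word of care is that the equation captures the parabolic and identity cases simultaneously, which is precisely why the statement reads ``parabolic or the identity'' rather than singling out one of them.
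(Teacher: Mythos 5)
Your proposal is correct and follows essentially the same route as the paper: the paper's proof also consists of observing that ``parabolic or the identity'' is equivalent to $\Tr^2 M/\det M = 4$ for any representative $M$, and then substituting the formula of Theorem \ref{thm:trmono}. You merely spell out the discriminant computation that the paper leaves implicit, which is fine.
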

\begin{proof}
A projective transformation $\Phi \in \PGL(2)$ is of parabolic type or the identity if and only if $\frac{\Tr^2 M}{\det M} = 4$
for a representative of $\Phi$.
The rest follows from Theorem \ref{thm:trmono}.
\end{proof}

\begin{corollary}
\label{cor:SchoeneGleichung}
The cross-ratios of a closed polygon satisfy the relation \eqref{eqn:MonoPara}.
\end{corollary}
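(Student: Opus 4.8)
The plan is to recognize that this corollary is essentially immediate from the two preceding results, once we observe that a closed polygon has trivial monodromy. First I would recall from Lemma \ref{lem:DRelations} that for a closed $n$-gon the monodromy matrix satisfies $M_n \sim \Id$, i.e., the monodromy transformation $\Phi \in \PGL(2)$ is the identity. Since the identity is, in particular, a transformation ``of parabolic type or the identity'' in the sense of Corollary \ref{cor:MonoPara}, the ``only if'' direction of that corollary applies verbatim and yields relation \eqref{eqn:MonoPara}.

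Alternatively, and to double-check the bookkeeping, I would argue directly through Theorem \ref{thm:trmono}. A representative $M$ of the identity has $\Tr^2 M / \det M = 4$ (for the identity matrix $\Tr^2 = 4$ and $\det = 1$, and the ratio is scale-invariant). Substituting into the trace formula \eqref{eqn:TraceC} gives
\[
\frac{1}{c_{[n]}}\left(\sum_{k=0}^{\lfloor \frac{n}{2} \rfloor}(-1)^k F_k(c)\right)^2 = 4,
\]
which, after multiplying by $c_{[n]}$, is exactly relation \eqref{eqn:MonoPara}. Here $c_{[n]} = \det M_n \ne 0$ for a non-degenerate polygon, so no division issues arise.

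There is no genuine obstacle in this argument: the entire content is the reduction to Corollary \ref{cor:MonoPara} (equivalently, to Theorem \ref{thm:trmono}) via the triviality of the closed-polygon monodromy. The one point worth flagging is that \eqref{eqn:MonoPara} is only a \emph{necessary} condition for a twisted polygon to close up, not a sufficient one: it merely records that the monodromy is parabolic or the identity, a strictly weaker condition than $\Phi = \Id$. This is precisely why the statement appears as a corollary of the parabolicity criterion rather than as a full closing condition, the latter being governed by the stronger system of relations $D_{i, n-3+i} = 0$ from Lemma \ref{lem:DRelations}.
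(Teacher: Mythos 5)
Your proof is correct and matches the paper's intended argument: the corollary follows immediately from Corollary \ref{cor:MonoPara} (equivalently, Theorem \ref{thm:trmono}), since a closed polygon has identity monodromy and the identity falls under the ``parabolic or the identity'' case, giving $\Tr^2 M/\det M = 4$. Your remark that \eqref{eqn:MonoPara} is merely necessary, with closure governed by the stronger relations $D_{i,n-3+i}=0$, is also exactly the paper's point of view.
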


The following two lemmas will be useful later.

\begin{lemma}
\label{lem:MoreDRel}
The cross-ratios of a closed polygon satisfy the following relations:
\[
D_{i,n-1+i} = D_{i,n-2+i} = \frac12 \sum_{I \circlearrowleft \text{sparse}} (-1)^{|I|} c_I
\]
for all $i$.
\end{lemma}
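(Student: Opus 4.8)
My plan is to separate the statement into its two assertions: the equality $D_{i,n-1+i}=D_{i,n-2+i}$ of the two continuants, and the evaluation of their common value as $\frac12\sum_{I\circlearrowleft\text{sparse}}(-1)^{|I|}c_I$. The first assertion I would dispatch immediately from the back recurrence in \eqref{eqn:DRecurrence}: setting $j=n-1+i$ gives $D_{i,n-1+i}=D_{i,n-2+i}-c_{n-1+i}D_{i,n-3+i}$, and since $\pP$ is closed, Lemma \ref{lem:DRelations} supplies $D_{i,n-3+i}=0$, so the correction term vanishes and the two continuants coincide. This reduces everything to computing $D_{i,n-2+i}$.

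For the evaluation, the key observation is that for a closed polygon the monodromy matrix of Lemma \ref{lem:MonMat1} degenerates to a scalar matrix. The closed-polygon relations kill both off-diagonal entries (here $D_{3,n}=0$ and $D_{2,n-1}=0$ are the cases $i=3$ and $i=2$ of $D_{i,n-3+i}=0$), while the computation already carried out inside the proof of Lemma \ref{lem:DRelations} shows the two diagonal entries agree, $D_{2,n}=-c_1D_{3,n-1}$. Hence $\Tr M_n=-c_1D_{3,n-1}+D_{2,n}=2D_{2,n}$, so $D_{2,n}=\frac12\Tr M_n$. Combining this with the trace expansion \eqref{eqn:TraceMon}, $\Tr M_n=\sum_{I\circlearrowleft\text{sparse}}(-1)^{|I|}c_I$, gives the asserted value for the index $i=2$, since $D_{2,n}=D_{2,n-2+2}$.

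It then remains to pass from $i=2$ to arbitrary $i$, and I would do this by the cyclic shift. By Lemma \ref{cyclicshift} shifting indices conjugates the Lax matrix, hence preserves $\Tr M_n$ (and $\det M_n=c_{[n]}$), while the reindexing $k\mapsto k+(i-2)$ in the sparse-subset expansion of Lemma \ref{lem:CoCont} carries $D_{2,n}(c)$ to $D_{i,n-2+i}(c)$; applying the $i=2$ identity to the shifted (still closed) polygon yields the statement for every $i$. The step I expect to require the most care is precisely this uniformity in $i$: a purely local argument through the recurrences only produces the relation $D_{i,n-2+i}+D_{i+1,n-1+i}=\Tr M_n$, equivalently a two-term recursion among the values that forces them to be $2$-periodic but not constant when $n$ is even. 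The scalar-matrix observation is what actually pins the common value to $\frac12\Tr M_n$ uniformly, and so circumvents this even/odd subtlety.
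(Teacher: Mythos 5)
Your proof is correct, and it is organized differently enough from the paper's to be worth comparing. The first equality is handled identically in both (recurrence \eqref{eqn:DRecurrence} plus $D_{i,n-3+i}=0$ from Lemma \ref{lem:DRelations}). For the evaluation, the paper never leaves the continuant calculus and works at an arbitrary index: it first proves that $D_{i,n-2+i}$ is independent of $i$ by a second application of \eqref{eqn:DRecurrence}, namely $D_{i-1,i+n-2}=D_{i,i+n-2}-c_{i-1}D_{i+1,i+n-2}=D_{i,i+n-2}$, the correction term vanishing because $D_{i+1,(i+1)+n-3}=0$; it then splits the cyclically sparse sum at index $i$ as $D_{i,i+n-2}-c_{i-1}D_{i+1,i+n-3}$ and converts the second term into $D_{i-1,(i-1)+n-2}$ by one more recurrence, so that the sum equals twice the common value. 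You instead pin down the value at the single index $i=2$ by reusing two facts established earlier: the scalar form of the monodromy matrix of a closed polygon (the identity $D_{2,n}=-c_1D_{3,n-1}$ from the proof of Lemma \ref{lem:DRelations}) and the trace expansion \eqref{eqn:TraceMon}, which together give $2D_{2,n}=\Tr M_n=\sum_{I\circlearrowleft\text{sparse}}(-1)^{|I|}c_I$; you then spread this to all $i$ by the cyclic shift, using that a shifted closed polygon is closed, that the sparse-subset expansion of Lemma \ref{lem:CoCont} reindexes $D_{2,n}$ of the shifted coordinates into $D_{i,n-2+i}$ of the original ones, and that the cyclic sum is shift-invariant. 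Your route trades computation for symmetry and for reuse of prior results; the paper's is self-contained at the level of continuant identities and produces the $i$-independence as an explicit intermediate statement rather than as a consequence of symmetry.

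One claim in your closing paragraph is inaccurate, though it does not damage your proof: you assert that a purely local recurrence argument can only produce the two-term relation $D_{i,n-2+i}+D_{i+1,n-1+i}=\Tr M_n$, hence only $2$-periodicity of the values when $n$ is even. In fact constancy in $i$ follows locally from the recurrences with no parity issue, exactly as in the paper's second display: $D_{i-1,i+n-2}=D_{i,i+n-2}-c_{i-1}D_{i+1,i+n-2}$, and the correction term vanishes since $D_{i+1,i+n-2}=D_{(i+1),(i+1)+n-3}=0$ by Lemma \ref{lem:DRelations}. So the even/odd dichotomy you were worried about never arises; your scalar-matrix detour is a legitimate alternative, not a necessity.
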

\begin{proof}
Due to equations \eqref{eqn:DRecurrence} and Lemma \ref{lem:DRelations} one has
\begin{gather*}
D_{i,i+n-1} = D_{i,i+n-2} - c_{i+n-1}D_{i,i+n-3} = D_{i,i+n-2},\\
D_{i-1,i+n-2} = D_{i,i+n-2} - c_{i-1}D_{i+1,i+n-2} = D_{i,i+n-2}.
\end{gather*}
It follows that $D_{i,n-1+i} = D_{i,n-2+i}$ and their common value is independent of~$i$.

At the same time one has
\begin{gather*}
\sum_{I \circlearrowleft \text{sparse}} (-1)^{|I|} c_I = D_{i,i+n-2} - c_{i-1}D_{i+1,i+n-3},\\
D_{i-1,i+n-3} = D_{i,i+n-3} - c_{i-1}D_{i+1,i+n-3} = -c_{i-1}D_{i+1,i+n-3},
\end{gather*}
which implies
\[
\sum_{I \circlearrowleft \text{sparse}} (-1)^{|I|} c_I = D_{i,i+n-2} + D_{i-1,i+n-3} = 2D_{i,i+n-2},
\]
and the lemma is proved.
\end{proof}

\begin{lemma}
\label{lem:LinRelClosed}
The cross-ratios of a closed polygon satisfy the equation
\[
\sum_{k=0}^{\lfloor \frac{n}{2} \rfloor} (-1)^k (n-2k) \sum_{|I|=k} c_I =0, \quad I \text{ cyclically sparse}.
\]
\end{lemma}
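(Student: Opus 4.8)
The plan is to derive this linear relation by summing the equalities of Lemma~\ref{lem:MoreDRel} over all cyclic positions and comparing the result with a direct combinatorial expansion. Write
\[
G(c) = \sum_{k=0}^{\lfloor n/2 \rfloor} (-1)^k F_k(c) = \sum_{I \circlearrowleft \text{sparse}} (-1)^{|I|} c_I
\]
for the alternating sum of the integrals. By Lemma~\ref{lem:MoreDRel}, for a closed polygon one has $D_{i,\,i+n-2} = \tfrac12 G(c)$ for every $i$, so that
\[
\sum_{i=1}^n D_{i,\,i+n-2} = \frac{n}{2}\, G(c).
\]
The idea is to evaluate the left-hand side a second time, purely combinatorially, and to read off the desired identity by matching coefficients of $F_k$.

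For the combinatorial evaluation I would invoke Lemma~\ref{lem:CoCont} at $\lambda = 1$, which expresses $D_{i,\,i+n-2}$ as $\sum (-1)^{|I|} c_I$ over subsets $I$ that are sparse in the \emph{linear} arc $[i, i+n-2]$. This arc consists of all residues modulo $n$ except the single index $i-1$. The key observation is the following bijection: for a subset $I$ with $i-1\notin I$, being linearly sparse on the arc $[i,i+n-2]$ is equivalent to being cyclically sparse on $[n]$, because the only cyclic adjacencies not tested by linear sparsity are the two involving the omitted index $i-1$, and these impose no condition once $i-1\notin I$. Consequently, when we sum over $i=1,\ldots,n$, a fixed cyclically sparse set $I$ contributes exactly for those $i$ with $i-1\notin I$, that is, with multiplicity $n-|I|$. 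This yields
\[
\sum_{i=1}^n D_{i,\,i+n-2} = \sum_{I \circlearrowleft \text{sparse}} (n-|I|)(-1)^{|I|} c_I = \sum_{k=0}^{\lfloor n/2 \rfloor} (-1)^k (n-k)\,F_k(c).
\]

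Comparing the two expressions gives $\sum_k (-1)^k (n-k) F_k = \tfrac{n}{2}\sum_k (-1)^k F_k$, and subtracting $\tfrac{n}{2}G(c)$ from both sides leaves $\sum_k (-1)^k\bigl(\tfrac{n}{2}-k\bigr) F_k = 0$; multiplying by $2$ is exactly the claimed relation $\sum_{k=0}^{\lfloor n/2 \rfloor} (-1)^k (n-2k)\sum_{|I|=k} c_I = 0$. The only genuinely nontrivial step is the counting argument, i.e. establishing that the linearly sparse subsets of the arcs $[i,i+n-2]$, summed over $i$, reproduce each cyclically sparse set with multiplicity $n-|I|$; everything else is bookkeeping. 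Note that closedness of the polygon enters only through Lemma~\ref{lem:MoreDRel} (the constancy of $D_{i,\,i+n-2}$), while the combinatorial expansion is an identity valid for all $c$.
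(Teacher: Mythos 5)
Your proof is correct, including the crucial counting step, and it is non-circular: Lemma~\ref{lem:MoreDRel} is proved before this lemma (from the recurrences \eqref{eqn:DRecurrence} and Lemma~\ref{lem:DRelations}) and does not depend on it. It is, however, a genuinely different route from the paper's. The paper simply sums the $n$ closure relations $D_{i,n-3+i}=0$ of Lemma~\ref{lem:DRelations}: expanding each via Lemma~\ref{lem:CoCont}, a cyclically sparse $I$ with $|I|=k$ lies in the arc $[i,n-3+i]$ (which omits the two indices $i-2$ and $i-1$) for exactly $n-2k$ values of $i$, since the $2k$ pairs $(i-2,i-1)$ meeting $I$ are precisely the excluded ones; because one is summing zeros, the identity $\sum_{k}(-1)^k(n-2k)F_k=0$ appears in a single step with no rearrangement. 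You instead sum the longer continuants $D_{i,i+n-2}$ (arcs omitting only $i-1$), whose common value $\tfrac12\sum_k(-1)^kF_k$ is supplied by Lemma~\ref{lem:MoreDRel}; your multiplicity count $n-|I|$ is the exact analogue for these arcs of the paper's count $n-2|I|$, and your verification that linear sparsity on the arc coincides with cyclic sparsity for sets avoiding $i-1$ is sound. Both arguments share the same combinatorial core — expand shifted continuants by Lemma~\ref{lem:CoCont} and count how many arcs contain a fixed cyclically sparse set — but the paper's choice of the \emph{vanishing} continuants makes the coefficient $n-2k$ emerge directly, whereas your choice costs an extra lemma as input and a final coefficient-matching step $\sum_k(-1)^k(n-k)F_k=\tfrac{n}{2}\sum_k(-1)^kF_k$. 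The trade-off is essentially stylistic; your version does isolate the fact that the relation already follows from the constancy of the $D_{i,i+n-2}$, though that constancy is itself derived from the closure relations the paper uses.
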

\begin{proof}
This equation is  the sum of the equations $D_{i, n-3+i} = 0$.

By Lemma \ref{lem:CoCont}, the polynomial $D_{i, n-3+i}$ is the sum of monomials $(-1)^{|I|}c_I$ over all sparse $I \subset [i, n-3+i]$. It suffices to establish that, for every $I$ with $|I|=k$, the number of indices $i$, such that $I \subset [i, n-3+i]$, equals $n-2k$.
This is true because there are exactly $2k$ pairs of consecutive indices $(i-2, i-1)$ having a non-empty intersection with $I$.
\end{proof}


\begin{example}
For $n=5$, Corollary \ref{cor:MonoPara} says that the relations $c_{i-1} + c_i + c_{i+1} = 1 + c_{i-1}c_{i+1}$ (indices taken modulo $5$) imply the identity
\[
\left( 1 - \sum_{i=1}^5 c_i + \sum_{i=1}^5 c_{i-1}c_{i+1} \right)^2 = 4\prod_{i=1}^5 c_i.
\]
Since the sum of the initial relations yields $3\sum c_i = 5 + \sum c_{i-1}c_{i+1}$, they also imply the identity
\[
(c_1 + c_2 + c_3 + c_4 + c_5 - 2)^2 = c_1c_2c_3c_4c_5.
\]
In the context of \emph{pentagramma mirificum} (and in the variables $\gamma_i = c_i - 1$) it was proved by Gauss, see \cite{Cox71}.
\end{example}

\subsection{Frieze patterns}
\label{fripat}

The moduli space of projective equivalence classes of  $n$-gons in the projective line is intimately related to Coxeter frieze patterns, the subject of a considerable current interest; see \cite{Cox71} for the original paper and \cite{Mor} for a modern comprehensive survey. 

In this section, we relate the previous material with  results from the theory of friezes (that was one of our motivations  in the first place).  We consider the case of closed $n$-gons with $n$ odd.

Let $V_i$ be a doubly infinite sequence of vectors in $\KK^2$ (as before, $\KK$ is either $\R$ or $\C$).
Organize their pairwise determinants in a table:

\begin{center}
\begin{tabular}{ccccccc}
$\ldots$ && $[V_0, V_1]$ && $[V_1, V_2]$ && $\ldots$\\
& $[V_{-1}, V_1]$ && $[V_0, V_2]$ && $[V_1, V_3]$ & \\ 
$\ldots$ && $[V_{-1}, V_2]$ && $[V_0, V_3]$ && $\ldots$\\
& $[V_{-2}, V_2]$ && $[V_{-1}, V_3]$ && $[V_0, V_4]$ &\\
$\ldots$ && $\ldots$ && $\ldots$ && $\ldots$
\end{tabular}
\end{center}

This table can be extended upwards by a row of zeros $[V_i, V_i] = 0$ and further by $[V_j, V_i]$ with $j > i$.
The reflection in the line $i=j$ inverts the signs.

If we assume the sequence to be antiperiodic: $V_{i+n} = - V_i$, then each diagonal of the table is antiperiodic as well,
the row $j = i+n$ consists of zeros, and the subsequent rows repeat the strip $i < j < i+n$ with a sign change.
Combining the translation $(i,j) \mapsto (i, j+n)$ with the reflection in the line $i=j$, we obtain an additional symmetry:
\[
[V_i, V_j] = -[V_j, V_i] = [V_j, V_{i+n}].
\]
This is a glide reflection with respect to the line $i+j = n$ by distance $n/2$.
As a fundamental domain of its action on the strip, one can choose the triangle $i \ge 0, j \le n$.

Let $p_i$ be an $n$-periodic sequence of points on the projective line $\PP^1$ with odd $n$.
Then one can lift points $p_i$ to vectors $V_i$, normalized in such a way that $[V_i, V_{i+1}] = 1$ for all $i$ and the sequence $V_i$ is antiperiodic.

\begin{remark} \label{liftev}
{\rm
If $n$ is even, a closed projective $n$-gon may not have such a lift. The subspace of $n$-gons that admit a lift satisfying the normalization condition $[V_i, V_{i+1}] = 1$ has codimension one (these polygons have zero alternating perimeter, see Section \ref{sec:AltPer}), and polygons in this subspace admit 1-parameter families of lifts; see \cite{MOST} for details. 
}
\end{remark}

The Ptolemy-Pl\"ucker relation
\[
[V_{i-1}, V_j] [V_i, V_{j+1}] - [V_i, V_j] [V_{i-1}, V_{j+1}] = [V_{i-1}, V_i] [V_j, V_{j+1}]
\]
implies that each elementary diamond
\[
 \begin{matrix}
 &N&
 \\
W&&E
 \\
&S&
\end{matrix}
\]
satisfies  the unimodular relation $EW - NS = 1$.

This unimodular relation, along with the boundary conditions (row of zeros, followed by a row of ones) is the definition of frieze patterns. The name was coined by H.S.M. Coxeter because of the glide reflection  symmetry. 

\begin{example}
\label{ex:fr5}
A general frieze pattern of width 2 looks like this:
$$
 \begin{array}{ccccccccccc}
\cdots&&1&& 1&&1&&\cdots
 \\[4pt]
&x_1&&\frac{x_2+1}{x_1}&&\frac{x_1+1}{x_2}&&x_2&&
 \\[4pt]
\cdots&&x_2&&\frac{x_1+x_2+1}{x_1x_2}&&x_1&&\cdots
 \\[4pt]
&1&&1&&1&&1&&
\end{array}
$$
(the rows of 0s are omitted); it is related to Gauss's ``Pentagramma Mirificum", his  study of self-dual spherical pentagons, published posthumously, see \cite{Ga}.
\end{example}

The unimodular relation makes it possible to reconstruct the table from its two lines: the line of $[V_i, V_{i+1}] = 1$ and the next line of $[V_{i-1}, V_{i+1}] = a_i$.
These two equations are equivalent to the second-order linear recursive relation
\begin{equation}
\label{dHill}
V_{i+1} = a_i V_{i} - V_{i-1}.
\end{equation}
By the above arguments, an $n$-periodic sequence $a_i$ defines an antiperiodic sequence $V_i$ if and only if
the $(n-1)$-st line of the table consists of ones.

Equation (\ref{dHill}) is a discrete analog of Hill's differential equation $V''(t)=p(t) V(t)$, with the sequence $a_i$ playing the role of the potential $p(t)$. Hence the theory of friezes is a discretization of the theory of Hill's equation whose solutions are anti-periodic, see \cite{OT2}.

The frieze contains information about the cross-ratio coordinates $c_i$ of the respective closed ideal polygon $(p_i)$.

\begin{lemma}
\label{atoc}
One has
$$
c_i = \frac{1}{a_ia_{i+1}}\ \ {\rm and}\ \ a_i = \frac{\sqrt{c_{[n]}}}{c_ic_{i+2} \cdots c_{i-1}} = \frac{c_{i+1} c_{i+3} \cdots c_{i-2}}{\sqrt{c_{[n]}}}.
$$
\end{lemma}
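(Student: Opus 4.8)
The plan is to pass from the projective points $p_i$ to their normalized lifts $V_i$ and to express everything through the $2\times 2$ determinants $[V_a,V_b]$, for which the defining data of the frieze ($[V_i,V_{i+1}]=1$ and $[V_{i-1},V_{i+1}]=a_i$) is directly available. First I would record the standard determinantal form of the cross-ratio: writing $V=(x,y)$ for a representative of the point $x/y$, one has $z_a-z_b=[V_a,V_b]/(y_ay_b)$, so the $y$-factors cancel and, with the paper's convention,
\[
c_i = [p_i,p_{i+1},p_{i-1},p_{i+2}] = \frac{[V_i,V_{i-1}]\,[V_{i+1},V_{i+2}]}{[V_i,V_{i+2}]\,[V_{i+1},V_{i-1}]}.
\]
This holds for any lift, and the antiperiodicity $V_{i+n}=-V_i$ guarantees that both $a_i=[V_{i-1},V_{i+1}]$ and the normalization $[V_i,V_{i+1}]=1$ descend to $n$-periodic data, so the identity is valid cyclically.

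Next I would substitute the normalizations into this formula. Using $[V_{i-1},V_i]=1$ (hence $[V_i,V_{i-1}]=-1$), $[V_{i+1},V_{i+2}]=1$, $[V_i,V_{i+2}]=a_{i+1}$, and $[V_{i+1},V_{i-1}]=-a_i$, the four factors collapse to
\[
c_i = \frac{(-1)(1)}{(a_{i+1})(-a_i)} = \frac{1}{a_i a_{i+1}},
\]
which is the first claimed identity. The only point requiring care is the bookkeeping of the antisymmetry $[V_a,V_b]=-[V_b,V_a]$ in each slot together with the index shift $a_{i+1}=[V_i,V_{i+2}]$.

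For the explicit formula for $a_i$ I would solve the cyclic system $a_ja_{j+1}=1/c_j$, using that $n$ is odd. Dividing consecutive relations gives $a_i/a_{i+2}=c_{i+1}/c_i$, and multiplying all $n$ relations yields $\left(\prod_{j=1}^{n} a_j\right)^2 = 1/c_{[n]}$, which fixes the overall scale up to the choice of $\sqrt{c_{[n]}}$. Concretely, I would set $P_i=c_ic_{i+2}\cdots c_{i-1}$ (the $\tfrac{n+1}{2}$ cross-ratios at even offset from $i$) and $Q_i=c_{i+1}c_{i+3}\cdots c_{i-2}$ (the $\tfrac{n-1}{2}$ at odd offset); because $n$ is odd, these two index sets partition $\{1,\dots,n\}$, so $P_iQ_i=c_{[n]}$, which gives at once the equality of the two displayed expressions $\sqrt{c_{[n]}}/P_i=Q_i/\sqrt{c_{[n]}}$. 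It then remains only to verify $a_ia_{i+1}=1/c_i$ for the candidate $a_i=\sqrt{c_{[n]}}/P_i$: since $P_i=c_iQ_{i+1}$ and $a_{i+1}=Q_{i+1}/\sqrt{c_{[n]}}$, one gets $a_ia_{i+1}=Q_{i+1}/P_i=1/c_i$, as required.

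I expect the main obstacle to be organizational rather than conceptual: getting the step-by-$2$ index ranges in $P_i$ and $Q_i$ exactly right (in particular that the even-offset run $i,i+2,\dots$ terminates at $i-1$ and the odd-offset run at $i-2$ precisely when $n$ is odd), and confirming that the residual sign freedom $a_i\mapsto -a_i$ is exactly the ambiguity in the antiperiodic lift $V_i\mapsto -V_i$ and in the choice of $\sqrt{c_{[n]}}$, so that the two sides of each identity match on the nose.
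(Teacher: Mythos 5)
Your proof is correct and follows essentially the same route as the paper: both derive $c_i = 1/(a_ia_{i+1})$ from the determinantal formula for the cross-ratio in terms of the normalized lifts $V_i$, and then solve the resulting cyclic system $a_ja_{j+1}=1/c_j$ for $a_i$ using that $n$ is odd (the paper leaves this second step entirely to the reader, which you carry out cleanly via the partition $P_iQ_i=c_{[n]}$ and the identity $P_i=c_iQ_{i+1}$). One small correction to your closing remark: the global flip $V_i\mapsto -V_i$ leaves every $a_i=[V_{i-1},V_{i+1}]$ unchanged, so the residual sign freedom in the formula comes solely from the choice of $\sqrt{c_{[n]}}$, not from the lift.
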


\begin{proof}
One has
\[
c_i = [p_i, p_{i+1}, p_{i-1}, p_{i+2}] = \frac{[V_i, V_{i-1}] [V_{i+1}, V_{i+2}]}{[V_i, V_{i+2}] [V_{i+1}, V_{i-1}]} = \frac{1}{a_ia_{i+1}},
\]
which is the first equality. To obtain the second one, solve for $a_i$, using the fact that $n$ is odd. 
\end{proof}

In terms of the entries of the first non-trivial row, the entries of a frieze pattern are given by the continuants
\begin{equation*} \label{continuant}
K_{i,j} =
\left|\begin{array}{cccccc}
a_i&1&0&0&\dots&0\\
1&a_{i+1}&1&0&\dots&0\\
\dots&\dots&\dots&\dots&\dots&\dots\\
\dots&\dots&\dots&1&a_{j-1}&1\\
\dots&\dots&\dots&0&1&a_{j}
\end{array}\right|.
\end{equation*}
In particular, $K_{i,n-3+i}=1$ and $K_{i,n-2+i}=0$ (the last two rows of the frieze).
These continuants are related to the ones introduced in Section \ref{sec:CoCont} as follows.

\begin{lemma}
\label{relcont}
One has
$$
D_{i,j-1}= \frac{1}{a_i\ldots a_j} K_{i,j}.
$$
\end{lemma}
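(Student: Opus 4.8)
The plan is to expand both continuants as sums of monomials and identify the two expansions term by term, using only Lemma~\ref{lem:CoCont}, Euler's rule, and the substitution $c_m = 1/(a_m a_{m+1})$ from Lemma~\ref{atoc}. First I would record that the two sides have matching sizes: $D_{i,j-1}$ is the continuant in the off-diagonal variables $c_i,\ldots,c_{j-1}$, whose tridiagonal matrix is $(j-i+1)\times(j-i+1)$, exactly the size of the matrix defining $K_{i,j}$ (diagonal entries $a_i,\ldots,a_j$). This makes the proposed identity dimensionally sensible before any computation, and it already foreshadows the bookkeeping that does all the work.

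For the main step, Euler's rule applied to $K_{i,j}$ gives
$$
K_{i,j} = \sum_{M} (-1)^{|M|} \prod_{k \notin V(M)} a_k,
$$
where $M$ ranges over the matchings (sets of pairwise non-adjacent edges) of the path on vertices $\{i,\ldots,j\}$, and $V(M)$ is the set of endpoints of the edges of $M$. Dividing by $a_i\cdots a_j$ turns each summand into $(-1)^{|M|}\prod_{k \in V(M)} a_k^{-1}$. A matched edge joins consecutive vertices $m,m+1$, and by Lemma~\ref{atoc} its contribution $1/(a_m a_{m+1})$ equals $c_m$; hence $\frac{1}{a_i\cdots a_j}K_{i,j} = \sum_M (-1)^{|M|} c_{E(M)}$, where $E(M)\subset\{i,\ldots,j-1\}$ collects the (left endpoints of the) edges of $M$. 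I would then observe that $M \mapsto E(M)$ is a bijection between matchings of the path and sparse subsets of $[i,j-1]$, with $|M|=|E(M)|$; comparing with the expansion $D_{i,j-1} = \sum_{I\subset[i,j-1]\text{ sparse}} (-1)^{|I|} c_I$ supplied by Lemma~\ref{lem:CoCont} finishes the argument.

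An equally clean alternative is induction on $j$ via the three-term recurrences: $D_{i,j-1} = D_{i,j-2} - c_{j-1} D_{i,j-3}$ from \eqref{eqn:DRecurrence}, and $K_{i,j} = a_j K_{i,j-1} - K_{i,j-2}$ from expanding $K_{i,j}$ along its last row. Substituting the inductive hypotheses $D_{i,j-2} = K_{i,j-1}/(a_i\cdots a_{j-1})$ and $D_{i,j-3} = K_{i,j-2}/(a_i\cdots a_{j-2})$ together with $c_{j-1} = 1/(a_{j-1}a_j)$, both terms acquire the common denominator $a_i\cdots a_j$ and collapse to $(a_j K_{i,j-1} - K_{i,j-2})/(a_i\cdots a_j) = K_{i,j}/(a_i\cdots a_j)$; the base cases $j=i$ and $j=i+1$ are immediate from $K_{i,i}=a_i$ and $K_{i,i+1}=a_ia_{i+1}-1$. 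Neither route conceals a genuine difficulty, so the only thing to watch—and the step I expect to be the true source of any error—is the index bookkeeping: keeping straight that the normalizing factor is $a_i\cdots a_j$ (running one step past the last $c$-index $j-1$), and that the off-diagonal labels of $D_{i,j-1}$ stop at $j-1$, which is precisely what makes the edge-to-index bijection land on $[i,j-1]$ rather than on $[i,j]$.
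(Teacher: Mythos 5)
Your proof is correct, but it takes a genuinely different route from the paper's. The paper's proof is a one-line determinant manipulation: substitute $c_m = 1/(a_m a_{m+1})$ into the tridiagonal matrix defining $D_{i,j-1}$, then multiply the $k$-th row and $k$-th column by $\sqrt{a_k}$ for each $k=i,\ldots,j$; this turns every diagonal entry $1$ into $a_k$ and every off-diagonal entry $1/\sqrt{a_m a_{m+1}}$ into $1$, so the rescaled determinant is exactly $K_{i,j}$ while the scaling contributes the factor $a_i\cdots a_j$. Your first route instead expands both continuants by Euler's rule and matches monomials through the bijection between matchings of the path and sparse subsets of $[i,j-1]$; your second route is an induction on the two three-term recurrences. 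All three arguments are sound (your size count, the edge-to-left-endpoint bijection, and the base cases $K_{i,i}=a_i$, $K_{i,i+1}=a_ia_{i+1}-1$ all check out). What the paper's scaling argument buys is brevity and the structural observation that the two matrices are related by a diagonal congruence; what your expansion buys is that it makes the underlying combinatorial identity (sparse subsets versus matchings) explicit, and your inductive variant is the most self-contained, needing neither Euler's rule nor any determinant manipulation beyond the recurrences.
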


\begin{proof}
Substitute $c_i=1/a_ia_{i+1}$ to the formula for $D_{i,j-1}$ and multiply the $i$-th row and the $i$-th column by $\sqrt{a_i}$.
\end{proof}

This is consistent with Lemma \ref{lem:DRelations}: the relation $D_{i,n-3+i}=0$ for closed polygons is equivalent to $K_{i,n-2+i}=0$.

Likewise, consider the discrete Hill equation (\ref{dHill}) with $n$-periodic coefficients $a_i$. Starting with the initial conditions $V_0=(1,0)^T, V_1=(0,1)^T$, one constructs a twisted polygon in $\KK^2$ that projects to a twisted $n$-gon in $\PP^1$:
$$
\Big(
\begin{array}{cccccc}
1, & 0, & -1, & -a_2, & -a_2a_3+1, & -a_2a_3a_4 + a_4+a_2,\\
0, & 1, & a_1, & a_1 a_2 -1, & a_1a_2a_3-a_3-a_1, & a_1a_2a_3a_4 - a_3 a_4 - a_1 a_4 - a_1 a_2 + 1,
\end{array}
$$
$$
\begin{array}{ccc}
 -a_2a_3a_4a_5+a_4a_5 + a_2 a_5 + a_2 a_3 -1, & \ldots\\ 
 a_1a_2a_3a_4a_5 - a_3a_4a_5-a_1a_4a_5-a_1a_2a_5-a_1a_2a_3 + a_5+a_3+a_1, & \ldots
\end{array}
\Big)
$$
The entries are the continuants $K_{1,m}$ and $-K_{2,m}$.

Since each next vector is obtained from the previous two by the recurrence (\ref{dHill}), the monodromy matrix of this twisted polygon
is 
$$
M'_n=
\begin{pmatrix}
0 & -1\\ 1 & a_1
\end{pmatrix}
\begin{pmatrix}
0 & -1\\ 1 & a_2
\end{pmatrix}
\ldots
\begin{pmatrix}
0 & -1\\ 1 & a_n
\end{pmatrix}.
$$

\begin{lemma}
\label{monoeq}
Up to scaling and conjugation, the matrix $M_n$ from Lemma \ref{lem:MonMat2} and the matrix $M'_n$ coincide.
\end{lemma}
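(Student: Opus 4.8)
The plan is to avoid computing with the matrices directly and instead argue that $M_n$ and $M'_n$ are two matrix representatives of \emph{the same} projective monodromy. The point is that the monodromy of a twisted $n$-gon, viewed as an element of $\PGL(2)$, is determined up to conjugacy by the cross-ratio sequence $(c_i)$ alone: by the discussion in Section~\ref{sec:ClosedTwisted}, a periodic sequence $(c_i)$ together with a choice of initial triple of vertices reconstructs the twisted polygon uniquely, and two reconstructions differing only in the initial triple are carried into one another by the unique element of $\PGL(2)$ matching the two triples; by Lemma~\ref{lem:MonodromyConj} this element conjugates the monodromy. Hence it suffices to exhibit the twisted polygons underlying $M_n$ and $M'_n$ as having one and the same cross-ratio sequence.

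Concretely I would proceed as follows. First, $M_n$ is by Lemma~\ref{lem:MonMat2} the monodromy of the twisted polygon whose cross-ratio coordinates are exactly the given $c_i$. Second, $M'_n$ is the monodromy of the twisted polygon obtained by projecting the solution $(V_i)$ of the discrete Hill equation \eqref{dHill} to $\PP^1$; by Lemma~\ref{atoc} the cross-ratio coordinates of this polygon are $c_i = 1/(a_ia_{i+1})$, i.e.\ precisely the same sequence $(c_i)$, with the same indexing, since both are computed via $c_i = [p_i,p_{i+1},p_{i-1},p_{i+2}]$. Third, the two polygons therefore differ by an element $\Psi \in \PGL(2)$; writing $\Phi$ for the common monodromy class, Lemma~\ref{lem:MonodromyConj} tells us that the monodromy of $\Psi(\pP)$ is $\Psi\Phi\Psi^{-1}$. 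Passing to matrix representatives, which are defined only up to a scalar, yields that $M_n$ and $M'_n$ agree up to scaling and conjugation. As a consistency check one can note that $\det M_n = c_{[n]} = (\prod_i a_i)^{-2}$ while $\det M'_n = 1$, so the scalar $\kappa$ with $M_n = \kappa\,\Psi M'_n \Psi^{-1}$ satisfies $\kappa^2 = c_{[n]}$, which is compatible.

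The only real subtlety, and the step I would be most careful about, is matching the indexing and normalization conventions of the two constructions: $M_n$ is computed in the projective basis $p_0=1,\ p_1=\infty,\ p_2=0$, whereas the Hill-equation polygon starts from $V_0=(1,0)^{\top},\ V_1=(0,1)^{\top}$, giving a different initial triple, and one must check that the half-integer glide-reflection shift built into the frieze does not introduce an index offset between the two $c$-sequences. Since Lemma~\ref{atoc} already records the cross-ratios of the Hill polygon in the same indexing $c_i=[p_i,p_{i+1},p_{i-1},p_{i+2}]$, this offset does not occur and the argument closes. A direct computational alternative is available, namely to expand $M_n$ via Lemma~\ref{lem:MonMat1}, rewrite its continuant entries $D_{i,j}$ in terms of the frieze continuants $K_{i,j}$ using Lemma~\ref{relcont}, and compare with the explicit entries of $M'_n$; but the per-entry scaling factors $1/(a_i\cdots a_j)$ would then have to be reorganized into a single diagonal conjugation plus an overall scalar, which is exactly the bookkeeping the conceptual argument lets us skip.
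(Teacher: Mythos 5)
Your proof is correct, but it takes a genuinely different route from the paper's. The paper proves Lemma \ref{monoeq} by a short direct computation: writing $c_{i-1}=1/(a_{i-1}a_i)$, it factors each matrix $\bigl(\begin{smallmatrix} 0 & c_{i-1}\\ -1 & 1\end{smallmatrix}\bigr)$ occurring in $M_n$ as $\bigl(\begin{smallmatrix} 1/a_{i-1} & 0\\ 0 & -1\end{smallmatrix}\bigr)\bigl(\begin{smallmatrix} 0 & 1/a_i\\ 1 & -1\end{smallmatrix}\bigr)$, regroups the $2n$ factors cyclically around the product (which is a conjugation), and observes that each regrouped pair equals $a_i^{-1}\bigl(\begin{smallmatrix} 0 & -1\\ 1 & a_i\end{smallmatrix}\bigr)$; this exhibits the conjugating matrix and the scalar $(a_1\cdots a_n)^{-1}$ explicitly. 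You instead interpret both matrices as monodromy representatives: $M_n$ for the polygon reconstructed from $(c_i)$ in the normalized basis (Lemma \ref{lem:MonMat2}), and $M'_n$ for the Hill polygon, whose cross-ratios are again $c_i=1/(a_ia_{i+1})$; since the cross-ratios determine a twisted polygon up to the diagonal $\PGL(2)$-action, and that action conjugates monodromies (Lemma \ref{lem:MonodromyConj}), the two matrices agree up to conjugation and the scalar ambiguity of representatives. Your argument is computation-free and explains \emph{why} the lemma holds (both matrices present the same point of $\cT$), and your two points of care are indeed fine: Lemma \ref{atoc} is formally stated for closed odd-gons with antiperiodic unimodular lifts, but its first equality uses only the recurrence \eqref{dHill} together with $[V_i,V_{i+1}]=1$ and $[V_{i-1},V_{i+1}]=a_i$, all of which hold for the twisted Hill polygon; and the indexing of the two $c$-sequences matches, consistently with your determinant check $\kappa^2=c_{[n]}$ and the paper's explicit value $\kappa=(a_1\cdots a_n)^{-1}$. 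What your route buys less: it produces no explicit conjugator, and it outsources its real content to the assertion of Section \ref{sec:ClosedTwisted} that the $c_i$ are faithful coordinates on $\cT$ (stated there without detailed proof), whereas the paper's two-line computation is self-contained and independent of that fact.
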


\begin{proof}
Since $c_{i-1}=1/a_{i-1} a_i$, we decompose
$$
\begin{pmatrix}
0 & c_{i-1}\\ -1 & 1
\end{pmatrix}
=
\begin{pmatrix}
\frac{1}{a_{i-1}} & 0\\ 0 & -1
\end{pmatrix}
\begin{pmatrix}
0 & \frac{1}{a_{i}}\\ 1 & -1
\end{pmatrix},
$$
regroup, and multiply
$$
\begin{pmatrix}
0 & \frac{1}{a_{i}}\\ 1 & -1
\end{pmatrix}
\begin{pmatrix}
\frac{1}{a_{i}} & 0\\ 0 & -1
\end{pmatrix}
=
\begin{pmatrix}
0 & -\frac{1}{a_{i}}\\ \frac{1}{a_{i}} & 1
\end{pmatrix}
\sim
\begin{pmatrix}
0 & -1\\ 1 & a_i
\end{pmatrix}.
$$
This implies the result.
\end{proof}

%

\section{Integrability on the moduli space of twisted polygons}
\label{sec:IntTwist}

\subsection{Main Theorem 1}
\label{sec:mainthm}

The main result of this section is that the relation $\T$ on the moduli space $\cT$ of twisted ideal polygons is Liouville  integrable. We shall formulate this theorem here; its proof occupies the rest of the section.

Consider the following Poisson structure on $(\KK^*)^n$:
\begin{equation}
\label{eqn:PoissonAlpha}
\begin{split}
\{c_i, c_{i+1}\}_\alpha &= c_ic_{i+1}(c_i + c_{i+1} - \alpha).\\
\{c_i, c_{i+2}\}_\alpha &= c_ic_{i+1}c_{i+2}.
\end{split}
\end{equation}
The values that are not mentioned explicitly are either zero or follow by the skew-symmetry from those mentioned.
For example, $\{c_{i}, c_{i-2}\}_\alpha = - c_{i-2}c_{i-1}c_i$.

Recall that
\[
F_k = \sum_{|I|=k} c_I, \quad I \text{ cyclically sparse}.
\]
Introduce two  functions, one of which is defined only for $n$ even:
\begin{gather*}
E_\alpha = \frac{1}{c_{[n]}} \left( \sum_{k=0}^{\lfloor \frac{n}{2} \rfloor} (-1)^k F_k \alpha^{-k} \right)^2,\\
\frac{c_{even}}{c_{odd}} = \frac{c_2 c_4 \cdots c_n}{c_1 c_3 \cdots c_{n-1}} \text{ for }n\text{ even}.
\end{gather*}
For an even $n$ we have $F_{\frac{n}{2}} = c_{even} + c_{odd}$. The geometric meaning of the
quotient ${c_{even}}/{c_{odd}}$ (the alternating perimeter) is described in Section \ref{sec:AltPer}.

\begin{mainthm}
\label{thm:TwistedModuli}
The relation $\T$ on the moduli space $\cT$ of twisted ideal polygons is completely integrable in the following sense:
\begin{enumerate}
\item
The Poisson structure \eqref{eqn:PoissonAlpha} is invariant under $\T$.
The functions $c_{[n]}$, $F_1, \ldots, F_{\lfloor \frac{n}{2} \rfloor}$ are independent and invariant under $\T$.
\item
For $n$ odd, the Poisson structure \eqref{eqn:PoissonAlpha} has corank $1$, and the function $E_\alpha$ is its Casimir.
The integrals $\frac{F_k^2}{c_{[n]}}$, $k = 1, \ldots {\frac{n-1}2}$ pairwise commute and form, together with $E_\alpha$, an independent system of functions.
\item
For $n$ even, the Poisson structure \eqref{eqn:PoissonAlpha} has corank $2$, with Casimirs $E_\alpha$ and $\frac{c_{even}}{c_{odd}}$.
The integrals $\frac{F_k^2}{c_{[n]}}$, $k = 1, \ldots {\frac{n-2}2}$ pairwise commute and form, together with $E_\alpha$ and $\frac{c_{even}}{c_{odd}}$, an independent system of functions.
\item
The space of Hamiltonian vector fields of the integrals does not depend on the choice of the Poisson structure in the family (\ref{eqn:PoissonAlpha}).
\end{enumerate}
\end{mainthm}

This complete integrability theorem has strong dynamical consequences implied by the Arnold-Liouville theorem, see, e.g., \cite{Arn}.

Namely, the moduli space of twisted ideal polygons $\cT$ is foliated by the level surfaces of the Casimir functions; this symplectic foliation has codimension 1 if $n$ is odd, and codimension 2 if $n$ is even.  The symplectic leaves are foliated by the level surfaces of the remaining integrals, the leaves are Lagrangian submanifolds of the symplectic leaves and they are invariant under $\T$. If $n=2k+1$ then these Lagrangian leaves have dimension $k$, and if $n=2k$ then they have dimension $k-1$.

The commuting Hamiltonian vector fields of the integrals give a locally free action of the Abelian Lie algebra $\R^k$, if $n=2k+1$, and $\R^{k-1}$, if $n=2k$. This gives each leaf an affine structure, and  the map $\T$ is a parallel translation in this affine structure. In particular, one has a Poncelet-style result: if a point of a Lagrangian leaf is periodic, then all points of this leaf are periodic with the same period. 

\subsection{Conjugacy of the Lax matrices}
\label{conjLax}

The following theorem explains the name Lax matrix for $A_\lambda(\pP)$.

\begin{theorem}
\label{thm:LaxMatrix}
If $\pP \T \pQ$, then for all $\lambda \ne \alpha^{-1}$ the matrices $A_\lambda(\pP)$ and $A_\lambda(\pQ)$ are conjugate.
Namely, we have
\[
A_\lambda(\pQ) = A_\mu^{-1}(p_1, q_1) A_\lambda(\pP) A_\mu(p_1, q_1) \  {\rm with} \ \mu = \frac{1-\alpha}{1-\alpha\lambda},
\]
provided that in \eqref{eqn:LaxMatrix} the same representative $M$ of the monodromy is chosen for $\pP$ and for $\pQ$.
\end{theorem}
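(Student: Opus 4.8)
The plan is to reduce the global conjugacy to a single \emph{edge identity} and then telescope around the polygon. Throughout I write $A_\lambda(a,b)$ for the matrices of Section~\ref{sec:Loxodromics}, and I use that these matrices are Möbius-covariant as an \emph{exact} equality: if $N\in\GL(2)$ represents $\Psi\in\PGL(2)$, then $A_\lambda(\Psi a,\Psi b)=N A_\lambda(a,b)N^{-1}$. Indeed, $A_\lambda(a,b)$ is by definition the linear map with eigenvalue $1$ on the line $a$ and eigenvalue $\lambda$ on the line $b$, and such a map is uniquely determined by these eigenlines and eigenvalues; since $N A_\lambda(a,b)N^{-1}$ has eigenvalues $1,\lambda$ on the lines $\Psi a,\Psi b$, it must coincide with $A_\lambda(\Psi a,\Psi b)$ (this refines the projective covariance already used for the Lax matrix). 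I would also first translate the hypothesis: by Lemma~\ref{loxodr}, $\pP\T\pQ$ is equivalent to $q_{i+1}=L_{\alpha^{-1}}(p_i,p_{i+1})(q_i)$ for all $i$.

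The heart of the proof is the edge identity
\[
A_\lambda(q_i,q_{i+1})\,A_{\mu_0}(p_i,q_i)=A_{\mu_0}(p_{i+1},q_{i+1})\,A_\lambda(p_i,p_{i+1}),\qquad \mu_0=\frac{1-\alpha\lambda}{1-\alpha},
\]
which relates the four points of the quadrilateral $(p_i,p_{i+1},q_i,q_{i+1})$ of cross-ratio $\alpha$, the ``diagonal'' transport being loxodromic along $p_iq_i$ with parameter $\mu_0$. I would prove it by using covariance to move $p_i,p_{i+1}$ to $0,\infty$; in that frame one has $q_{i+1}=\alpha^{-1}q_i$, and the identity becomes a direct $2\times2$ computation with the explicit matrices of Section~\ref{sec:Loxodromics}. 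Requiring the lower-left entry of the product to vanish pins down $\mu_0$ exactly, and one checks the remaining entries match with no projective scalar left over. This edge identity, and getting $\mu_0$ exactly right, is the main obstacle; the rest is bookkeeping.

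Next I telescope. Starting from $\big[\prod_{i=n}^{1}A_\lambda(q_i,q_{i+1})\big]\,A_{\mu_0}(p_1,q_1)$ and applying the edge identity repeatedly, the diagonal factor is pushed through from the right to the left, giving
\[
\Big[\prod_{i=n}^{1}A_\lambda(q_i,q_{i+1})\Big]A_{\mu_0}(p_1,q_1)=A_{\mu_0}(p_{n+1},q_{n+1})\Big[\prod_{i=n}^{1}A_\lambda(p_i,p_{i+1})\Big].
\]
Since $\pP$ and $\pQ$ share the monodromy $\Phi$, one has $p_{n+1}=\Phi(p_1)$ and $q_{n+1}=\Phi(q_1)$, so covariance yields $A_{\mu_0}(p_{n+1},q_{n+1})=M A_{\mu_0}(p_1,q_1)M^{-1}$ with the common representative $M$. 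Left-multiplying by $M^{-1}$ and recognizing the Lax matrices $A_\lambda(\pP)=M^{-1}\prod A_\lambda(p_i,p_{i+1})$ and $A_\lambda(\pQ)=M^{-1}\prod A_\lambda(q_i,q_{i+1})$ from \eqref{eqn:LaxMatrix}, I obtain $A_\lambda(\pQ)\,A_{\mu_0}(p_1,q_1)=A_{\mu_0}(p_1,q_1)\,A_\lambda(\pP)$.

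Finally I put this in the stated form. By Lemma~\ref{loxoprop}, $A_{\mu_0}(p_1,q_1)=A_{\mu_0^{-1}}^{-1}(p_1,q_1)=A_\mu^{-1}(p_1,q_1)$ with $\mu=\mu_0^{-1}=\frac{1-\alpha}{1-\alpha\lambda}$, so the previous display reads $A_\lambda(\pQ)=A_\mu^{-1}(p_1,q_1)\,A_\lambda(\pP)\,A_\mu(p_1,q_1)$, exactly as claimed. The hypothesis $\lambda\neq\alpha^{-1}$ is precisely what guarantees $\mu_0\neq0$, so that $A_{\mu_0}(p_1,q_1)$ is invertible and the conjugation makes sense.
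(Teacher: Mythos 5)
Your proof is correct, and at the top level it is the same as the paper's: the paper also reduces Theorem \ref{thm:LaxMatrix} to a one-edge conjugation identity (Lemma \ref{lem:LaxConjug}), telescopes it through the product defining $A_\lambda(\pQ)$, and absorbs the boundary factor $A_\mu^{-1}(p_{n+1},q_{n+1}) = M A_\mu^{-1}(p_1,q_1)M^{-1}$ using the common monodromy representative; your edge identity is precisely Lemma \ref{lem:LaxConjug} rewritten via Lemma \ref{loxoprop}, with $\mu_0=\mu^{-1}$. The genuine difference is how the edge identity gets proved. The paper remarks that it ``can be proved by a direct computation'' but actually derives it in the Appendix from consistent labelings of loxodromic transformations along the edges of an ideal tetrahedron (Theorem \ref{thm:ConsistLabel}, Lemma \ref{lem:ConsistA}); that route is longer but explains conceptually where the parameter pairing $\lambda \leftrightarrow \mu$ comes from and plugs directly into the 3D-consistency and Bianchi-permutability picture. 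You carry out the direct computation instead, and you organize it well: the exact (not merely projective) conjugation-covariance of $A_\lambda(a,b)$, which you correctly justify by uniqueness of a matrix with prescribed eigenlines and eigenvalues $1$ and $\lambda$, legitimately reduces the identity to the frame $p_i=0$, $p_{i+1}=\infty$, $q_{i+1}=\alpha^{-1}q_i$, where it is a four-entry check; the vanishing of the lower-left entry does force $\mu_0=\frac{1-\alpha\lambda}{1-\alpha}$, and the remaining entries then agree with no scalar left over. Note also that the exact covariance you prove is used silently by the paper as well (in the step $M^{-1}A_\mu^{-1}(p_{n+1},q_{n+1})M = A_\mu^{-1}(p_1,q_1)$), so making it explicit is a small improvement rather than an extra assumption. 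In sum: same skeleton, with a more elementary and self-contained proof of the key lemma, at the cost of the geometric insight the tetrahedron construction provides.
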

\begin{proof}
The proof relies on Lemma \ref{lem:LaxConjug} below.
Substitute the formula from this lemma into the definition of $A_\lambda(\pQ)$.
After some cancellations, we get
\begin{multline*}
A_\lambda(\pQ) = M^{-1} A_\mu^{-1}(p_{n+1}, q_{n+1}) A_\lambda(p_n, p_{n+1}) \cdots A_\lambda(p_1, p_2) A_\mu(p_1, q_1)\\
= M^{-1} A_\mu^{-1}(p_{n+1}, q_{n+1}) M A_\lambda(\pP) A_\mu(p_1, q_1) = A_\mu^{-1}(p_1, q_1) A_\lambda(\pP) A_\mu(p_1, q_1),
\end{multline*}
as claimed.
\end{proof}

\begin{lemma}
\label{lem:LaxConjug}
Let $[p_i, p_{i+1}, q_i, q_{i+1}] = \alpha \in \KK^* \setminus \{1\}$ and let $\lambda \in \KK^* \setminus \{\alpha^{-1}\}$.
Put $\mu = \frac{1-\alpha}{1-\alpha\lambda}$. Then we have
\[
A_\lambda(q_i, q_{i+1}) = A_\mu^{-1}(p_{i+1}, q_{i+1}) A_\lambda(p_i, p_{i+1}) A_\mu(p_i, q_i).
\]
\end{lemma}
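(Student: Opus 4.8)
The statement is an identity between products of explicit $2\times 2$ matrices $A_\lambda(p,q)$, so in principle it can be checked by brute-force computation. The plan is to reduce the verification to a normalized configuration using the Möbius-invariance built into the setup, and then to carry out the remaining check on four points in standard position. First I would observe that both sides of the claimed identity are, up to the scalar determinants, matrices representing loxodromic transformations, and that the statement is equivariant under simultaneous conjugation: if $\Psi \in \PGL(2)$, then $A_\lambda(\Psi(p),\Psi(q)) = N A_\lambda(p,q) N^{-1}$ up to scaling, where $N$ represents $\Psi$, by the functoriality already used in Section \ref{spaces}. Moreover the cross-ratio $[p_i,p_{i+1},q_i,q_{i+1}]$ is a Möbius invariant. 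Hence I may apply a single Möbius transformation to all four points $p_i, p_{i+1}, q_i, q_{i+1}$ and it suffices to prove the identity for any one convenient representative of each $\PGL(2)$-orbit with the prescribed cross-ratio $\alpha$.

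Concretely, I would normalize by sending $p_i \mapsto 0$, $p_{i+1}\mapsto \infty$, and $q_i \mapsto 1$. The constraint $[p_i,p_{i+1},q_i,q_{i+1}]=\alpha$ then pins down $q_{i+1}$ to a definite value; with the paper's convention $[z_1,z_2,z_3,z_4]=\frac{(z_1-z_3)(z_2-z_4)}{(z_1-z_4)(z_2-z_3)}$, computing $[0,\infty,1,q_{i+1}]$ yields a linear equation determining $q_{i+1}$ as an explicit function of $\alpha$. In this chart the three matrices $A_\lambda(0,\infty)$, $A_\mu(0,1)$, and $A_\mu(\infty,q_{i+1})$ all take the simple forms listed in the display after Lemma \ref{loxodr} (the cases $A_\lambda(p,\infty)$, $A_\lambda(\infty,q)$, and the generic $A_\lambda(p,q)$), so the right-hand side $A_\mu^{-1}(p_{i+1},q_{i+1})\,A_\lambda(p_i,p_{i+1})\,A_\mu(p_i,q_i)$ becomes a product of three nearly-triangular matrices that I can multiply out by hand. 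The claim is then that this product equals $A_\lambda(q_i,q_{i+1}) = A_\lambda(1,q_{i+1})$, again given by the generic formula.

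The computation has two things to verify: that the off-diagonal and diagonal entries match after substituting $\mu=\frac{1-\alpha}{1-\alpha\lambda}$, and that the overall scalar (determinant) bookkeeping is consistent — recall $\det A_\lambda(p,q)=\lambda$ and $\det A_\mu = \mu$, so the determinants of the two sides are $\lambda$ on the left and $\mu^{-1}\cdot\lambda\cdot\mu = \lambda$ on the right, which already confirms the scaling is correct and the identity can hold on the nose rather than merely projectively. The genuinely substantive step is algebraic: after normalization one is left with rational expressions in $\lambda$, $\alpha$ (hence $\mu$), and the coordinate of $q_{i+1}$, and the matching of entries is exactly where the precise value $\mu=\frac{1-\alpha}{1-\alpha\lambda}$ is forced. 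I expect the main obstacle to be organizing this so that the formula for $\mu$ emerges naturally rather than appearing as a deus ex machina: the cleanest route is to treat $\mu$ as an unknown, demand that the product on the right fix $q_{i+1}$ with the correct loxodromic parameter $\lambda$ along the axis $q_i q_{i+1}$, and solve for $\mu$, thereby both deriving and verifying the stated value in one stroke. The identity $L_\lambda(q_i,q_{i+1})(z)=A_\mu^{-1}(p_{i+1},q_{i+1})L_\lambda(p_i,p_{i+1})A_\mu(p_i,q_i)(z)$ can also be read geometrically as saying that conjugating the loxodromic motion along $p_ip_{i+1}$ by the loxodromic map $A_\mu(p_i,q_i)$ carries its axis to $q_iq_{i+1}$ while adjusting the parameter, which is consistent with Lemma \ref{loxoprop} and gives a useful sanity check on the final answer.
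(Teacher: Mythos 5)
Your overall strategy --- M\"obius normalization followed by an explicit check on a standard configuration --- is viable, and it is precisely the ``direct computation'' that the paper mentions but does not print; the paper's own proof instead derives the lemma from the tetrahedron machinery of the Appendix (Lemma \ref{lem:ConsistA}: for a consistent edge labeling one has the exact matrix identity $A_{ij}A_{ik} = A_{jl}A_{kl}$, and the relation $\mu = \frac{1-\alpha}{1-\alpha\lambda}$ comes out of the consistency equations of Theorem \ref{thm:ConsistLabel}). Your normalization is correctly set up: with $p_i = 0$, $p_{i+1} = \infty$, $q_i = 1$, the cross-ratio condition forces $q_{i+1} = 1/\alpha$, all four matrices take the elementary forms from Section \ref{sec:Loxodromics}, and the triple product does come out equal, entry by entry, to $A_\lambda(1, 1/\alpha)$ once $\mu = \frac{1-\alpha}{1-\alpha\lambda}$ is substituted (the $(1,1)$ entry reduces via the factorization $\lambda(1-\alpha)^2 - \alpha(1-\lambda)^2 = (\lambda-\alpha)(1-\alpha\lambda)$).

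There is, however, one genuine gap, and it sits exactly at the point that gives the lemma its content. The paper stresses that this is an identity of matrices, not merely of M\"obius transformations. Your reduction step invokes equivariance only ``up to scaling'': $A_\lambda(\Psi(p), \Psi(q)) = N A_\lambda(p,q) N^{-1}$ up to a scalar. If that is all you know, then transporting the normalized identity back to a general configuration yields only $A_\lambda(q_i, q_{i+1}) = c \cdot A_\mu^{-1}(p_{i+1}, q_{i+1}) A_\lambda(p_i, p_{i+1}) A_\mu(p_i, q_i)$ for an unknown scalar $c$, and your determinant bookkeeping (both sides have determinant $\lambda$) forces only $c^2 = 1$, i.e.\ $c = \pm 1$; it does not, as you claim, ``confirm the identity holds on the nose.'' The fix is to observe that the equivariance is in fact exact: $A_\lambda(p,q)$ is the unique element of $\GL(2)$ acting as the identity on the line in $\KK^2$ over $p$ and as multiplication by $\lambda$ on the line over $q$ (this characterization holds uniformly, including the charts with $\infty$), and it is manifestly preserved by conjugation, so $A_\lambda(\Psi(p), \Psi(q)) = N A_\lambda(p,q) N^{-1}$ holds exactly for every representative $N$ of $\Psi$. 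With this stated, your reduction is legitimate and the normalized computation finishes the proof. (Over $\C$ one could instead dispose of the sign by connectivity of $\PGL(2,\C)$, but exact equivariance is cleaner and covers the real case too.) A minor further caution: your closing ``geometric reading'' of the identity as a conjugation is inaccurate, since the outer factors $A_\mu^{-1}(p_{i+1}, q_{i+1})$ and $A_\mu(p_i, q_i)$ are different matrices; the right-hand side is a gauge-type (discrete zero-curvature) product, not a conjugation of $L_\lambda(p_i, p_{i+1})$.
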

This is slightly stronger than the corresponding identity for loxodromic transformations $L_\lambda$ and $L_\mu$
because the latter implies the identity for matrices only up to scaling.

Lemma \ref{lem:LaxConjug} can be proved by a direct computation.
In the Appendix we give a less direct but more conceptual proof,
based on special collections of loxodromic transformations along the edges of an ideal tetrahedron.

In the case of closed polygons, all of the above results are applicable, and some of the proofs are simplified.

Theorem \ref{thm:LaxMatrix} implies the following proposition.

\begin{proposition} \label{true2-2}
If $\pP \T \pQ$, and $\pP$ is $\alpha$-related exactly to two different ideal polygons,
then $\pQ$ is also $\alpha$-related to exactly two different (possibly degenerate) ideal polygons.
That is, a $2-2$ dynamics cannot end with a polygon $\alpha$-related to one or infinitely many other polygons, but can only end with a degenerate polygon.
\end{proposition}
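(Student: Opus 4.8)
The plan is to exploit the conjugation structure furnished by Theorem~\ref{thm:LaxMatrix}. Recall from Lemma~\ref{lem:FixedPoint} that the polygons $\pQ$ with $\pP \T \pQ$ correspond bijectively to the fixed points of the projective transformation $L_{\alpha^{-1}}(\pP)$ (subject to the resulting vertices forming a non-degenerate polygon). A M\"obius transformation of $\PP^1$ has exactly two fixed points unless it is parabolic (one fixed point) or the identity (infinitely many). Thus the statement ``$\pP$ is $\alpha$-related to exactly two polygons'' should translate into the statement that $L_{\alpha^{-1}}(\pP)$, equivalently the Lax matrix $A_{\alpha^{-1}}(\pP)$, has two distinct eigenvalues, i.e.\ is a loxodromic (diagonalizable, non-scalar) element. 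My goal is to show this eigenvalue structure is preserved when passing from $\pP$ to $\pQ$.

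The key step is to apply Theorem~\ref{thm:LaxMatrix} at the parameter $\lambda = \alpha^{-1}$. First I would note that the theorem gives conjugacy of $A_\lambda(\pP)$ and $A_\lambda(\pQ)$ for all $\lambda \neq \alpha^{-1}$, and that both are polynomial (hence continuous, indeed analytic) in $\lambda$. Since conjugacy of the matrices holds on the punctured neighborhood of $\lambda = \alpha^{-1}$, and the spectrum (the pair of eigenvalues, equivalently the normalized trace $\Tr^2 A_\lambda / \det A_\lambda$) depends continuously on $\lambda$, the normalized traces of $A_\lambda(\pP)$ and $A_\lambda(\pQ)$ agree for $\lambda \neq \alpha^{-1}$ and therefore agree in the limit $\lambda \to \alpha^{-1}$ as well. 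Consequently $A_{\alpha^{-1}}(\pP)$ and $A_{\alpha^{-1}}(\pQ)$ have the same normalized trace, so $L_{\alpha^{-1}}(\pQ)$ is of the same type (loxodromic, parabolic, or scalar) as $L_{\alpha^{-1}}(\pP)$. Applying Lemma~\ref{cyclicshift} and the symmetry of $\T$, the number of polygons $\alpha$-related to $\pQ$ is governed by the fixed points of $L_{\alpha^{-1}}(\pQ)$, which number exactly two precisely when $L_{\alpha^{-1}}(\pQ)$ is loxodromic. Since $\pP$ being $\alpha$-related to exactly two polygons forces $L_{\alpha^{-1}}(\pP)$ to be loxodromic (two distinct fixed points and neither a parabolic nor identity transformation), $L_{\alpha^{-1}}(\pQ)$ is loxodromic too, giving $\pQ$ exactly two fixed points and hence exactly two $\alpha$-related polygons.

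I anticipate the main obstacle to be the behavior precisely at $\lambda = \alpha^{-1}$, the one value excluded from Theorem~\ref{thm:LaxMatrix}. The conjugacy itself breaks down there (the conjugating factor $\mu = \frac{1-\alpha}{1-\alpha\lambda}$ blows up), so one cannot conjugate directly; the argument must instead pass through a continuity or limiting statement for a conjugacy invariant. The clean way to handle this is to observe that the normalized trace is a rational (in fact polynomial-over-polynomial) function of $\lambda$ with coefficients determined by $\pP$, that it coincides for $\pP$ and $\pQ$ on a Zariski-dense set of $\lambda$ (all $\lambda \neq \alpha^{-1}$), and hence coincides identically by the identity principle. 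This sidesteps any delicate limiting behavior. One should also address the parenthetical ``possibly degenerate'': the two fixed points of $L_{\alpha^{-1}}(\pQ)$ always exist and are distinct, but the induced vertices via \eqref{eqn:FixedPoint} need not form a non-degenerate polygon, which is exactly why the conclusion is phrased as ``two different (possibly degenerate) ideal polygons.'' Finally, I would remark that the case distinction is genuinely a trichotomy: ruling out the parabolic/identity cases for $\pQ$ follows immediately once we know $\pP$ is loxodromic, since the shared normalized trace cannot simultaneously equal $4$ (parabolic, by Corollary~\ref{cor:MonoPara}) or correspond to a scalar matrix.
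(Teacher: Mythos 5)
Your proposal is correct and takes essentially the same route as the paper: invoke Theorem~\ref{thm:LaxMatrix} to get conjugacy of $A_\lambda(\pP)$ and $A_\lambda(\pQ)$ for $\lambda \neq \alpha^{-1}$, then pass to the limit $\lambda \to \alpha^{-1}$ via a continuous spectral invariant to conclude that $L_{\alpha^{-1}}(\pQ)$ is loxodromic whenever $L_{\alpha^{-1}}(\pP)$ is. The only cosmetic difference is that the paper tracks the eigenvalues themselves while you track the normalized trace (or, in your variant, identify the two rational functions of $\lambda$ by the identity principle), which carries the same information for $2\times 2$ matrices.
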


\begin{proof}
We are given that $L_{\alpha^{-1}} (\pP)$ is a loxodromic transformation, and we want to show that so is $L_{\alpha^{-1}} (\pQ)$.
A M\"obius transformation is a loxodromic transformation if and only if a matrix that represents it has distinct eigenvalues (whose ratios are the reciprocal derivatives of the transformation at its fixed points). 

Thus $A_{\alpha^{-1}} (\pP)$ has distinct eigenvalues, say, $t_1$ and $t_2$.
According to Theorem \ref{thm:LaxMatrix}, $A_{\lambda} (\pP)$ is conjugate to $A_{\lambda} (\pQ)$ for all $\lambda \neq \alpha^{-1}$. The eigenvalues of $A_{\lambda} (\pQ)$ depend continuously on $\lambda$, so taking limit $\lambda \to \alpha^{-1}$, we conclude that the eigenvalues of  $A_{\alpha^{-1}} (\pQ)$ are also equal to $t_1$ and $t_2$. Hence $L_{\alpha^{-1}} (\pQ)$ is a loxodromic transformation.
\end{proof}

\subsection{Bianchi permutability} \label{permsection}
In this section we show that, properly understood, the 2-2 correspondences $\T$ and $\TT$ commute.

\begin{theorem} \label{Bianchi}
Let $\pP,\pQ$, and $\pR$ be three twisted polygons such that $\pP \T \pQ$ and  $\pP \TT \pR$.
There exists a twisted polygon $\pS$ such that $\pQ \TT \pS$ and $\pR \T \pS$.
\end{theorem}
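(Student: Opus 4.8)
The plan is to prove Bianchi permutability by producing the vertices of $\pS$ explicitly and then verifying that the two required cross-ratio relations hold simultaneously. The key structural fact I would exploit is Lemma \ref{lem:FixedPoint}: given $\pP$, a polygon $\pQ$ with $\pP \T \pQ$ is determined by choosing a fixed point of the loxodromic composition $L_{\alpha^{-1}}(\pP)$ and propagating it by $q_{i+1} = L_{\alpha^{-1}}(p_i,p_{i+1})(q_i)$. Thus everything reduces to a statement about the four loxodromic propagation maps along the edges. Concretely, for each index $i$ I would set $\phi_i^\alpha = L_{\alpha^{-1}}(p_i,p_{i+1})$ and $\phi_i^\beta = L_{\beta^{-1}}(p_i,p_{i+1})$, so that $q_{i+1} = \phi_i^\alpha(q_i)$ and $r_{i+1} = \phi_i^\beta(r_i)$, and I would define the candidate $\pS$ so that $s_{i+1} = \psi_i^\beta(s_i)$ for an $\beta$-propagation along the edges of $\pQ$ and simultaneously $s_{i+1} = \psi_i^\alpha(s_i)$ for an $\alpha$-propagation along the edges of $\pR$.

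First I would reduce the theorem to a single local (one-index) commutation identity for loxodromic transformations. The point is that if I can find, for each $i$, a point $s_i$ depending on $q_i, r_i$ (and the edge data) such that the four propagation maps $\phi_i^\alpha, \phi_i^\beta$ and the two analogous maps along the $\pQ$- and $\pR$-edges fit into a commuting square, then the global polygon $\pS$ closes up automatically and carries the correct monodromy, because all four polygons $\pP,\pQ,\pR,\pS$ share the same monodromy $\Phi$ by the definition of $\T$ on twisted polygons. This is exactly the ``3D-consistency'' of the cross-ratio system alluded to in the Introduction and in the Appendix via the ideal tetrahedron: the defining relation \eqref{reldef} is the $Q1(0)$ quad-equation, and Bianchi permutability is its consistency-around-a-cube. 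So the heart of the matter is the local claim: given points $p_i, p_{i+1}, q_i, r_i$ with $[p_i,p_{i+1},q_i,q_{i+1}]=\alpha$ and $[p_i,p_{i+1},r_i,r_{i+1}]=\beta$, and given $s_i$ with $[q_i,r_i,\ast,\ast]$ or rather $[p_i, q_i, r_i, s_i]$ determined by the cross-ratio equation at the $i$-th site, the resulting $s_{i+1}$ satisfies both $[q_i,q_{i+1},s_i,s_{i+1}]=\beta$ and $[r_i,r_{i+1},s_i,s_{i+1}]=\alpha$.

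The cleanest way I would establish this local identity is the conjugation formula of Lemma \ref{lem:LaxConjug}, which already encodes how the loxodromic propagation maps intertwine when one passes from a polygon to an $\alpha$-related one. Applying that lemma along the $\pP\to\pQ$ edge with parameter $\lambda$ specialized to $\beta^{-1}$, and along the $\pP\to\pR$ edge with parameter specialized to $\alpha^{-1}$, should produce two factorizations of the same transported map that force the diagonal point $s_i$ to be common. In other words, I would define $s_i$ as the image of $p_i$ (or of the appropriate fixed point) under the single transition matrix $A_\mu(\cdot,\cdot)$ appearing in Lemma \ref{lem:LaxConjug}, and then the matrix identities, being associativity of a product of four edge-matrices around the square, give the two relations for $\pS$ at once.

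The main obstacle I anticipate is the \emph{choice} of the solution branch, i.e.\ the genuinely $2$-$2$ (rather than single-valued) nature of $\T$ and $\TT$. Lemma \ref{lem:FixedPoint} only guarantees that $\pQ$ corresponds to \emph{a} fixed point of $L_{\alpha^{-1}}(\pP)$, and there are two of them; likewise for $\pR$ and for the two edges leaving $\pQ$ and $\pR$. So the real content is to show the branches can be matched coherently around the square so that the point $s_1$ closing up $\pS$ is simultaneously a fixed point of $L_{\beta^{-1}}(\pQ)$ and of $L_{\alpha^{-1}}(\pR)$. I expect this to follow from Theorem \ref{thm:LaxMatrix}: since $A_\lambda(\pQ)$ is conjugate to $A_\lambda(\pP)$ (by $A_\mu(p_1,q_1)$) and $A_\lambda(\pR)$ is conjugate to $A_\lambda(\pP)$ (by a corresponding transition matrix), the two fixed-point sets are transported by explicit M\"obius maps, and one checks that the transition matrices commute up to the shared conjugation, so that a compatible common fixed point $s_1$ exists. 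Degeneracy — the possibility that a vertex collides or that one of the loxodromic maps becomes parabolic — is handled, as elsewhere in the paper, by restricting to a Zariski-open set and invoking that all iterates are assumed non-degenerate.
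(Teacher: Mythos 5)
Your framework assembles the right ingredients---Lemma \ref{lem:FixedPoint}, Theorem \ref{thm:LaxMatrix}, Lemma \ref{lem:LaxConjug}, and the cube picture of 3D-consistency---and in fact it blends the paper's two proofs (the Lax-matrix proof in Section \ref{permsection} and the consistency proof in Appendix \ref{sec:BianchiAgain}). But in both threads you stop exactly where the real work lies. Following your own setup, Theorem \ref{thm:LaxMatrix} gives
$L_{\beta^{-1}}(\pQ) = L_\mu^{-1}(p_1,q_1)\, L_{\beta^{-1}}(\pP)\, L_\mu(p_1,q_1)$ with $\mu = \frac{1-\alpha}{1-\alpha\beta^{-1}}$, and
$L_{\alpha^{-1}}(\pR) = L_{\mu'}^{-1}(p_1,r_1)\, L_{\alpha^{-1}}(\pP)\, L_{\mu'}(p_1,r_1)$ with $\mu' = \frac{1-\beta}{1-\beta\alpha^{-1}}$. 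Hence $s_1 := L_\mu^{-1}(p_1,q_1)(r_1)$ is a fixed point of $L_{\beta^{-1}}(\pQ)$ and $s_1' := L_{\mu'}^{-1}(p_1,r_1)(q_1)$ is a fixed point of $L_{\alpha^{-1}}(\pR)$; the theorem amounts to the assertion $s_1 = s_1'$. (Note in passing that $s_1$ is the image of $r_1$, not of $p_1$: the point $p_1$ is fixed by $L_\mu(p_1,q_1)$.) This is precisely your ``branch-matching'' issue, and you dispose of it with the claim that ``the transition matrices commute up to the shared conjugation.'' That claim is false as stated: $A_\mu(p_1,q_1)$ and $A_{\mu'}(p_1,r_1)$ are loxodromic matrices with distinct axes meeting only at $p_1$, and they do not commute; no corrected argument is offered. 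The paper closes this gap with a short cross-ratio computation: by Lemma \ref{loxodr}, $[p_1,q_1,r_1,s_1] = \mu$ and $[p_1,r_1,q_1,s_1'] = \mu'$; one checks $\mu + \mu' = 1$, and since $[z_1,z_2,z_3,w] + [z_1,z_3,z_2,w] = 1$ and $w \mapsto [p_1,r_1,q_1,w]$ is injective, it follows that $s_1 = s_1'$. Nothing in your proposal substitutes for this step.

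Your second thread---reduce to a local commuting-square identity and invoke 3D-consistency of the cross-ratio system---is exactly the route of Appendix \ref{sec:BianchiAgain}, and your reduction is sound: once a common point $s_1$ exists, the remaining vertices propagate by Lemma \ref{lem:LaxConjug}, and the monodromy condition holds because $s_i$ is defined M\"obius-equivariantly from $p_i, q_i, r_i$. But you never prove the local consistency statement; you only note that it is ``alluded to'' in the paper. The appendix actually proves it, via consistent labelings of loxodromic transformations on an ideal tetrahedron (Theorem \ref{thm:ConsistLabel}, Lemma \ref{lem:ConsistA}), and that proof again reduces to a coincidence-of-fixed-points argument of the same flavor. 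So to complete your proposal you must either carry out the $\mu + \mu' = 1$ cross-ratio computation above, or prove (or explicitly cite from \cite{BS}, \cite{BSb} as a black box) the 3D-consistency of the cross-ratio system. As written, the proposal has the skeleton of both proofs and the decisive step of neither.
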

\begin{proof}
By Lemma \ref{lem:FixedPoint} we have
\[
L_{\alpha^{-1}}(\pP)(q_1) = q_1, \quad L_{\beta^{-1}}(\pP)(r_1) = r_1.
\]
On the other hand, Theorem \ref{thm:LaxMatrix} implies
\begin{align*}
L_{\beta^{-1}}(\pQ) &= L_\mu^{-1}(p_1, q_1) L_{\beta^{-1}}(\pP) L_\mu(p_1, q_1), &\mu = \frac{1-\alpha}{1-\alpha\beta^{-1}},\\
L_{\alpha^{-1}}(\pR) &= L_{\mu'}^{-1}(p_1, r_1) L_{\alpha^{-1}}(\pP) L_{\mu'}(p_1, r_1), &\mu' = \frac{1-\beta}{1-\beta\alpha^{-1}}.
\end{align*}
It follows that the points
\[
s_1 = L_\mu^{-1}(p_1, q_1)(r_1), \quad s'_1 = L_{\mu'}^{-1}(p_1, q_1)(r_1)
\]
are fixed points of the transformations $L_{\beta^{-1}}(\pQ)$ and $L_{\alpha^{-1}}(\pR)$, respectively.
Thus they give rise to $n$-gons $\pS$ and $\pS'$ such that $\pQ \TT \pS$ and $\pR \T \pS'$.

In order to prove that $\pS = \pS'$, it suffices to show that $s_1 = s'_1$:
the coincidence of the other vertices follows from the conjugation identity of Lemma \ref{lem:LaxConjug}.
By Lemma \ref{loxodr} we have
\[
[p_1, q_1, r_1, s_1] = \mu, \quad [p_1, r_1, q_1, s'_1] = \mu'.
\]
Since $\mu + \mu' = 1$, the points $s_1$ and $s'_1$ coincide.
\end{proof}

See Appendix \ref{sec:BianchiAgain} for an alternative proof of Bianchi permutability.

\subsection{Integrals} \label{intsubsect}
Theorem \ref{thm:LaxMatrix} implies that for $\pP \T \pQ$ we have ${\Tr} A_\lambda(\pP) = {\Tr} A_\lambda(\pQ)$.
The trace is a polynomial in $\lambda$, and its coefficients are integrals, shared by the maps $\T$ for all values of $\alpha$.
Under a projective transformation of $\pP$ the matrix $A_\lambda(\pP)$ goes to a conjugate one and preserves its trace.
Hence these integrals will be projectively invariant and expressible in terms of our chosen coordinates on the moduli space of polygons
\[
c_i = [p_i, p_{i+1}, p_{i-1}, p_{i+2}] = \frac{(p_{i-1} - p_i)(p_{i+1} - p_{i+2})}{(p_{i-1} - p_{i+1})(p_i - p_{i+2})}.
\]
In order to remove dependence on the choice of a matrix $M$ representing the monodromy,
instead of the trace we  consider the normalized trace, ${\Tr^2 M}/{\det M}$.

\begin{theorem}
\label{thm:Integrals}
For a closed or twisted polygon $\pP$, we have
\begin{equation}
\label{eqn:TracePolynomial}
\frac{\Tr^2 A_\lambda(\pP)}{\det A_\lambda(\pP)} = \frac{1}{c_{[n]}\lambda^n} \left( \sum_{k=0}^{\lfloor \frac{n}2 \rfloor} (-1)^k F_k(c) \lambda^k \right)^2,
\end{equation}
where, as in (\ref{eqn:Integrals1}),
$$
F_k(c) = \sum_{|I|=k} c_I, \quad I \subset [n] \text{ cyclically sparse}.
$$
\end{theorem}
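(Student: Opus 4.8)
The plan is to show that, up to a scalar, the Lax matrix $A_\lambda(\pP)$ is the \emph{inverse} of the monodromy-type matrix obtained by replacing each $c_i$ with $c_i\lambda$, and then to read off the normalized trace by literally repeating the computation of Theorem \ref{thm:trmono}. Since the normalized trace $\Tr^2/\det$ is invariant both under scaling and under matrix inversion, and is insensitive to the choice of representative $M$ of the monodromy, this reduction handles the closed and twisted cases uniformly.

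Concretely, I would work in the projective basis $p_0=1$, $p_1=\infty$, $p_2=0$ of Lemma \ref{lem:MonMat2}, write $C_i=\begin{pmatrix}0&c_i\\-1&1\end{pmatrix}$, and set $G_k=C_1\cdots C_k$ with $G_0=\Id$. By the inductive argument in Lemma \ref{lem:MonMat2}, $G_k$ sends the triple $(1,\infty,0)$ to $(p_k,p_{k+1},p_{k+2})$; in particular $G_{k-1}$ sends $\infty\mapsto p_k$ and $0\mapsto p_{k+1}$, and $G_n$ represents the monodromy $\Phi$. The first key step is the exact matrix identity
\[
A_\lambda(p_k,p_{k+1}) = G_{k-1}\,\operatorname{diag}(1,\lambda)\,G_{k-1}^{-1},
\]
which holds because $\operatorname{diag}(1,\lambda)=A_\lambda(\infty,0)$ is the loxodromic from $\infty$ to $0$, and conjugating by $G_{k-1}$ carries its ordered fixed points and eigenvalues $(1,\lambda)$ exactly onto those of $A_\lambda(p_k,p_{k+1})$. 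Taking $M=G_n$ in \eqref{eqn:LaxMatrix}, the product $A_\lambda(\pP)=G_n^{-1}\prod_{k=n}^{1}G_{k-1}\operatorname{diag}(1,\lambda)G_{k-1}^{-1}$ telescopes: the junctions give $G_k^{-1}G_{k-1}=C_k^{-1}$, so $A_\lambda(\pP) = \prod_{k=n}^{1}\bigl(C_k^{-1}\operatorname{diag}(1,\lambda)\bigr)$. A one-line computation then gives $C_i^{-1}\operatorname{diag}(1,\lambda)=\lambda\,C_i(\lambda)^{-1}$, where $C_i(\lambda)=\begin{pmatrix}0&c_i\lambda\\-1&1\end{pmatrix}$, whence $A_\lambda(\pP)=\lambda^{n}\bigl(C_1(\lambda)\cdots C_n(\lambda)\bigr)^{-1}$. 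Thus $A_\lambda(\pP)$ is $\lambda^n$ times the inverse of $\prod_i C_i(\lambda)$, which is precisely the monodromy matrix of Lemma \ref{lem:MonMat2} with $c_i$ replaced by $c_i\lambda$.

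It then remains to compute the normalized trace of $\prod_i C_i(\lambda)$. By Lemma \ref{lem:MonMat1} with $c_i\mapsto c_i\lambda$, its trace equals $D_{2,n}(\lambda)-c_1\lambda D_{3,n-1}(\lambda)$, and Lemma \ref{lem:CoCont} identifies this, exactly as in the proof of Theorem \ref{thm:trmono}, with $\sum_{k}(-1)^k\lambda^k F_k(c)$ (the cyclically sparse index sets split according to whether they contain $1$, the two families being accounted for by $D_{2,n}(\lambda)$ and $-c_1\lambda D_{3,n-1}(\lambda)$). Its determinant is $\prod_i(c_i\lambda)=c_{[n]}\lambda^n$, so its normalized trace is $\tfrac{1}{c_{[n]}\lambda^n}\bigl(\sum_k(-1)^k\lambda^kF_k\bigr)^2$, which by the scaling-and-inversion invariance of $\Tr^2/\det$ equals that of $A_\lambda(\pP)$, as claimed.

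The main obstacle is the bookkeeping behind the gauge reduction: verifying that the loxodromic factors are \emph{exactly}, not merely projectively, conjugates of $\operatorname{diag}(1,\lambda)$ by the frames $G_{k-1}$, and tracking the telescoping so that the stray scalars assemble into the single factor $\lambda^n$. Once this is in place, everything else is the trace computation already carried out for the monodromy in Theorem \ref{thm:trmono}, now run with the substitution $c_i\mapsto c_i\lambda$. I would also note that the argument only requires the frames $G_{k-1}$ to be invertible, which holds for non-degenerate polygons, and that the asserted equality is an identity of Laurent polynomials in $\lambda$, so establishing it on the Zariski-dense locus where all constructions are valid suffices.
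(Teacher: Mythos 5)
Your proof is correct, and it takes a genuinely different route from the paper's. The paper proves Theorem \ref{thm:Integrals} by direct expansion: it writes $A_\lambda(p_i,p_{i+1}) = \frac{1}{p_i-p_{i+1}}(B_i - \lambda C_i)$ with rank-one matrices $B_i, C_i$, expands the $n$-fold product, discards non-sparse terms via $C_iC_{i-1}=0$, and evaluates the traces of the surviving rank-one products explicitly in the vertex coordinates, with a separate and noticeably more delicate computation to handle the monodromy factor in the twisted case. You instead establish the exact matrix identity $A_\lambda(\pP) = \lambda^n\, M(\lambda c)^{-1}$, where $M(\lambda c)$ is the monodromy matrix of Lemma \ref{lem:MonMat2} with $c_i$ replaced by $\lambda c_i$, and then quote Theorem \ref{thm:trmono} together with the homogeneity of $F_k$ and the invariance of $\Tr^2/\det$ under scaling and inversion. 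Your steps check out: the gauge identity $A_\lambda(p_k,p_{k+1}) = G_{k-1}\operatorname{diag}(1,\lambda)\,G_{k-1}^{-1}$ is indeed exact, since both sides are the unique matrix with eigenvalue $1$ on a representative of $p_k$ and eigenvalue $\lambda$ on a representative of $p_{k+1}$ (uniqueness needs only $p_k \neq p_{k+1}$, guaranteed by non-degeneracy, and is insensitive to the choice of representatives); the junctions $G_k^{-1}G_{k-1}=C_k^{-1}$ telescope as you say; and $C_k^{-1}\operatorname{diag}(1,\lambda)=\lambda\,C_k(\lambda)^{-1}$ is a correct one-line computation. Your route is shorter (given Lemmas \ref{lem:MonMat2}, \ref{lem:MonMat1}, \ref{lem:CoCont} and Theorem \ref{thm:trmono}, which the paper has already proved), treats closed and twisted polygons uniformly, and upgrades to a matrix identity what the paper records only at the level of normalized traces — the remark after Theorem \ref{thm:Integrals} that $A_1(\pP)$ and the monodromy have the same normalized trace, and the scaling argument of Lemma \ref{lem:AuxVar} behind Theorem \ref{thm:TraceComponents}. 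What the paper's expansion buys instead is explicit formulas for $\Tr A_\lambda(\pP)$ in terms of the vertices $p_i$ themselves; those formulas are reused in Section \ref{altsect} (Theorem \ref{thm:IntegralsClosed}) to connect the $F_k$ with the multi-ratio integrals $G_k$, and your argument does not produce them.
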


Comparing Theorems \ref{thm:trmono} and \ref{thm:Integrals}, we see that the normalized trace of the monodromy of a twisted polygon $\pP$  equals the normalized trace of the Lax matrix $A_1(\pP)$. This observation will be expanded in Theorem \ref{thm:TraceComponents} below.

\begin{proof}[Proof of Theorem \ref{thm:Integrals} for closed polygons.]
Let us use a shorthand notation for differences:
$
p_i^{(t)} = p_i - p_{i+t}.
$
In particular, in this notation, we have
\begin{equation}
\label{eqn:cDiff}
c_i = \frac{p_{i-1}^{(1)} p_{i+1}^{(1)}}{p_{i-1}^{(2)} p_i^{(2)}}.
\end{equation}
Observe that
\begin{equation}
\label{eqn:ALambdaSum}
A_\lambda(p_i, p_{i+1}) = \frac{1}{p_i^{(1)}} (B_i - \lambda C_i),
\end{equation}
where
\[
B_i = \begin{pmatrix} p_i\\ 1 \end{pmatrix} \begin{pmatrix} 1 & -p_{i+1} \end{pmatrix}, \quad
C_i = \begin{pmatrix} p_{i+1}\\ 1 \end{pmatrix} \begin{pmatrix} 1 & -p_i \end{pmatrix}.
\]
Expanding the product of \eqref{eqn:ALambdaSum}, we obtain
\begin{equation}
\label{eqn:TraceFirst}
A_\lambda(\pP) = \frac1{p_1^{(1)} \cdots p_n^{(1)}} \sum_{k=0}^n (-1)^k \lambda^k \sum_{|I|=k}\Pi_I,
\end{equation}
where, for any subset $I \subset [n]$, we use the notation
\[
\Pi_I = X_n X_{n-1} \cdots X_1, \quad X_i = \begin{cases} B_i, &\text{ if }i \notin I,\\ C_i, &\text{ if }i \in I \end{cases}.
\]


Note that $C_i C_{i-1} = 0$ for all $i$.
Therefore we can assume that $I = \{i_1, \ldots, i_k\}$ with $i_{s+1} > i_s + 1$: the subset $I$ is sparse.
We are only interested in the trace of the matrix $\Pi_I$, and the trace is invariant under cyclic permutations of the indices.
In particular, $\Tr(\Pi_I) = 0$ if $I$ contains both $1$ and $n$, so that $I$ can be assumed cyclically sparse.

Permute the factors of $\Pi_I$ cyclically in the following way:
\[
\Pi'_I = (B_{i_1-1} \cdots B_{i_k+1} C_{i_k}) \cdots (B_{i_2-1} \cdots B_{i_1+1} C_{i_1}).
\]
(The indices go in the decreasing order and are taken modulo $n$.)
Compute
\[
B_{j-1} \cdots B_{i+1} C_i = p_{j-2}^{(2)} \cdots p_{i+1}^{(2)} p_{i+1}^{(1)}
\begin{pmatrix} p_{j-1}\\ 1 \end{pmatrix} \begin{pmatrix} 1 & -p_i \end{pmatrix}.
\]
Substituting this into the above formula we obtain
\begin{multline*}
\Pi'_I = (p_{i_1-2}^{(2)} \cdots p_{i_k+1}^{(2)} p_{i_k+1}^{(1)}) \cdots (p_{i_2-2}^{(2)} \cdots p_{i_1+1}^{(2)} p_{i_1+1}^{(1)}) \cdot\\
\cdot \begin{pmatrix} p_{i_1-1}\\ 1 \end{pmatrix} \begin{pmatrix} 1 & -p_{i_k} \end{pmatrix} \cdots
\begin{pmatrix} p_{i_2-1}\\ 1 \end{pmatrix} \begin{pmatrix} 1 & -p_{i_1} \end{pmatrix}\\
= \frac{p_1^{(2)} \cdots p_n^{(2)}}{p_{i_1-1}^{(2)} p_{i_1}^{(2)} \cdots p_{i_k-1}^{(2)} p_{i_k}^{(2)}}
p_{i_1+1}^{(1)} \cdots p_{i_k+1}^{(1)} p_{i_2-1}^{(1)} \cdots p_{i_k-1}^{(1)}
\begin{pmatrix} p_{i_1-1}\\ 1 \end{pmatrix} \begin{pmatrix} 1 & -p_{i_1} \end{pmatrix}
\end{multline*}
The trace of the ``column times row'' matrix on the very right is $p_{i_1-1}^{(1)}$.
It follows that
\[
\Tr(\Pi_I) = \Tr(\Pi'_I) = p_1^{(2)} \cdots p_n^{(2)} c_I.
\]
From \eqref{eqn:TraceFirst} we obtain
\[
\Tr(A_\lambda(\pP)) = \frac{p_1^{(2)} \cdots p_n^{(2)}}{p_1^{(1)} \cdots p_n^{(1)}} \sum_{k=0}^{\lfloor\frac{n}{2}\rfloor}
(-1)^k \lambda^k \sum_{|I|=k} c_I, \quad I \text{ cyclically sparse}.
\]
To identify the factor before the sum, compute, using \eqref{eqn:cDiff}:
\[
c_{[n]} = \frac{p_0^{(1)} p_1^{(1)} \left(p_2^{(1)} \cdots p_{n-1}^{(1)}\right)^2 p_n^{(1)} p_{n+1}^{(1)}}{p_0^{(2)}\left(p_1^{(2)} \cdots p_{n-1}^{(2)}\right)^2p_n^{(2)}}
= \left(\frac{p_1^{(1)} \cdots p_n^{(1)}}{p_1^{(2)} \cdots p_n^{(2)}} \right)^2.
\]
It remains to recall that $\det A_\lambda(p,q) = \lambda$, so  $\det A_\lambda(\pP) = \lambda^n$, and the theorem is proved.
\end{proof}

\begin{proof}[Proof of Theorem \ref{thm:Integrals} for twisted polygons.]
Similarly to \eqref{eqn:TraceFirst}, we have
\[
\Tr (A_\lambda(\pP)) = \sum_{k=0}^n (-1)^k \lambda^k \sum_{|I|=k} \Tr \left( \frac{M^{-1} X_n \cdots X_1}{p_1^{(1)} \cdots p_n^{(1)}} \right),
\]
where $X_i = C_i$ or $B_i$ according as $i \in I$ or not, and $B_i$, $C_i$ are as in \eqref{eqn:ALambdaSum}.
We claim that
\[
\Tr \left(\frac{M^{-1} X_n \cdots X_1}{p_1^{(1)} \cdots p_n^{(1)}}\right) = \Tr \left(\frac{M^{-1} X_{i+n} \cdots X_{i+1}}{p_{i+1}^{(1)} \cdots p_{i+n}^{(1)}}\right)
\]
for every $i$, where we put $X_{i+n} = B_{i+n}$ iff  $X_i = B_i$.
Indeed, due to
\[
M^{-1} A_\lambda(p_i, p_{i+1}) = A_\lambda(p_{i-n}, p_{i+1-n}) M^{-1},
\]
we have
\[
\frac{M^{-1} B_i}{p_i^{(1)}}= \frac{B_{i-n}M^{-1}}{p_{i-n}^{(1)}}, \quad \frac{M^{-1} C_i}{p_i^{(1)}} = \frac{C_{i-n}M^{-1}}{p_{i-n}^{(1)}},
\]
which implies
\[
\frac{M^{-1} X_n \cdots X_1}{p_1^{(1)} \cdots p_n^{(1)}} = \frac{X_0 M^{-1} X_{n-1} \cdots X_1}{p_0^{(1)} \cdots p_{n-1}^{(1)}}.
\]
Moving $X_0$ to the end of the product, conjugates the matrix and therefore does not change the trace.

If $I = \{i_1 < \cdots < i_k\}$, then shift the indices as follows:
\[
\Pi'_I = (B_{i_1+n-1} \cdots B_{i_k+1} C_{i_k}) \cdots (B_{i_2-1} \cdots B_{i_1+1} C_{i_1}).
\]
The difference from the case of closed polygons is that the indices are not taken modulo $n$.
It follows that
\[
\Pi'_I = \frac{p_{i_1-1}^{(2)} \cdots p_{i_1+n-2}^{(2)}}{p_{i_1-1}^{(2)} p_{i_1}^{(2)} \cdots p_{i_k-1}^{(2)} p_{i_k}^{(2)}}
p_{i_1+1}^{(1)} \cdots p_{i_k+1}^{(1)} p_{i_2-1}^{(1)} \cdots p_{i_k-1}^{(1)}
\begin{pmatrix} p_{i_1+n-1}\\ 1 \end{pmatrix} \begin{pmatrix} 1 & -p_{i_1} \end{pmatrix}.
\]

For $M = \begin{pmatrix} a & b\\ c & d \end{pmatrix}$ we have
\[
M \begin{pmatrix} z\\ 1 \end{pmatrix} = (cz+d) \begin{pmatrix} \frac{az+b}{cz+d}\\ 1 \end{pmatrix}.
\]
Therefore the following holds for every $i$:
\[
M^{-1} \begin{pmatrix} p_{i+n}\\ 1 \end{pmatrix} = \frac{1}{cp_i + d} \begin{pmatrix} p_i\\ 1 \end{pmatrix}.
\]
Combining this with the previous computations we obtain
\begin{multline*}
\Tr \left( \frac{M^{-1} X_n \cdots X_1}{p_1^{(1)} \cdots p_n^{(1)}} \right)
= \Tr \left( \frac{M^{-1} \Pi'_I}{p_{i_1}^{(1)} \cdots p_{i_1+n-1}^{(1)}} \right)\\
= \frac{1}{cp_{i_1-1}+d} \cdot \frac{p_{i_1-1}^{(2)} \cdots p_{i_1+n-2}^{(2)}}{p_{i_1}^{(1)} \cdots p_{i_1+n-1}^{(1)}} \cdot
\frac{p_{i_1-1}^{(1)} p_{i_1+1}^{(1)} \cdots p_{i_k-1}^{(1)} p_{i_k+1}^{(1)}}{p_{i_1-1}^{(2)} p_{i_1}^{(2)} \cdots p_{i_k-1}^{(2)} p_{i_k}^{(2)}}\\
= \frac{1}{cp_{i_1-1}+d} \cdot \frac{p_{i_1-1}^{(2)} \cdots p_{i_1+n-2}^{(2)}}{p_{i_1}^{(1)} \cdots p_{i_1+n-1}^{(1)}} c_{I},
\end{multline*}
which we rewrite as
\[
\Tr \left( \frac{M^{-1} X_n \cdots X_1}{p_1^{(1)} \cdots p_n^{(1)}} \right) = x_{i_1} c_{I}, \quad
x_i = \frac{1}{cp_{i-1}+d} \frac{p_{i-1}^{(2)} \cdots p_{i+n-2}^{(2)}}{p_i^{(1)} \cdots p_{i+n-1}^{(1)}}.
\]
Let us show that $x_i$ is independent of $i$.
Indeed, with the help of
\begin{multline*}
\frac{p_{i+n}^{(t)}}{p_i^{(t)}} = \frac{\frac{ap_i + b}{cp_i + d} - \frac{ap_{i+t} + b}{cp_{i+t} + d}}{p_i - p_{i+t}}
= \frac{(ad-bc)p_i - (ad-bc)p_{i+t}}{(cp_i+d)(cp_{i+t}+d)(p_i-p_{i+t})}\\ = \frac{\det M}{(cp_i+d)(cp_{i+t}+d)},
\end{multline*}
we compute
\[
\frac{x_{i+1}}{x_i} = \frac{cp_{i-1}+d}{cp_i+d} \cdot \frac{p_{i+n-1}^{(2)}}{p_{i-1}^{(2)}} \cdot \frac{p_i^{(1)}}{p_{i+n}^{(1)}} = 1.
\]
Taking into account
\[
c_{[n]} = \frac{p_0^{(1)} p_1^{(1)} \left(p_2^{(1)} \cdots p_{n-1}^{(1)}\right)^2 p_n^{(1)} p_{n+1}^{(1)}}{p_0^{(2)}\left(p_1^{(2)} \cdots p_{n-1}^{(2)}\right)^2p_n^{(2)}},
\]
we obtain
\begin{multline*}
x_1^2 = \frac{1}{(cp_0+d)^2} \frac{(p_0^{(2)} \cdots p_{n-1}^{(2)})^2}{(p_1^{(1)} \cdots p_n^{(1)})^2}
= \frac{1}{(cp_0+d)^2 c_{[n]}} \cdot \frac{p_{n+1}^{(1)}}{p_1^{(1)}} \cdot \frac{p_0^{(1)}}{p_n^{(1)}} \cdot \frac{p_0^{(2)}}{p_n^{(2)}}
= \frac{\det M^{-1}}{c_{[n]}}.
\end{multline*}

It follows that
\[
\Tr^2 A_\lambda(\pP) = \frac{\det M^{-1}}{c_{[n]}} \left( \sum_{k=0}^{\lfloor\frac{n}{2}\rfloor} (-1)^k \lambda^k F_k(\pP) \right)^2.
\]
Combined with
\[
\det A_\lambda(\pP) = \lambda^n \det M^{-1},
\]
this implies the theorem.
\end{proof}

\begin{corollary}
\label{cor:FIntegrals}
The polynomials $c_{[n]}$ and $F_k(c)$, $k = 1, \ldots, \left\lfloor \frac{n}{2} \right\rfloor$ are integrals of the relation $\T$.
\end{corollary}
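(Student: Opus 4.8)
The plan is to deduce the corollary directly from the two immediately preceding theorems, reading the word ``integral'' to mean a function on the moduli space that takes equal values at $\pP$ and $\pQ$ whenever $\pP \T \pQ$. The starting observation is that conjugate matrices have equal trace and equal determinant, so the normalized trace $\Tr^2/\det$ is a conjugacy invariant of a matrix in $\GL(2)$.

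First I would invoke Theorem \ref{thm:LaxMatrix}: if $\pP \T \pQ$, then $A_\lambda(\pP)$ and $A_\lambda(\pQ)$ are conjugate for every $\lambda \ne \alpha^{-1}$. Consequently their normalized traces agree, and substituting the explicit expression from Theorem \ref{thm:Integrals} gives
\begin{equation*}
\frac{1}{c_{[n]}(\pP)}\left(\sum_{k=0}^{\lfloor\frac n2\rfloor}(-1)^k F_k(\pP)\lambda^k\right)^2 = \frac{1}{c_{[n]}(\pQ)}\left(\sum_{k=0}^{\lfloor\frac n2\rfloor}(-1)^k F_k(\pQ)\lambda^k\right)^2
\end{equation*}
for all $\lambda \ne \alpha^{-1}$, the common factor $\lambda^{-n}$ having cancelled. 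Away from finitely many values of $\lambda$ both sides are polynomials in $\lambda$, so an identity valid on a cofinite set is in fact an identity of polynomials, holding for all $\lambda$.

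Write $P(\lambda) = \sum_k (-1)^k F_k(\pP)\lambda^k$ and $Q(\lambda) = \sum_k (-1)^k F_k(\pQ)\lambda^k$, and set $a = c_{[n]}(\pP)$, $b = c_{[n]}(\pQ)$; both $a,b \in \KK^*$, and since $F_0 = 1$ both $P$ and $Q$ have constant term $1$, hence are nonzero. The identity reads $b\,P^2 = a\,Q^2$, so $(P/Q)^2 = a/b$ is a nonzero constant; a rational function whose square is constant is itself constant (its derivative vanishes), so $P/Q$ is a scalar. Evaluating at $\lambda = 0$ and using $P(0) = Q(0) = F_0 = 1$ pins this scalar to $1$, whence $P \equiv Q$ and therefore $a = b$. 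Comparing coefficients of $\lambda^k$ in $P \equiv Q$ yields $F_k(\pP) = F_k(\pQ)$ for every $k$, while $a = b$ gives $c_{[n]}(\pP) = c_{[n]}(\pQ)$. This shows that all the listed functions are integrals of $\T$.

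The only step demanding genuine care is the last one: the theorems control only the squared generating polynomial $P^2/c_{[n]}$, so a priori one recovers the $F_k$ merely up to the common sign and scaling ambiguity inherent in extracting a square root. The normalization $F_0 = 1$, built into the definition of the $F_k$, is exactly what removes this ambiguity and allows the individual coefficients to be separated.
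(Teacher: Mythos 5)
Your proposal is correct and follows essentially the paper's own route: both arguments combine Theorem \ref{thm:LaxMatrix} (conjugacy of the Lax matrices under $\T$) with the explicit formula of Theorem \ref{thm:Integrals}, conclude that the normalized trace is preserved as a (Laurent) polynomial identity in $\lambda$, and then recover $c_{[n]}$ and the individual $F_k$ from the squared generating polynomial. The only difference is how the square is undone — the paper argues by induction on the Laurent coefficients, noting that the coefficient at $\lambda^{k-n}$ equals $c_{[n]}^{-1}\bigl((-1)^k 2F_k + \text{terms in } F_{<k}\bigr)$, whereas you extract a global square root of the ratio and pin down the sign with the normalization $F_0 = 1$; both devices are valid and the choice between them is cosmetic.
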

\begin{proof}
From Theorem \ref{thm:LaxMatrix} we know that the coefficients of the (Laurent) polynomial on the right hand side of \eqref{eqn:TracePolynomial}
are integrals of $\T$.
The coefficient of this polynomial at $\lambda^{-n}$ is $c_{[n]}^{-1}$, therefore $c_{[n]}$ is an integral.
The coefficient at $\lambda^{k-n}$ has the form $c_{[n]}^{-1}((-1)^k 2F_k + \text{terms in }F_{<k})$;
therefore, by induction, all $F_k$ are integrals.
\end{proof}


\subsection{Independence of the integrals}
\label{indeptwist}

A collection of functions  is said to be independent on a subset of their common domain if their differentials are linearly independent at every point of  this set.

\begin{theorem}
\label{thm:IntIndependence}
The functions $c_{[n]}$ and $F_k(c)$, $k = 1, \ldots, \left\lfloor \frac{n}{2} \right\rfloor$
are independent on a Zariski open subset $\cT$.
\end{theorem}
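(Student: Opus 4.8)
The plan is to reduce everything to a Jacobian computation at a single cleverly chosen point. Write $m = \lfloor \frac{n}{2}\rfloor$, so we have $m+1$ functions $c_{[n]}, F_1, \ldots, F_m$ on the $n$-dimensional space $\cT \subset (\KK^*)^n$. The set of points at which the covectors $dc_{[n]}, dF_1, \ldots, dF_m$ are linearly independent is the locus where some $(m+1)\times(m+1)$ minor of the Jacobian is nonzero, hence Zariski open; since $\cT$ is irreducible, it suffices to exhibit \emph{one} point at which such a minor is nonzero. The obstacle is that the naive choice fails: at the fully symmetric point $c_1=\cdots=c_n$ one has $dF_k \parallel \sum_i dc_i$ for every $k$ (by the Euler identity $\sum_i c_i\,\partial_{c_i}F_k = kF_k$ together with the $S_n$--symmetry), so all the differentials become proportional and the Jacobian degenerates. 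One must therefore break this symmetry.

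To do so I would use a weighted degeneration. Choose a maximum independent set $S\subset [n]$ of the cyclic adjacency graph, so $|S|=m$ and no two elements of $S$ are cyclically consecutive (e.g. $S=\{1,3,5,\ldots\}$ works for both parities), and set $\bar S = [n]\setminus S$, with $|\bar S| = n-m$. Assign weight $0$ to the coordinates indexed by $S$ and weight $1$ to those indexed by $\bar S$, and rescale $c_i \mapsto t^{w(i)}c_i$. Because every subset of $S$ is automatically cyclically sparse, the weight-zero (that is, the $t\to 0$ leading) part of $F_k$ is exactly the elementary symmetric polynomial $e_k$ in the $m$ variables $\{c_j : j\in S\}$, while $c_{[n]}$ is weight-homogeneous of weight $|\bar S| = n-m$.

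The key step is then a leading-order computation of the $(m+1)\times(m+1)$ Jacobian with respect to the coordinates $\{c_s\}\cup\{c_j : j\in S\}$, where $s\in \bar S$ is a fixed index. In the row coming from $c_{[n]}$ the entry $\partial c_{[n]}/\partial c_s$ has order $t^{\,n-m-1}$, whereas the entries $\partial c_{[n]}/\partial c_j$ with $j\in S$ have order $t^{\,n-m}$; in the rows coming from $F_1,\ldots,F_m$ every entry is of order $t^0$, with leading part $\partial e_k/\partial c_j$ in the columns $j\in S$. Expanding the determinant along the $c_{[n]}$ row, the dominant contribution is $\partial c_{[n]}/\partial c_s$ times the minor $\det(\partial F_k/\partial c_j)_{k=1,\ldots,m;\,j\in S}$, whose leading part is the Jacobian $\det(\partial e_k/\partial c_j)$ of the elementary symmetric polynomials. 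The latter equals, up to sign, the Vandermonde determinant $\prod_{i<j}(c_{j_i}-c_{j_k})$ in the values $\{c_j : j\in S\}$, which is nonzero once these values are chosen distinct. Every other term in the expansion carries a strictly higher power of $t$, so the whole Jacobian determinant is a nonzero multiple of $t^{\,n-m-1}$ plus higher-order terms, hence nonzero for small $t\neq 0$.

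The main obstacle, and what the degeneration is designed to overcome, is \emph{not} the independence of $F_1,\ldots,F_m$ among themselves (that is essentially the classical independence of elementary symmetric polynomials), but rather showing that $dc_{[n]}$ does not lie in their span at a generic point. What makes this work is the degree gap: $c_{[n]}$ has degree $n$ while each $F_k$ has degree at most $m$, and after the rescaling this gap is converted into the gap in $t$-order between the two kinds of entries of the $c_{[n]}$ row, which is exactly what renders the Jacobian triangular to leading order. The only bookkeeping to verify is that a maximum independent set of size precisely $m$ exists for both parities of $n$ (it does) and that $n-m-1\ge 0$ (immediate, since $m=\lfloor\frac n2\rfloor$); both are routine.
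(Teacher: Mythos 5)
Your proposal is correct, and it follows a genuinely different route from the paper's, even though both share the same outer skeleton: independence is a Zariski-open condition, so it suffices to exhibit one point where an $(m+1)\times(m+1)$ Jacobian minor is nonzero (here $m=\lfloor n/2\rfloor$), and both arguments manufacture such a point by a degeneration and a limit. The paper evaluates at the one-parameter point $c_i=\eps^i$, keeps the partial derivatives with respect to the odd-indexed variables (plus one extra row when needed), divides each column by the lowest power of $\eps$ occurring in it, and checks that the limit as $\eps\to 0$ is lower-triangular with unit diagonal. Your two-scale degeneration instead keeps the coordinates on a maximum cyclically sparse set $S$ generic and distinct while sending the complementary coordinates to zero of order $t$; in this limit each $F_k$ degenerates to the elementary symmetric polynomial $e_k$ in the $m$ variables $\{c_j\}_{j\in S}$ (every subset of $S$ being automatically cyclically sparse), so the relevant $m\times m$ block of the Jacobian converges to the classical Jacobian of $(e_1,\ldots,e_m)$, i.e.\ a Vandermonde determinant, nonzero for distinct values, while the degree gap between $c_{[n]}$ and the $F_k$ produces the order gap $t^{\,n-m-1}$ versus $t^{\,n-m}$ in the $c_{[n]}$ row that makes a single term dominate the expansion. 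What your route buys is conceptual transparency: the independence of $F_1,\ldots,F_m$ among themselves is reduced to a textbook fact, and the independence of $dc_{[n]}$ from their span is explained by the degree gap rather than verified entry by entry. What the paper's route buys is reusability: the same $\eps$-height bookkeeping at $c_i=\eps^i$ is adapted verbatim in the proof of Theorem \ref{thm:RestrictIntegrals} to the constrained subvariety $\cP$ of closed polygons, where only $n-3$ of the $c_i$ can be prescribed freely and your freedom to place distinct generic values on a full sparse set would be lost. One cosmetic quibble: in your motivating remark you invoke the ``$S_n$-symmetry'' of $F_k$, but $F_k$ is only dihedrally invariant; cyclic invariance alone is what forces all partials $\partial F_k/\partial c_i$ to coincide at the symmetric point, and it suffices for the degeneracy you point out.
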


\begin{proof}
The argument is a simplified version of the proof of a similar statement for the monodromy integrals of the pentagram map in \cite{Sch}, see also \cite{OST2}.

Consider an $(\lfloor{n/2}\rfloor+1) \times n$ matrix $M$ whose columns are the gradient vectors  $\nabla F_1,\nabla F_2,\ldots,\nabla F_{\lfloor{n/2}\rfloor},\nabla c_{[n]}$. The $j$th row of $M$ consists of the first partial derivatives with respect to $c_j$ of the functions $F_1,\ldots, F_{\lfloor{n/2}\rfloor}, c_{[n]}$.

The integrals are polynomial functions, therefore it suffices to show that $M$ has the maximal rank $\lfloor{n/2}\rfloor+1$ at some point: the rank is maximal in a Zariski open set.

We estimate rk $M$ at a special point
$$
c_1=\eps, c_2=\eps^2,\ldots, c_n = \eps^n
$$
by identifying its non-degenerate square submatrix. 

To prove that this square matrix is non-degenerate, we divide  each column by the highest power of $\eps$ that divides all its entries and then let $\eps \to 0$. We show that the limiting matrix is non-degenerate, and since the rank may only drop at a special value (zero) of the parameter $\eps$, we conclude that rk $M =  \lfloor{n/2}\rfloor+1$ almost everywhere. 

To implement this argument, delete the even rows $2,4,6,\ldots,2(\lfloor{(n-1)/2}\rfloor)$ from $M$ and call the resulting square matrix $M'$. The entries of $M'$ (except for the last column) are the first partial derivatives of the functions $F_k$ with respect to the variables $c_j$ with $j$ odd.

Start with the last column of $M'$.  Its $j$th entry is $c_{[n]}/c_j$, and the lowest exponent of $\eps$, namely, $n(n-1)/2$, is attained in the last row. Divide the entries by $\eps^{n(n-1)/2}$ and let $\eps \to 0$. The last column becomes $(0,\ldots,0,1)^T$.

Next, consider the column  before the last. The term on or above the diagonal that has the lowest exponent of $\eps$ is 
$c_{\{1,3,\ldots,2(\lfloor{n/2}\rfloor)-3\}}$, and it occurs on the diagonal. Dividing by the respective power of $\eps$ and taking limit $\eps\to 0$, this column becomes $(0,\ldots,0,1,*)^T$.

Continuing in the same way, each time the lowest power of $\eps$ among the terms on and above the diagonal occurs on the diagonal. Dividing by the respective power of $\eps$ and taking limit $\eps\to 0$,
we turn $M'$ into a lower-triangular matrix with ones on the diagonal. Therefore  $M'$ is non-degenerate, as needed.
\end{proof}

\subsection{Extended dynamics and monodromy integrals}
\label{sec:ExtDynTwist}
Let us describe the relation $\T$ in terms of auxiliary coordinates.

\begin{lemma}
\label{lem:AuxVar}
Let $\pP$ and $\pQ$ be two twisted or closed polygons such that $\pP \T \pQ$. Set
\begin{align*}
c_i &= [p_i, p_{i+1}, p_{i-1}, p_{i+2}],& x_i &= [p_i, p_{i+1}, p_{i-1}, q_i],\\
d_i &= [q_i, q_{i+1}, q_{i-1}, q_{i+2}],& y_i &= [q_i, q_{i+1}, q_{i-1}, p_i].
\end{align*}
Then we have
\begin{equation}
\label{eqn:ExtDyn}
c_i = \alpha x_i (1 - x_{i+1}), \quad d_i = \alpha y_i (1 - y_{i+1}), \quad x_i + y_i = 1.
\end{equation}

Conversely, for any $x \in (\KK \setminus\{0,1\})^n$ and any $\alpha \in \KK \setminus \{0, 1\}$,
the numbers $c_i$ and $d_i$ given by formulas \eqref{eqn:ExtDyn} define a pair $\pP \T \pQ$, unique up to a simultaneous projective transformation.
\end{lemma}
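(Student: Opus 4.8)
The plan is to exploit the Möbius invariance of the cross-ratio and reduce each identity to a one-line computation in a normalized chart. Fix the index $i$ and apply a Möbius transformation sending $p_i \mapsto 0$ and $p_{i+1}\mapsto\infty$; every relevant cross-ratio then collapses to a simple ratio, via $[0,\infty,a,b]=a/b$ and $[\infty,a,0,b]=1-b/a$. In this chart the defining relation $[p_i,p_{i+1},q_i,q_{i+1}]=\alpha$ reads $q_i=\alpha q_{i+1}$, while $x_i=p_{i-1}/q_i$, $c_i=p_{i-1}/p_{i+2}$, and $x_{i+1}=1-q_{i+1}/p_{i+2}$. Substituting gives $\alpha x_i(1-x_{i+1}) = (q_i/q_{i+1})(p_{i-1}/q_i)(q_{i+1}/p_{i+2}) = p_{i-1}/p_{i+2} = c_i$, which is the first identity. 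The identity for $d_i$ follows by the $p\leftrightarrow q$ symmetry of the relation $\T$: since $[q_i,q_{i+1},p_i,p_{i+1}]=[p_i,p_{i+1},q_i,q_{i+1}]=\alpha$, interchanging the two polygons carries $c_i,x_i$ into $d_i,y_i$ and reproduces the computation verbatim.

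The relation $x_i+y_i=1$ is the only genuinely non-local step, and I expect it to be the main obstacle: $x_i$ and $y_i$ involve different point sets ($p_{i-1},q_i$ versus $q_{i-1},p_i$), so they cannot be compared by a four-point cross-ratio identity alone. Here I would bring in the defining relation at the previous index, $[p_{i-1},p_i,q_{i-1},q_i]=\alpha$, which in the same normalized chart pins down $q_{i-1}=p_{i-1}q_i/\bigl(\alpha p_{i-1}+(1-\alpha)q_i\bigr)$. Feeding this into $y_i=[q_i,q_{i+1},q_{i-1},p_i]$ (with $q_{i+1}=q_i/\alpha$) and simplifying, the dependence on $\alpha$ cancels and one obtains $y_i = 1-p_{i-1}/q_i = 1-x_i$, as required. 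Coupling the constraints at indices $i-1$ and $i$ is where the bookkeeping effort lies.

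For the converse, I would run the same computation backwards. Given $x\in(\KK\setminus\{0,1\})^n$ and $\alpha\in\KK\setminus\{0,1\}$, set $c_i=\alpha x_i(1-x_{i+1})$; since $x_i\ne 0,1$ and $\alpha\ne 0$ these lie in $\KK^*$, so by the discussion in Section~\ref{sec:ClosedTwisted} they are the cross-ratio coordinates of a non-degenerate twisted polygon $\pP$, unique up to $\PGL(2)$. Then define each $q_i$ as the unique point with $[p_i,p_{i+1},p_{i-1},q_i]=x_i$. Möbius-equivariance of this construction together with the $n$-periodicity of $x$ forces $q_{i+n}=\Phi(q_i)$, so $\pQ=(q_i)$ is a twisted polygon sharing the monodromy $\Phi$ of $\pP$. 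Finally, the normalized-chart computation read in reverse yields $[p_i,p_{i+1},q_i,q_{i+1}] = c_i/\bigl(x_i(1-x_{i+1})\bigr)=\alpha$, so $\pP\T\pQ$; the remaining assertions, $d_i=\alpha y_i(1-y_{i+1})$ and $y_i=1-x_i$, then follow from the forward direction already established, and uniqueness up to a simultaneous projective transformation is built into the construction.
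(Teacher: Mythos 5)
Your proof is correct and follows essentially the same route as the paper's: the identities $c_i=\alpha x_i(1-x_{i+1})$ and $x_i+y_i=1$ are obtained by combining the defining relation $\T$ at indices $i$ and $i-1$ with elementary cross-ratio manipulations, and the converse is the same construction (recover $\pP$ from the $c_i$, then place each $q_i$ via $x_i$ and check the monodromy matches). The only difference is stylistic: you compute in the normalized chart $p_i=0$, $p_{i+1}=\infty$, whereas the paper uses the multiplicativity and permutation identities of the cross-ratio coordinate-free, which makes the cancellation of $\alpha$ in the $x_i+y_i=1$ step appear without solving for $q_{i-1}$ explicitly.
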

\begin{proof}
The multiplicativity of the cross-ratio and the relation $\pP \T \pQ$ imply
\begin{multline*}
[p_i, q_i, p_{i-1}, p_{i+1}] = [p_i, q_i, p_{i-1}, q_{i-1}] [p_i, q_i, q_{i-1}, q_{i+1}] [p_i, q_i, q_{i+1}, p_{i+1}]\\
= (1 - \alpha) [p_i, q_i, q_{i-1}, q_{i+1}] \frac{1}{1-\alpha} = [p_i, q_i, q_{i-1}, q_{i+1}].
\end{multline*}
Using the action of the permutations of the variables on the cross-ratio, this equation rewrites as
\[
1 - \frac1{1-x_i} = 1 - \frac1{y_i},
\]
which implies $x_i + y_i = 1$.

In a similar way we have
\[
c_i = [p_i, p_{i+1}, p_{i-1}, q_i] [p_i, p_{i+1}, q_i, q_{i+1}] [p_i, p_{i+1}, q_{i+1}, p_{i+2}] = x_i \alpha (1 - x_{i+1}),
\]
and similarly, replacing $c$ with $d$, $x$ with $y$, and exchanging $p$ and $q$.

Let us prove the second statement of the lemma. For given $x$ and $\alpha$, compute the $n$-tuple $c_1, \ldots, c_n$
and extend it to a periodic doubly infinite sequence.
This sequence defines a projective class of a twisted $n$-gon.
Let $\pP$ be a representative of this class.
Then the $n$-tuple $x_1, \ldots, x_n$ determines an $n$-periodic doubly infinite sequence of points $q_i$.

We claim that $\pQ = (q_i)$ is a twisted polygon with the same monodromy as $\pP$.
Indeed, from $\Phi(p_i) = p_{i+n}$ and $x_i = [p_i, p_{i+1}, p_{i-1}, q_i]$, it follows that $\Phi(q_i) = q_{i+n}$.
Thus we have a pair of twisted polygons $\pP$ and $\pQ$ with the same monodromy (in the special case when the monodromy is the identity, the polygons are closed).
The equation $c_i = \alpha x_i(1-x_{i+1})$ implies $[p_i, p_{i+1}, q_i, q_{i+1}] = \alpha$, thus we have $\pP \T \pQ$.
\end{proof}

Observe that scaling $(c_i)$ and $(d_i)$ simultaneously by $t$  yields two $t\alpha$-related sequences of cross-ratios.
This serves an alternative source of integrals and provides an alternative proof of Corollary \ref{cor:FIntegrals}.

\begin{theorem}
\label{thm:TraceComponents} Let $M$ be a monodromy matrix of a twisted polygon. 
The homogeneous components of the normalized trace ${\Tr^2 M}/{\det M}$ are integrals of the relation $\T$.
\end{theorem}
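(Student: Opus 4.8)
The plan is to upgrade the tautological invariance of the monodromy under $\T$ into an invariance of each graded piece, using the scaling covariance of the relation recorded just above. Write $g(c) = \Tr^2 M(c)/\det M(c)$ for the normalized trace of the monodromy, regarded as a function of the cross-ratios $c = (c_1,\ldots,c_n)$, and consider the scaling action $c \mapsto tc$ (with the grading $\deg c_i = 1$). By the \emph{homogeneous components} of $g$ I mean the coefficients $g_j$ in the expansion $g(tc) = \sum_j t^j g_j(c)$. The first point is that this expansion really is a finite Laurent expansion in $t$: by Theorem \ref{thm:trmono} and the homogeneity relations $F_k(tc) = t^k F_k(c)$, $c_{[n]}(tc) = t^n c_{[n]}(c)$, one has
\[
g(tc) = \frac{1}{t^n c_{[n]}} \left( \sum_{k=0}^{\lfloor \frac n2 \rfloor} (-1)^k t^k F_k(c) \right)^2 = \sum_{m} t^{m-n} \cdot \frac{(-1)^m}{c_{[n]}} \sum_{k+l=m} F_k F_l,
\]
so that $g_{m-n} = (-1)^m c_{[n]}^{-1} \sum_{k+l=m} F_k F_l$.

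The central step is the functional identity $g(tc) = g(td)$ for any $\T$-related pair. Suppose $\pP \T \pQ$, with cross-ratios $c$ and $d$. By Lemma \ref{lem:AuxVar} this pair is encoded by auxiliary variables $x_i$ through $c_i = \alpha x_i(1-x_{i+1})$ and $d_i = \alpha(1-x_i)(1-(1-x_{i+1}))$. Keeping the $x_i$ fixed and replacing $\alpha$ by $t\alpha$ multiplies both $c$ and $d$ by $t$, and the converse statement of Lemma \ref{lem:AuxVar} produces twisted polygons $\pP_t$ and $\pQ_t$ with cross-ratios $tc$ and $td$ that are $\stackrel{t\alpha}{\sim}$-related. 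Since the relation $\stackrel{t\alpha}{\sim}$ on twisted polygons requires, by definition, that the two polygons have the \emph{same monodromy}, the matrices $M(tc)$ and $M(td)$ are conjugate, whence $g(tc) = g(td)$.

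Finally I extract the components. The identity $g(tc) = g(td)$ holds for all $t$ for which the scaled relation is defined, hence for all $t$ by Zariski density, and both sides are Laurent polynomials in $t$; equating coefficients of each power of $t$ gives $g_j(c) = g_j(d)$ for every $j$. As $\pP \T \pQ$ was arbitrary, each homogeneous component $g_j$ is an integral of $\T$. The explicit formula $g_{m-n} = (-1)^m c_{[n]}^{-1}\sum_{k+l=m}F_kF_l$ then exhibits $c_{[n]}$ and the $F_k$ as integrals (matching the coefficients of the Lax-matrix trace polynomial of Theorem \ref{thm:Integrals}), which is the promised alternative proof of Corollary \ref{cor:FIntegrals}. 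The main obstacle is the middle step: one must verify that the naive scaling $c \mapsto tc$ corresponds to an honest relation for the parameter $t\alpha$ — this is exactly what the second half of Lemma \ref{lem:AuxVar} supplies — and combine it with the fact that monodromy-invariance is part of the very definition of $\T$ on twisted polygons; once the identity $g(tc)=g(td)$ is secured, passing to homogeneous components is purely formal.
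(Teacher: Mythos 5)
Your proof is correct and follows essentially the same route as the paper's: both rest on the invariance of the monodromy under $\T$ together with the observation (via the converse part of Lemma \ref{lem:AuxVar}, fixing $x$ and scaling $\alpha \mapsto t\alpha$) that scaling $c$ and $d$ simultaneously by $t$ yields a $t\alpha$-related pair, whence $g(tc)=g(td)$ and the homogeneous components match. Your write-up merely makes explicit some details the paper leaves implicit, such as the finite Laurent expansion and the Zariski-density argument for the excluded values of $t$.
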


\begin{proof}
For a given pair $\pP \T \pQ$, let $c$ and $d$ be the corresponding sequences of cross-ratios. Denote the normalized trace of the monodromy by $F(c)$.
Since $\pP$ and $\pQ$ have the same monodromy, we have $F(c) = F(d)$. 

At the same time, Lemma \ref{lem:AuxVar} implies that there exist twisted polygons $\pP_t \stackrel{t\alpha}{\sim} \pQ_t$
with the cross-ratio sequences $tc$ and $td$ respectively.
It follows that $F(tc) = F(td)$ for all $t$, implying that the homogeneous components of $F(c)$ are equal to the respective homogeneous components of $F(d)$.
\end{proof}

Since the function $F(c)$ is given by the right hand side of \eqref{eqn:TraceC},
Theorem \ref{thm:TraceComponents} implies that $c_{[n]}$ and $F_k(c)$ are integrals.

Let us reformulate the statement of Theorem \ref{thm:TraceComponents}. Given a projective equivalence class of a twisted polygon $\pP$ with the cross-ratios $c$, let $\pP_t$ be the projective equivalence class of the twisted polygon whose cross-ratios equal $tc$, and let $M_t$ denote the monodromy of $\pP_t$. Since the polynomials $F_k$ are of degree $k$,  one has
\begin{equation}
\label{scaledmono}
\frac{\Tr M_t}{\sqrt{\det M_t}} = \sum_{k=0}^{\lfloor \frac{n}{2} \rfloor} (-1)^k t^{k-\frac{n}{2}} \frac{F_k}{\sqrt{c_{[n]}}}.
\end{equation}

\subsection{Extended dynamics and exceptional polygons}
\label{sec:ExtDynExcPol}
Recall Lemma \ref{lem:FixedPoint} and Corollary \ref{cor:2-2}: for an ideal polygon $\pP$ the number of polygons $\pQ$ such that $\pP \T \pQ$
is equal to $1$, $2$, or $\infty$, according to whether the Lax matrix $L_{\alpha^{-1}}(\pP)$ is parabolic, non-parabolic, or the identity.
(We consider only the complex case.)
Due to the M\"obius invariance, the type of the Lax matrix depends only on the cross-ratios $c$ of $\pP$.
In this section we describe the exceptional polygons (those having one or infinitely many $\alpha$-related polygons) in terms of $c$.

Let $X = (\C \setminus \{0, 1\})^n$ and $C = (\C \setminus \{0\})^n$.
Let $\pi_\alpha : X \to C$ be the map given by the formulas of Lemma \ref{lem:AuxVar} that express $c$ in terms of $x$:
\[
\pi_\alpha(x) = c, \text{ where } c_i = \alpha x_i(1-x_{i+1}).
\]
The preimage of a point $c \in C$ under $\pi_{\alpha}^{-1}$ is found from the fixed points of the composition of M\"obius maps
\begin{equation}
\label{eqn:TAlpha}
x_i = \frac{c_i}{\alpha(1 - x_{i+1})}.
\end{equation}
Thus we can represent $C$ as a disjoint union
\[
C = C_1(\alpha) \sqcup C_2(\alpha) \sqcup C_\infty(\alpha),
\]
where $C_i(\alpha) = \{c \in C \mid |\pi_{\alpha}^{-1}(c)| = i\}$.

\begin{lemma}
One has $c \in C_i(\alpha)$ if and only if an ideal polygon with cross-ratios $c$ is $\alpha$-related to $i$ ideal polygons (possibly degenerate).
\end{lemma}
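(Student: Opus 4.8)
The plan is to produce, for a fixed ideal polygon $\pP$ with cross-ratios $c$, an explicit bijection between the set of (possibly degenerate) polygons $\pQ$ satisfying $\pP \T \pQ$ and the fiber $\pi_\alpha^{-1}(c) \subset X$; the lemma then follows by comparing cardinalities (with the value $i = \infty$ allowed on both sides). Because the relation $\T$ is $\PGL(2)$-equivariant and the coordinates $c$ are projectively invariant, replacing $\pP$ by a projectively equivalent polygon changes neither side, so fixing a single representative $\pP$ is harmless. All the algebraic identities needed are already packaged in Lemma \ref{lem:AuxVar}; the real work lies in matching up the two domains correctly.

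First I would send each $\pQ$ with $\pP \T \pQ$ to the tuple $x$ with $x_i = [p_i, p_{i+1}, p_{i-1}, q_i]$. Lemma \ref{lem:AuxVar} gives immediately $c_i = \alpha x_i(1 - x_{i+1})$, that is, $\pi_\alpha(x) = c$. The point needing care is that $x$ lands in $X = (\C \setminus \{0,1\})^n$ and not merely in $(\CP^1)^n$. Here the hypothesis $\alpha \notin \{0, 1, \infty\}$ is decisive: since $p_{i-1}, p_i, p_{i+1}$ are distinct, the cross-ratio $x_i$ equals $0$, $1$, or $\infty$ precisely when $q_i$ equals $p_{i+1}$, $p_{i-1}$, or $p_i$ respectively, and each such coincidence degenerates one of the defining cross-ratios: $q_i = p_{i+1}$ makes $[p_i, p_{i+1}, q_i, q_{i+1}] = \infty$, $q_i = p_{i-1}$ makes $[p_{i-1}, p_i, q_{i-1}, q_i] = \infty$, and $q_i = p_i$ makes $[p_i, p_{i+1}, q_i, q_{i+1}] = 0$, each contradicting the constant value $\alpha$. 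Thus $x \in \pi_\alpha^{-1}(c)$.

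In the reverse direction I would take $x \in \pi_\alpha^{-1}(c)$ and let $q_i$ be the unique point with $[p_i, p_{i+1}, p_{i-1}, q_i] = x_i$, which exists and is unique because $p_{i-1}, p_i, p_{i+1}$ are distinct. The converse part of Lemma \ref{lem:AuxVar} (applied with the chosen representative being our fixed $\pP$) then shows that $\pQ = (q_i)$ shares the monodromy of $\pP$, and that the cross-ratio multiplicativity $c_i = x_i\,[p_i, p_{i+1}, q_i, q_{i+1}]\,(1 - x_{i+1})$ combined with $\pi_\alpha(x) = c$ and $x_i(1 - x_{i+1}) \neq 0$ forces $[p_i, p_{i+1}, q_i, q_{i+1}] = \alpha$, that is, $\pP \T \pQ$. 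The two constructions are manifestly inverse to one another, both being governed by the single relation $x_i = [p_i, p_{i+1}, p_{i-1}, q_i]$ with $q_i$ recovered from $x_i$; hence $|\pi_\alpha^{-1}(c)|$ equals the number of polygons $\alpha$-related to $\pP$, which is exactly the assertion.

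I expect the main obstacle to be not algebraic but the boundary bookkeeping of the two previous paragraphs: one must verify that restricting the fiber to $X$ (so that each $x_i$ avoids $0$ and $1$) excludes exactly the forbidden coincidences $q_i \in \{p_{i-1}, p_i, p_{i+1}\}$ and nothing more, so that no genuinely $\alpha$-related $\pQ$ is dropped and no spurious solution is introduced. In particular one should observe that the degeneracy permitted in the statement is only of the type $q_i = q_{i+2}$ — consecutive vertices of $\pQ$ are automatically distinct, since $q_i = q_{i+1}$ would yield the value $1 \neq \alpha$ — and that such degenerate $\pQ$ still correspond to honest points of $\pi_\alpha^{-1}(c)$, so that they are correctly included in the count.
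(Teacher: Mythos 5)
Your proposal is correct and takes essentially the same route as the paper: both use Lemma \ref{lem:AuxVar} to set up the bijection $q_i \leftrightarrow x_i = [p_i, p_{i+1}, p_{i-1}, q_i]$ between the polygons $\alpha$-related to a fixed representative $\pP$ and the points of the fiber $\pi_\alpha^{-1}(c)$. The paper's proof is a compressed version of yours, leaving implicit the boundary bookkeeping you spell out (that $x_i \in \{0,1,\infty\}$ corresponds exactly to the forbidden coincidences $q_i \in \{p_{i+1}, p_{i-1}, p_i\}$, and that $q_i = q_{i+1}$ is automatically excluded since $\alpha \neq 1$).
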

\begin{proof}
We use Lemma \ref{lem:AuxVar}.
The complex number $x_i$ determines the position of the point $q_i$ with respect to the points $p_{i-1}, p_i, p_{i+1}$.
Thus for every polygon $\pP$ with cross-ratios $c$
the polygons $\alpha$-related to $\pP$ are in a one-to-one correspondence with the elements of $\pi_{\alpha}^{-1}(c)$.
\end{proof}


\begin{theorem}
\label{thm:InfAlpha}
Let $\pP$ be a twisted or closed $n$-gon with cross-ratio coordinates $c_1, \ldots, c_n$, and let $\alpha \in \C \setminus \{0,1\}$.
Then the following holds.

There are infinitely many $n$-gons $\alpha$-related to $\pP$ if and only if the cross-ratios $\alpha^{-1}c_1, \ldots, \alpha^{-1}c_n$
determine a projective equivalence class of closed polygons, that is,
the numbers $\alpha^{-1}c_1, \ldots, \alpha^{-1}c_n$ satisfy the equations
\begin{equation}
\label{eqn:DAlpha}
D_{i, n-3+i}(\alpha^{-1}c) = 0
\end{equation}
with $D_{i,j}$ as in Section \ref{sec:CoCont}.

There is exactly one $n$-gon $\alpha$-related to $\pP$ if and only if the cross-ratios $\alpha^{-1}c_1, \ldots, \alpha^{-1}c_n$
determine a projective equivalence class of a twisted polygon with parabolic monodromy, that is if and only if
\begin{equation}
\label{eqn:TraceAlpha}
\frac{1}{c_{[n]}} \left( \sum\limits_{k=0}^{\lfloor \frac{n}2\rfloor} (-1)^k \alpha^{\frac{n}2 - k} F_k(c) \right)^2 = 4
\end{equation}
and not all of the equations $D_{i, n-3+i}(\alpha^{-1}c) = 0$ are satisfied.
\end{theorem}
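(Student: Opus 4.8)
The plan is to reduce the whole statement to the classification of the fixed points of a single Möbius transformation, namely the monodromy of the twisted polygon whose cross-ratio coordinates are $\alpha^{-1}c$.

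First I would invoke the preceding lemma, which identifies the polygons $\pQ$ with $\pP \T \pQ$ with the elements of the fiber $\pi_\alpha^{-1}(c)$, the latter being the set of periodic solutions of the recurrence \eqref{eqn:TAlpha}, namely $x_i = c_i/\bigl(\alpha(1-x_{i+1})\bigr)$. The crux is to recognize this recurrence as a fixed-point equation. Writing $c' = \alpha^{-1}c$, the map $x_{i+1}\mapsto x_i = c'_i/(1-x_{i+1})$ is represented by the matrix $\begin{pmatrix} 0 & c'_i\\ -1 & 1\end{pmatrix}$, so the composite map $x_{n+1}\mapsto x_1$ is represented by the product $\begin{pmatrix} 0 & c'_1\\ -1 & 1\end{pmatrix}\cdots\begin{pmatrix} 0 & c'_n\\ -1 & 1\end{pmatrix}$. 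By Lemma \ref{lem:MonMat2} this is exactly the monodromy matrix $M_n(c')$ of the twisted polygon with cross-ratios $c' = \alpha^{-1}c$. Hence $|\pi_\alpha^{-1}(c)|$ equals the number of fixed points on $\PP^1$ of the Möbius transformation defined by $M_n(\alpha^{-1}c)$.

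Over $\C$ a Möbius transformation fixes infinitely many points iff it is the identity, exactly one point iff it is parabolic, and two points otherwise. The infinitely-many case therefore occurs precisely when $M_n(\alpha^{-1}c)$ is the identity, i.e. when $\alpha^{-1}c$ is the cross-ratio sequence of a closed polygon; by Lemma \ref{lem:DRelations} this is equivalent to $D_{i,n-3+i}(\alpha^{-1}c)=0$ for all $i$, which is the first assertion. For the one-polygon case I would apply Corollary \ref{cor:MonoPara} to the cross-ratios $c'=\alpha^{-1}c$: the transformation is parabolic or the identity iff $\bigl(\sum_k (-1)^k F_k(c')\bigr)^2 = 4c'_{[n]}$. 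Using the homogeneity $F_k(\alpha^{-1}c)=\alpha^{-k}F_k(c)$ and $c'_{[n]}=\alpha^{-n}c_{[n]}$, multiplying by $\alpha^n$ and absorbing $\alpha^{n/2}$ into the square converts this identity into \eqref{eqn:TraceAlpha}. Ruling out the identity by requiring that not all $D_{i,n-3+i}(\alpha^{-1}c)$ vanish isolates the parabolic case, which is the second assertion.

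The only genuine step is the matrix identification in the first paragraph; once the fiber equation \eqref{eqn:TAlpha} is recognized as the fixed-point equation for $M_n(\alpha^{-1}c)$, the rest is a direct application of Lemma \ref{lem:DRelations} and Corollary \ref{cor:MonoPara} together with the homogeneity bookkeeping for $F_k$ and $c_{[n]}$. The one point that must be checked carefully — that passing from a fixed point $x_1$ to the full periodic sequence $(x_i)$, and thence to an $\alpha$-related polygon, preserves the cardinality count — is already contained in the discussion of Section \ref{sec:ExtDynExcPol} and may simply be cited.
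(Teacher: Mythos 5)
Your proposal is correct and follows essentially the same route as the paper: both identify the fiber $\pi_\alpha^{-1}(c)$ with the fixed-point set of the M\"obius transformation given by the monodromy matrix $M_n(\alpha^{-1}c)$ (via Lemma \ref{lem:MonMat2}), then invoke the identity/parabolic/loxodromic trichotomy together with Lemma \ref{lem:DRelations} and the parabolicity criterion. Your explicit homogeneity bookkeeping ($F_k(\alpha^{-1}c)=\alpha^{-k}F_k(c)$, $c'_{[n]}=\alpha^{-n}c_{[n]}$) just spells out a step the paper leaves implicit.
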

Note that equations \eqref{eqn:DAlpha} imply equation \eqref{eqn:TraceAlpha} as was shown in Section \ref{calcmono}.
\begin{proof}
The composition of maps \eqref{eqn:TAlpha} corresponds to the product of matrices
\[
M(\alpha^{-1}c) = \begin{pmatrix} 0 & \alpha^{-1}c_1\\ -1 & 1 \end{pmatrix} \cdots \begin{pmatrix} 0 & \alpha^{-1}c_n\\ -1 & 1 \end{pmatrix},
\]
which is the monodromy of a polygon with cross-ratios $\alpha^{-1}c$.
Thus the cardinality of the preimage $\pi^{-1}_\alpha(c)$ is $\infty$ or $1$ if this monodromy is, respectively, the identity or parabolic.
Lemma \ref{lem:DRelations} and Corollary \ref{cor:SchoeneGleichung} give necessary and sufficient conditions of triviality and parabolicity of the monodromy.
\end{proof}

\begin{lemma}
\label{lem:CoordInfFiber}
The infinite fiber $\pi_\alpha^{-1}(c)$ for $c \in C_\infty(\alpha)$ consists of points with the coordinates $x_i = [p_i, p_{i+1}, p_{i-1}, z]$,
where $\pP$ is any polygon with cross-ratios $\alpha^{-1}c$, and $z$ is any point in $\CP^1$.
\end{lemma}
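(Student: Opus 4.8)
The plan is to identify the fiber $\pi_\alpha^{-1}(c)$, which by definition is the set of $x \in X$ satisfying the cyclic system $x_i(1-x_{i+1}) = \alpha^{-1}c_i$ for all $i$, with the claimed one-parameter family. I would write $\hat c_i = \alpha^{-1}c_i$; since $c \in C_\infty(\alpha)$, Theorem~\ref{thm:InfAlpha} guarantees that $\hat c$ is the cross-ratio sequence of a \emph{closed} polygon $\pP=(p_i)$, so that $\hat c_i = [p_i,p_{i+1},p_{i-1},p_{i+2}]$ and $p_{i+n}=p_i$. The key structural remark is that the fiber equation is a first-order recurrence $x_{i+1} = 1 - \hat c_i/x_i$, so that a solution is completely determined by its first coordinate $x_1$; I would prove the two inclusions accordingly.

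For the forward inclusion I would check that for each $z \in \CP^1 \setminus \{p_1,\ldots,p_n\}$ the sequence $x_i = [p_i,p_{i+1},p_{i-1},z]$ lies in the fiber, by verifying the identity $x_i(1-x_{i+1}) = [p_i,p_{i+1},p_{i-1},p_{i+2}] = \hat c_i$, where $x_{i+1} = [p_{i+1},p_{i+2},p_i,z]$. A direct expansion of $1-x_{i+1}$ collapses to $\frac{(p_i-z)(p_{i+2}-p_{i+1})}{(p_{i+1}-z)(p_{i+2}-p_i)}$, after which the two $z$-dependent factors cancel against those in $x_i$ and one lands on $\hat c_i$. This is exactly the cross-ratio computation underlying Lemma~\ref{lem:AuxVar}, specialized to the case in which the auxiliary point is the \emph{same} point $z$ for every index. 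Geometrically this is the degenerate $\alpha=1$ relation: the constant sequence $q_i\equiv z$ gives $[p_i,p_{i+1},q_i,q_{i+1}]=1$, so these are precisely the (degenerate) polygons $1$-related to $\pP$. Periodicity $x_{i+n}=x_i$ is automatic because $\pP$ is closed, and the finitely many excluded values $z\in\{p_j\}$, where some $x_i$ leaves $\C\setminus\{0,1\}$, still leave an infinite family.

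For the reverse inclusion, given any $x$ in the fiber, I would use that $p_0,p_1,p_2$ are distinct, so $z \mapsto [p_1,p_2,p_0,z]$ is a M\"obius bijection of $\CP^1$; let $z$ be the unique preimage of $x_1$. Then the sequence $x_i' = [p_i,p_{i+1},p_{i-1},z]$ produced by the forward inclusion and the given sequence $x_i$ both satisfy the recurrence $x_{i+1}=1-\hat c_i/x_i$ and agree at $i=1$, so by induction they coincide for all $i$. Hence $x$ has the claimed form, and the two inclusions together prove the lemma.

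The computations here are routine, so the only real care needed is bookkeeping: the polygon whose vertices appear in the formula carries the cross-ratios $\alpha^{-1}c$ rather than $c$, and one must track the finitely many degenerate positions of $z$. That the family genuinely fills an \emph{infinite} fiber, rather than collapsing to a point, is not something to establish here — it is exactly the hypothesis $c\in C_\infty(\alpha)$, which via \eqref{eqn:TAlpha} says that the total monodromy fixing $x_1$ is the identity, so that every value of $x_1$ (equivalently, every $z$) extends to a consistent periodic solution.
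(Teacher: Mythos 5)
Your proposal is correct and follows essentially the same route as the paper: verify that $x_i=[p_i,p_{i+1},p_{i-1},z]$ satisfies $\alpha x_i(1-x_{i+1})=c_i$ (the paper does this via cross-ratio identities, you by direct expansion — the same computation), and then use the fact that a fiber point is determined by any single coordinate via the recurrence. Your explicit treatment of the reverse inclusion (M\"obius bijectivity of $z\mapsto[p_1,p_2,p_0,z]$) and of the excluded values $z\in\{p_j\}$ only spells out what the paper leaves implicit.
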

\begin{proof}
A point in $\pi_\alpha^{-1}(c)$ is uniquely determined by any one of its coordinates.
Further, for $x_i$ given by the formula in the lemma we have
\begin{multline*}
\alpha x_i (1-x_{i+1}) = \alpha [p_i, p_{i+1}, p_{i-1}, z] [p_{i+1}, p_i, p_{i+2}, z]\\
= \alpha [p_i, p_{i+1}, p_{i-1}, z] [p_i, p_{i+1}, z, p_{i+2}]
= \alpha [p_i, p_{i+1}, p_{i-1}, p_{i+2}] = c_i,
\end{multline*}
as needed.
\end{proof}

We apply Theorem \ref{thm:InfAlpha} in Section \ref{smallsection} to give examples of closed $n$-gons $\alpha$-related
to infinitely many other closed $n$-gons.

\subsection{Second set of auxiliary variables}
\label{sec:AuxVar2}
In the space $X = (\C \setminus \{0,1\})^n$ perform the following  change of variables:
\[
u_i = \frac{1}{x_i} - 1, \quad x_i = \frac{1}{1+u_i}.
\]
Then $u \in (\C \setminus \{0,-1\})^n$, and the projection $\pi_\alpha$ writes in the new coordinates as
\[
c_i = \frac{\alpha u_{i+1}}{(1+u_i)(1+u_{i+1})} = \frac{\alpha}{(1+u_i)(1+\frac{1}{u_{i+1}})}.
\]
The coordinates $u_i$ will play an important role in the description of Poisson structures on $\cT$ and $\cP$.
In this section we will use them to describe the union of infinite fibers $X_\infty$.

\begin{lemma}
\label{lem:DijU}
Under the above substitutions we have
\[
D_{i,j}(\alpha^{-1}c) = \frac{1 + u_i + u_i u_{i+1} + \cdots + u_{[i,j+1]}}{(1+u_i)(1+u_{i+1}) \cdots (1+u_{j+1})},
\]
\[
\frac{\Tr^2 M(\alpha^{-1}c)}{\det M(\alpha^{-1}c)} = \frac{(1+u_{[n]})^2}{u_{[n]}}.
\]
\end{lemma}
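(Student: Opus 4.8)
The plan is to prove the first identity by induction on $j$ (for fixed $i$) using the recurrence \eqref{eqn:DRecurrence}, and then to extract the trace--determinant formula from it. I would write $P_{i,j} = \prod_{m=i}^{j+1}(1+u_m)$ for the claimed denominator and $N_{i,j} = \sum_{k=i-1}^{j+1} u_i u_{i+1}\cdots u_k$ (with the empty product, at $k=i-1$, equal to $1$) for the claimed numerator, so that the assertion becomes $D_{i,j}(\alpha^{-1}c) = N_{i,j}/P_{i,j}$. First I would check the base cases $D_{i,i-2} = D_{i,i-1} = 1$: for $j=i-1$ both numerator and denominator equal $1+u_i$, and for $j=i-2$ both are the empty product $1$, so the claimed fraction collapses to $1$ in each case.

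For the inductive step I would substitute $\alpha^{-1}c_j = \dfrac{u_{j+1}}{(1+u_j)(1+u_{j+1})}$ into $D_{i,j}(\alpha^{-1}c) = D_{i,j-1}(\alpha^{-1}c) - (\alpha^{-1}c_j)D_{i,j-2}(\alpha^{-1}c)$, clear denominators over the common denominator $P_{i,j} = (1+u_{j+1})(1+u_j)P_{i,j-2} = (1+u_{j+1})P_{i,j-1}$, and reduce the numerator to $N_{i,j-1}(1+u_{j+1}) - u_{j+1}N_{i,j-2}$. Using $N_{i,j-1}-N_{i,j-2}=u_iu_{i+1}\cdots u_j$ this simplifies to $N_{i,j-1}+u_iu_{i+1}\cdots u_{j+1} = N_{i,j}$, closing the induction. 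This is a short telescoping computation that I expect to be entirely routine.

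The trace--determinant formula then follows. For the determinant, $\det M(\alpha^{-1}c) = (\alpha^{-1}c)_{[n]} = \prod_{i=1}^n \alpha^{-1}c_i$ by Lemma \ref{lem:MonMat2}; substituting the expression for each $\alpha^{-1}c_i$ and telescoping the cyclic product gives $\det M(\alpha^{-1}c) = u_{[n]}\big/\big(\prod_{i=1}^n(1+u_i)\big)^2$. For the trace, I would use $\Tr M(\alpha^{-1}c) = D_{2,n}(\alpha^{-1}c) - (\alpha^{-1}c_1)D_{3,n-1}(\alpha^{-1}c)$ from \eqref{eqn:TraceMon} (equivalently Lemma \ref{lem:MonMat1}) and apply the first identity to both continuants. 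Taking the ratio and cancelling the common factor $\big(\prod_{i=1}^n(1+u_i)\big)^2$ yields $(1+u_{[n]})^2/u_{[n]}$.

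The only place where anything nontrivial happens, and hence the main point to handle carefully, is the cyclic wrap-around in $D_{2,n}$: since $c_n$ involves $u_n$ and $u_{n+1}=u_1$, the denominator $P_{2,n}$ becomes the full cyclic product $\prod_{i=1}^n(1+u_i)$ and the numerator $N_{2,n}$ acquires the final term $u_1u_2\cdots u_n = u_{[n]}$. After writing $(\alpha^{-1}c_1)D_{3,n-1}$ over the same denominator, all intermediate monomials $u_2,\,u_2u_3,\,\ldots,\,u_2\cdots u_n$ cancel against their counterparts in $N_{2,n}$, leaving exactly $1 + u_{[n]}$ in the numerator of the trace. This clean cancellation is what produces the simple closed form, and verifying it is the crux of the proof.
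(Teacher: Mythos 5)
Your proposal is correct and follows essentially the same route as the paper: the first identity by induction on $j$ via the recurrence \eqref{eqn:DRecurrence}, and the second by combining the first identity with the expression of $\Tr M$ as a difference of two continuants (you use $\Tr M = D_{2,n} - \alpha^{-1}c_1 D_{3,n-1}$, the paper the cyclically shifted $D_{1,n-1} - \alpha^{-1}c_0 D_{2,n-2}$, which is the same thing) together with $\det M(\alpha^{-1}c) = \prod_i \alpha^{-1}c_i$. The paper's proof is stated in two lines; your write-up supplies exactly the details it leaves implicit, including the correct handling of the cyclic wrap-around $u_{n+1}=u_1$ and the cancellation leaving $1+u_{[n]}$.
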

\begin{proof}
The first formula is proved by induction on $j$ using the recurrence \eqref{eqn:DRecurrence}.
The second formula follows from the first one and the equation $\Tr M = D_{1,n-1} - \alpha^{-1}c D_{2,n-2}$.
\end{proof}

\begin{proposition}
In the coordinates $u_i$ the union of the infinite fibers $\pi_\alpha^{-1}(C_\infty(\alpha))$ is the solution set of the following system of equations:
\begin{eqnarray}
1 + u_i + u_iu_{i+1} + \cdots + u_{[i, i+n-2]} = 0 \label{eqn:USum},\\
u_{[n]} = 1. \label{eqn:UProd}
\end{eqnarray}
Any two equations \eqref{eqn:USum} for consecutive $i$ imply the rest.
Any equation \eqref{eqn:USum} together with equation \eqref{eqn:UProd} implies the rest.
\end{proposition}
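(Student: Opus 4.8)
The plan is to reduce the entire statement to the numerators appearing in Lemma~\ref{lem:DijU} together with one linear recurrence among them. Write
\[
N_i := 1 + u_i + u_i u_{i+1} + \cdots + u_{[i,i+n-2]}
\]
for the numerator of $D_{i,n-3+i}(\alpha^{-1}c)$ supplied by Lemma~\ref{lem:DijU}. Since $u_i \neq -1$, the denominators there never vanish, so for each fixed $i$ equation \eqref{eqn:USum} is equivalent to $D_{i,n-3+i}(\alpha^{-1}c)=0$. The first step is then to identify the solution set: by Theorem~\ref{thm:InfAlpha}, a point $u$ lies in $\pi_\alpha^{-1}(C_\infty(\alpha))$ if and only if $c=\pi_\alpha(u)\in C_\infty(\alpha)$, which holds exactly when $D_{i,n-3+i}(\alpha^{-1}c)=0$ for all $i$, i.e. when $N_i=0$ for all $i$. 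Thus $\pi_\alpha^{-1}(C_\infty(\alpha))=\{u : N_i=0\ \forall i\}$, and it remains to show this set coincides with the solution set of the \emph{full} system \eqref{eqn:USum}--\eqref{eqn:UProd} and to prove the two reduction statements.

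The engine of the proof is the recurrence
\[
N_i = u_i N_{i+1} + 1 - u_{[n]},
\]
which I would establish by expanding $u_i N_{i+1}$ term by term. The only delicate point is that the top term $u_i u_{i+1} \cdots u_{i+n-1}$ wraps around cyclically and collapses to $u_{[n]}$, so that $u_i N_{i+1} = (N_i - 1) + u_{[n]}$. I expect this cyclic index bookkeeping to be the sole genuinely delicate step; everything else is formal.

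With the recurrence in hand, the consequences follow quickly. If $N_i = 0$ for two consecutive indices, the recurrence forces $1 - u_{[n]} = 0$, that is \eqref{eqn:UProd}; in particular \eqref{eqn:UProd} is automatic on $\{u : N_i = 0\ \forall i\}$, so this set equals the solution set of the combined system, proving the main assertion. For the reductions, note that once $u_{[n]}=1$ the recurrence simplifies to $N_i = u_i N_{i+1}$; hence a single vanishing $N_{i_0}$ propagates backward, $N_{i_0-1}=u_{i_0-1}N_{i_0}=0$ and so on, and after $n$ cyclic steps kills every $N_i$, giving the ``one equation \eqref{eqn:USum} plus \eqref{eqn:UProd}'' statement. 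Finally, given two consecutive vanishing numerators $N_i = N_{i+1} = 0$, the recurrence first yields $u_{[n]}=1$ and then the preceding argument yields $N_j=0$ for all $j$, establishing the ``two consecutive equations'' statement. The alternative derivation of $u_{[n]}=1$ via the trace formula $\Tr^2 M/\det M = (1+u_{[n]})^2/u_{[n]}$ in Lemma~\ref{lem:DijU} (setting the normalized trace equal to $4$) could be mentioned as a cross-check, but it is not needed since the recurrence already delivers \eqref{eqn:UProd} for free.
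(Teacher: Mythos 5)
Your proposal is correct, and its core coincides with the paper's argument: both identify the solution set of \eqref{eqn:USum} with $\pi_\alpha^{-1}(C_\infty(\alpha))$ by combining Theorem \ref{thm:InfAlpha} with Lemma \ref{lem:DijU}, using that the denominators $(1+u_i)\cdots$ never vanish on $(\C\setminus\{0,-1\})^n$. Where you diverge is in everything after that. The paper obtains \eqref{eqn:UProd} by a geometric remark --- it expresses parabolicity of the monodromy $M(\alpha^{-1}c)$, which holds in particular when that monodromy is the identity --- and then dismisses the two dependence statements with ``easily checked, left to the reader.'' You instead introduce the single recurrence
\[
N_i = u_i N_{i+1} + 1 - u_{[n]}, \qquad N_i := 1 + u_i + u_iu_{i+1} + \cdots + u_{[i,i+n-2]},
\]
whose proof is exactly the cyclic wrap-around computation you flag (the top term of $u_iN_{i+1}$ collapses to $u_{[n]}$), and this one identity delivers all three remaining claims uniformly: two consecutive vanishing $N_i$'s force $u_{[n]}=1$; conversely $u_{[n]}=1$ turns the recurrence into $N_{i-1}=u_{i-1}N_i$, so one vanishing $N_i$ propagates cyclically to all. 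What your route buys is a complete, purely algebraic proof of the parts the paper omits, with \eqref{eqn:UProd} falling out for free rather than being imported from the trace formula; what the paper's route buys is the conceptual interpretation of \eqref{eqn:UProd} as a parabolicity condition, which you correctly relegate to a cross-check. Both are sound; yours is the more self-contained writeup of this proposition.
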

\begin{proof}
The fact that the union of infinite fibers is the solution set of \eqref{eqn:USum}
follows from Theorem \ref{thm:InfAlpha} and Lemma \ref{lem:DijU}.
Equation \eqref{eqn:UProd} follows from them because it expresses the parabolicity of the monodromy $M(\alpha^{-1}c)$.

The statements about the mutual dependence of the equations are easily checked, and we leave it to the reader.
\end{proof}

\subsection{Poisson structures}
\label{sec:IntProof}
The following pair of Poisson brackets appears in the study of the Volterra lattice, see \cite{Suris97} and the book \cite{SurB}.
These brackets are compatible, that is, any linear combination of them is also a Poisson bracket:
\begin{equation}
\label{eqn:BiHamiltonian}
\begin{split}
\{c_i, c_{i+1}\}_1 &= c_ic_{i+1}\\
\{c_i, c_{i+1}\}_2 &= c_ic_{i+1}(c_i + c_{i+1})\\
\{c_i, c_{i+2}\}_2 &= c_ic_{i+1}c_{i+2}
\end{split}
\end{equation}
(As we mentioned earlier, the values that are not mentioned explicitly are either zero or follow by the skew-symmetry from those mentioned.)

\begin{theorem}
\label{thm:InvBracket}
The Poisson bracket
\begin{equation}
\label{eqn:PoissonC}
\{ \cdot, \cdot \}_{\alpha^{-1}} = \{ \cdot, \cdot\}_1 - \alpha^{-1} \{ \cdot, \cdot \}_2
\end{equation}
is invariant with respect to the correspondence $\T$.
\end{theorem}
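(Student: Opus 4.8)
The plan is to realize the relation $\T$ in the auxiliary coordinates of Lemma \ref{lem:AuxVar} and Section \ref{sec:AuxVar2}, where it linearizes into a single $\alpha$-independent involution, and then to show that the bracket \eqref{eqn:PoissonC} pulls back to these coordinates as a very simple structure that is manifestly preserved by that involution. Recall that for $\pP \T \pQ$ the variables $x_i = [p_i,p_{i+1},p_{i-1},q_i]$ satisfy $c_i = \alpha x_i(1-x_{i+1})$, while the cross-ratios of $\pQ$ are $d_i = \alpha y_i(1-y_{i+1})$ with $y_i = 1-x_i$. Passing to $u_i = \frac1{x_i}-1$ one has $c_i = \frac{\alpha u_{i+1}}{(1+u_i)(1+u_{i+1})}$, and the very same formula applied to $v_i = 1/u_i$ returns $d_i$. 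Thus, writing $\Pi_\alpha\colon u\mapsto c$ for this map (a local diffeomorphism on a Zariski open set) and $\sigma\colon u_i\mapsto 1/u_i$ for the involution, the correspondence is exactly $c = \Pi_\alpha(u)$, $d = \Pi_\alpha(\sigma u)$; equivalently, each branch of $T_\alpha$ is $\Pi_\alpha\circ\sigma\circ\Pi_\alpha^{-1}$. Invariance of \eqref{eqn:PoissonC} under $\T$ then reduces to two assertions: that $\Pi_\alpha$ is a Poisson map onto $(\KK^*)^n$ with the pulled-back bracket, and that $\sigma$ is a Poisson automorphism of that bracket.

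The heart of the argument is to compute the pullback $\Pi_\alpha^*\{\cdot,\cdot\}_{\alpha^{-1}}$ and to check that it is the \emph{log-canonical} bracket $\{u_i,u_{i+1}\} = u_iu_{i+1}$, with $\{u_i,u_{i+j}\} = 0$ for $j\ge 2$ (equivalently, in the $x$-coordinates, $\{x_i,x_{i+1}\} = x_i(1-x_i)\,x_{i+1}(1-x_{i+1})$). Since each $c_i$ depends only on $u_i,u_{i+1}$ and the proposed bracket is nearest-neighbour, the relations $\{c_i,c_{i+j}\} = 0$ for $j\ge 3$ hold automatically, and only $\{c_i,c_{i+1}\}$ and $\{c_i,c_{i+2}\}$ require checking. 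The relation $\{c_i,c_{i+2}\}$ pins down the coefficient and reproduces $-\alpha^{-1}c_ic_{i+1}c_{i+2}$; the relation $\{c_i,c_{i+1}\}$ is the substantive one: the two contributing terms combine, after factoring out $c_ic_{i+1}$, into $(1-x_i)(1-x_{i+1}) + x_{i+1}x_{i+2}$, which equals $1-\alpha^{-1}(c_i+c_{i+1})$ upon using $\alpha^{-1}c_i = x_i(1-x_{i+1})$. The decisive feature is that this pulled-back bracket carries no $\alpha$ whatsoever: the entire $\alpha$-dependence of \eqref{eqn:PoissonC} is absorbed into the map $\Pi_\alpha$.

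With the pullback in hand, the remaining two points are immediate. The log-canonical bracket satisfies the Jacobi identity automatically, as its structure constants are constant in the logarithmic coordinates $\log u_i$, so it is a genuine Poisson bracket; and it is visibly $\sigma$-invariant, since $\{1/u_i,1/u_{i+1}\} = \frac{1}{u_iu_{i+1}} = (u_iu_{i+1})\circ\sigma$, i.e. in the logarithmic coordinates $\sigma$ is the sign flip $\log u_i \mapsto -\log u_i$, which preserves a constant skew form. Hence $\sigma$ is a Poisson automorphism and $\Pi_\alpha$ is Poisson, so each branch $\Pi_\alpha\circ\sigma\circ\Pi_\alpha^{-1}$ of $T_\alpha$ preserves $\{\cdot,\cdot\}_{\alpha^{-1}}$, which is the claim.

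I expect the only real obstacle to be the verification in the second paragraph: identifying the correct pullback bracket and matching the $\{c_i,c_{i+1}\}$ relation, where the constant term and the cubic term of \eqref{eqn:PoissonC} must reassemble into the clean $\alpha$-independent expression. Once the form $\{u_i,u_{i+1}\} = u_iu_{i+1}$ is established, the reduction of invariance to the transparent symmetry $u\mapsto 1/u$ is essentially formal. A secondary point needing a word of care is that $\Pi_\alpha$ is only a local diffeomorphism and $T_\alpha$ a $2$--$2$ correspondence, so the conclusion is best stated branch by branch, each branch being a genuine local Poisson isomorphism.
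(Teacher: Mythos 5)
Your proposal is correct and follows essentially the same route as the paper: lift $\T$ through $\pi_\alpha$ to the auxiliary variables, where it becomes the $\alpha$-independent involution ($x_i\mapsto 1-x_i$, equivalently $u_i\mapsto 1/u_i$), verify by the same nearest-neighbour computation that $\pi_\alpha$ is Poisson, and conclude from the manifest invariance of the lifted bracket under that involution. The only difference is cosmetic: the paper exhibits the invariance in the $x$-coordinates, where the bracket reads $\{x_i,x_{i+1}\}=x_i(1-x_i)x_{i+1}(1-x_{i+1})$, while you pass to the $u$-coordinates and the log-canonical form $\{u_i,u_{i+1}\}=u_iu_{i+1}$ --- a change of variables the paper itself performs immediately afterwards for the Casimir computation.
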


To prove this theorem, consider the  commutative diagram that follows from Lemma \ref{lem:AuxVar}:
\begin{equation}
\label{eqn:TauDiagram}
\begin{tikzcd}
X \arrow[r, "\tau"] \arrow[d, "\pi_{\alpha}"] & X \arrow[d, "\pi_{\alpha}"]\\
C \arrow[r, "\T"] & C
\end{tikzcd}
\end{equation}
where $\tau(x) = y$ with $y_i = 1-x_i$. 
Our arguments are parallel to those in \cite{Evr}.

Consider the following Poisson bracket on the space $X$:
\begin{equation}
\label{eqn:PoissonX}
\{x_i, x_{i+1}\} = x_i(1-x_i)x_{i+1}(1-x_{i+1}).
\end{equation}

\begin{lemma}
\label{lem:PoissonMap}
The map $\pi_\alpha$ is Poisson with respect to the bracket \eqref{eqn:PoissonX} on $X$
and \eqref{eqn:PoissonC} on $C$.
\end{lemma}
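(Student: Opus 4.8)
The plan is to treat this as a statement about a morphism of Poisson manifolds and reduce it to a finite check on coordinate functions. Recall that a map $\phi$ between Poisson manifolds is Poisson precisely when $\{\phi^* f, \phi^* g\} = \phi^*\{f,g\}$ for all $f,g$; since both brackets are biderivations and $c_1,\dots,c_n$ are global coordinates on $C$ (so their differentials span the cotangent space at every point), it suffices to verify
\[
\{\pi_\alpha^* c_i, \pi_\alpha^* c_j\}_X = \pi_\alpha^*\{c_i, c_j\}_{\alpha^{-1}} \qquad \text{for all } i,j,
\]
where $\pi_\alpha^* c_i = \alpha\, x_i(1 - x_{i+1})$. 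Writing out \eqref{eqn:PoissonC} explicitly, the target is $\pi_\alpha^*\big(c_ic_{i+1}(1-\alpha^{-1}(c_i+c_{i+1}))\big)$ when $j=i+1$, equals $\pi_\alpha^*(-\alpha^{-1}c_ic_{i+1}c_{i+2})$ when $j=i+2$, and vanishes otherwise.

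First I would exploit the locality of both structures. Each $\pi_\alpha^* c_i$ is a function of $x_i$ and $x_{i+1}$ only, and the bracket \eqref{eqn:PoissonX} couples only consecutive variables. Expanding $\{\pi_\alpha^* c_i, \pi_\alpha^* c_j\}_X$ as a biderivation, a nonzero contribution requires a consecutive pair $(x_k,x_{k+1})$ with one index in $\{i,i+1\}$ and the other in $\{j,j+1\}$; this is impossible unless $j \in \{i-2,\dots,i+2\}$ cyclically. This matches the support of $\{\cdot,\cdot\}_{\alpha^{-1}}$, which vanishes for $|i-j|\ge 3$, and the case $j=i$ is trivial by antisymmetry. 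So only $j=i+1$ and $j=i+2$ remain, the reversed cases following by antisymmetry.

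Next I would carry out the two surviving computations directly. For $j=i+2$, only the single bracket $\{x_{i+1},x_{i+2}\}$ contributes, giving
\[
\{\pi_\alpha^* c_i, \pi_\alpha^* c_{i+2}\}_X = -\alpha^2\, x_i x_{i+1} x_{i+2}(1-x_{i+1})(1-x_{i+2})(1-x_{i+3}),
\]
and substituting $c_\bullet = \alpha x_\bullet(1-x_{\bullet+1})$ into $-\alpha^{-1}c_ic_{i+1}c_{i+2}$ yields the identical expression, so the two sides agree on the nose. For $j=i+1$, two brackets contribute, $\{x_i,x_{i+1}\}$ and $\{x_{i+1},x_{i+2}\}$; after factoring out the common $\alpha^2 x_i x_{i+1}(1-x_{i+1})(1-x_{i+2})$, matching the two sides reduces to the elementary polynomial identity
\[
(1-x_i)(1-x_{i+1}) + x_{i+1}x_{i+2} = 1 - x_i(1-x_{i+1}) - x_{i+1}(1-x_{i+2}),
\]
both sides expanding to $1 - x_i - x_{i+1} + x_i x_{i+1} + x_{i+1}x_{i+2}$.

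The one place where anything genuinely happens — and what I would flag as the crux — is this last case: the term $c_i + c_{i+1}$ in $\{c_i,c_{i+1}\}_{\alpha^{-1}}$ is invisible in any single $X$-bracket and is produced only by \emph{combining} the contributions of $\{x_i,x_{i+1}\}$ and $\{x_{i+1},x_{i+2}\}$; this is the sole test of the nonlinearity of the target structure, and it is exactly the displayed identity. Finally, to confirm that this is a bona fide morphism of Poisson manifolds and not merely a bracket-matching identity, I would note that \eqref{eqn:PoissonX} is itself Poisson: in the coordinates $\xi_i = \log\frac{1-x_i}{x_i}$ (that is, $\log u_i$ for the $u_i$ of Section \ref{sec:AuxVar2}) it becomes the constant bracket $\{\xi_i,\xi_{i+1}\}=1$, which satisfies the Jacobi identity automatically.
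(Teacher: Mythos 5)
Your proposal is correct and follows essentially the same route as the paper's proof: a direct chain-rule computation of the two nontrivial cases $j=i+1$ (two contributing $X$-brackets, reducing to the displayed polynomial identity) and $j=i+2$ (one contributing bracket), with the remaining cases vanishing on both sides. Your added remarks — the justification that checking coordinate functions suffices, the locality argument, and the Jacobi identity for \eqref{eqn:PoissonX} via the logarithmic coordinates $\log u_i$ — are correct refinements of points the paper leaves implicit (the last one appears later in the paper, in the rank computation for Main Theorem \ref{thm:TwistedModuli}).
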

\begin{proof}
The proof is a direct computation.
First compute the partial derivatives of $c$ with respect to $x$:
\[
\frac{\partial c_i}{\partial x_i} = \alpha (1 - x_{i+1}), \quad \frac{\partial c_i}{\partial x_{i+1}} = -\alpha x_i.
\]
It follows that
\begin{multline*}
\{c_{i-1} \circ \pi_\alpha, c_i \circ \pi_\alpha\} = \frac{\partial c_{i-1}}{\partial x_{i-1}} \frac{\partial c_i}{\partial x_i} \{x_{i-1}, x_i\}
+ \frac{\partial c_{i-1}}{\partial x_i} \frac{\partial c_i}{\partial x_{i+1}} \{x_i, x_{i+1}\}\\
= \alpha^2 (1-x_i)(1-x_{i+1}) x_{i-1}(1-x_{i-1}) x_i(1-x_i) + \alpha^2 x_{i-1} x_i x_i(1-x_i) x_{i+1}(1-x_{i+1})\\
= \alpha^2 x_{i-1}x_i(1-x_i)(1-x_{i+1}) (1 - x_{i-1} - x_i + x_{i-1}x_i + x_ix_{i+1})\\
= c_{i-1}c_i \left( 1 - \frac{c_{i-1}+c_i}{\alpha} \right) = \{c_{i-1}, c_i\}_{\alpha^{-1}},
\end{multline*}
\begin{multline*}
\{c_{i-1} \circ \pi_\alpha, c_{i+1} \circ \pi_\alpha\} = \frac{\partial c_{i-1}}{\partial x_i} \frac{\partial c_{i+1}}{\partial x_{i+1}} \{x_i, x_{i+1}\}\\
= -\alpha^2 x_{i-1}(1-x_{i+2}) x_i(1-x_i) x_{i+1}(1-x_{i+1})\\
= -\frac1{\alpha} c_{i-1}c_ic_{i+1} = \{c_{i-1}, c_{i+1}\}_{\alpha^{-1}},
\end{multline*}
and the lemma is proved.
\end{proof}

\begin{proof}[Proof of Theorem \ref{thm:InvBracket}]
It is easy to see that the bracket \eqref{eqn:PoissonX} is preserved by the involution $\tau \colon x_i \mapsto 1-x_i$.
Lemma \ref{lem:PoissonMap} and the commutative diagram \eqref{eqn:TauDiagram}  imply that the bracket $\{\cdot, \cdot\}_{\alpha^{-1}}$
is preserved by the relation $\T$.
\end{proof}

We now proceed to showing that the integrals found in Section \ref{intsubsect} are in involution with respect to the bracket $\{\cdot, \cdot\}_{\alpha^{-1}}$.
For this we will use the variables $u_i$ introduced in Section \ref{sec:AuxVar2}.
The Poisson bracket \eqref{eqn:PoissonX} takes in these variables the form
\begin{equation}
\label{eqn:PoissonU}
\{u_i, u_{i+1}\} = u_iu_{i+1},
\end{equation}
as can be easily checked.

\begin{lemma}
\label{lem:CasimirAlpha}
For every $\alpha \in \C \setminus \{0\}$, the function
\[
E_\alpha = \frac{1}{c_{[n]}} \left( \sum\limits_{k=0}^{\lfloor \frac{n}2\rfloor} (-1)^k \alpha^{\frac{n}2 - k} F_k(c) \right)^2
\]
is a Casimir of the bracket \eqref{eqn:PoissonC}.
\end{lemma}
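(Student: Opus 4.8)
The plan is to pull the function $E_\alpha$ back to the auxiliary space $X$ along $\pi_\alpha$, where it becomes a function of the single quantity $u_{[n]} = u_1 \cdots u_n$, to check that $u_{[n]}$ is a Casimir of the bracket \eqref{eqn:PoissonU}, and finally to descend the Casimir property to $C$ using that $\pi_\alpha$ is a Poisson map (Lemma \ref{lem:PoissonMap}). The placement of this lemma right after Section \ref{sec:AuxVar2} and Lemma \ref{lem:DijU} is exactly what makes this route available.

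First I would identify $E_\alpha$ with a normalized trace of the scaled monodromy. Since $F_k$ is homogeneous of degree $k$, one has $F_k(\alpha^{-1}c) = \alpha^{-k} F_k(c)$ and $(\alpha^{-1}c)_{[n]} = \alpha^{-n} c_{[n]}$. Substituting these into Theorem \ref{thm:trmono}, applied to the cross-ratio sequence $\alpha^{-1}c$, gives
\[
\frac{\Tr^2 M(\alpha^{-1}c)}{\det M(\alpha^{-1}c)} = \frac{1}{(\alpha^{-1}c)_{[n]}} \left( \sum_{k=0}^{\lfloor n/2 \rfloor} (-1)^k F_k(\alpha^{-1}c) \right)^2 = \frac{\alpha^n}{c_{[n]}} \left( \sum_{k=0}^{\lfloor n/2 \rfloor} (-1)^k \alpha^{-k} F_k(c) \right)^2 = E_\alpha,
\]
the last equality following because $\alpha^{n/2-k} = \alpha^{n/2}\alpha^{-k}$ and the extracted factor $\alpha^{n/2}$ squares to $\alpha^n$. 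Now Lemma \ref{lem:DijU} rewrites the leftmost expression in the $u$-coordinates, so the pullback $E_\alpha \circ \pi_\alpha$ equals $\frac{(1+u_{[n]})^2}{u_{[n]}}$, which depends only on $u_{[n]}$.

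Next I would verify that $u_{[n]}$ Poisson-commutes with every $u_j$ under \eqref{eqn:PoissonU}. By the Leibniz rule only the two adjacent brackets contribute, and using $\{u_{j-1},u_j\} = u_{j-1}u_j$ and $\{u_{j+1},u_j\} = -u_j u_{j+1}$ one gets
\[
\{u_{[n]}, u_j\} = \{u_{j-1}, u_j\}\prod_{k\neq j-1} u_k + \{u_{j+1}, u_j\}\prod_{k\neq j+1} u_k = u_j u_{[n]} - u_j u_{[n]} = 0.
\]
Hence $u_{[n]}$, and therefore any function of it, is a Casimir of \eqref{eqn:PoissonU} (equivalently of \eqref{eqn:PoissonX}); in particular $E_\alpha \circ \pi_\alpha$ is a Casimir.

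Finally I would transfer this back to $C$. Since $\pi_\alpha$ is Poisson from \eqref{eqn:PoissonX} on $X$ to \eqref{eqn:PoissonC} on $C$, for each coordinate $c_i$ we have $\{E_\alpha, c_i\}_{\alpha^{-1}} \circ \pi_\alpha = \{E_\alpha \circ \pi_\alpha,\, c_i \circ \pi_\alpha\}_{X} = 0$, and since the $c_i$ generate, this gives that $E_\alpha$ is Casimir. The one step requiring genuine care is this descent: one needs that $\{E_\alpha, c_i\}_{\alpha^{-1}}$, a rational function on $C$, vanishing along the image of $\pi_\alpha$ forces it to vanish identically. This holds because $\pi_\alpha$ is generically finite (indeed $2$-to-$1$, by Corollary \ref{cor:2-2}) between equidimensional spaces, hence dominant, so its image is Zariski dense. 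I expect this descent argument, rather than either routine computation, to be the main point to state carefully.
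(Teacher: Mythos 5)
Your proof is correct and follows essentially the same route as the paper's: pull $E_\alpha$ back along $\pi_\alpha$, identify it via Theorem \ref{thm:trmono} and Lemma \ref{lem:DijU} with $\frac{(1+u_{[n]})^2}{u_{[n]}}$, check that $u_{[n]}$ is a Casimir of \eqref{eqn:PoissonU}, and descend using that $\pi_\alpha$ is Poisson. Your only addition is the explicit justification of the descent step (dominance of the generically finite map $\pi_\alpha$, so that vanishing on its image forces identical vanishing), a point the paper leaves implicit in its opening sentence "it suffices to show"; this is a legitimate and worthwhile clarification, not a different argument.
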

\begin{proof}
Since the map $\pi_\alpha$ is Poisson, it suffices to show that
the pullback of this function to $X$ via $\pi_{\alpha}$ is a Casimir of the Poisson structure on $X$.

It is easily seen that the function $u_{[n]}$ on $X$ is a Casimir of the bracket \eqref{eqn:PoissonU}.
On the other hand, by Lemma \ref{lem:DijU} we have
\[
E_\alpha \circ \pi_\alpha = 2 + u_{[n]} + \frac{1}{u_{[n]}}
\]
and the lemma follows.
\end{proof}

\begin{theorem}
\label{thm:FCommute}
The functions $\frac{F_k^2}{c_{[n]}}$ are in involution with respect to each of the Poisson brackets \eqref{eqn:BiHamiltonian}.
\end{theorem}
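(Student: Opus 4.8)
The plan is to deduce everything from the fact, established in Lemma \ref{lem:CasimirAlpha}, that for every $\alpha$ the function $E_\alpha$ is a Casimir of the bracket $\{\cdot,\cdot\}_{\alpha^{-1}} = \{\cdot,\cdot\}_1 - \alpha^{-1}\{\cdot,\cdot\}_2$ of \eqref{eqn:PoissonC}. First I would reduce the claim to an assertion about the ``square roots'' $\phi_k := F_k/\sqrt{c_{[n]}}$ of the integrals. Since $\frac{F_k^2}{c_{[n]}} = \phi_k^2$ and $\{\phi_k^2,\phi_l^2\}_j = 4\,\phi_k\phi_l\{\phi_k,\phi_l\}_j$, it suffices to prove $\{\phi_k,\phi_l\}_j = 0$ for $j=1,2$. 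By \eqref{scaledmono} the $\phi_k$ are precisely the coefficients of the normalized trace $g(t) := \Tr M_t/\sqrt{\det M_t} = \sum_{k}(-1)^k t^{k-n/2}\phi_k$ of the rescaled monodromy, and comparing with the definition of $E_\alpha$ gives $g(t)^2 = E_{t^{-1}}$.

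Second, I would extract from the compatible brackets \eqref{eqn:BiHamiltonian} that the whole family $\{E_\alpha\}$ is in involution with respect to $\{\cdot,\cdot\}_1$ and $\{\cdot,\cdot\}_2$ separately. Fix $\alpha \neq \beta$. Because $E_\alpha$ is a Casimir of $\{\cdot,\cdot\}_{\alpha^{-1}}$ and $E_\beta$ a Casimir of $\{\cdot,\cdot\}_{\beta^{-1}}$, one has $\{E_\alpha,E_\beta\}_{\alpha^{-1}} = \{E_\alpha,E_\beta\}_{\beta^{-1}} = 0$, that is,
\[
\{E_\alpha,E_\beta\}_1 - \alpha^{-1}\{E_\alpha,E_\beta\}_2 = 0, \qquad
\{E_\alpha,E_\beta\}_1 - \beta^{-1}\{E_\alpha,E_\beta\}_2 = 0.
\]
Since $\alpha^{-1}\neq\beta^{-1}$, this linear system forces $\{E_\alpha,E_\beta\}_1 = \{E_\alpha,E_\beta\}_2 = 0$, and this holds for all $\alpha\neq\beta$.

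Third, I would pass from $E_\alpha$ to $g$ and separate coefficients. Setting $t=\alpha^{-1}$, $s=\beta^{-1}$, the previous step reads $\{g(t)^2,g(s)^2\}_j = 0$ for all $t\neq s$. Expanding $\{g(t)^2,g(s)^2\}_j = 4\,g(t)g(s)\{g(t),g(s)\}_j$ and dividing by the factor $g(t)g(s)$, which is not identically zero, yields $\{g(t),g(s)\}_j = 0$ on a dense set, hence identically. Writing $g(t)=t^{-n/2}\sum_k(-1)^k t^k\phi_k$, the scalar factors $t^{-n/2},s^{-n/2}$ pull out of the bracket, so the identity $\sum_{k,l}(-1)^{k+l}t^k s^l\{\phi_k,\phi_l\}_j \equiv 0$ holds for all $t,s$; comparing coefficients gives $\{\phi_k,\phi_l\}_j=0$ for all $k,l$, exactly what the first step requires.

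The only real obstacle is formal: the quantities $\phi_k$ and $g(t)$ involve the square root $\sqrt{c_{[n]}}$, which is defined only up to sign (and only locally over $\C$). I would handle this by fixing a local branch on the Zariski-open set $\{c_{[n]}\neq 0\}$; a global sign change $\sqrt{c_{[n]}}\mapsto-\sqrt{c_{[n]}}$ sends each $\phi_k\mapsto-\phi_k$ and hence leaves every bracket $\{\phi_k,\phi_l\}_j$ unchanged, so the vanishing $\{\phi_k,\phi_l\}_j=0$ is branch-independent. The final equality $\{F_k^2/c_{[n]},\,F_l^2/c_{[n]}\}_j=0$ is then an identity between honest rational functions and holds throughout $\cT$.
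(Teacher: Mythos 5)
Your proof is correct, but it takes a genuinely different route from the paper's. Both arguments hinge on Lemma \ref{lem:CasimirAlpha}, yet they exploit it differently. The paper fixes a single parameter $\alpha$, chooses a branch of $\sqrt{E_\alpha}$, expands the Casimir identity $\{f,\sqrt{E_\alpha}\}_{\alpha^{-1}}=0$ in powers of $\alpha$ to obtain the recursion $\{f,E_k\}_1=-\{f,E_{k-1}\}_2$ for every test function $f$, and then telescopes via the Lenard--Magri scheme, $\{E_k,E_l\}_1=\{E_{k+i},E_{l-i}\}_1$, which vanishes for $i$ large. You instead take two distinct parameters $\alpha\neq\beta$ and run the classical pencil argument: since $E_\alpha$ and $E_\beta$ are Casimirs of two \emph{different} brackets $\{\cdot,\cdot\}_{\alpha^{-1}}$ and $\{\cdot,\cdot\}_{\beta^{-1}}$ in the pencil, the resulting $2\times 2$ linear system forces $\{E_\alpha,E_\beta\}_1=\{E_\alpha,E_\beta\}_2=0$, and the involution of the individual $\phi_k=F_k/\sqrt{c_{[n]}}$ follows by extracting coefficients of a polynomial identity in the two variables $t,s$. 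Your version has two genuine advantages: the key step is performed on the single-valued rational functions $E_\alpha$, so all square-root branch issues are confined to the harmless final coefficient extraction (which you dispose of correctly), and no telescoping is needed. What the paper's recursion buys in exchange is the relation $\{f,E_k\}_1=-\{f,E_{k-1}\}_2$ itself, i.e.\ the statement that the Hamiltonian vector field of $E_k$ for the first bracket equals minus that of $E_{k-1}$ for the second; this is precisely the input for Corollary \ref{cor:indepstr} (part 4 of Main Theorem \ref{thm:TwistedModuli}), which your argument would recover only by performing that extra one-parameter expansion. One cosmetic remark: with the normalization of $E_\alpha$ in Lemma \ref{lem:CasimirAlpha} your identity $g(t)^2=E_{t^{-1}}$ is exact, whereas with the normalization in Section \ref{sec:mainthm} it holds up to the constant factor $t^{-n}$ --- harmless, since constant factors do not affect the vanishing of Poisson brackets.
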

\begin{proof}
Choose $c \in C$ and choose one of the branches of the function $\sqrt{E_\alpha}$ in its neighborhood.
By Lemma \ref{lem:CasimirAlpha}, this function annihilates the Poisson bracket $\{\cdot, \cdot\}_{\alpha^{-1}}$.
Denoting $E_k(c) = \frac{F_k(c)}{\sqrt{c_{[n]}}}$, we have, for all functions $f \colon C \to \C$
and for all $\alpha \in \C \setminus \{0\}$,
\begin{multline*}
0 = \left\{ f, \sum_{k=0}^{\lfloor\frac{n}{2}\rfloor} (-1)^k \alpha^{\frac{n}{2}-k} E_k(c) \right\}_{\alpha^{-1}}\\
= \left\{ f, \sum_{k=0}^{\lfloor\frac{n}{2}\rfloor} (-1)^k \alpha^{\frac{n}{2}-k} E_k(c) \right\}_1
- \alpha^{-1} \left\{ f, \sum_{k=0}^{\lfloor\frac{n}{2}\rfloor} (-1)^k \alpha^{\frac{n}{2}-k} E_k(c) \right\}_2\\
= \sum_{k=0}^{\lfloor\frac{n}{2}\rfloor+1} (-1)^k \alpha^{\frac{n}{2}-k} (\{f, E_k\}_1 + \{f, E_{k-1}\}_2),
\end{multline*}
where we put $E_{-1} = E_{\lfloor\frac{n}{2}\rfloor+1} = 0$.
It follows that
\[
\{f, E_k\}_1 = - \{f, E_{k-1}\}_2
\]
for all $f$ and $k$.
This allows us to apply the Lenard-Magri scheme:
\[
\{E_k, E_l\}_1 = -\{E_k, E_{l-1}\}_2 =  \{E_{l-1}, E_k\}_2 = -\{E_{l-1}, E_{k+1}\}_1 = \{E_{k+1}, E_{l-1}\}_1.
\]
This implies $\{E_k, E_l\}_1 = \{E_{k+i}, E_{l-i}\}$, which vanishes for $i$ sufficiently large.
It also follows that $\{E_k, E_l\}_2 = 0$.
Thus the (globally defined) functions $E_k^2 = \frac{F_k^2}{c_{[n]}}$ are in involution with respect to each of the two brackets,
and the theorem is proved.
\end{proof}

The space of the Hamiltonian vector fields obtained by the Lenard-Magri scheme does not depend on the choice of the Poisson bracket in the pencil, and one obtains the following corollary.

\begin{corollary}
\label{cor:indepstr}
The space of the Hamiltonian vector fields of the integrals is independent on the choice of the Poisson structure in the pencil $\{ \cdot, \cdot \}_{\alpha}$.
\end{corollary}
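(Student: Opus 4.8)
The plan is to read the statement off the bi-Hamiltonian (Lenard--Magri) recursion that was already established inside the proof of Theorem~\ref{thm:FCommute}, translating it from functions into vector fields. Throughout, fix a generic point of $\cT$ and a local branch of $E_k = F_k/\sqrt{c_{[n]}}$, set $m = \lfloor\frac n2\rfloor$, and write $X_G^{(1)}$, $X_G^{(2)}$, $X_G^{(\beta)}$ for the Hamiltonian vector fields of a function $G$ with respect to $\{\cdot,\cdot\}_1$, $\{\cdot,\cdot\}_2$, and the pencil member $\{\cdot,\cdot\}_1 - \beta\{\cdot,\cdot\}_2$ respectively (so $\beta = \alpha^{-1}$), using the convention $X_G(f) = \{f,G\}$; the sign convention is irrelevant since only spans will matter. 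The relation $\{f,E_k\}_1 = -\{f,E_{k-1}\}_2$ derived there for all $f$ and $1\le k\le m$, together with the boundary facts $\{f,E_0\}_1 = 0$ and $\{f,E_m\}_2 = 0$ (that is, $c_{[n]}$ is a Casimir of $\{\cdot,\cdot\}_1$ and $E_m^2$ is a Casimir of $\{\cdot,\cdot\}_2$), is exactly the input. In vector-field form these read $X_{E_k}^{(2)} = -X_{E_{k+1}}^{(1)}$ for $0\le k\le m-1$, while $X_{E_0}^{(1)} = 0$ and $X_{E_m}^{(2)} = 0$.

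Next I would compute the Hamiltonian field of each integral with respect to the generic member of the pencil. Since $X_{E_k}^{(\beta)} = X_{E_k}^{(1)} - \beta X_{E_k}^{(2)}$, substituting the recursion gives
\[
X_{E_0}^{(\beta)} = \beta\,X_{E_1}^{(1)},\qquad
X_{E_k}^{(\beta)} = X_{E_k}^{(1)} + \beta\,X_{E_{k+1}}^{(1)}\ (1\le k\le m-1),\qquad
X_{E_m}^{(\beta)} = X_{E_m}^{(1)}.
\]
Because the globally defined integral $E_k^2 = F_k^2/c_{[n]}$ satisfies $X_{E_k^2}^{(\beta)} = 2E_k\,X_{E_k}^{(\beta)}$ pointwise, and $E_k \ne 0$ on a Zariski-open set, the span of the Hamiltonian fields of the honest integrals $c_{[n]}, F_1^2/c_{[n]}, \dots, F_m^2/c_{[n]}$ coincides generically with $V_\beta := \mathrm{span}\{X_{E_k}^{(\beta)} : 0\le k\le m\}$; the extra Casimir $c_{even}/c_{odd}$ present for even $n$ supplies the zero field to every bracket in the pencil and may be discarded.

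Finally I would prove $V_\beta = V_0$ for all $\beta$, where $V_0 = \mathrm{span}\{X_{E_1}^{(1)},\dots,X_{E_m}^{(1)}\}$ is the span for $\{\cdot,\cdot\}_1$. The inclusion $V_\beta\subseteq V_0$ is immediate from the displayed formulas. For the reverse inclusion (with $\beta\ne0$) I run a downward induction: $X_{E_m}^{(\beta)} = X_{E_m}^{(1)}$ puts $X_{E_m}^{(1)}$ into $V_\beta$, and then $X_{E_k}^{(\beta)} - \beta X_{E_{k+1}}^{(1)} = X_{E_k}^{(1)}$ feeds each lower $X_{E_k}^{(1)}$ into $V_\beta$ in turn; equivalently, the transition from $(X_{E_1}^{(1)},\dots,X_{E_m}^{(1)})$ to the generating set of $V_\beta$ is upper bidiagonal with ones on the diagonal and $\beta$ on the superdiagonal, hence unimodular. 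Running the same computation from the degenerate member $\{\cdot,\cdot\}_2$ covers the remaining bracket. I do not expect a genuine obstacle here: the content is entirely the Lenard--Magri relation already in hand, and the only care needed is bookkeeping. Specifically, one must track the two boundary terms $E_0$ and $E_m$ carefully, noting that the integral $c_{[n]}$ is essential because it is a Casimir of $\{\cdot,\cdot\}_1$ yet supplies the missing field $X_{E_1}^{(1)}$ for every other bracket; and one must pass between the local fields $X_{E_k}$ and the global integrals $E_k^2$ via the scalar $2E_k$, which is legitimate on the Zariski-open set where it is nonvanishing.
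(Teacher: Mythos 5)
Your proof is correct and follows the same route as the paper: the paper derives the corollary directly from the Lenard--Magri relations $\{f,E_k\}_1=-\{f,E_{k-1}\}_2$ established in the proof of Theorem \ref{thm:FCommute}, observing that the span of Hamiltonian fields produced by this scheme is pencil-independent. Your write-up merely makes explicit the unimodular (bidiagonal) change of generators between $\{X_{E_k}^{(\beta)}\}$ and $\{X_{E_k}^{(1)}\}$, together with the boundary cases $X_{E_0}^{(1)}=0$, $X_{E_m}^{(2)}=0$, which the paper leaves implicit.
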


We  finish the proof of Main Theorem \ref{thm:TwistedModuli}: what remains to be done is to compute the rank of the Poisson structure and find all the Casimirs.
At the regular values of $\pi_\alpha$, the Poisson structure $\{\cdot, \cdot\}_{\alpha^{-1}}$
is equivalent to the Poisson structure \eqref{eqn:PoissonX}, which  has the form \eqref{eqn:PoissonU} in the coordinates $u_i$.
A logarithmic change of variables transforms this structure into a constant one with the matrix $a_{i,i+1} = 1, a_{i,i-1} = -1$.
This matrix has corank $1$ for $n$ odd and corank $2$ for $n$ even.

One Casimir function for $\{\cdot, \cdot\}_{\alpha^{-1}}$ is provided by Lemma \ref{lem:CasimirAlpha}.
For $n$ even, there is an additional Casimir.
Let
\[
c_{odd} = c_1 c_3 \cdots c_{n-1}, \quad c_{even} = c_2 c_4 \cdots c_n.
\]
Then we have
\[
\frac{c_{odd}}{c_{even}} \circ \pi_{\alpha} = \frac{x_{odd} (1-x)_{even}}{x_{even} (1-x)_{odd}} = \frac{u_{odd}}{u_{even}}.
\]
It is not hard to see that both $u_{odd}$ and $u_{even}$ are Casimirs of the bracket \eqref{eqn:PoissonU},
thus ${c_{odd}}/{c_{even}}$ is a Casimir of $\{\cdot, \cdot\}_{\alpha^{-1}}$.

Combined with Theorems \ref{thm:Integrals}, \ref{thm:IntIndependence}, \ref{thm:InvBracket}, and \ref{thm:FCommute}, this completes the proof of Main Theorem \ref{thm:TwistedModuli}.

\begin{remark}
{\rm Corollary \ref{cor:indepstr} provides another proof of Theorem \ref{Bianchi} (Bianchi permutability). According to this corollary, the affine structure on the Lagrangian leaves is independent of $\alpha$. Therefore, for all values of $\alpha$, the maps $T_\alpha$ are parallel translations in the same affine coordinates, and hence they commute.
}
\end{remark}

\section{Integrability on the moduli space of closed polygons}
\label{sec:IntClosed}

\subsection{Main Theorem 2}
\label{subsect:mainclosed}

The moduli space of closed ideal $n$-gons is a codimension 3 subspace of the moduli space of twisted ideal $n$-gons, invariant under the relations $\T$. Complete integrability of $\T$ on $\cT$ does not automatically imply integrability of its restriction to $\cP$. In this section we prove the following result. 

\begin{mainthm}
\label{thm:mainclosed}
Almost every point of $\cP$ lies on a ${\lfloor \frac{n-3}{2} \rfloor}$-dimensional  submanifold, invariant under $\T$ and carrying an invariant affine structure. The map $\T$ is a parallel translation with respect to this affine structure.  This foliation on invariant manifolds and the affine structures on its leaves is independent of $\alpha$.
\end{mainthm}

As before, ``almost every" means belonging to a dense open set.

\subsection{Another set of integrals} \label{altsect}

Let us introduce the functions on the space $\widetilde \cP$:
\begin{align}
G_0(P) &= 2,\nonumber\\
G_k(P) &= \sum_{i_1 < \cdots < i_k} \frac{(p_{i_1} - p_{i_k+1})(p_{i_2} - p_{i_1+1}) \cdots (p_{i_k} - p_{i_{k-1}+1})}
{(p_{i_1} - p_{i_1+1})(p_{i_2} - p_{i_2+1}) \cdots (p_{i_k} - p_{i_k+1})},
\label{eqn:Integrals2}
\end{align}
where $k=0,1,\ldots, \lfloor {n}/2 \rfloor$.
In particular, $G_1 = n$ and $G_2 = \sum_{i<j} [p_i, p_j, p_{j+1}, p_{i+1}]$.
Note that for a non-sparse multiindex $(i_1, \ldots, i_k)$ the corresponding summand in \eqref{eqn:Integrals2} vanishes.

\begin{lemma}
The functions $G_k$ are projectively invariant and hence descend on the moduli space $\cP$.
\end{lemma}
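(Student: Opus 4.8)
The plan is to prove invariance one summand at a time. Writing $S_I$ for the term of \eqref{eqn:Integrals2} indexed by $I = \{i_1 < \cdots < i_k\}$, it suffices to show that each $S_I$ is invariant under the diagonal action of $\PGL(2)$, since $G_k = \sum_I S_I$. First I would record how a single difference transforms: for $\Psi(p) = \frac{ap+b}{cp+d}$ with $ad - bc \neq 0$ one has
\[
\Psi(p_a) - \Psi(p_b) = \frac{(ad-bc)(p_a - p_b)}{(cp_a + d)(cp_b + d)},
\]
so every factor $(p_a - p_b)$ acquires a common Jacobian factor $(ad-bc)$ together with two cocycle factors $(cp_a+d)$ and $(cp_b+d)$ in the denominator.

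The key step is then a bookkeeping argument with two cancellations. The numerator of $S_I$ is a product of exactly $k$ differences, and so is its denominator; hence the factors $(ad-bc)^k$ cancel between numerator and denominator. It remains to check that each variable $p_j$ occurs with the same multiplicity in the numerator and in the denominator of $S_I$, so that the cocycle factors $(cp_j+d)$ cancel as well. I would verify this directly: the numerator of $S_I$ has ``left'' indices $\{i_1, \ldots, i_k\}$ and ``right'' indices $\{i_k+1, i_1+1, \ldots, i_{k-1}+1\} = \{i_1+1, \ldots, i_k+1\}$, while the denominator $\prod_{s}(p_{i_s}-p_{i_s+1})$ involves exactly the same two sets $\{i_1,\ldots,i_k\}$ and $\{i_1+1,\ldots,i_k+1\}$. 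Thus numerator and denominator carry identical variable multisets, the product of cocycle factors is the same for both, and they cancel.

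Combining the two cancellations yields $\Psi \cdot S_I = S_I$ for every $\Psi \in \PGL(2)$, hence $G_k$ descends to $\cP$. The argument is purely algebraic and works uniformly over $\KK = \R$ or $\C$. The only point requiring care is the combinatorial matching of variable multisets: one must keep precise track of the cyclic shift $i_s \mapsto i_s + 1$ and, in particular, of the wrap-around factor $(p_{i_1} - p_{i_k+1})$, including the case $i_k = n$ where $i_k+1 = 1$. This is exactly where I expect the bulk of the (elementary) work to lie, but no genuine obstacle arises. A minor technical remark is that the computation is carried out on the Zariski-open locus where all $p_j$ are finite and distinct; invariance then extends to configurations involving $\infty$ by continuity, or equivalently by checking the generators $p \mapsto p+b$, $p \mapsto ap$, and $p \mapsto 1/p$ of $\PGL(2)$ separately.
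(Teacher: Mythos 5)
Your proposal is correct and is in substance the same as the paper's proof: the paper disposes of the lemma by checking invariance under the generators $p \mapsto p+c$, $p \mapsto cp$, and $p \mapsto 1/p$, and the content of those checks (especially for the inversion) is precisely your observation that each summand of $G_k$ has the same number of difference factors and the same variable multiset in its numerator and denominator. Your uniform cocycle computation for a general element of $\PGL(2)$ — which, as you note yourself, is equivalent to the generator-by-generator check — is just a packaged version of that same bookkeeping.
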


\begin{proof}
It is straightforward to check that $G_k$ is invariant under parallel translations $p \mapsto p+c$, dilations $p \mapsto cp$, and the inversion $p \mapsto 1/p$.
\end{proof} 

The next theorem implies that the functions $G_k$ are integrals of the relations $\T$.

\begin{theorem}
\label{thm:IntegralsClosed}
For a closed polygon $\pP$, we have
\[
\Tr A_\lambda(\pP) = \sum_{k=0}^{\lfloor \frac{n}2 \rfloor} G_k(\pP) (\lambda - 1)^k.
\]
The two sets of integrals for closed polygons are related by the equations
\[
F_l = \sqrt{c_{[n]}} \sum_{k=l}^{\lfloor \frac{n}2 \rfloor} (-1)^{k} \binom{k}{l} G_k, \quad G_l = \frac{1}{\sqrt{c_{[n]}}}\sum_{k=l}^{\lfloor \frac{n}2 \rfloor} (-1)^k \binom{k}{l} F_k
\]
for every $0 \le l \le \lfloor \frac{n}2 \rfloor$ and for one of the two choices of the square root value.
\end{theorem}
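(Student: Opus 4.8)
The plan is to prove the first identity by expanding $\Tr A_\lambda(\pP)$ directly in powers of $\lambda-1$, and then to deduce the conversion formulas between $F_l$ and $G_l$ by comparing this expansion with the expansion in powers of $\lambda$ furnished by Theorem~\ref{thm:Integrals}. The decisive observation is that the edge matrices become trivial at $\lambda=1$: keeping the shorthand $p_i^{(1)}=p_i-p_{i+1}$ and the rank-one matrices $B_i,C_i$ from \eqref{eqn:ALambdaSum}, a one-line computation gives $B_i-C_i=p_i^{(1)}\Id$, hence
\[
A_\lambda(p_i,p_{i+1})=\frac{1}{p_i^{(1)}}\bigl(B_i-\lambda C_i\bigr)=\Id-(\lambda-1)\frac{C_i}{p_i^{(1)}}.
\]
Because a closed polygon has monodromy $M=\Id$, I would expand the product $A_\lambda(\pP)=\prod_{i=n}^{1}\bigl(\Id-(\lambda-1)C_i/p_i^{(1)}\bigr)$ in powers of $\nu:=\lambda-1$, the coefficient of $\nu^k$ being a sum of ordered products $C_{i_k}\cdots C_{i_1}$ over $k$-subsets $\{i_1<\cdots<i_k\}$. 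Exactly as in the proof of Theorem~\ref{thm:Integrals}, the relation $C_iC_{i-1}=0$ annihilates all non-sparse subsets, and the wrap-around terms containing both $1$ and $n$ vanish since $p_{n+1}=p_1$ for a closed polygon.

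The core of the argument is the trace of one such telescoping product. Writing $C_i=v_iw_i^{\top}$ with $v_i=(p_{i+1},1)^{\top}$ and $w_i^{\top}=(1,-p_i)$, the product collapses to a scalar multiple of a single rank-one matrix, the scalars being the dot products $w_a^{\top}v_b=p_{b+1}-p_a$. Taking the trace and using its cyclic invariance yields
\[
\Tr\bigl(C_{i_k}\cdots C_{i_1}\bigr)=(p_{i_k+1}-p_{i_1})\prod_{j=2}^{k}(p_{i_{j-1}+1}-p_{i_j}),
\]
which differs from the numerator of the corresponding summand of $G_k$ in \eqref{eqn:Integrals2} by precisely a factor $(-1)^k$. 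As each term also carries the prefactor $(-\nu)^k$, the two sign contributions cancel and the coefficient of $\nu^k$ in $\Tr A_\lambda(\pP)$ is exactly the sum defining $G_k(\pP)$ (with denominators $p_{i_j}^{(1)}$); the case $k=0$ reduces to $\Tr\Id=2=G_0$. I expect the only genuine obstacle here to be the bookkeeping of signs and of the sparse/wrap-around conventions, namely verifying that the $(-1)^k$ emerging from the telescoping trace matches the $(-1)^k$ in $(-\nu)^k$.

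For the two conversion formulas I would invoke the proof of Theorem~\ref{thm:Integrals}, which, before squaring, gives $\Tr A_\lambda(\pP)=\tfrac{1}{\sqrt{c_{[n]}}}\sum_k(-1)^kF_k\lambda^k$ once the branch of the root is fixed by $\sqrt{c_{[n]}}=p_1^{(1)}\cdots p_n^{(1)}/(p_1^{(2)}\cdots p_n^{(2)})$. Equating this with $\sum_k G_k(\lambda-1)^k$ expresses one and the same polynomial of degree $\le\lfloor n/2\rfloor$ in the two bases $\{\lambda^k\}$ and $\{(\lambda-1)^k\}$. Substituting $\lambda^k=\sum_l\binom{k}{l}(\lambda-1)^l$ and matching the coefficient of $(\lambda-1)^l$ gives $G_l=\tfrac{1}{\sqrt{c_{[n]}}}\sum_{k\ge l}(-1)^k\binom{k}{l}F_k$, while substituting $(\lambda-1)^k=\sum_l(-1)^{k-l}\binom{k}{l}\lambda^l$ and matching the coefficient of $\lambda^l$ gives $F_l=\sqrt{c_{[n]}}\sum_{k\ge l}(-1)^k\binom{k}{l}G_k$. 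These two transformations are mutually inverse by the standard binomial inversion, which serves as a consistency check and completes the proof.
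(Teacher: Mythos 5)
Your proposal is correct and follows essentially the same route as the paper's own proof: the paper likewise writes $A_\lambda(p_i,p_{i+1})=\Id+\frac{1-\lambda}{p_i-p_{i+1}}B_i$ with the same rank-one matrix (your $C_i$), telescopes the products, and reads off $\Tr A_\lambda(\pP)=\sum_k G_k(\lambda-1)^k$ from the cyclic trace, with cyclically non-sparse terms vanishing automatically. The conversion formulas are also obtained exactly as you do, by equating the unsquared trace expansion $\frac{1}{\sqrt{c_{[n]}}}\sum_k(-1)^kF_k\lambda^k$ from Theorem~\ref{thm:Integrals} with $\sum_k G_k(\lambda-1)^k$ and changing basis between $\{\lambda^k\}$ and $\{(\lambda-1)^k\}$.
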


\begin{proof}
Observe that
\[
A_\lambda(p_i, p_{i+1}) = \Id + \frac{1-\lambda}{p_i - p_{i+1}} B_i,
\]
where
\[
B_i = \begin{pmatrix} p_{i+1} & -p_ip_{i+1}\\ 1 & -p_i \end{pmatrix}
= \begin{pmatrix} p_{i+1}\\ 1 \end{pmatrix} \begin{pmatrix} 1 & -p_i \end{pmatrix}.
\]
Thus we have
\[
A_\lambda(\pP) = \sum_{k=0}^n (1-\lambda)^k \sum_{i_1 < \cdots < i_k} \frac{B_{i_k} \cdots B_{i_1}}{(p_{i_k} - p_{i_k+1}) \cdots (p_{i_1} - p_{i_1+1})}.
\]
The $k=0$ term is $\Id$, therefore $G_0(\pP) = \Tr(\Id) = 2$.
For $k \ge 1$, one computes
\begin{multline*}
B_{i_k} \cdots B_{i_1} = \begin{pmatrix} p_{{i_k}+1}\\ 1 \end{pmatrix} \begin{pmatrix} 1 & -p_{i_k} \end{pmatrix}
\cdots \begin{pmatrix} p_{i_2+1}\\ 1 \end{pmatrix} \begin{pmatrix} 1 & -p_{i_2} \end{pmatrix}
\begin{pmatrix} p_{i_1+1}\\ 1 \end{pmatrix} \begin{pmatrix} 1 & -p_{i_1} \end{pmatrix}\\
= (p_{i_{k-1}+1} - p_{i_k}) \cdots (p_{{i_1}+1} - p_{i_2}) \begin{pmatrix} p_{i_k+1} & -p_{i_1}p_{i_k+1}\\ 1 & -p_{i_1} \end{pmatrix},
\end{multline*}
which implies
\[
\Tr (B_{i_k} \cdots B_{i_1}) = (p_{{i_1}+1} - p_{i_2}) \cdots (p_{i_k+1} - p_{i_1}).
\]
This allows us to compute $\Tr A_\lambda(\pP)$ and leads to the formula stated in the theorem.

Next, we prove the identities relating the two sets of integrals.
Theorem \ref{thm:Integrals} implies that
\[
\frac{1}{\sqrt{c_{[n]}}} \sum_{k=0}^{\lfloor \frac{n}2 \rfloor} (-1)^kF_k \lambda^k = \Tr(A_\lambda) = \sum_{k=0}^{\lfloor \frac{n}2 \rfloor} G_k (\lambda - 1)^k.
\]
Transform the right hand side:
\[
\sum_{k=0}^{\lfloor \frac{n}2 \rfloor} G_k (\lambda - 1)^k = \sum_{k=0}^{\lfloor \frac{n}2 \rfloor} G_k \sum_{l=0}^k \binom{k}{l} \lambda^l (-1)^{k-l}
= \sum_{l=0}^{\lfloor \frac{n}2 \rfloor} \lambda^l \sum_{k=l}^{\lfloor \frac{n}2 \rfloor} (-1)^{k-l} \binom{k}{l} G_k.
\]
This proves the first series of identities.
The second series is proved similarly after the substitution $\mu = 1-\lambda$.
\end{proof}

\subsection{Alternating perimeters}
\label{sec:AltPer}
The integrals $G_k$ can be interpreted in terms of the alternating perimeters of ideal polygons. We refer to \cite{Pen} for this material.

Consider an ideal even-gon. Its alternating perimeter is defined as follows. Choose a decoration of the polygon, that is, a horocycle at every vertex. Define the side length of the polygon as the signed  distance between the intersection points of the respective horocycles with this side; by convention, if the two consecutive horocycles are disjoint then the respective distance is positive. The alternating sum of the side lengths does not depend on the decoration, see Figure \ref{horo}.

\begin{figure}[hbtp]
\centering
\includegraphics[width=5in]{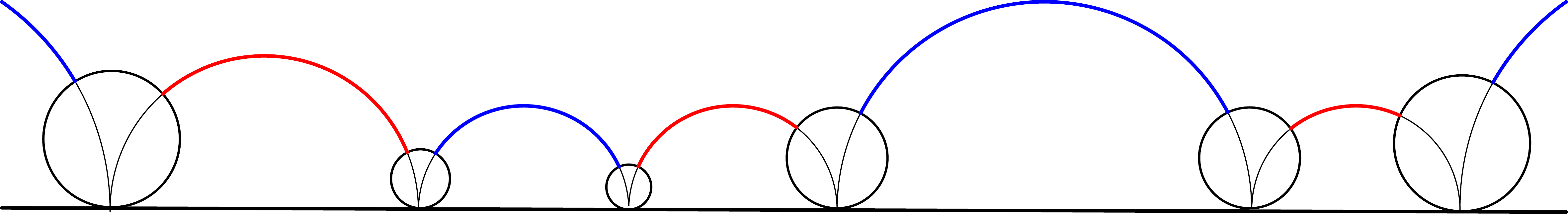}
\caption{Alternating perimeter of an ideal polygon.}
\label{horo}
\end{figure}

Let $\pP=(p_1,\ldots,p_{2m})$ be a decorated ideal $2m$-gon, and let $\delta_{i,j}$ be the above defined length of $p_i p_j$. The 
$\lambda$-length is defined as
$$
\lambda_{i,j} = e^{\delta_{i,j}/2}.
$$
 Set
$$
A(\pP) = \frac{\lambda_{1,2} \lambda_{3,4} \cdots \lambda_{2m-1,2m}}{\lambda_{2,3} \lambda_{4,5} \cdots \lambda_{2m,1}},
$$
 the exponent of the alternating semi-perimeter of $\pP$.
 
 \begin{lemma} \label{altper}
 One has
 $$
G_k(\pP) = \sum_{i_1 < \cdots < i_k} (-1)^k {A}^{-1} (p_{i_1}, p_{i_1+1}, \ldots, p_{i_k}, p_{i_k+1}).
$$
 \end{lemma}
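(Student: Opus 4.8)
The plan is to express the alternating semi-perimeter $A$ of the sub-$2k$-gon directly in terms of the vertex coordinates $p_i$ using Penner's formula for $\lambda$-lengths, and then to match the resulting rational expression with the summand of $G_k$ factor by factor. The decoration will drop out, so that what remains is a purely algebraic identity between products of differences.

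First I would recall from Penner's decorated Teichm\"uller theory \cite{Pen} that, once a decoration (a horocycle at each vertex) is fixed, the $\lambda$-length admits the coordinate expression $\lambda_{i,j} = (p_i - p_j)/\sqrt{t_i t_j}$, where $t_i$ is the weight (Euclidean diameter of the horocycle) at $p_i$ in an affine chart; the sign conventions are to be fixed once and for all so that $\lambda_{i,j}$ is antisymmetric in $(i,j)$, and the absolute-value normalization of $\delta_{i,j}$ is absorbed into the signed differences $p_i - p_j$. Applying this to the $2k$-gon with cyclically ordered vertices $p_{i_1}, p_{i_1+1}, p_{i_2}, p_{i_2+1}, \ldots, p_{i_k}, p_{i_k+1}$, the numerator of $A$ collects the ``odd'' edges $(p_{i_s}, p_{i_s+1})$ and the denominator the ``even'' edges $(p_{i_s+1}, p_{i_{s+1}})$, with indices taken cyclically so that $i_{k+1} = i_1$.

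The crucial observation is that every vertex of this $2k$-gon is an endpoint of exactly one numerator edge and exactly one denominator edge; hence each weight $t_i$ occurs once in the numerator and once in the denominator of $A$, and all weights cancel. This leaves
\[
A(p_{i_1}, p_{i_1+1}, \ldots, p_{i_k}, p_{i_k+1}) = \frac{\prod_{s=1}^k (p_{i_s} - p_{i_s+1})}{\prod_{s=1}^k (p_{i_s+1} - p_{i_{s+1}})},
\]
so that $A^{-1}$ has numerator $\prod_{s=1}^k (p_{i_s+1} - p_{i_{s+1}})$ and denominator $\prod_{s=1}^k (p_{i_s} - p_{i_s+1})$. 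The denominator already agrees with the denominator of the $G_k$-summand in \eqref{eqn:Integrals2}, and the cyclic reindexing $r = s+1$ turns each factor $(p_{i_s+1} - p_{i_{s+1}})$ into $-(p_{i_r} - p_{i_{r-1}+1})$; collecting the $k$ resulting sign changes shows that the numerator of $A^{-1}$ equals $(-1)^k$ times the numerator of the $G_k$-summand. Therefore the summand of $G_k$ equals $(-1)^k A^{-1}(p_{i_1}, \ldots, p_{i_k+1})$, and summing over $i_1 < \cdots < i_k$ yields the stated identity. As a sanity check, for $k=1$ the degenerate ``bigon'' gives $A = -1$, hence $(-1)^1 A^{-1} = 1$, consistent with $G_1 = n$.

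The main point requiring care is the sign bookkeeping. Penner's $\lambda$-lengths are intrinsically positive, whereas the $G_k$ are built from signed differences, so the step I expect to be most delicate is fixing an orientation convention making $\lambda_{i,j}$ antisymmetric (equivalently, letting the positive weights $t_i$ play a purely formal cancelling role while the signs are carried by the differences $p_i - p_j$) and checking that this is precisely consistent with the prefactor $(-1)^k$. A secondary point is that only cyclically sparse multi-indices correspond to genuine ideal $2k$-gons: for a non-sparse index one has $p_{i_s+1} = p_{i_{s+1}}$ for some $s$, whereupon both the $G_k$-summand and $A^{-1}$ vanish, so these degenerate terms contribute nothing and the identity extends over all multi-indices without modification.
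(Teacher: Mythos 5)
Your proof is correct and takes essentially the same route as the paper: the identity you derive from Penner's coordinate formula $\lambda_{i,j}=(p_i-p_j)/\sqrt{t_i t_j}$ (with the horocycle weights cancelling around the alternating product) is precisely the identity the paper quotes wholesale from \cite{Pen}, after which both arguments reduce to the same cyclic reindexing and count of $k$ sign flips. If anything, your explicit handling of the sign conventions (signed differences versus intrinsically positive $\lambda$-lengths, with the $k=1$ sanity check) is more careful than the paper's one-line proof, which leaves exactly that bookkeeping implicit.
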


\proof
For a decorated ideal $2k$-gon $\pP$, one has
$$
\frac{\prod_{j=1}^k (p_{2j-1} - p_{2j})}{\prod_{j=1}^k (p_{2j} - p_{2j+1})} = 
\frac{\prod_{j=1}^k \lambda_{2j-1,2j}}{\prod_{j=1}^k \lambda_{2j,2j+1}},
$$
 see \cite{Pen}. This implies the result.
\proofend

For example, if $n=2m$ then $c_{even}/c_{odd} = A^2(\pP)$ and $G_m (\pP) = A(\pP) + 1/A (\pP)$.
In fact, for even-gons, the next result holds.

\begin{lemma} 	\label{thm:AP}
If $\pP \T \pQ$, then $A(\pP) \cdot A(\pQ) =1$.
\end{lemma}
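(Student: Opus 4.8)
The plan is to pull everything back to the Grassmann--Pl\"ucker relation after lifting the vertices to vectors. Lift each vertex to a column vector $P_i = (p_i,1)^T$, $Q_i = (q_i,1)^T$ and write $[V,W]=\det(V,W)$ for the $2\times 2$ determinant, so that $[P_i,P_j]=p_i-p_j$, $[P_i,Q_j]=p_i-q_j$, and so on. The computation in the proof of Lemma~\ref{altper} (resting on Penner's $\lambda$-length identity) gives, for the full $2m$-gon,
\[
A(\pP) = \frac{[P_1,P_2]\,[P_3,P_4]\cdots[P_{2m-1},P_{2m}]}{[P_2,P_3]\,[P_4,P_5]\cdots[P_{2m},P_1]},
\]
and the analogous expression for $A(\pQ)$ in the brackets $[Q_i,Q_{i+1}]$. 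I would take these product formulas as the starting point.

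The key algebraic input is the Grassmann--Pl\"ucker identity for four vectors in $\KK^2$,
\[
[P_i,P_{i+1}]\,[Q_i,Q_{i+1}] = [P_i,Q_i]\,[P_{i+1},Q_{i+1}] - [P_i,Q_{i+1}]\,[P_{i+1},Q_i].
\]
Next I would feed in the relation $\pP\T\pQ$: in bracket form the equation $[p_i,p_{i+1},q_i,q_{i+1}]=\alpha$ reads $[P_i,Q_i]\,[P_{i+1},Q_{i+1}] = \alpha\,[P_i,Q_{i+1}]\,[P_{i+1},Q_i]$, and substituting this into the Pl\"ucker relation collapses it to the one-term identity
\[
[P_i,P_{i+1}]\,[Q_i,Q_{i+1}] = (\alpha-1)\,[P_i,Q_{i+1}]\,[P_{i+1},Q_i],
\]
valid for every $i$. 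Non-degeneracy of $\pP\T\pQ$ together with $\alpha\notin\{0,1,\infty\}$ ensures that none of the brackets involved vanishes.

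Now I would assemble $A(\pP)A(\pQ)$. Grouping the numerators of the two products yields $\prod_{i\text{ odd}}[P_i,P_{i+1}]\,[Q_i,Q_{i+1}]$ and the denominators the analogous product over even $i$; applying the collapsed identity to each factor produces $(\alpha-1)^m$ in both numerator and denominator (there are $m$ odd and $m$ even indices), so these constants cancel, leaving
\[
A(\pP)A(\pQ) = \frac{\prod_{i\text{ odd}}[P_i,Q_{i+1}]\,[P_{i+1},Q_i]}{\prod_{i\text{ even}}[P_i,Q_{i+1}]\,[P_{i+1},Q_i]}.
\]
Finally I would rewrite each factor back via $[P_i,Q_{i+1}]\,[P_{i+1},Q_i] = \alpha^{-1}[P_i,Q_i]\,[P_{i+1},Q_{i+1}]$; the $\alpha^{-1}$'s cancel again between the two products, and both the resulting numerator and denominator equal $\prod_{k=1}^{2m}[P_k,Q_k]$, whence the ratio is $1$.

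The step carrying the real content is this last telescoping, because it is here that the \emph{sign} is pinned down. One can already obtain $A(\pP)^2A(\pQ)^2=1$ cheaply from Main Theorem~\ref{thm:TwistedModuli}: since $c_{even}/c_{odd}=A^2$ and the substitution $y_i=1-x_i$ of Lemma~\ref{lem:AuxVar} sends $c_{even}/c_{odd}$ to its reciprocal, the product of the two is $1$; but that route leaves an ambiguous sign $A(\pP)A(\pQ)=\pm 1$. The Pl\"ucker computation is worth carrying out precisely because it resolves this: the net exponent of each diagonal bracket $[P_k,Q_k]$ in the final product is $(-1)^{k+1}+(-1)^{k}=0$ (it occurs once as the $i=k$ factor and once as the $i=k-1$ factor, with opposite placements), so the cancellation is exact and the value is $+1$ rather than $-1$.
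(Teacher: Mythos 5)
Your proof is correct and is essentially the paper's own argument written in determinant-bracket form: the per-edge identity you derive from Pl\"ucker plus the $\alpha$-relation, $[P_i,P_{i+1}][Q_i,Q_{i+1}]=\frac{\alpha-1}{\alpha}[P_i,Q_i][P_{i+1},Q_{i+1}]$, is precisely the paper's observation that $[p_i,q_{i+1},p_{i+1},q_i]=1-\frac{1}{\alpha}$ for every edge, and both arguments then multiply this identity over the odd and the even edges and cancel the diagonal factors $p_k-q_k$. The only difference is presentational: the paper equates two products, each equal to $\left(1-\frac{1}{\alpha}\right)^m$, whereas you assemble $A(\pP)A(\pQ)$ directly and telescope, which also makes the sign bookkeeping explicit.
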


\proof
	We have
$$
	\left(1 - \frac1\alpha\right)^n = \prod_{i=1}^{n} [p_{2i-1}, q_{2i}; p_{2i}, 
	q_{2i-1}]
	= \prod_{i=1}^n 
	\frac{(p_{2i-1}-p_{2i})(q_{2i}-q_{2i-1})}{(p_{2i-1}-q_{2i-1})(q_{2i}-p_{2i})},
$$
and
$$
	\left(1 - \frac1\alpha\right)^n = \prod_{i=1}^{n} [p_{2i}, q_{2i+1}; 
	p_{2i+1}, 
	q_{2i}]
	= \prod_{i=1}^n 
	\frac{(p_{2i}-p_{2i+1})(q_{2i+1}-q_{2i})}{(p_{2i}-q_{2i})(q_{2i+1}-p_{2i+1})}.
$$
	Hence the ratio of the right hand sides of the equations is equal to $1$. 
\proofend

\subsection{Restriction of the integrals to the space of closed polygons}

The moduli space $\cP$ has codimension 3 in $\cT$. After restriction to $\cP$, the integrals $F_k$ and $c_{[n]}$ become functionally dependent. Likewise, their differentials along $\cP$ (considered as 1-forms in $\cT$) also become linearly dependent. The following theorem  makes this precise. 

\begin{theorem}
\label{thm:RestrictIntegrals}
The restrictions of the integrals $F_k$ to $\cP$ satisfy the relations
\begin{equation}
\label{intrel}
 \sum_{k=0}^{\lfloor \frac{n}2 \rfloor} (-1)^k {F_k} =  2 \sqrt{c_{[n]}}, \quad \sum_{k=0}^{\lfloor \frac{n}{2} \rfloor} (-1)^k (n-2k) F_k = 0.
\end{equation}
Also, along $\cP$ one has a linear relation between the differentials of the integrals
\begin{equation}
\label{difintrel}
d\left(\frac{1}{\sqrt{c_{[n]}}} \sum_{k=0}^{\lfloor \frac{n}2 \rfloor} (-1)^k  {F_k} \right)= 0.
\end{equation}
On an open dense subset of $\cP$, these are the only relations between the integrals and their differentials.
\end{theorem}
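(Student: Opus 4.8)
The plan is to treat the three assertions of Theorem~\ref{thm:RestrictIntegrals} in turn, spending almost all the effort on the last one. The two functional relations in \eqref{intrel} are essentially already at hand. For a closed polygon the monodromy is trivial, so $M_n$ represents the identity of $\PGL(2)$, i.e.\ $M_n=\mu\,\Id$; by Lemma~\ref{lem:MonMat2} we have $\det M_n=c_{[n]}$, whence $\mu^2=c_{[n]}$, while \eqref{eqn:TraceMon} gives $\Tr M_n=\sum_k(-1)^kF_k$. Comparing, $\sum_k(-1)^kF_k=2\mu=2\sqrt{c_{[n]}}$ for the appropriate branch of the root, which is the first relation; the second is exactly Lemma~\ref{lem:LinRelClosed}. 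The differential relation \eqref{difintrel} is then immediate, since the first relation says that $c_{[n]}^{-1/2}\sum_k(-1)^kF_k$ equals the constant $2$ on $\cP$, so its differential restricted to $T\cP$ vanishes.

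It remains to prove that these are the only relations, and I would first recast this linear-algebraically. Fix a smooth point $c\in\cP$ at which the integrals are independent on $\cT$, and set
\[
W=\operatorname{span}_{\KK}\{\,dc_{[n]},dF_1,\ldots,dF_{\lfloor n/2\rfloor}\,\}\subset T^*_c\cT,\qquad N=N^*_c\cP .
\]
By Theorem~\ref{thm:IntIndependence}, $\dim W=\lfloor n/2\rfloor+1$, and by Lemma~\ref{lem:DRelations} the conormal space $N$ is three-dimensional, spanned by $dD_{1,n-2},dD_{2,n-1},dD_{3,n}$. The linear relations among the restrictions of the integral differentials to $T\cP$ are precisely the elements of $W\cap N$, so the claim is that $\dim(W\cap N)=2$ on a dense open subset of $\cP$.

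The lower bound $\dim(W\cap N)\ge 2$ holds at every smooth point: the covector $\omega_1=\sum_k(-1)^k(n-2k)\,dF_k$ lies in $W$ by construction and in $N$ because $\sum_k(-1)^k(n-2k)F_k$ vanishes on $\cP$ (Lemma~\ref{lem:LinRelClosed}), while $\omega_2=d\bigl(c_{[n]}^{-1/2}\sum_k(-1)^kF_k\bigr)$ lies in $W$ and in $N$ by the first functional relation; they are independent since only $\omega_2$ involves $dc_{[n]}$. On the locus where $\dim W$ and $\dim N$ are locally constant, $\dim(W\cap N)=\dim W+\dim N-\dim(W+N)$ is upper semicontinuous, so it suffices to exhibit one point of $\cP$ at which $\dim(W\cap N)=2$, equivalently at which the $\lfloor n/2\rfloor+4$ gradients $\nabla c_{[n]},\nabla F_1,\ldots,\nabla F_{\lfloor n/2\rfloor},\nabla D_{1,n-2},\nabla D_{2,n-1},\nabla D_{3,n}$ in $\KK^n$ have rank $\lfloor n/2\rfloor+2$. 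Combined with the universal bound $\dim(W+N)\le\lfloor n/2\rfloor+2$ forced by $\omega_1,\omega_2$, this yields equality on a dense open set.

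The heart of the argument, and the step I expect to be the main obstacle, is therefore constructing one explicit closed polygon at which this combined gradient matrix attains the maximal rank $\lfloor n/2\rfloor+2$. This is the analogue on $\cP$ of the degeneration in Theorem~\ref{thm:IntIndependence}, but harder, because the test point must satisfy the nonlinear closure equations $D_{i,n-3+i}=0$ while remaining generic for the gradients. I would look for a one-parameter family $c_i=c_i(\eps)\in\cP$ (solving the closure relations order by order in $\eps$ about a convenient configuration), divide each column of the gradient matrix by the lowest power of $\eps$ dividing it, and pass to $\eps\to0$, arranging the degeneration so that the limiting matrix is block-triangular with an evident nonsingular $(\lfloor n/2\rfloor+2)\times(\lfloor n/2\rfloor+2)$ minor; lower semicontinuity of rank then transports the conclusion back to $\eps\neq0$. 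The projectively regular $n$-gon is tempting, since there the gradients become circulant and diagonalize under the discrete Fourier transform over $\Z/n$, but symmetry tends to depress the rank, so one must either verify that no spurious Fourier mode collapses or, more safely, perturb off the symmetric point within $\cP$. A drop of exactly $2$ means the integrals have functional rank $\lfloor n/2\rfloor-1$ on $\cP$ (uniformly in the parity of $n$, since for even $n$ the ratio $c_{even}/c_{odd}$ is already determined by $F_{n/2}$ and $c_{[n]}$ and contributes no new relation), which is exactly the count feeding into Main Theorem~\ref{thm:mainclosed}.
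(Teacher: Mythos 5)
Your handling of the two functional relations \eqref{intrel} is fine: the second is Lemma~\ref{lem:LinRelClosed}, and your derivation of the first from $M_n=\mu\,\Id$, $\det M_n=c_{[n]}$, $\Tr M_n=\sum_k(-1)^kF_k$ is a correct variant of the paper's argument (essentially Lemma~\ref{lm:3rel}, $\Phi(1)=2$). The genuine gap is your reading of \eqref{difintrel}. That relation is \emph{not} the tautology that a function constant on $\cP$ has vanishing differential along $T\cP$; it asserts that the $1$-form $d\bigl(c_{[n]}^{-1/2}\sum_k(-1)^kF_k\bigr)$ on $\cT$ --- all $n$ partial derivatives $\partial/\partial c_i$ --- vanishes at every point of $\cP$. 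The paper says explicitly that the differentials are ``considered as 1-forms in $\cT$'', and proves \eqref{difintrel} by computing each $\partial/\partial c_i$ and invoking $D_{i+1,i+n-2}=0$ and $D_{i+1,i+n-1}=D_{i,i+n-2}$ (Lemmas~\ref{lem:DRelations} and~\ref{lem:MoreDRel}). Equivalently, on $\cP$ one has $dc_{[n]}=\sqrt{c_{[n]}}\,\sum_k(-1)^k dF_k$ in $T^*\cT$. This nontrivial identity is exactly the ``one relation between the differentials considered as covectors in $\cT$'' consumed by the proof of Main Theorem~\ref{thm:mainclosed}; your proposal never states or proves it.

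The misreading then breaks your linear-algebra frame. By \eqref{difintrel}, the covectors $dc_{[n]},dF_1,\dots,dF_{\lfloor n/2\rfloor}$ are dependent at \emph{every} point of $\cP$, so the Zariski-open independence locus of Theorem~\ref{thm:IntIndependence} is disjoint from $\cP$, and your opening move (``fix a point of $\cP$ at which the integrals are independent on $\cT$'', giving $\dim W=\lfloor n/2\rfloor+1$) is vacuous: no such point exists. In truth $\dim W=\lfloor n/2\rfloor$ along $\cP$, your $\omega_2$ is the zero covector rather than a second generator, and $\dim(W\cap N)=1$, spanned by $\omega_1$. Your test-point computation cannot repair this: a combined gradient rank of $\lfloor n/2\rfloor+2$ is equally consistent with your configuration $(\dim W=\lfloor n/2\rfloor+1,\ \dim(W\cap N)=2)$ and with the true one $(\dim W=\lfloor n/2\rfloor,\ \dim(W\cap N)=1)$, so the framework is structurally blind to \eqref{difintrel}. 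Finally, the step you yourself flag as the heart --- exhibiting a point of $\cP$ with the required rank --- is left as a plan, whereas it is the bulk of the paper's proof: set $c_i=\eps^i$ for $i\le n-3$, solve the closure relations $D_{i,n-3+i}=0$ for $c_{n-2},c_{n-1},c_n$, control the $\eps$-heights using the continuant identity \eqref{contident}, and conclude that $dF_1,\dots,dF_{\lfloor n/2\rfloor}$ are independent along $\cP$; combined with \eqref{difintrel} this shows that \eqref{difintrel} is the unique differential relation, while the uniqueness of the functional relations \eqref{intrel} follows from the leaf-dimension count in the proof of Main Theorem~\ref{thm:mainclosed}.
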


\begin{proof}
The second of equations \eqref{intrel} was proved in Lemma \ref{lem:LinRelClosed}.
The first one follows from $G_0 = 2$ and expression of $G_0$ in terms of $F_i$ obtained in Theorem \ref{thm:IntegralsClosed}.

Note that Theorem \ref{thm:IntegralsClosed} together with $G_1 = n$ implies
\[
\sum_{k=1}^{\lfloor \frac{n}2 \rfloor} (-1)^k kF_k = n\sqrt{c_{[n]}},
\]
which is a linear combination of equations \eqref{intrel}.

In the proof of Theorem \ref{thm:trmono} we established the following polynomial identity:
\[
\sum_{k=0}^{\lfloor \frac{n}2 \rfloor} (-1)^k  {F_k} = \sum_{I \circlearrowleft \text{sparse}} (-1)^{|I|} c_I = D_{i+1,i+n-1} - c_i D_{i+2,i+n-2}.
\]
It follows that
\begin{multline*}
\frac{\partial}{\partial c_i} \left(\frac{1}{\sqrt{c_{[n]}}} \sum_{k=0}^{\lfloor \frac{n}2 \rfloor} (-1)^k  {F_k} \right)
= -\frac{1}{2c_i\sqrt{c_{[n]}}} \sum_{k=0}^{\lfloor \frac{n}2 \rfloor} (-1)^k  {F_k} - \frac{1}{\sqrt{c_{[n]}}} D_{i+2,i+n-2}\\
= -\frac{1}{2c_i\sqrt{c_{[n]}}} (D_{i+1,i+n-1} + c_i D_{i+2,i+n-2})\\
= -\frac{1}{2c_i\sqrt{c_{[n]}}} (D_{i+1,i+n-1} - D_{i,i+n-2} + D_{i+1,i+n-2}).
\end{multline*}
The cross-ratios of a closed polygon satisfy $D_{i+1,i+n-2} = 0$ and $D_{i+1,i+n-1} = D_{i,i+n-2}$
(see Lemma \ref{lem:DRelations} and Lemma \ref{lem:MoreDRel}).
It follows that the above partial derivative vanishes at $\cP$, which proves equation \eqref{difintrel}.

Next, we show that (\ref{difintrel}) is the only relation between the differentials of the integrals, considered as covectors in $\cT$. It suffices to show that $dF_1,\ldots, dF_{\lfloor \frac{n}2 \rfloor}$ are linearly independent in an open dense set. The argument is similar to the one in the proof of Theorem \ref{thm:IntIndependence}, with the important difference that only $n-3$ of the variables $c_i$ are independent.

Set
\begin{equation}
\label{setc}
c_1=\eps, c_2=\eps^2,\ldots, c_{n-3} = \eps^{n-3}.
\end{equation}

Let us call the greatest power of $\eps$ that divides a polynomial in variables $c_j$ its {\it height}. The height of zero is infinite.

We claim that the heights of the remaining variables, $c_{n-2}, c_{n-1}$, and $c_n$ are, respectively, $1, (n-3)(n-2)/{2}$, and $1$.

Indeed, due to Lemma \ref{lem:DRelations} and formula (\ref{eqn:DRecurrence}), one has
$$
0=D_{1,n-2} = D_{1,n-3} - c_{n-2} D_{1,n-4},  
$$
hence $c_{n-2} = D_{1,n-3}/D_{1,n-4}$. By (\ref{setc}), both the numerator and denominator have height 1, and the claim about $c_{n-2}$ follows.

Likewise, 
$$
0=D_{0,n-3} = D_{1,n-3} -c_n D_{2,n-3}, 
$$
hence $c_n=D_{1,n-3}/D_{2,n-3}$, having height 1 as well.

Finally, 
$$
0=D_{2,n-1} = D_{2,n-2} - c_{n-1} D_{2,n-3},
$$
hence $c_{n-1} = D_{2,n-2}/D_{2,n-3}$. One has
$$
D_{2,n-2} = D_{2,n-3} - c_{n-2} D_{2,n-4} = \frac{D_{2,n-3} D_{1,n-4}-D_{1,n-3} D_{2,n-4}}{D_{1,n-4}}.
$$
The denominator has height 1; as to the numerator, Lemma \ref{lem:MonMat1} with $n$ replaced by $n-3$, along with Lemma \ref{lem:MonMat2}, imply that the numerator equals $c_1c_2\ldots c_{n-3}$, and this implies the claim about $c_{n-1}$. 

For the last step, alternatively, one can use the identity satisfied by continuants (see \cite{Muir}, item 554):
\begin{equation} \label{contident}
D_{1,m} D_{i,j} - D_{1,j} D_{i,m} = -c_{i-1} \ldots c_{j+1} D_{1,i-2} D_{j+2,m}.
\end{equation}

Next we consider the ${\lfloor \frac{n}{2} \rfloor} \times n$ matrix whose columns are the gradients of the integrals $F_i$ and whose rows are their first partial derivatives with respect to the variables $c_j$. This matrix is evaluated along $\cP$.

The cases of even and odd $n$ differ slightly. Let us consider the case of $n=2k$ in detail.

Reduce the matrix to a square $k \times k$ one by retaining the  rows $1,2,4,6,\ldots, n-2$. As before, we start with the last column and go leftward. In each consecutive column, we identify the terms with the smallest height, divide by the respective power of
$\eps$, and take limit as $\eps\to 0$. 

The entries of the last, $k$th, column are $\partial (c_{even})/\partial c_j$ with $j \in \{2,4,6,\ldots, n-2\}$ and $\partial (c_{odd})/\partial c_1$, and the smallest height is attained for $j=n-4$. Thus the last column will have 1 as $(n-4)$th entry and the rest zeros. Subtracting this column from the columns left of it, we may ignore their $(n-4)$th entries.  

Similarly, consider the next, $(k-1)$st, column. Ignoring its $(n-4)$th entry, it has 1 as $(n-6)$th entry and the rest zeros. Again,  subtracting the two rightmost columns from the columns left of them, we may ignore their $(n-4)$th and $(n-6)$th entries. Continuing in this way, we reach 3rd column that has 1 at its second position. 

It remains to consider the top, 1st, and bottom, $(n-2)$nd, positions of the first and the second columns. That is, we need to show that the $2\times 2$ matrix made of these entries is non-degenerate.

Since $F_1 = \sum c_j$, the first column  has ones as every entry. The top position of the second column is $\sum c_j - c_1-c_2-c_n$, and the bottom position of the second column is $\sum c_j - c_{n-3} - c_{n-2} - c_{n-1}$. Thus we need to show that their difference, $c_{n-3} + c_{n-2} + c_{n-1} - c_1-c_2-c_n$, has a finite height.

The terms $c_{n-3}$ and $c_{n-1}$ have height not less than $n-3$. We shall see that the height of $c_{n-2}-c_{n}-c_1$ equals 3, and this will imply that the height of $c_{n-3} + c_{n-2} + c_{n-1} - c_1-c_2-c_n$ equals that of $c_2$, that is, 2.

One has
\begin{equation*}
\begin{split}
c_{n-2}-c_{n} = \frac{D_{1,n-3}}{D_{1,n-4}} - \frac{D_{1,n-3}}{D_{2,n-3}} &= \frac{D_{1,n-3} (D_{2,n-3} - D_{1,n-4})}{D_{1,n-4}D_{2,n-3}}\\
&= \frac{D_{1,n-3} (c_1 D_{3,n-4} - c_{n-3} D_{2,n-5})}{D_{1,n-4}D_{2,n-3}},
\end{split}
\end{equation*}
where the last equality makes use of (\ref{eqn:DRecurrence}).

Since the height of $c_{n-3}$ is $n-3$, we ignore this term and continue: 
\begin{equation*}
\begin{split}
c_{n-2}-c_{n}-c_1&= c_1 \frac{D_{1,n-3} D_{3,n-4} - D_{1,n-4}D_{2,n-3}}{D_{1,n-4}D_{2,n-3}}\\
&= c_1 \frac{(D_{1,n-3} D_{3,n-4} - D_{1,n-4}D_{3,n-3}) + c_2 D_{1,n-4} D_{4,n-3}}{D_{1,n-4}D_{2,n-3}},
\end{split}
\end{equation*}
where the last equality is again due to (\ref{eqn:DRecurrence}).

Due to (\ref{contident}),  the expression in parentheses in the numerator is divisible by $c_2\ldots c_{n-3}$, hence the height of $c_{n-2}-c_{n}-c_1$ equals that of $c_1c_2$, that is, equal to 3. This completes the argument.

If $n$ is odd, one  argues similarly, and we not dwell on it. 

We shall show that (\ref{intrel}) are the only relations between the restrictions of the integrals on $\cP$ as part of the proof of complete integrability in the next section.
\end{proof}

The following lemma provides an alternative derivation of relations (\ref{intrel}) and a proof of (\ref{difintrel}).
Let $M(t) \in \GL(2)$ be a germ of a differentiable curve with $M(1) = \Id$, and let $\Phi(t) = {\Tr M(t)}/{\sqrt{\det M(t)}}$ be the normalized trace.

\begin{lemma}
\label{lm:3rel}
One has
$$
\Phi(1) = 2,\ \Phi' (1) = 0,\ d\Phi (1) =0.
$$
\end{lemma}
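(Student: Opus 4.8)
The three assertions are best read by viewing $\Phi$ as the function $M \mapsto \Tr M/\sqrt{\det M}$ on a neighborhood of $\Id$ in $\GL(2)$ (choosing the branch of the square root with $\sqrt{\det \Id}=1$), with $\Phi(t)$ standing for $\Phi(M(t))$; then $\Phi'(1)$ is the derivative along the curve, while $d\Phi(1)$ is the differential of $\Phi$ at the point $M(1)=\Id$. The first claim is immediate: $\Tr \Id = 2$ and $\det \Id = 1$, so $\Phi(1)=2$. The plan is to reduce the remaining two claims to a single fact, namely that $\Id$ is a critical point of $\Phi$, i.e. $d\Phi|_{\Id}=0$; the assertion $\Phi'(1)=0$ then follows by the chain rule, since $\Phi'(1)=d\Phi|_{\Id}(M'(1))$.

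To compute $d\Phi|_{\Id}$ I would evaluate the directional derivative in an arbitrary direction $H$ (a $2\times 2$ matrix). Using Jacobi's formula $\frac{d}{ds}\det(M+sH)\big|_{s=0}=\det M\cdot\Tr(M^{-1}H)$, one obtains
\[
d\Phi|_M(H)=\frac{\Tr H}{\sqrt{\det M}}-\frac{\Tr M}{2\sqrt{\det M}}\,\Tr\!\left(M^{-1}H\right),
\]
and at $M=\Id$ this collapses to $\Tr H-\tfrac{2}{2}\Tr H=0$ for every $H$. Hence $d\Phi|_{\Id}=0$, which is exactly the assertion $d\Phi(1)=0$, and $\Phi'(1)=0$ follows at once. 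An equivalent and even shorter route is the second-order Taylor expansion $\det(\Id+sH)=1+s\Tr H+s^2\det H$, giving $\sqrt{\det(\Id+sH)}=1+\tfrac{s}{2}\Tr H+O(s^2)$, so that $\Phi(\Id+sH)=(2+s\Tr H)\big(1-\tfrac{s}{2}\Tr H+O(s^2)\big)=2+O(s^2)$; the vanishing of the linear term is precisely the critical-point statement.

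There is essentially no obstacle here: the computation is elementary, and the only point requiring a word of care is the branch of $\sqrt{\det M}$, which is harmless near $\Id$. Conceptually, the reason $\Id$ is critical is that $\Phi$ is invariant (up to sign) under the scaling $M\mapsto sM$, and more sharply the critical locus of $\Phi$ is exactly the set of scalar matrices, of which $\Id$ is one: to first order the growth of $\Tr$ and of $\sqrt{\det}$ cancel precisely at scalar matrices. When the lemma is applied with $M(t)$ the monodromy of the polygon with cross-ratios $tc$ (normalized so that the closed polygon at $t=1$ has $M(1)=\Id$), the value $\Phi(1)=2$ yields the first relation of \eqref{intrel}, the derivative $\Phi'(1)=0$ yields the second relation of \eqref{intrel} via \eqref{scaledmono}, and $d\Phi(1)=0$ yields \eqref{difintrel}.
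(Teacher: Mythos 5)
Your proof is correct, but it takes a genuinely different route from the paper's. You show directly, via Jacobi's formula (or, equivalently, the Taylor expansion $\det(\Id+sH)=1+s\Tr H+s^2\det H$), that the identity matrix is a critical point of the function $M \mapsto \Tr M/\sqrt{\det M}$ on a neighborhood of $\Id$ in $\GL(2)$, and then deduce all three assertions by the chain rule. The paper instead parametrizes by eigenvalues: writing the eigenvalues of $M(t)$ as $e^{\lambda_1(t)}$, $e^{\lambda_2(t)}$ and setting $\mu=(\lambda_1-\lambda_2)/2$, it observes that $\Phi = e^{\mu}+e^{-\mu}$ with $\mu(1)=0$, so $\Phi' = (e^{\mu}-e^{-\mu})\mu'$ vanishes at $t=1$, and likewise for the differential. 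Your computation is more elementary and arguably more careful on one point: the paper's argument implicitly differentiates $\mu$, which is (half) a square root of the discriminant, a function vanishing at $\Id$, so $\mu$ is not obviously differentiable there; to make that argument airtight one would note that $\Phi=2\cosh\mu$ is an entire function of $\mu^2$ and that $\mu^2$ is a smooth function of $M$ with a critical point at $\Id$. What the paper's route buys is the conceptual picture that $\Phi$ is an even function of $\mu$; what yours buys is an explicit formula for $d\Phi|_M$, from which you correctly identify the full critical locus as the scalar matrices, with no smoothness worries. Finally, your reading of the three assertions --- $\Phi'(1)$ as the derivative along the curve and $d\Phi(1)$ as the differential in the remaining parameters, both consequences of the single fact $d\Phi|_{\Id}=0$ --- is exactly what is needed for the application in the proof of Theorem \ref{thm:RestrictIntegrals}.
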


\begin{proof}
Let $e^{\lambda_1(t)}$ and $e^{\lambda_2(t)}$ be the eigenvalues of $M(t)$, and let $\mu(t)=\frac{\lambda_1(t)-\lambda_2(t)}{2}$. 
Note that $\mu(1)=0$.

One has $\Phi(t)=e^{\mu(t)} + e^{-\mu(t)}$. It follows that 
$$
\Phi(1) = e^{\mu(1)} + e^{-\mu(1)} =2, \ \Phi'(1) = (e^{\mu(1)} - e^{-\mu(1)}) \mu'(1) =0,
$$
and
$$
 d\Phi (1) = (e^{\mu(1)} - e^{-\mu(1)}) d\mu'(1) =0,
$$
as claimed.
\end{proof}


\begin{proof}[Proof of Theorem \ref{thm:RestrictIntegrals}]
Applied to the monodromy of a twisted polygon, Lemma \ref{lm:3rel}, along with formula (\ref{scaledmono})
imply relations (\ref{intrel}) and (\ref{difintrel}).
\end{proof}

\subsection{Proof of integrability}
\label{sec:IntegrTwist}

The next proposition is an analog of Proposition 3.1 in \cite{OST2}.

\begin{proposition} \label{tang}
Let $H$ be one of the integrals  described in Corollary \ref{cor:FIntegrals}, and let $X$ be 
the Hamiltonian vector field of $H$ with respect to the bracket $\{,\}_\alpha$. Then $X$ is tangent  to the submanifold  $\cP$.
\end{proposition}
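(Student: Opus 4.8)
The plan is to translate tangency into a bracket condition against the monodromy and then use that the monodromy is scalar along $\cP$. By Lemmas \ref{lem:MonMat1} and \ref{lem:DRelations}, the subspace $\cP\subset\cT$ is exactly the locus where the monodromy matrix $M_n=M_n(c)$ is scalar, $M_n\propto\Id$; on an open dense subset it is a smooth codimension-$3$ submanifold cut out by three of the entry-functions $(M_n)_{12}$, $(M_n)_{21}$, $(M_n)_{11}-(M_n)_{22}$, whose differentials are independent there. A vector field is tangent to a submanifold precisely when it annihilates a local system of defining functions, so it suffices to prove that
\[
\{H,(M_n)_{ab}\}_\alpha\big|_{\cP}=0\qquad\text{for all entries }(M_n)_{ab}.
\]
Since $H$, being $c_{[n]}$ or some $F_k$, and the bracket $\{\cdot,\cdot\}_\alpha$ of \eqref{eqn:PoissonAlpha} are both invariant under the index shift (Lemma \ref{cyclicshift}), it is in fact enough to check this for a single cyclic position and to transport the conclusion to the remaining equations $D_{i,n-3+i}=0$ by the shift.

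The heart of the argument is a Lax (orbit-isospectrality) identity: I would show that the Hamiltonian flow of $H$ acts on the monodromy by infinitesimal conjugation, i.e. that there is a matrix-valued function $B=B_H(c)$ with
\[
\{H,M_n\}_\alpha=[M_n,B],
\]
where the left-hand side is the matrix $\big(\{H,(M_n)_{ab}\}_\alpha\big)$. This is the infinitesimal counterpart of the finite conjugation in Theorem \ref{thm:LaxMatrix}: recall that $A_1(p,q)=\Id$, so the Lax matrix at $\lambda=1$ is $A_1(\pP)=M_n^{-1}$, and $H$ is (a homogeneous component of) the conjugation-invariant trace of $A_\lambda(\pP)$ by Theorem \ref{thm:Integrals}. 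For a transfer matrix built as an ordered product of the local factors $A_\lambda(p_i,p_{i+1})$, equipped with the Volterra-type bracket \eqref{eqn:PoissonAlpha}, the Poisson bracket of such a trace with $M_n$ is a commutator; concretely the identity can be verified directly from the explicit brackets \eqref{eqn:PoissonAlpha} and the continuant recurrences \eqref{eqn:DRecurrence} that express the entries of $M_n$, without needing a closed form for $B$. I expect this commutator identity to be the main obstacle, since it is exactly the step distinguishing the trace integrals from arbitrary functions of $c$: only for them does the flow preserve the full conjugacy orbit of the monodromy, and not merely its trace and determinant.

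Granting the identity, the conclusion is immediate. At a point of $\cP$ we have $M_n=s\,\Id$ with $s^2=\det M_n=c_{[n]}$, and a scalar matrix is central, so $[M_n,B]=[s\,\Id,B]=0$ there, irrespective of $B$. Hence $\{H,(M_n)_{ab}\}_\alpha|_{\cP}=0$ for every entry; in particular the brackets of $H$ with $(M_n)_{12}$, $(M_n)_{21}$ and $(M_n)_{11}-(M_n)_{22}$ vanish on $\cP$, which is the tangency of $X_H$ to $\cP$, and likewise for $H=c_{[n]}$. I emphasize that passing through the commutator form is essential: preserving only $\Tr M_n$ and $\det M_n$ would not suffice, since it would allow the scalar monodromy to deform into a non-trivial parabolic (Jordan) matrix without leaving a level set of the trace — it is the stronger, orbit-level invariance $\{H,M_n\}_\alpha=[M_n,B]$ that pins the monodromy to remain scalar along the flow.
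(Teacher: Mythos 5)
Your reduction of tangency to the vanishing of $\{H,(M_n)_{ab}\}_\alpha$ along $\cP$ is sound, and so is the closing observation that a scalar matrix is central. But the load-bearing step --- the existence of a matrix-valued function $B=B_H(c)$, \emph{regular along} $\cP$, with $\{H,M_n\}_\alpha=[M_n,B]$ --- is precisely what you never prove, and nothing in the paper supplies it. Your assertion that ``for a transfer matrix built as an ordered product of local factors, the bracket of a trace invariant with $M_n$ is a commutator'' is the statement that the bracket \eqref{eqn:PoissonAlpha} carries a Sklyanin-type $r$-matrix structure compatible with the factors $\begin{pmatrix} 0 & c_i\\ -1 & 1\end{pmatrix}$; that is a substantive claim needing its own computation, and it does not follow formally from Theorems \ref{thm:LaxMatrix} or \ref{thm:Integrals}, which concern the discrete correspondence $\T$, not the Hamiltonian flows of the $F_k$. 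Note where the difficulty actually sits: at points where $M_n$ is non-scalar with distinct eigenvalues, a pointwise $B$ exists cheaply, because in $\SL(2,\C)$ a generic level set of the normalized trace is a single conjugacy class, and preservation of the trace is already known (Theorems \ref{thm:trmono} and \ref{thm:FCommute}). The entire issue is whether $B$ stays finite as the monodromy becomes scalar, i.e.\ exactly along $\cP$: an identity valid only off $\cP$ gives nothing there, while at points of $\cP$ the identity is vacuously satisfiable by any finite $B$. So your argument stands or falls with a regularity statement about $B$ near $\cP$ that is never addressed; as written, there is a genuine gap.

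It is also worth saying that the route you dismiss in your last paragraph is the one the paper actually takes, and it does suffice. The paper only uses that $X$ commutes with the normalized trace $\Phi(M)$ (hence is tangent to the generic isomonodromic leaves), and then a limiting argument: locally $\cT\cong\cP\times\SL(2)$, the isomonodromic leaves project to conjugacy classes, and Lemma \ref{limpos} shows that the intersection over $X\to\Id$ of the limiting positions of the tangent spaces to conjugacy classes is zero --- because a $C\in\sl(2)$ with $\Tr(AC)=0$ for all $A\in\sl(2)$ must vanish, the form $\Tr(AC)$ being non-degenerate. Continuity of $X$ then forces $X|_{\cP}$ to be tangent to $\cP$. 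So your worry about the scalar monodromy drifting into a parabolic matrix on the same trace level is legitimate \emph{pointwise}, but it is defeated by this soft geometric argument rather than by an orbit-level Lax equation. To complete your proposal you would have to either construct $B$ explicitly and verify its regularity on $\cP$, or replace the commutator identity by a limiting argument of the above type --- at which point you would essentially have rediscovered the paper's proof.
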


\proof
Space $\cT$ is foliated by isomonodromic submanifolds that are generically of codimension one and that are the level hypersurfaces of the normalized trace of the monodromy $M$. Denote the normalized trace by $\Phi(M)$.

The isomonodromic foliation is singular, and $\cP$ is a singular leaf of codimension $3$. 
Note that the versal deformation of $\cP$ is locally isomorphic to $\SL(2)$ partitioned into the conjugacy equivalence classes.

One has $\{H,\Phi(M)\}_\alpha=0$, since the integrals Poisson commute with respect to all brackets $\{,\}_\alpha$ and $\Phi (M)$  is a function of the integrals, see Theorem \ref{thm:trmono}. 
Hence $X$ is tangent to the generic leaves of the isomonodromic foliation on $\cT$. 
Let us show that $X$ is tangent to $\cP$ as well.

In a nutshell,  the tangent space to $\cP$ at a smooth point $x_0$ is the intersection of the limiting positions of the tangent spaces to the isomonodromic leaves at points $x$, as $x$ tends to $x_0$. If  $X$ is transverse to $\cP$ at point $x_0\in \cP$, then $X$ is also transverse to an isomonodromic leaf at some point $x$ close to $x_0$,  which leads to a contradiction.

More precisely,  fix the first three vertices of a twisted polygon by a projective transformation, say, $0,1,\infty$.  This gives a local identification of $\cT$ with  the set of tuples $(p_4,\dots,p_n; M)$. The space of closed $n$-gons is characterized by the condition that $M=\Id$,  so we  locally identify $\cT$ with $\cP \times \SL(2)$. In particular, we have a projection $\cT \to \SL(2)$, and the preimage of the identity is $\cP$. The isomonodromic leaves project to the conjugacy equivalence classes in $\SL(2)$.

Thus the proof of the proposition reduces to the following lemma about the  $\SL(2)$.

\begin{lemma}
\label{limpos}
Consider the singular foliation of $\SL(2)$ by the conjugacy equivalence classes, and let $T_X$ be the tangent space to this foliation at $X\in \SL(2)$. Then the intersection, over all $X$, of the limiting positions of the spaces $T_X$, as $X$ tends to $\Id$, is trivial. 
\end{lemma}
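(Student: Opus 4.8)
The plan is to transport the problem into the Lie algebra $\mathfrak{sl}(2)$, where the tangent spaces to the conjugacy classes become images of the operators $\Id-\operatorname{Ad}_X$, and then to settle the intersection by exhibiting just three explicit directions of approach.

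First I would trivialize $T\SL(2)$ by right translation, identifying $T_X\SL(2)$ with $\mathfrak{sl}(2)$ via $v\mapsto vX^{-1}$. The conjugacy class of $X$ is the orbit of the action $g\cdot X=gXg^{-1}$, so its tangent space at $X$ is $\{\xi X-X\xi:\xi\in\mathfrak{sl}(2)\}$; under the right trivialization this is exactly $\operatorname{Im}(\Id-\operatorname{Ad}_X)\subseteq\mathfrak{sl}(2)$. For $X\neq\pm\Id$ the centralizer is one-dimensional, so $T_X$ is a $2$-plane. Thus the lemma reduces to the purely linear-algebraic claim that the intersection of all limiting positions of the $2$-planes $\operatorname{Im}(\Id-\operatorname{Ad}_X)$, taken as $X\to\Id$, is $\{0\}$.

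The key simplification I would use is that it suffices to produce finitely many limiting positions whose intersection is already trivial, since the full intersection is contained in any partial one. With the standard basis $H,E,F$ of $\mathfrak{sl}(2)$ (so $[H,E]=2E$, $[H,F]=-2F$, $[E,F]=H$), I would take the three families $X_t=\exp(tH),\ \exp(tE),\ \exp(tF)$, each non-central for $t\neq0$ and limiting to $\Id$. Using $\operatorname{Ad}_{\exp(tA)}=\exp(t\,\operatorname{ad}_A)$, a one-line computation on the basis shows that for every $t\neq0$ the image $\operatorname{Im}(\Id-\operatorname{Ad}_{X_t})$ equals $\langle E,F\rangle$ for $A=H$, equals $\langle H,E\rangle$ for $A=E$, and equals $\langle H,F\rangle$ for $A=F$. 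In each case the $2$-plane is independent of $t$, so these are genuine limiting positions, and their triple intersection is immediate: $\langle E,F\rangle\cap\langle H,E\rangle=\langle E\rangle$, and $\langle E\rangle\cap\langle H,F\rangle=\{0\}$. This forces the intersection of all limiting positions to be trivial.

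The only point requiring care is the meaning of ``limiting position'': because each of the three images is constant in $t$, no delicate Grassmannian-convergence argument is needed, which is precisely why I would route the proof through these three directions rather than through a general analysis of $\operatorname{Im}(\Id-\operatorname{Ad}_X)$ as $X\to\Id$. I expect the main conceptual step to be the reduction in the second paragraph, namely fixing the trivialization so that the tangent planes to the conjugacy classes are literally $\operatorname{Im}(\Id-\operatorname{Ad}_X)$; the computation itself is short and, since $H,E,F$ are real matrices, is valid verbatim over both $\R$ and $\C$, as the lemma requires.
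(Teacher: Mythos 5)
Your proof is correct, but it takes a genuinely different route from the paper's. The paper describes the tangent spaces by \emph{equations} rather than parametrically: an infinitesimal deformation $B+\eps C$ inside a conjugacy class satisfies $\Tr C=0$ (constancy of the trace) and $\Tr(B^{-1}C)=0$ (the determinant constraint); writing $B=\Id+\eps A$ with $A\in\sl(2)$ and passing to the limit, the limiting position along the direction $A$ lies in $\{C\in\sl(2):\Tr(AC)=0\}$, and since the trace form is non-degenerate on $\sl(2)$ and $A$ ranges over all of $\sl(2)$, the common intersection is $\{0\}$. You instead describe the tangent spaces as \emph{images}, $T_X\cong\operatorname{Im}(\Id-\operatorname{Ad}_X)$ in the right trivialization, and evaluate them along the three one-parameter subgroups $\exp(tH)$, $\exp(tE)$, $\exp(tF)$. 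The two computations are in fact dual to one another: your three limiting planes $\langle E,F\rangle$, $\langle H,E\rangle$, $\langle H,F\rangle$ are exactly the trace-orthogonal complements of $H$, $E$, $F$ in $\sl(2)$ that the paper's argument produces for those three directions of approach. What the paper's version buys is a description of the limiting position along an \emph{arbitrary} direction, with the conclusion following from non-degeneracy of the Killing form — an argument that transfers verbatim to other reductive groups. What your version buys is economy and concreteness: no defining equations of the conjugacy classes are needed, and because the trivialized planes are constant along your three curves, no genuine Grassmannian-convergence analysis is required. One small point worth making explicit: the lemma intersects subspaces of the varying tangent spaces $T_X\SL(2)$, so a common ambient identification is needed before "intersection of limiting positions" makes sense; your right trivialization $v\mapsto vX^{-1}$ differs from the ambient matrix-space identification (used implicitly in the paper) by right multiplication by $X^{-1}$, which tends to the identity as $X\to\Id$, so the two notions of limiting position agree and your reduction is legitimate.
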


\proof Let $B\in\SL(2)$, and let $B+\varepsilon C$ be an
infinitesimal deformation within the conjugacy equivalence class. Then
$$
{\Tr} (B+\eps C) = {\Tr} B,\ 1=\det (B+\eps C) = \det B \det(\Id+ \eps B^{-1} C) = 1+ \eps {\Tr} (B^{-1} C),
$$
hence 
${\Tr} C = 0$ and ${\Tr} (B^{-1} C) =0.
$

Let  $B=\Id+\eps A$ be a point in an infinitesimal neighborhood of the identity $\Id$; then ${\Tr} A=0$. 
Our conditions on $C$ imply that ${\Tr} (C)={\Tr} (AC)=0$. 
Since ${\Tr} (AC)$ is a non-degenerate quadratic form, 
a matrix $C\in \sl(2)$, satisfying ${\Tr} (AC)=0$ for all $A\in \sl(2)$, must be zero.
\proofend

The proposition follows.
\proofend

{\it Proof of Main Theorem \ref{thm:mainclosed}}. Fix $\alpha$ and consider the Hamiltonian vector fields with respect to $\{,\}_\alpha$.

Consider the case of odd $n$ and set $n=2k+1$. 
The space of integrals of $\T$ on $\cT$ is $k+1$-dimensional.
Since $\{,\}_\alpha$ has one Casimir function (Main Theorem \ref{thm:TwistedModuli}) and that, at a generic point $x\in \cP$, there is exactly one relation between the differentials of the integrals considered as covectors in $\cT$  (Theorem \ref{thm:RestrictIntegrals}), it follows that the vector space generated by the Hamiltonian vectors fields of the integrals has dimension $k-1$.
 One obtains a $k-1$-dimensional foliation on $\cP$ whose leaves carry an affine structure, given by these commuting Hamiltonian vector fields. 

The leaves are level surfaces of the restrictions of the integrals on $\cP$, hence the relations (\ref{intrel})  are generically the only relations between the integrals. The leaves and the affine structures therein are invariant under $\T$, hence the restriction of the map $\T$ to a leaf is a parallel translation. 

The case of even $n$ is similar. Let $n=2k$. The space of integrals of $\T$ on $\cT$ is again $k+1$-dimensional, but one has two Casimirs. Therefore, at a generic point $x\in \cP$, the vector space generated by the Hamiltonian vectors fields of the integrals has dimension $k-2$. The rest of the argument repeats that in the case when $n$ is odd.

\subsection{$\cP$ as a Poisson submanifold of $\cT$}

Consider the Poisson structure 
\begin{equation}
\label{eqn:PoissonC2}
\begin{split}
\{c_i, c_{i+1}\} &= c_ic_{i+1}(1 - c_i - c_{i+1}),\\
\{c_i, c_{i+2}\} &= -c_ic_{i+1}c_{i+2}
\end{split}
\end{equation}
on  the space of twisted polygons $\cT$. This is the bracket $\{\cdot, \cdot\}_1 - \{\cdot, \cdot\}_2$ from \eqref{eqn:BiHamiltonian}.

\begin{proposition}
The space of closed polygons $\cP$ is a Poisson submanifold of the space of twisted polygons $\cT$
with respect to the  bracket (\ref{eqn:PoissonC2}).
\end{proposition}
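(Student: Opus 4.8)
The plan is to verify the intrinsic characterization of a Poisson submanifold: the Hamiltonian vector field of every function on $\cT$ must be tangent to $\cP$ along $\cP$. Since for a function $g$ one has $dD(X_g) = \{D,g\}$, tangency of all $X_g$ to the (smooth locus of the) hypersurfaces $\{D_{i,n-3+i}=0\}$ is equivalent to demanding that $\{D_{i,n-3+i},g\}$ vanish on $\cP$ for every $g$ and every $i$. As $\{D_{i,n-3+i},\cdot\}$ is a derivation, it suffices to check this on coordinate functions, i.e.\ to prove
\[
\{D_{i,n-3+i},\,c_j\}\big|_{\cP}=0 \qquad\text{for all } i,j .
\]
By Lemma \ref{lem:DRelations} the equations $D_{i,n-3+i}=0$ cut out $\cP$ in $\cT$, and the bracket \eqref{eqn:PoissonC2} is manifestly invariant under the cyclic index shift $i\mapsto i+1$, which by Lemma \ref{cyclicshift} preserves $\cP$; hence it is enough to treat one value of $i$, say $i=3$, and all $j$.

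The computational engine is the derivative formula for continuants. From Lemma \ref{lem:CoCont}, $D_{a,b}=\sum_{I\subset[a,b]\ \mathrm{sparse}}(-1)^{|I|}c_I$, and since a sparse set containing $k$ splits into a sparse subset of $[a,k-2]$, the singleton $\{k\}$, and a sparse subset of $[k+2,b]$, one obtains
\[
\frac{\partial D_{a,b}}{\partial c_k}=-\,D_{a,k-2}\,D_{k+2,b}\quad(a\le k\le b),
\]
and $0$ otherwise (with the convention $D_{a,k-2}=1$ for $k-2<a$). Writing $\{D_{i,n-3+i},c_j\}=\sum_k \tfrac{\partial D_{i,n-3+i}}{\partial c_k}\{c_k,c_j\}$ and using that $\{c_k,c_j\}$ is nonzero only for $|k-j|\le 2$, the bracket collapses to a sum of four terms, indexed by $k\in\{j-2,j-1,j+1,j+2\}$, each a product $D_{i,k-2}\,D_{k+2,n-3+i}$ times the explicit monomial in the $c$'s read off from \eqref{eqn:PoissonC2}.

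It then remains to show that this four-term expression vanishes once the closed-polygon relations are imposed. Here I would feed in the identities valid on $\cP$: the relations $D_{i,n-3+i}=0$ and $D_{i,n-2+i}=0$ (Lemma \ref{lem:DRelations}), the equalities $D_{i,n-1+i}=D_{i,n-2+i}$ together with the $i$-independence of their common value (Lemma \ref{lem:MoreDRel}), the recurrences \eqref{eqn:DRecurrence}, and the bilinear continuant identity \eqref{contident}. Because each surviving summand factors through a continuant whose index window is long enough (spanning $n-3$ or more consecutive indices) to meet one of these vanishing or collapse relations, the four terms should cancel in pairs or vanish individually on $\cP$. The main obstacle I anticipate is purely organizational: keeping the index bookkeeping straight under the cyclic wraparound, and in particular tracking exactly which continuant lengths land on $D_{i,n-3+i}$, $D_{i,n-2+i}$, or $D_{i,n-1+i}$ in the boundary cases $j=i$, $j=i-1$, and at the two ends of the window $[i,n-3+i]$. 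This is the continuant calculus and nothing conceptually deep, but it must be done with care.

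As a structural cross-check (and the germ of an alternative proof), one can note that $\cP$ coincides with the locus $C_\infty(1)$ of infinite fibers of $\pi_1$ by Theorem \ref{thm:InfAlpha}; over this locus the Casimir $E_1$ of \eqref{eqn:PoissonC2} (Lemma \ref{lem:CasimirAlpha}) takes the constant value $4$, and in the variables $u_i$, where the bracket assumes the log-canonical form \eqref{eqn:PoissonU}, the preimage $\pi_1^{-1}(\cP)$ is the explicit system \eqref{eqn:USum}, \eqref{eqn:UProd}. Verifying the Poisson-ideal condition in the $u$-coordinates is cleaner, but transferring the conclusion back down to $\cP$ needs extra care precisely because $\pi_1$ fails to be a local diffeomorphism exactly along this locus; for this reason I would carry out the direct continuant computation as the primary argument.
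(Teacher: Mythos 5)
Your route is the same as the paper's: reduce the Poisson\-submanifold property to tangency of the coordinate Hamiltonian fields to $\cP$, present $\cP$ by continuant equations, and show that the brackets of the defining functions with the coordinates vanish on $\cP$ by continuant calculus. Your setup — the derivative formula $\partial D_{a,b}/\partial c_k=-D_{a,k-2}D_{k+2,b}$ and the collapse of each bracket to four terms — is correct. The genuine gap is that the decisive step is never carried out, and the mechanism you predict for it is not the one that works. Take the representative window $[2,n-1]$ and $j=1$; the collapse gives, as in the paper,
\[
X_1(D_{2,n-1})=c_1\bigl(-c_2(1-c_1-c_2)D_{4,n-1}+c_2c_3D_{5,n-1}-c_{n-1}c_nD_{2,n-3}\bigr).
\]
The continuants occurring here involve only $n-4$ or $n-5$ consecutive variables, whereas each vanishing continuant $D_{i,n-3+i}$ involves $n-2$ of them; so, contrary to your expectation that every surviving summand is ``long enough to meet one of these vanishing or collapse relations,'' no term vanishes individually on $\cP$ and no two cancel as written. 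The cancellation emerges only after a chain of substitutions that uses \eqref{eqn:DRecurrence} together with \emph{three consecutive} closure relations $D_{1,n-2}=0$, $D_{2,n-1}=0$, $D_{3,n}=0$, namely
\[
D_{3,n-1}=c_2D_{4,n-1}=c_nD_{3,n-2},\qquad
D_{2,n-2}=c_1D_{3,n-2}=c_{n-1}D_{2,n-3};
\]
these turn the first two terms into $c_1c_2D_{4,n-1}$ and then give $c_1c_2D_{4,n-1}=c_1c_nD_{3,n-2}=c_nD_{2,n-2}=c_nc_{n-1}D_{2,n-3}$, so the bracket vanishes on $\cP$. This computation, and its analogues for every position of $j$ relative to the window (including the wrap-around cases you flag), \emph{is} the proof; without it the proposal remains a plan. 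It is also precisely the computation the paper performs for $D_{2,n-1}$ before declaring the remaining case ``similar.''

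Two further remarks. First, tangency of every $X_g$ to each hypersurface $\{D_{i,n-3+i}=0\}$ implies tangency to $\cP$ only if the differentials $dD_{i,n-3+i}$ span the full three-dimensional conormal space at generic points of $\cP$; you use this silently, and it needs an argument (a variety can be cut out by functions whose differentials vanish along it, as with $x^2=0$). It does hold: for three consecutive members of your family, the Jacobian of $(D_{1,n-2},D_{0,n-3},D_{2,n-1})$ with respect to $(c_{n-2},c_n,c_{n-1})$ has determinant $-D_{1,n-4}\,D_{2,n-3}^2$, generically nonzero on $\cP$. Second, a comparison in your favor: the paper presents $\cP$ as the zero set of $D_{2,n-1}$, $D_{3,n}$ and $\Tr M$, and disposes of the third function via the Casimir property of $\Tr M/\sqrt{c_{[n]}}$ (Lemma \ref{lem:CasimirAlpha}), asserting that $\Tr M$ vanishes on $\cP$. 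In fact, on $\cP$ the monodromy matrix is scalar, so $\Tr M=\pm2\sqrt{c_{[n]}}\neq 0$ (compare Corollary \ref{cor:SchoeneGleichung}); the correct third equation is the equality of the diagonal entries, $D_{2,n}+c_1D_{3,n-1}=0$, which modulo the other two equations equals $-c_nD_{1,n-2}$, i.e.\ a third consecutive continuant. Your all-continuant presentation of $\cP$ sidesteps this slip, at the cost of one more computation of the displayed type; so, once completed, your argument would be a corrected version of the paper's proof rather than a genuinely different one.
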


\begin{proof}
We have to show that for every function $H$ on $\cT$ its Hamiltonian vector field $X_H$ is tangent to $\cP$.
It suffices to show this for the Hamiltonian vector fields $X_i$ of the coordinate functions $c_i$.
Due to the cyclic symmetry it suffices to consider the case $i=1$. We have
\[
X_1 = c_1c_2(1-c_1-c_2) \frac{\partial}{\partial c_2} - c_1c_2c_3 \frac{\partial}{\partial c_3} + c_{n-1}c_nc_1 \frac{\partial}{\partial c_{n-1}}
- c_nc_1(1-c_n-c_1) \frac{\partial}{\partial c_n}.
\]

Lemma \ref{lem:MonMat1} and equation \eqref{eqn:TraceMon} imply that $\cP \subset \cT$ is the solution set of the system
\[
D_{i,n-3+i} = 0, \quad D_{i+1,n-2+i} = 0, \quad \Tr M = 0
\]
for any $i$.
Besides, at a generic point of $\cP$ the normal space to $\cP$ is spanned by the gradients of the functions $D_{i,n-3+i}$, $D_{i+1,n-2+i}$, $\Tr M$.
Thus it suffices to show that the functions $\Tr M$, $D_{2,n-1}$, and $D_{3,n}$ have zero derivatives in the direction of $X_1$.

By Lemma \ref{lem:CasimirAlpha}, the function $\frac{1}{\sqrt{c_{[n]}}} \Tr M$ is a Casimir of our Poisson bracket.
Since $\Tr M$ vanishes on $\cP$, it follows that $X_1 (\Tr M) = 0$ on $\cP$.

Let us compute $X_1(D_{2,n-1})$.
One has
\[
\frac{\partial D_{2,n-1}}{\partial c_2} = -D_{4,n-1}, \quad \frac{\partial D_{2,n-1}}{\partial c_3} = -D_{5,n-1}, \quad
\frac{\partial D_{2,n-1}}{\partial c_{n-1}} = -D_{2,n-3}, \quad \frac{\partial D_{2,n-1}}{\partial c_n} = 0.
\]
By substituting this into the formula for $X_1(\cdot)$, one obtains
\[
X_1(D_{2,n-1}) = c_1(-c_2(1-c_1-c_2)D_{4,n-1} + c_2c_3 D_{5,n-1} - c_{n-1}c_n D_{2,n-3}).
\]
Using the recurrence \eqref{eqn:DRecurrence} and the vanishing of $D_{i,n-3+i}$ on $\cP$,
one shows that the expression on the right hand side vanishes on $\cP$.

The computation of $X_1(D_{3,n})$ is similar.
\end{proof}

Let $U = (\C \setminus \{0,-1\})^{n-3}$ with coordinates $u_2, \ldots, u_{n-2}$.
Consider the following Poisson bracket on $U$:
\begin{equation}
\label{eqn:PoissonU2}
\{u_i, u_{i+1}\} = u_i u_{i+1}, \quad i = 2, \ldots, n-3
\end{equation}
(note that $\{u_2, u_{n-2}\} = 0$ unless $n=5$).

\begin{theorem}
\label{thm:PoissonEmbedding}
The map $\rho \colon U \to \cT$ defined as
\begin{equation}
\label{eqn:CFromU}
\begin{split}
c_1 &= \frac{u_2}{1+u_2},\\
c_i &= \frac{u_{i+1}}{(1+u_i)(1+u_{i+1})}, \quad i = 2, \ldots, n-3,\\
c_{n-2} &= \frac{1}{1+u_{n-2}},\\
c_{n-1} &= \frac{u_{[2,n-2]}}{1 + u_2 + u_2u_3 + \cdots + u_{[2,n-2]}},\\
c_n &= \frac{1}{1 + u_2 + u_2u_3 + \cdots + u_{[2,n-2]}}
\end{split}
\end{equation}
is a Poisson map with respect to the brackets \eqref{eqn:PoissonU2} and \eqref{eqn:PoissonC2}, and its image is an open dense subset of $\cP$.
\end{theorem}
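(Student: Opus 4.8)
The plan is to verify the three assertions of the theorem in turn, and the whole argument rests on one observation: the first $n-2$ components of $\rho$ are the cross-ratios produced along a \emph{non-closed} chain. For $2 \le i \le n-3$ they already have the form $c_i = \frac{u_{i+1}}{(1+u_i)(1+u_{i+1})}$ used for the map $\pi_1$ of Section~\ref{sec:AuxVar2}, while $c_1 = \frac{u_2}{1+u_2}$ and $c_{n-2} = \frac{1}{1+u_{n-2}}$ are the boundary modifications obtained by formally setting $u_1 = 0$ and $u_{n-1} = \infty$. Because of this I can compute all continuants $D_{i,j}(c)$ with $1 \le i \le j \le n-2$ in closed form directly from the recurrences \eqref{eqn:DRecurrence}: the special form of $c_1$ forces the telescoping cancellation $D_{1,j} = \frac{1}{(1+u_2)\cdots(1+u_{j+1})}$, the special form of $c_{n-2}$ gives $D_{i,n-2} = \frac{u_i\cdots u_{n-2}}{(1+u_i)\cdots(1+u_{n-2})}$, and the interior continuants are given by Lemma~\ref{lem:DijU} at $\alpha = 1$.

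First I would show $\rho(U) \subseteq \cP$. By Lemma~\ref{lem:DRelations} it suffices to verify the three consecutive relations $D_{i,n-3+i}(c) = 0$ for $i = 1, 2, 3$. The relation $D_{1,n-2} = 0$ is immediate, since the closed forms give $D_{1,n-3} = \frac{1}{(1+u_2)\cdots(1+u_{n-2})} = c_{n-2}D_{1,n-4}$ while $D_{1,n-2} = D_{1,n-3} - c_{n-2}D_{1,n-4}$. For $i = 2, 3$ I peel off the non-chain entries $c_{n-1}$ and $c_n$ with \eqref{eqn:DRecurrence}, reducing $D_{2,n-1}$ and $D_{3,n}$ to the chain continuants above; writing $T = 1 + u_3 + \cdots + u_3\cdots u_{n-2}$ and $\Sigma = 1 + u_2 + \cdots + u_{[2,n-2]}$, the identity $\Sigma = 1 + u_2 T$ shows that the stated values $c_{n-1} = \frac{u_{[2,n-2]}}{\Sigma}$ and $c_n = \frac{1}{\Sigma}$ are exactly the ones forcing $D_{2,n-1} = 0$ and $D_{3,n} = 0$. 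This verification, which pins down the two non-standard boundary formulas of \eqref{eqn:CFromU}, is the main technical obstacle; everything else reduces to careful continuant bookkeeping.

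Next I would prove that $\rho$ is Poisson. Since the preceding proposition exhibits $\cP$ as a Poisson submanifold of $\cT$ for the bracket \eqref{eqn:PoissonC2} and $\rho$ lands in $\cP$, it is enough to check $\{c_i\circ\rho, c_j\circ\rho\}_U = \{c_i, c_j\}\circ\rho$ for $i, j$ ranging over functions that form local coordinates on $\cP$. By the computation in the proof of Theorem~\ref{thm:RestrictIntegrals} the three defining relations of $\cP$ can be solved for $c_{n-2}, c_{n-1}, c_n$, so $c_1, \ldots, c_{n-3}$ are such coordinates. For $i, j \le n-3$ the right-hand side of \eqref{eqn:PoissonC2} involves only $c_1, \ldots, c_{n-3}$, all of which carry the $\pi_1$-form, while on the left the bracket \eqref{eqn:PoissonU2} is the restriction of \eqref{eqn:PoissonU}; the frozen boundary values contribute nothing, since $\{u_1, u_2\} = u_1 u_2$ vanishes at $u_1 = 0$ and the analogous bracket vanishes at $u_{n-1} = \infty$ in the chart $1/u_{n-1}$. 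Hence both sides agree by Lemma~\ref{lem:PoissonMap} at $\alpha = 1$.

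Finally, for the image I would note $\dim U = n-3 = \dim\cP$ and build a rational inverse: $c_1 = \frac{u_2}{1+u_2}$ gives $u_2 = \frac{c_1}{1-c_1}$, and then $c_i = \frac{u_{i+1}}{(1+u_i)(1+u_{i+1})}$ determines $u_{i+1}$ from $u_i$ and $c_i$ for $i = 2, \ldots, n-3$ in succession. As $c_1, \ldots, c_{n-3}$ are coordinates on $\cP$, this is a rational section defined on a dense open subset, so $\rho$ is birational onto $\cP$; being a dominant morphism between irreducible varieties of the same dimension, its image contains a dense open subset of $\cP$, the complement lying in the proper closed locus where some $1+u_i$, $\Sigma$, or $c_i$ degenerates.
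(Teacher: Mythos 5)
Your proposal is correct, and while your treatment of the containment $\rho(U)\subseteq\cP$ and of density runs parallel to the paper's, your proof of the Poisson property takes a genuinely different and more conceptual route. The paper proves that $\rho$ is Poisson by a direct computation (which it itself calls tedious), recording all the partial derivatives $\partial c_i/\partial u_j$, including those of the complicated components $c_{n-1}$ and $c_n$. You instead exploit the preceding proposition, that $\cP$ is a Poisson submanifold of $\cT$ for the bracket \eqref{eqn:PoissonC2}: once $\rho(U)\subseteq\cP$ is established, it suffices to verify the bracket identity on the functions $c_1,\dots,c_{n-3}$, which are local coordinates on a dense open subset of $\cP$; for such pairs both the pullbacks $c_i\circ\rho$ and the bracket polynomials of \eqref{eqn:PoissonC2} involve only $u_2,\dots,u_{n-2}$ and $c_1,\dots,c_{n-3}$, so the required identity is exactly the specialization at $u_1=0$ of Lemma \ref{lem:PoissonMap} at $\alpha=1$ --- and this specialization is legitimate precisely for the reason you give, namely that $\{u_1,u_2\}=u_1u_2$ and $\{u_n,u_1\}=u_nu_1$ vanish on the locus $u_1=0$, so that locus is Poisson for the cyclic bracket \eqref{eqn:PoissonU} and restriction commutes with taking brackets. (Your additional remark about $u_{n-1}=\infty$ is harmless but not needed here: for $i,j\le n-3$ neither side involves $u_{n-1}$ or $u_n$ at all.) What your route buys is an essentially computation-free argument that also explains structurally why the boundary formulas in \eqref{eqn:CFromU} are forced; the cost is an extra layer of reduction, which requires the containment $\rho(U)\subseteq\cP$ as a prerequisite --- you supply it via the closed-form continuants (Lemma \ref{lem:DijU} with $u_1$ frozen at $0$) and the three consecutive relations $D_{1,n-2}=D_{2,n-1}=D_{3,n}=0$ of Lemma \ref{lem:DRelations}, which is an equivalent reorganization of the paper's check that the last three formulas of \eqref{eqn:CFromU} reproduce $c_{n-2}=D_{1,n-3}/D_{1,n-4}$, $c_n=D_{1,n-3}/D_{2,n-3}$, $c_{n-1}=1-c_{n-2}D_{2,n-4}/D_{2,n-3}$ on $\cP$. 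The final step (the triangular rational inverse through the first $n-3$ coordinates, which determine a point of $\cP$) is essentially identical to the paper's density argument.
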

\begin{proof}
Let us prove that $\rho(U)$ is an open dense subset of $\cP$.
The system of the first $n-3$ equations in \eqref{eqn:CFromU} can be solved for $u_2, \ldots, u_{n-2}$.
It follows that the composition of $\rho$ with the projection to the first $n-3$ coordinates $c_1, \ldots, c_{n-3}$ has a dense image.
On the other hand, the first $n-3$ coordinates determine a point in $\cP \subset \cT$ uniquely:
\begin{align*}
0 = D_{1,n-2} = D_{1,n-3} - c_{n-2}D_{1,n-4} &\Rightarrow c_{n-2} = \frac{D_{1,n-3}}{D_{1,n-4}},\\
0 = D_{0,n-3} = D_{1,n-3} - c_0 D_{2,n-3} &\Rightarrow c_n = c_0 = \frac{D_{1,n-3}}{D_{2,n-3}},\\
0 = D_{2,n-1} = (D_{2,n-3} - c_{n-2}D_{2,n-4}) - c_{n-1}D_{2,n-3} &\Rightarrow c_{n-1} = 1 - c_{n-2} \frac{D_{2,n-4}}{D_{2,n-3}}.
\end{align*}
Therefore it suffices to check that the last three equations in \eqref{eqn:CFromU} arise from the substitution of the first $n-3$ equations
into the above formulas.
For this we can use the first formula of Lemma \ref{lem:DijU} by formally setting $u_1 = 0$ (which brings the first equation in \eqref{eqn:CFromU}
into the form of the subsequent $n-4$ equations).
It follows that
\[
D_{1,j} = \frac{1}{(1+u_2) \cdots (1+u_{j+1})},
\]
while $D_{2,j}$ is given by the formula from Lemma \ref{lem:DijU}.
By substituting this into the formulas which express the last three coordinates in $\cP$ by means of the first $n-3$
we obtain the last three equations of \eqref{eqn:CFromU}.

The fact that the map $\rho$ is Poisson is proved by a tedious computation.
The computation can be facilitated by the following formulas for partial derivatives.
\begin{align*}
\frac{\partial c_i}{\partial u_i} &= -\frac{c_i}{1+u_i}, \quad i \ne 1,\\
\frac{\partial c_i}{\partial u_{i+1}} &= \frac{c_i}{u_{i+1}(1+u_{i+1})}, \quad i \ne n-2,\\
\frac{\partial c_{n-1}}{\partial u_i} &= \frac{c_{n-1}}{u_i} \frac{u_{\langle 2, i-1 \rangle}}{u_{\langle 2, n-2 \rangle}},\\
\frac{\partial c_n}{\partial u_i} &= \frac{c_n}{u_i} (u_{\langle 2, i-1 \rangle} c_n - 1).
\end{align*}
Here $u_{\langle i, j \rangle} = \sum_{k=i-1}^j u_{[k,j]}$.
\end{proof}

\begin{remark}
\label{rem:UCR}
The map $\rho$ can be viewed as an extension of the map $\pi \colon X \to C$ to a codimension $3$ space $u_1 = 0$, $u_{n-1} = \infty$, $u_n = 1$ lying in its closure.

Similarly to Lemma \ref{lem:CoordInfFiber} one computes the coordinates of a point in $\pi^{-1}(c)$ for $c \in \cP$ as
\[
u_i = -[p_{i+1}, p_{i-1}, p_i, z],
\]
where $\pP$ is any polygon with cross-ratios $c$, and $z$ is any point different from $p_i$ for all $i$.
The extension to $u_1 = 0$, $u_{n-1} = \infty$, $u_n = 1$ corresponds to setting $z = p_n$.
\end{remark}

\begin{corollary}
For $n$ odd, the Poisson bracket \eqref{eqn:PoissonC2} induces a symplectic structure on $\cP$.
\end{corollary}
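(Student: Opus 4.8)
The plan is to transport the statement to the parameter space $U$ via the Poisson parametrization $\rho$ of Theorem~\ref{thm:PoissonEmbedding}, where the bracket becomes a tridiagonal one whose non-degeneracy is decided by the parity of $n-3$. First I would record the necessary parity observation: since $n$ is odd, $\dim \cP = n-3$ is even, which is exactly what is needed for a symplectic (i.e.\ everywhere non-degenerate, and hence even-dimensional) Poisson structure. By the preceding proposition, $\cP$ is a Poisson submanifold of $\cT$ for the bracket \eqref{eqn:PoissonC2}, so this bracket restricts to a genuine Poisson structure on $\cP$, and it remains to compute its rank.

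Next I would use $\rho$ to reduce to $U$. By Theorem~\ref{thm:PoissonEmbedding} the map $\rho\colon U \to \cT$ is Poisson for the brackets \eqref{eqn:PoissonU2} and \eqref{eqn:PoissonC2}, and its image is an open dense subset of $\cP$; since the image lies in the Poisson submanifold $\cP$, the map factors as a Poisson map $\rho\colon U \to \cP$. Moreover $\rho$ is injective, because the first $n-3$ equations of \eqref{eqn:CFromU} recover $u_2,\ldots,u_{n-2}$ from $c_1,\ldots,c_{n-3}$, and $\dim U = \dim \cP = n-3$, so $\rho$ is a Poisson diffeomorphism onto an open dense subset of $\cP$. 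A Poisson diffeomorphism carries a non-degenerate bracket to a non-degenerate bracket, so it suffices to prove that \eqref{eqn:PoissonU2} is symplectic on $U$.

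To see the latter, I would pass to logarithmic coordinates $v_i = \log u_i$, in which \eqref{eqn:PoissonU2} becomes the constant-coefficient bracket with structure matrix $B$, the $(n-3)\times(n-3)$ skew-symmetric tridiagonal matrix determined by $B_{i,i+1}=1=-B_{i+1,i}$ and zeros elsewhere. For a skew-symmetric tridiagonal matrix, the Pfaffian equals the product of the odd-indexed superdiagonal entries; here all of them equal $1$, so $\operatorname{Pf} B = 1$ and $\det B = 1 \neq 0$. This uses $n-3$ even in an essential way (for $n-3$ odd the matrix is necessarily degenerate). Hence \eqref{eqn:PoissonU2} is non-degenerate on $U$, and transporting it through $\rho$ shows that the restriction of \eqref{eqn:PoissonC2} is non-degenerate on the open dense subset $\rho(U)\subset\cP$, i.e.\ induces a symplectic structure there.

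I expect the main obstacle to be purely bookkeeping rather than conceptual: the genuine content is the Pfaffian computation and the correct identification of the structure matrix arising from \eqref{eqn:PoissonU2} (taking care of the $n=5$ case, where $u_{n-2}=u_3$ and the matrix is simply $\left(\begin{smallmatrix} 0 & 1\\ -1 & 0\end{smallmatrix}\right)$). A secondary point to address is the ``open dense'' versus ``everywhere'' issue; one can either phrase the corollary as holding on the chart $\rho(U)$, or note as a consistency check that the relations \eqref{intrel} force the ambient Casimir $E_1$ to be constant ($\equiv 4$) on $\cP$, so that $\cP$ sits inside a single symplectic leaf of the corank-one bracket \eqref{eqn:PoissonC2} on $\cT$, which is compatible with $\cP$ itself being symplectic of the full even dimension $n-3$.
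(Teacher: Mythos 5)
Your proposal is correct and takes essentially the same route as the paper: the paper's entire proof is the one-line observation that the bracket \eqref{eqn:PoissonU2} is non-degenerate for $n$ odd, relying on Theorem \ref{thm:PoissonEmbedding} exactly as you do. Your write-up merely fills in the details the paper leaves implicit (the Pfaffian computation in logarithmic coordinates, the injectivity of $\rho$, and the honest ``open dense subset'' caveat), all of which are accurate.
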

\begin{proof}
Indeed, the Poisson bracket \eqref{eqn:PoissonU2} is non-degenerate for $n$ odd.
\end{proof}

In the next section we show that this structure coincides with the one coming from the theory of cluster algebras.

\subsection{Odd-gons: a symplectic structure}
\label{sec:cluster}

As we saw in Section \ref{fripat}, if $n$ is odd, the moduli space $\cP$ is identified with the space of frieze patterns of width $n-3$. 

Let $x_1,x_2,\dots,x_{n-3}$ be the diagonal frieze coordinates, extended by setting $x_{-1} = x_{n-1} = 0, x_0 = x_{n-2} = 1$. The space of friezes is a cluster variety, and one has the symplectic form 
\begin{equation} 
\label{sympfr}
\omega = \sum_{i=1}^{n-4} \frac{d x_i \wedge dx_{i+1}}{x_i x_{i+1}},
\end{equation}
known in the theory of cluster algebras, see \cite{GSV,MOT,Mor}.
The respective Poisson bracket is given by the following lemma whose proof is a direct calculation and is omitted.

\begin{lemma} \label{Pbrack}
For $1\le i<j\le n-3$, the  non-zero value of bracket is when $i$ is odd and $j$ is even:
$$
\{x_i,x_j\} =-x_ix_j
$$
(as always, one also has the opposite sign if $x_i$ and $x_j$ are swapped).
\end{lemma}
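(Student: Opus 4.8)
The plan is to linearize the bracket by passing to logarithmic coordinates, turning \eqref{sympfr} into a constant two-form, and then to invert the resulting constant skew-symmetric matrix. Set $y_i = \log x_i$ for $1 \le i \le n-3$, so that $dy_i = dx_i/x_i$ and the symplectic form becomes $\omega = \sum_{i=1}^{n-4} dy_i \wedge dy_{i+1}$. Its matrix in the basis $dy_1,\ldots,dy_{n-3}$ is the tridiagonal skew-symmetric $(n-3)\times(n-3)$ matrix $\Omega$ with $\Omega_{i,i+1} = 1$, $\Omega_{i+1,i} = -1$, and all other entries zero. Since $n$ is odd, $N := n-3$ is even, and a tridiagonal skew matrix of even size is nonsingular; the Poisson bracket dual to $\omega$ is then $\{y_i, y_j\} = (\Omega^{-1})_{ij}$, the overall sign being pinned down by the smallest case $N=2$, where $\omega = dy_1 \wedge dy_2$ forces $\{y_1, y_2\} = -1$.

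First I would exhibit $P := \Omega^{-1}$ explicitly and claim it is the ``checkerboard'' matrix: $P_{ij} = -1$ when $i<j$ with $i$ odd and $j$ even, $P_{ij} = 0$ for $i<j$ of any other parity, and $P_{ji} = -P_{ij}$. To verify $\Omega P = \Id$ I would read off the $(i,k)$ entry of the product, which is $P_{i+1,k} - P_{i-1,k}$ (with out-of-range entries understood as $0$), and check that it equals $\delta_{ik}$. Because this recurrence links indices differing by two, the even- and odd-indexed rows of each column decouple, and the check splits into the four parity cases for $(i,k)$. In each case the column $P_{\cdot,k}$ is constant (equal to $0$, $-1$, or $+1$) on each parity class on either side of $k$, so that $P_{i+1,k} - P_{i-1,k}$ vanishes except precisely at $i=k$, where it jumps by $1$. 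The boundary rows $i=1$ and $i=N$ require no separate treatment, as they are subsumed by the convention $P_{0,k} = P_{N+1,k} = 0$.

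Finally I would transport the result back to the multiplicative coordinates. Since
\[
\{x_i, x_j\} = \{e^{y_i}, e^{y_j}\} = x_i x_j \{y_i, y_j\} = x_i x_j\, P_{ij},
\]
the checkerboard form of $P$ yields exactly $\{x_i, x_j\} = -x_i x_j$ for $i<j$ with $i$ odd and $j$ even, and $\{x_i, x_j\} = 0$ otherwise, which is the assertion of the lemma.

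The only genuine difficulty here is bookkeeping rather than mathematical content: one must track parities and boundary indices carefully in the four-case verification, and fix the global sign of the Poisson tensor by comparison with the $N=2$ case. Everything else is the routine inversion of a constant-coefficient skew-symmetric matrix, which is why the authors deemed the computation safe to omit.
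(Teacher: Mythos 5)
Your proof is correct, and it is essentially the direct calculation that the paper declares and omits: in logarithmic coordinates $y_i=\log x_i$ the form \eqref{sympfr} becomes constant with tridiagonal skew matrix $\Omega$, your checkerboard matrix $P$ does satisfy $\Omega P = \Id$ (the parity and boundary bookkeeping in your four-case check is sound), and transporting back via $\{x_i,x_j\}=x_ix_j\{y_i,y_j\}$ gives exactly the stated pattern.

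One caveat: the sign is not ``forced'' by the case $N=2$; it is a choice of convention. Whether $\omega=dy_1\wedge dy_2$ corresponds to $\{y_1,y_2\}=-1$ or $+1$ depends on the convention relating a symplectic form to its Poisson bracket (equivalently, on whether the Poisson tensor is taken to be $\Omega^{-1}$ or $-\Omega^{-1}$), and both choices occur in standard references. Your choice reproduces the minus sign printed in Lemma \ref{Pbrack}, which is the best one can do, since the paper never fixes a convention; note, however, that the sign cannot be cross-checked against the rest of the paper either, because pushing the bracket of Lemma \ref{Pbrack} as printed through the change of variables \eqref{eqn:UtoX} yields $\{u_i,u_{i+1}\}=-u_iu_{i+1}$, the opposite of \eqref{eqn:PoissonU2}, whereas the proof of Theorem \ref{Cluster} needs \eqref{eqn:UtoX} to be a Poisson map --- so that part of the paper effectively uses the opposite convention. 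The convention-independent content of the lemma, namely that $\{x_i,x_j\}$ vanishes unless $i,j$ have opposite parities with the odd index smaller, where it equals $\pm x_ix_j$, is precisely what your matrix inversion establishes; just present the sign as a normalization matched to the paper's statement rather than as something the $N=2$ case pins down.
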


\begin{theorem} \label{Cluster}
The symplectic structure (\ref{sympfr}) on $\cP$ coincides with the one induced by the Poisson bracket \eqref{eqn:PoissonC2}.
\end{theorem}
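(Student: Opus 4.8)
The plan is to exhibit both Poisson structures on $\cP$ in one and the same coordinate system and then to observe that each of them is \emph{log-canonical} (the bracket of any two coordinates is an integer multiple of their product), so that their equality reduces to the comparison of two integer skew-symmetric forms. First I would fix the common coordinates. By Theorem \ref{thm:PoissonEmbedding} the bracket \eqref{eqn:PoissonC2}, restricted to the open dense Poisson submanifold $\rho(U)\subset\cP$, is expressed in the coordinates $u_2,\ldots,u_{n-2}$ as the nearest-neighbor bracket \eqref{eqn:PoissonU2}, namely $\{u_i,u_{i+1}\}=u_iu_{i+1}$ and $\{u_i,u_j\}=0$ for $|i-j|\ge 2$. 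It therefore suffices to rewrite the cluster bracket of Lemma \ref{Pbrack} in these same variables $u_i$ and to check that it coincides with \eqref{eqn:PoissonU2}.

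The bridge between the two coordinate systems is the determinant description of the frieze from Section \ref{fripat}. Lifting the vertices $p_i$ to antiperiodic vectors $V_i$ with $[V_i,V_{i+1}]=1$, the diagonal frieze coordinates are the determinants $x_k=[V_0,V_{k+1}]$; this correctly recovers the prescribed boundary values, since $x_{-1}=[V_0,V_0]=0$, $x_0=[V_0,V_1]=1$, $x_{n-2}=[V_0,V_{n-1}]=1$, and $x_{n-1}=[V_0,V_n]=0$. On the other hand, Remark \ref{rem:UCR} gives $u_i=-[p_{i+1},p_{i-1},p_i,p_n]$; expanding this cross-ratio through the vectors $V_i$ and using $[V_{i-1},V_i]=1$ yields the monomial relation
\[
u_i=\frac{[V_0,V_{i-1}]}{[V_0,V_{i+1}]}=\frac{x_{i-2}}{x_i}.
\]
Because $n$ is odd, this relation is inverted by the recursion $x_{i-2}=u_ix_i$ anchored at the two boundary values $x_0=1$ (for even indices) and $x_{n-2}=1$ (for odd indices), expressing every $x_i$ as a Laurent monomial in the $u_j$. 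Thus the change of variables is a homomorphism of algebraic tori.

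Since both brackets are log-canonical, this monomial change of variables transforms the skew form of Lemma \ref{Pbrack} into another log-canonical bracket whose coefficient in front of $u_au_b$ is the pairing of the exponent vectors of $u_a$ and $u_b$ against the matrix $\epsilon_{kl}$, where $\{x_k,x_l\}=\epsilon_{kl}x_kx_l$ with $\epsilon_{kl}=-1$ exactly when $k<l$ with $k$ odd and $l$ even. The final step is the finite computation: substituting $u_i=x_{i-2}/x_i$ one checks that all coefficients of $u_iu_j$ with $|i-j|\ge 2$ vanish, because each such pairing involves only pairs $\epsilon_{k,k'}$ of equal parity or pairs that cancel, while the nearest-neighbor coefficient of $u_iu_{i+1}$ reproduces \eqref{eqn:PoissonU2}; the constant boundary coordinates $x_0$ and $x_{n-2}$ contribute nothing because they Poisson-commute with everything. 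This identifies the induced bracket with \eqref{eqn:PoissonU2} and hence with the restriction of \eqref{eqn:PoissonC2}, proving the theorem.

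I expect the main obstacle to be essentially bookkeeping rather than conceptual: aligning the orientation of the frieze diagonal and its odd/even parity labeling with the indexing of the $u_i$, correctly treating the two boundary anchors $x_0,x_{n-2}$ and the wrap-around pairs, so that the parity rule of Lemma \ref{Pbrack} yields precisely the path-graph coefficients of \eqref{eqn:PoissonU2}. The genuinely delicate point is fixing the sign in the cross-ratio/determinant dictionary (equivalently, the choice of base vertex and orientation of the diagonal), since the skew form is rigid and an off-by-one parity shift would flip every coefficient; once the conventions are pinned down the remaining verification is a direct calculation, of the same routine nature as the computation omitted in Lemma \ref{Pbrack}.
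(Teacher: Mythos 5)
Your proposal is correct and takes essentially the same route as the paper: you obtain the monomial change of variables $u_i = x_{i-2}/x_i$ from Remark \ref{rem:UCR} exactly as the paper does, and then verify by direct computation that this map carries the cluster bracket of Lemma \ref{Pbrack} to the bracket \eqref{eqn:PoissonU2}, which by Theorem \ref{thm:PoissonEmbedding} represents \eqref{eqn:PoissonC2} on $\cP$. The paper compresses that last step into ``this is done by a direct computation''; your log-canonical bookkeeping, including the sign/parity caveat you rightly flag, is simply a more explicit way of carrying out the same verification.
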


\begin{proof}
The diagonal frieze coordinates and the coordinates $u_i$ introduced in \eqref{eqn:CFromU} are related by the equations
\begin{equation}
\label{eqn:UtoX}
u_i = \frac{x_{i-2}}{x_i}, \quad i = 2, \ldots, n-2.
\end{equation}
This follows from the formula indicated in the Remark \ref{rem:UCR}:
\[
u_i = -[p_{i+1}, p_{i-1}, p_i, p_n] = -\frac{[V_{i+1},V_i] [V_{i-1},V_n]}{[V_{i+1},V_n] [V_{i-1},V_i]} = \frac{[V_{i-1},V_n]}{[V_{i+1},V_n]} = \frac{x_{i-2}}{x_i}
\]
Alternatively, one can check this using the known expressions of the cross-ratio coordinates $c_i$
in terms of the diagonal frieze coordinates $x_i$, see \cite{Mor}:
\begin{equation} \label{afromx}
a_i = \frac{x_{i-2}+x_i}{x_{i-1}},\ {\rm for}\ i=1,\ldots,n-1,\ {\rm and}\ \ a_n = \sum_{k=0}^{n-3} \frac{1}{x_kx_{k+1}},
\end{equation}
and $c_i = \frac{1}{a_ia_{i+1}}$, see Lemma \ref{atoc}.

Thus it suffices to show that the map \eqref{eqn:UtoX} is a Poisson map.
This is done by a direct computation.
\end{proof}

\begin{remark}
In the coordinates $a_i$, the Poisson bracket \eqref{eqn:PoissonC2} on $\cP$ is as follows.
\begin{equation*}
\begin{split}
&\{a_i,a_j\}_1 = (-1)^{j-i} a_i a_j\ \ {\rm for}\  1\leq i<j \leq n, i+2\leq j, (i,j) \neq (1,n),\\
 &\{a_i,a_{i+1}\}_1 = 1-a_i a_{i+1},\ {\rm and} \ \{a_1,a_n\}_1 =   a_1 a_n -1.
\end{split}
\end{equation*}
\end{remark}



\subsection{Odd-gons: infinitesimal map} 
\label{infinitsection}
If $n$ is odd, the map $\T$ with infinitesimal $\alpha$ is interpreted as a vector field on the space of polygons $\widetilde{\cP}$.
In this section we study this vector field and its projection to the moduli space $\cP$. 

\begin{theorem}
\label{thm:InfMap}
Let $n$ be odd. Consider the vector field
\[
\widetilde{\xi} = \sum_{i=1}^n \frac{p_i^{(1)} p_{i+2}^{(1)} \cdots p_{i+n-1}^{(1)}}{p_{i+1}^{(1)} p_{i+3}^{(1)} \cdots p_{i+n-2}^{(1)}} \frac{\partial}{\partial p_i}
\]
on $\widetilde{\cP}$.
(As before, $p_j^{(1)} = p_j - p_{j+1}$ and the indices are taken modulo $n$.)
Then the following holds:
\begin{enumerate}
\item
If $\pP(t)$ is a smooth curve in $\widetilde{\cP}$ tangent to $\widetilde{\xi}$ at $t=0$, then one has
\[
[p_i(0), p_{i+1}(0), p_i(t), p_{i+1}(t)] = -t^2 + o(t^2).
\]
\item
The field $\widetilde{\xi}$ is $\PSL(2)$-invariant, and its projection to $\cP$ is given by
\[
\xi = \frac{1}{\sqrt{c_{[n]}}} \sum_{i=1}^n (c_i c_{i+2} \cdots c_{i+n-1} - c_{i+1}c_{i+3} \cdots c_{i+n}) \frac{\partial}{\partial c_i}.
\]
\item
The vector field $\xi$ on $\cP$ is the Hamiltonian vector field of the function
\[
E_{\lfloor \frac{n}2 \rfloor} = \frac{F_{\lfloor \frac{n}2 \rfloor}}{\sqrt{c_{[n]}}}
\]
with respect to the Poisson structure \eqref{eqn:PoissonC2} on $\cP$.
\end{enumerate}
\end{theorem}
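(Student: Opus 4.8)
The plan is to treat the three parts in order, using part (1) as the geometric engine that drives the other two. For part (1) I would Taylor-expand the cross-ratio directly. Writing $p_j(t)=p_j(0)+tv_j+o(t)$, where $v_j$ is the $j$-th component of $\widetilde\xi$, the numerator of $[p_i(0),p_{i+1}(0),p_i(t),p_{i+1}(t)]$ is $t^2 v_iv_{i+1}+o(t^2)$ while the denominator tends to $-(p_i^{(1)})^2$, so the cross-ratio equals $-\frac{v_iv_{i+1}}{(p_i^{(1)})^2}\,t^2+o(t^2)$. The claim therefore reduces to the algebraic identity $v_iv_{i+1}=(p_i^{(1)})^2$. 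I would prove this by writing out the two interlacing products: since $n$ is odd, the odd-offset factors of $v_i$ cancel against the denominator of $v_{i+1}$ and the even-offset factors of $v_{i+1}$ cancel likewise, so that all factors pair off except two copies of $p_i^{(1)}$, one of which arises from the wrap-around relation $p_{i+n}^{(1)}=p_i^{(1)}$ valid because $\widetilde\cP$ consists of closed polygons. This is the only place where oddness of $n$ is essential.

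For part (2) I would first establish $\PSL(2)$-invariance on the three types of generators. Invariance under $p\mapsto p+c$ is immediate since every $p_j^{(1)}$ is unchanged; invariance under $p\mapsto\lambda p$ follows because $v_i$ is homogeneous of degree one (its numerator has one more factor than its denominator) while $\partial/\partial p_i$ scales oppositely; invariance under the inversion $p\mapsto 1/p$ requires $v_i(1/p)=-p_i^{-2}v_i(p)$, which I would verify by the same wrap-around cancellation, the telescoping $p$-products collapsing to $p_i^{-2}$. (Alternatively, part (1) characterises $\{\pm\widetilde\xi\}$ through the M\"obius-invariant leading term $-t^2$ of the cross-ratios, and connectedness of $\PGL(2)$ fixes the sign.) For the projection I would avoid differentiating $c_i$ by hand and instead reuse part (1): a curve tangent to $\widetilde\xi$ satisfies $\pP(0)\T\pP(t)$ with $\alpha=-t^2+o(t^2)$, so Lemma \ref{lem:AuxVar} gives $c_i=\alpha x_i(1-x_{i+1})$ and $c_i(t)=\alpha(1-x_i)x_{i+1}$, hence $c_i(t)-c_i=\alpha(x_{i+1}-x_i)$. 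Inserting the asymptotics $x_j\sim A_j/t$, which force $A_jA_{j+1}=c_j$, the leading order is $\dot c_i(0)=A_i-A_{i+1}$; comparing $A_jA_{j+1}=c_j$ with $c_j=1/(a_ja_{j+1})$ from Section \ref{fripat} identifies $A_j=1/a_j$, so $\xi(c_i)=1/a_i-1/a_{i+1}$, and rewriting each $a_j^{-1}$ via Lemma \ref{atoc} reproduces the stated expression $\frac{1}{\sqrt{c_{[n]}}}\bigl(c_ic_{i+2}\cdots c_{i+n-1}-c_{i+1}c_{i+3}\cdots c_{i+n}\bigr)$.

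For part (3) the crucial preliminary is that the top integral trivialises in frieze coordinates: for $n=2m+1$ the cyclically sparse $m$-subsets of $\Z/n$ are exactly the blocks $\{j,j+2,\dots,j+2m-2\}$, so Lemma \ref{atoc} gives $F_m=\sqrt{c_{[n]}}\sum_i a_i$ and hence $E_m=F_m/\sqrt{c_{[n]}}=\sum_i a_i$. It then remains to check $\xi=X_{E_m}$ for the bracket \eqref{eqn:PoissonC2}. Since $\cP$ is a Poisson submanifold of $\cT$ for this bracket (established in the subsection showing $\cP$ is Poisson in $\cT$), I would compute $\{c_i,E_m\}=\{c_i,F_m\}c_{[n]}^{-1/2}-\tfrac12 F_m c_{[n]}^{-3/2}\{c_i,c_{[n]}\}$ from the explicit brackets on $\cT$, restrict to $\cP$, and simplify using the closed-polygon relations $D_{i,n-3+i}=0$ and $D_{i,n-1+i}=D_{i,n-2+i}$ (Lemmas \ref{lem:DRelations} and \ref{lem:MoreDRel}), verifying that the result equals $1/a_i-1/a_{i+1}$. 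The main obstacle is precisely this final verification: it is a genuine, somewhat lengthy Poisson computation. I expect it to be cleanest in the diagonal frieze coordinates $x_i$, where the symplectic form is the cluster form \eqref{sympfr}, $E_m=\sum_i a_i$ is an explicit rational function of the $x_i$ through \eqref{afromx}, and the Hamiltonian condition $\iota_\xi\omega=dE_m$ can be confirmed coordinate by coordinate.
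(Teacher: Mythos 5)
The gap is in part (2), at the step where you pass from part (1) to Lemma \ref{lem:AuxVar}. A curve $\pP(t)$ tangent to $\widetilde{\xi}$ does \emph{not} satisfy $\pP(0)\T\pP(t)$ for any value of $\alpha$: the relation $\T$ demands that the $n$ cross-ratios $[p_i(0),p_{i+1}(0),p_i(t),p_{i+1}(t)]$ all equal one and the same constant, whereas part (1) only says that each of them is $-t^2+o(t^2)$, with remainders that in general depend on $i$. Lemma \ref{lem:AuxVar} --- whose proof uses the exact equality of the cross-ratios at edges $i-1$ and $i$ (that is how $x_i+y_i=1$ is derived) --- therefore does not apply as cited. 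The issue is not cosmetic, because your auxiliary variables blow up: $x_i\sim A_i/t$, so $i$-dependent discrepancies of size $o(t^2)$ among the cross-ratios get multiplied by quantities of size $1/t$ and $1/t^2$ and could a priori pollute $c_i(t)-c_i(0)$ at order $t$, which is precisely the order you are computing. The argument can be repaired in two ways: either rerun the identities of Lemma \ref{lem:AuxVar} with index-dependent constants $\alpha_i$ and verify that all error terms in $c_i(t)-c_i(0)$ are $o(t)$ (they are, but this is an estimate one must actually carry out, not a citation); or replace the tangent curve by an exact orbit of the correspondence, i.e.\ a family $\pQ(t)$ with $\pQ(0)=\pP$ that is exactly $\alpha$-related to $\pP$ with $\alpha=-t^2$, and prove that such a family exists, is differentiable, and satisfies $\dot\pQ(0)=\pm\widetilde{\xi}$; the latter is not free either, since $L_{\alpha^{-1}}(\pP)$ degenerates as $\alpha\to 0$. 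The paper sidesteps all of this by a direct computation: it differentiates $c_i=[p_i,p_{i+1},p_{i-1},p_{i+2}]$ along $\widetilde{\xi}$, using the relations $v_{i-1}=(p_{i-1}^{(1)}/p_i^{(1)})^2v_{i+1}$ and $v_{i+2}=(p_{i+1}^{(1)}/p_i^{(1)})^2v_i$ that follow from $v_iv_{i+1}=(p_i^{(1)})^2$, and then recognizes the result as the stated monomials in the $c_j$. You should either supply the missing asymptotic control or switch to this direct differentiation.

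The rest is sound. Part (1) is exactly the paper's argument (the paper asserts, and you verify, that $v_iv_{i+1}=(p_i^{(1)})^2$ has $\pm\widetilde{\xi}$ as its only solutions for odd $n$); your $\PSL(2)$-invariance check on generators is correct, with only a harmless global sign ambiguity in the identification $A_j=1/a_j$, matching the ambiguity of $\sqrt{c_{[n]}}$ in the statement itself. Part (3) takes a genuinely different and viable route: the identity $E_{\lfloor \frac{n}{2}\rfloor}=\sum_i a_i$ for odd $n$ is correct, and since Theorem \ref{Cluster} identifies the bracket \eqref{eqn:PoissonC2} on $\cP$ with the cluster symplectic form, the Hamiltonian condition can indeed be verified in frieze coordinates. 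Be aware, though, that the paper's proof is far less computational: it invokes the Lenard--Magri relation from the proof of Theorem \ref{thm:FCommute}, namely $\{E_{\lfloor \frac{n}{2}\rfloor},c_i\}_2=-\{E_{\lfloor \frac{n}{2}\rfloor+1},c_i\}_1=0$, to reduce the bracket \eqref{eqn:PoissonC2} to the elementary bracket $\{\cdot,\cdot\}_1$, and then evaluates $\{F_{\lfloor \frac{n}{2}\rfloor},c_i\}_1$ via the combinatorial identity $c_j\,\partial F_{\lfloor \frac{n}{2}\rfloor}/\partial c_j=F_{\lfloor \frac{n}{2}\rfloor}-F_{\lfloor \frac{n}{2}\rfloor}^{\setminus j}$; the lengthy verification you defer is thereby replaced by a few lines.
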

\begin{proof}
Let $p_i(t) = p_i(0) + t v_i + o(t)$.
Then we have
\[
[p_i(0), p_{i+1}(0), p_i(t), p_{i+1}(t)] = -t^2 \frac{v_i v_{i+1}}{(p_i - p_{i+1})^2} + o(t^2).
\]
The system of equations $v_i v_{i+1} = (p_i - p_{i+1})^2$ has a unique solution for odd $n$,
given by the components of the vector field $\widetilde{\xi}$ above.
This proves the first part of the theorem.

We note  that for even $n$  a solution exists if and only if the alternating perimeter of $\pP$ is zero,
and one has a one-parameter family of solutions in this case.

Take a path $p_i(t)$ such that $\dot p_i(0) = v_i$ for all $i$ (with the components of $\widetilde\xi$ as $v_i$),
project it to $\cP$, and compute $\dot c_i(0)$:
\[
\dot c_i = \frac{\partial c_i}{\partial p_{i-1}} v_{i-1} + \frac{\partial c_i}{\partial p_i} v_i
+ \frac{\partial c_i}{\partial p_{i+1}} v_{i+1} + \frac{\partial c_i}{\partial p_{i+2}} v_{i+2}.
\]
The partial derivatives of the cross-ratio $c_i = [p_i, p_{i+1}, p_{i-1}, p_{i+2}]$ are as follows:
\begin{align*}
\frac{\partial c_i}{\partial p_{i-1}} &= c_i \frac{p_i^{(1)}}{p_{i-1}^{(1)}p_{i-1}^{(2)}}, &
\frac{\partial c_i}{\partial p_i} &= -c_i \frac{p_{i-1}^{(3)}}{p_{i-1}^{(1)}p_i^{(2)}},\\
\frac{\partial c_i}{\partial p_{i+1}} &= c_i \frac{p_{i-1}^{(3)}}{p_{i+1}^{(1)}p_{i-1}^{(2)}}, &
\frac{\partial c_i}{\partial p_{i+2}} &= -c_i \frac{p_i^{(1)}}{p_{i+1}^{(1)}p_i^{(2)}}.
\end{align*}
Besides, we have
\[
v_{i-1} = \left(\frac{p_{i-1}^{(1)}}{p_i^{(1)}}\right)^2 v_{i+1}, \quad v_{i+2} = \left( \frac{p_{i+1}^{(1)}}{p_i^{(1)}} \right)^2 v_i.
\]
It follows that
\begin{multline*}
\dot c_i = c_i \left( \frac{p_i^{(1)}}{p_{i-1}^{(1)}p_{i-1}^{(2)}} v_{i-1} - \frac{p_{i-1}^{(3)}}{p_{i-1}^{(1)}p_i^{(2)}} v_i
 + \frac{p_{i-1}^{(3)}}{p_{i+1}^{(1)}p_{i-1}^{(2)}} v_{i+1} - \frac{p_i^{(1)}}{p_{i+1}^{(1)}p_i^{(2)}} v_{i+2}\right)\\
= c_i \left( \left( \frac{p_{i-1}^{(3)}}{p_{i+1}^{(1)}p_{i-1}^{(2)}} + \frac{p_{i-1}^{(1)}}{p_i^{(1)}p_{i-1}^{(2)}} \right)v_{i+1}
 - \left( \frac{p_{i+1}^{(1)}}{p_i^{(1)}p_i^{(2)}} + \frac{p_{i-1}^{(3)}}{p_{i-1}^{(1)}p_i^{(2)}} \right)v_i \right)\\
= c_i \frac{p_{i-1}^{(3)}p_i^{(1)} + p_{i-1}^{(1)}p_{i+1}^{(1)}}{p_{i}^{(1)}}
\left( \frac{v_{i+1}}{p_{i+1}^{(1)}p_{i-1}^{(2)}} - \frac{v_i}{p_{i-1}^{(1)}p_i^{(2)}} \right).
\end{multline*}
One shows the following identity by a simple computation:
\[
p_{i-1}^{(3)}p_i^{(1)} + p_{i-1}^{(1)}p_{i+1}^{(1)} = p_{i-1}^{(2)}p_i^{(2)},
\]
and by substituting it into the formula above obtains
\[
\dot c_i = c_i \left( \frac{p_i^{(2)}}{p_i^{(1)}p_{i+1}^{(1)}} v_{i+1} - \frac{p_{i-1}^{(2)}}{p_{i-1}^{(1)}p_i^{(1)}} v_i \right).
\]
On the other hand, due to $c_i = \frac{p_{i-1}^{(1)}p_{i+1}^{(1)}}{p_{i-1}^{(2)}p_i^{(2)}}$, we have $\sqrt{c_{[n]}} = \frac{p_{[n]}^{(1)}}{p_{[n]}^{(2)}}$ and
\begin{multline*}
c_{i+1} c_{i+3} \cdots c_{i+n-2} = \frac{p_i^{(1)} (p_{i+2}^{(1)} \cdots p_{i+n-3}^{(1)})^2 p_{i+n-1}^{(1)}}{p_i^{(2)}p_{i+1}^{(2)} \cdots p_{i+n-2}^{(2)}}\\
= \frac{p_{[n]}^{(1)}}{p_{[n]}^{(2)}} \cdot p_{i-1}^{(2)} \cdot
\frac{p_{i+2}^{(1)} p_{i+4}^{(1)} \cdots p_{i+n-3}^{(1)}}{p_{i+1}^{(1)} p_{i+3}^{(1)} \cdots p_{i+n-2}^{(1)}}
= \sqrt{c_{[n]}} \frac{p_{i-1}^{(2)}}{p_{i-1}^{(1)}p_i^{(1)}} v_i.
\end{multline*}
This implies
\[
\dot c_i = \frac{c_i}{\sqrt{c_{[n]}}} (c_{i+2}c_{i+4} \cdots c_{i+n-1} - c_{i+1} c_{i+3} \cdots c_{i+n-2}),
\]
which proves the second part of the theorem.

From the proof of Theorem \ref{thm:FCommute} we know that
\[
\{E_{\lfloor \frac{n}{2} \rfloor}, c_i\}_2 = -\{E_{\lfloor \frac{n}{2} \rfloor + 1}, c_i\}_1 = 0.
\]
Thus we have
\[
\{E_{\lfloor \frac{n}{2} \rfloor}, c_i\} = \{E_{\lfloor \frac{n}{2} \rfloor}, c_i\}_1 =
\frac{1}{\sqrt{c_{[n]}}} \{F_{\lfloor \frac{n}{2} \rfloor}, c_i\}_1,
\]
because $c_{[n]}$ is a Casimir of the bracket $\{\cdot, \cdot\}_1$.
One computes
\[
\{F_{\lfloor \frac{n}{2} \rfloor}, c_i\}_1 = \frac{\partial F_{\lfloor \frac{n}{2} \rfloor}}{\partial c_{i-1}} \{c_{i-1}, c_i\}_1
+ \frac{\partial F_{\lfloor \frac{n}{2} \rfloor}}{\partial c_{i+1}} \{c_{i+1}, c_i\}_1
= c_i\left( c_{i-1}\frac{\partial F_{\lfloor \frac{n}{2} \rfloor}}{\partial c_{i-1}}
- c_{i+1}\frac{\partial F_{\lfloor \frac{n}{2} \rfloor}}{\partial c_{i+1}} \right).
\]
By splitting the sum $F_{\lfloor \frac{n}{2} \rfloor}$ into terms containing $c_j$ and those not containing $c_j$, one easily shows that
\[
c_j\frac{\partial F_{\lfloor \frac{n}{2} \rfloor}}{\partial c_j} = F_{\lfloor \frac{n}{2} \rfloor} - F_{\lfloor \frac{n}{2} \rfloor}^{\setminus j},
\]
where $F_k^{\setminus j}$ denotes the sum of all cyclically sparse monomials without factor $c_j$.
It follows that
\[
\{F_{\lfloor \frac{n}{2} \rfloor}, c_i\}_1
= c_i \left( F_{\lfloor \frac{n}{2} \rfloor}^{\setminus (i+1)} - F_{\lfloor \frac{n}{2} \rfloor}^{\setminus (i-1)} \right).
\]
The sums $F_{\lfloor \frac{n}{2} \rfloor}^{\setminus (i-1)}$ and $F_{\lfloor \frac{n}{2} \rfloor}^{\setminus (i+1)}$
have many common summands, and their difference is
\[
F_{\lfloor \frac{n}{2} \rfloor}^{\setminus (i+1)} - F_{\lfloor \frac{n}{2} \rfloor}^{\setminus (i-1)}
= c_{i+2} c_{i+4} \cdots c_{i+n-1} - c_{i+1} c_{i+3} \cdots c_{i+n-2}.
\]
Putting all things together we obtain
\begin{multline*}
\{E_{\lfloor \frac{n}{2} \rfloor}, c_i\} = \frac{1}{\sqrt{c_{[n]}}} \{F_{\lfloor \frac{n}{2} \rfloor}, c_i\}_1\\
= \frac{c_i}{\sqrt{c_{[n]}}} (c_{i+2}c_{i+4} \cdots c_{i+n-1} - c_{i+1} c_{i+3} \cdots c_{i+n-2}) = \dot c_i,
\end{multline*}
and the theorem is proved.
\end{proof}

\begin{remark}
In the $a$-coordinates (see Section \ref{fripat}) one has
$$
\xi = \sum_{i=1}^n a_i (a_{i+1} - a_{i+2} + \ldots - a_{n+i-1}) \frac{\partial}{\partial a_i}.
$$
This coincides with the dressing chain of Veselov-Shabat, see formula (12) in \cite{SV}.
\end{remark}

\begin{remark}
A  continuous limit $n\to\infty$ of the moduli space $\cP$ is the moduli space of projective equivalence classes of diffeomorphisms $\R/\pi\Z \to \RP^1$.
In this limit, the relations $\T$ can be interpreted as B\"acklund transformations of the KdV equation, see \cite{Tab1}.
\end{remark}

\section{(Pre)symplectic form and two additional integrals} \label{twoaddsection}

\subsection{(Pre)symplectic form on the space of closed polygons} \label{symplsect}

Define differential 1- and 2-forms on the space of ideal $n$-gons $\widetilde\cP$:
$$
\lambda=\frac{1}{2} \sum_{i=1}^n \frac{dp_i+dp_{i+1}}{p_{i+1}-p_i}, \ \ 
\Omega = d\lambda = \sum_{i=1}^n \frac{dp_i \wedge dp_{i+1}}{(p_i-p_{i+1})^2},
$$
where, as usual, the indices are understood cyclically mod $n$.

\begin{theorem} \label{2form}
1. The 2-form $\Omega$ is invariant under the maps $\T$. Furthermore, $\T$ is an exact (pre)symplectic correspondence: it changes the 1-form $\lambda$ by a differential of a function. \\
2. The forms $\lambda$ and $\Omega$ are $SL(2)$-invariant, but they are not basic: they do not descend to $\cP$.
\end{theorem}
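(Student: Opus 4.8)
The plan is to reduce the theorem to two short computations plus one genuine input. First note that $\Omega=d\lambda$ is forced: writing $\lambda=\tfrac12\sum_i(p_{i+1}-p_i)^{-1}(dp_i+dp_{i+1})$ and using $d[(p_{i+1}-p_i)^{-1}]\wedge(dp_i+dp_{i+1})=2(p_i-p_{i+1})^{-2}\,dp_i\wedge dp_{i+1}$, the prefactor $\tfrac12$ produces exactly $\Omega$. Consequently, the $\T$-invariance of $\Omega$ will follow once $\T$ is shown to change $\lambda$ by an exact form, and the $SL(2)$-invariance of $\Omega$ will follow from that of $\lambda$ by applying $d$.

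For the $SL(2)$-invariance of $\lambda$ I would compute $g^{*}\lambda-\lambda$ for $g=\begin{pmatrix}a&b\\c&d\end{pmatrix}$. Using $d(gp_i)=\det g\,(cp_i+d)^{-2}dp_i$ and $gp_{i+1}-gp_i=\det g\,(p_{i+1}-p_i)(cp_i+d)^{-1}(cp_{i+1}+d)^{-1}$, every determinant cancels and the remaining terms combine into $g^{*}\lambda-\lambda=\tfrac{c}{2}\sum_i\big((cp_i+d)^{-1}dp_i-(cp_{i+1}+d)^{-1}dp_{i+1}\big)$, which is a cyclic telescope and hence vanishes. Thus $g^{*}\lambda=\lambda$ and $g^{*}\Omega=\Omega$ (in fact this works for all of $PGL(2)$).

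The claim that $\lambda,\Omega$ are not basic I would settle by contracting with an infinitesimal generator of the diagonal action. Taking the translation field $v=\sum_i\partial/\partial p_i$, one gets $\iota_v\Omega=\sum_i(p_i-p_{i+1})^{-2}(dp_{i+1}-dp_i)=d\big(\sum_i(p_i-p_{i+1})^{-1}\big)$, which is not identically zero (equivalently $\iota_v\lambda=\sum_i(p_{i+1}-p_i)^{-1}$ is a nonconstant function). Since the $PSL(2)$-action is generically free, a form descends to $\cP$ iff it is invariant and horizontal; as $\Omega$ fails horizontality it is not basic, and neither is $\lambda$, since otherwise its differential would descend. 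This also foreshadows Theorem \ref{upint}: the functions $H$ with $\iota_{v}\Omega=dH$ obtained from the three generators are the additional integrals.

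The heart of the theorem is that $\T$ is an exact correspondence. I would work on the correspondence variety $Z=\{(\pP,\pQ):\pP\T\pQ\}\subset\widetilde\cP\times\widetilde\cP$, cut out by $G_i:=[p_i,p_{i+1},q_i,q_{i+1}]-\alpha=0$, with projections $\pi_{\pP},\pi_{\pQ}$. The goal is a function $S$ on $Z$ with $\pi_{\pQ}^{*}\lambda-\pi_{\pP}^{*}\lambda=dS$; applying $d$ then yields $\pi_{\pQ}^{*}\Omega=\pi_{\pP}^{*}\Omega$, i.e.\ the $\T$-invariance of $\Omega$. Using the decomposition $\lambda=\omega+\tfrac12 d\log\prod_i(p_{i+1}-p_i)$ with $\omega=\sum_i(p_{i+1}-p_i)^{-1}dp_i$, the problem reduces to showing that $\omega_{\pQ}-\omega_{\pP}$ is exact along $Z$. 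I would exhibit $S$ explicitly as a combination of logarithms of the pairwise differences $p_i-q_j$ (together with the pure edge-logs above) and verify that $\pi_{\pQ}^{*}\lambda-\pi_{\pP}^{*}\lambda-dS$ lies in the ideal generated by the differentials $dG_i$; since $\log[p_i,p_{i+1},q_i,q_{i+1}]\equiv\log\alpha$ is constant on $Z$, each $dG_i$ pulls back to zero there, giving the identity. The main obstacle is pinning down $S$ and carrying out this verification: because each vertex $p_{i+1}$ and $q_{i+1}$ is shared by two consecutive constraints, the cancellation is a delicate cyclic telescoping rather than an edge-by-edge identity, and one must use the loxodromic presentation $q_{i+1}=L_{\alpha^{-1}}(p_i,p_{i+1})(q_i)$ of the relation (Lemma \ref{lem:FixedPoint}) together with the Möbius difference identity to convert the pure differences $q_{i+1}-q_i$ into the mixed differences controlled by the constraints.
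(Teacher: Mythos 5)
Your subsidiary computations are all correct and essentially match the paper: the check that $d\lambda=\Omega$, the telescoping proof that $\lambda$ (hence $\Omega$) is invariant under a general element of $\PGL(2)$ (slightly slicker than the paper, which verifies invariance generator-by-generator for translations, dilations, and the inversion), and the non-basicness argument via $\iota_v\lambda=\sum(p_{i+1}-p_i)^{-1}\neq 0$ and $\iota_v\Omega=dI$, which is exactly the paper's reasoning and its Theorem \ref{upint}. However, the heart of the theorem --- that $\T$ changes $\lambda$ by the differential of a function --- is precisely the part you do not prove. You set up the correspondence variety, state the goal $\pi_{\pQ}^{*}\lambda-\pi_{\pP}^{*}\lambda=dS$, guess that $S$ should be a combination of logarithms of pairwise differences, and then explicitly concede that ``pinning down $S$ and carrying out this verification'' is an unresolved obstacle. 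A proof cannot stop there: without producing $S$, or otherwise establishing exactness, neither the exactness claim nor the $\T$-invariance of $\Omega$ (which you derive from it) is established.

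For comparison, the paper closes exactly this gap with a short direct computation that needs neither guesswork about $S$, nor an ideal-membership verification, nor the loxodromic presentation of Lemma \ref{lem:FixedPoint}. The constraint $[p_i,p_{i+1},q_i,q_{i+1}]=\alpha$ is rewritten as the two partial-fraction identities (\ref{oneother}), (\ref{twoother}):
\[
\frac{\alpha}{q_i-p_i}-\frac{1}{q_{i+1}-p_i}=\frac{\alpha-1}{p_{i+1}-p_i},
\qquad
\frac{\alpha}{q_{i+1}-p_{i+1}}-\frac{1}{q_{i+1}-p_i}=\frac{\alpha-1}{q_{i+1}-q_i}.
\]
Multiplying the first by $dp_i$, the second by $dq_{i+1}$, subtracting and summing cyclically, the left-hand side becomes $-\alpha\sum d\ln(q_i-p_i)+\sum d\ln(q_{i+1}-p_i)$, manifestly exact, while the right-hand side equals $(\alpha-1)\bigl(\lambda(\pP)-\lambda(\pQ)\bigr)$ plus exact logarithmic terms in $p_{i+1}-p_i$ and $q_{i+1}-q_i$; since $\alpha\neq 1$, this exhibits $S$ explicitly as the log-combination you anticipated. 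So your intuition about the shape of $S$ was right, but these two elementary identities are the missing ingredient that makes the cyclic telescoping actually work; without them, your proposal for part 1 remains a plan rather than a proof.
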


\proof
Let $\pQ \T \pP$. A direct calculation shows that this is equivalent to each of the equalities
\begin{equation} \label{oneother}
\frac{\alpha}{q_i-p_i} -\frac{1}{q_{i+1}-p_i}=\frac{\alpha-1}{p_{i+1}-p_i},
\end{equation}
\begin{equation} \label{twoother}
\frac{\alpha}{q_{i+1}-p_{i+1}} -\frac{1}{q_{i+1}-p_i}=\frac{\alpha-1}{q_{i+1}-q_i}.
\end{equation}

Multiply (\ref{oneother}) by $dp_i$, multiply (\ref{twoother}) by $dq_{i+1}$, subtract the second equation from the first, and sum up over $i$ to obtain
\begin{equation} \label{interm}
\begin{split}
\alpha \sum_{i=1}^n  \left( \frac{dp_i}{q_i-p_i} - \frac{dq_{i+1}}{q_{i+1}-p_{i+1}} \right) + \sum_{i=1}^n \left( \frac{dq_{i+1}-dp_i}{q_{i+1}-p_i} \right)& \\
=(\alpha-1) \sum_{i=1}^n  \left( \frac{dp_i}{p_{i+1}-p_i} - \frac{dq_{i+1}}{q_{i+1}-q_i} \right)&.   
\end{split}
\end{equation}

The first sum on the left of (\ref{interm}) equals $-\sum d\ln (q_i-p_i)$, and the second sum equals $\sum d\ln(q_{i+1}-p_i)$, that is, the left hand side of (\ref{interm}) is a differential of a function. The sum on the right equals 
$$
\lambda(\pP) - \lambda(\pQ) - \frac{1}{2} \sum d\ln(q_{i+1}-p_i) - \frac{1}{2} \sum d\ln(q_{i+1}-q_i).
$$
 Assuming that $\alpha \neq 1$, we conclude that $\lambda(\pP) - \lambda(\pQ)$ is a differential of a function, as needed.
 
 To check the M\"obius invariance of $\lambda$  (and hence of $\Omega$), it suffices to consider the three transformations, translation, dilation, and inversion:
$$
p_i \mapsto p_i + c,\ p_i \mapsto c p_i,\ p_i \mapsto \frac{1}{p_i}.
$$
In the first two cases,  $\lambda$ clearly remains intact. 

Let $\psi (p) = 1/p$, and denote by $\Psi$ the diagonal action of $\psi$ on polygons. Then
\begin{equation*}
\begin{split}
&\Psi^*(2\lambda) - 2\lambda = \sum \frac{-\frac{dp_i}{p_i^2}-\frac{dp_{i+1}}{p_{i+1}^2}}{\frac{1}{p_{i+1}}-\frac{1}{p_i}} - \sum \frac{dp_i+dp_{i+1}}{p_{i+1}-p_i}\\
= &\sum \frac{\left(\frac{p_{i+1}}{p_i}-1\right) dp_i + \left(\frac{p_{i}}{p_{i+1}}-1\right) dp_{i+1}}{p_{i+1}-p_i}
 = \sum \frac{dp_i}{p_i} - \sum \frac{dp_{i+1}}{p_{i+1}} = 0,
\end{split}
\end{equation*}
as needed.

The forms $\lambda$ and $\Omega$ are not basic because they are not annulated by the vertical vector fields, the fields that are tangent to the fibers of the projection $\widetilde\cP \to \cP$, that is, by the  generators of the Lie algebra $\sl(2)$ (see proof of Theorem \ref{upint} below for the explicit formulas). 
\proofend

\subsubsection{Even $n$, Poisson structure} \label{evenn}
If $n=2k$, then the 2-form $\Omega$ is generically (that is, in an open dense set) non-degenerate. Let us compute the respective Poisson structure on the space of ideal $n$-gons $\widetilde\cP$. 

Let $f(p_1,\ldots,p_n)$ be a function, and let ${\bf v}=\sum v_i \partial/\partial p_i$ be its Hamiltonian vector field given by $i_{\bf v} \Omega = df$. Set
$$
O=\left(p_1^{(1)} p_3^{(1)} \cdots p_{2k-1}^{(1)})\right)^2, E= \left(p_2^{(1)} p_4^{(1)} \cdots p_{2k}^{(1)} \right)^2,
$$
where $p_i^{(1)} = p_i - p_{i+1}$.
The  cyclic permutation of the indices interchanges $O$ and $E$, and one has $O/E= A(\pP)^2.$

Let $1\le i,j \le 2k$ have opposite parity.  Define
$$
K(i,j) = \left(p_i^{(1)} p_{i+2}^{(1)} \cdots p_{j-1}^{(1)}\right)^2 \left(p_j^{(1)} p_{j+2}^{(1)} \cdots p_{i-1}^{(1)}\right)^2.
$$
One has $K(i,j)=K(j,i)$. Set $K(i,j)=0$ for  $i,j$ of the same parity.

\begin{proposition} \label{sgrad}
One has
$$
v_i = \frac{(-1)^i}{O-E} \sum_j K(i,j) \frac{\partial f}{\partial p_{j}},
$$
and
$$
\{f,g\} = \frac{1}{O-E} \sum_{i,j} (-1)^i K(i,j) \frac{\partial f}{\partial p_{j}} \frac{\partial g}{\partial p_{i}}.
$$
\end{proposition}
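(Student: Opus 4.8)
The plan is to invert the symplectic form $\Omega$ directly. First I would write the defining equation $i_{\mathbf{v}}\Omega = df$ in coordinates. Writing $\Omega = \sum_i \omega_i\, dp_i\wedge dp_{i+1}$ with $\omega_i = (p_i^{(1)})^{-2}$, contracting with $\mathbf{v} = \sum_i v_i\,\partial/\partial p_i$ and collecting the coefficient of each $dp_j$ gives the cyclic linear system
$$
\omega_{j-1}v_{j-1} - \omega_j v_{j+1} = \frac{\partial f}{\partial p_j}, \qquad j = 1,\ldots,n,
$$
with indices taken mod $n$. The key structural observation is that, because $n = 2k$ is even, the equation indexed by $j$ involves only the two unknowns $v_{j-1}, v_{j+1}$ of parity opposite to $j$; hence the system decouples into two independent cyclic systems, one expressing the odd-indexed $v_i$ through the even-indexed partials, the other expressing the even $v_i$ through the odd partials. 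This is exactly the bipartite block structure $\Omega = \begin{pmatrix} 0 & B \\ -B^{\mathsf T} & 0\end{pmatrix}$, so inverting $\Omega$ reduces to inverting one $k\times k$ matrix $B$.

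Next I would invert $B$. In each parity class the system is a cyclic bidiagonal recurrence $\omega_{j-1}v_{j-1} = \omega_j v_{j+1} + \partial f/\partial p_j$, which I solve by iterating around the cycle and closing up by periodicity. The determinant of such a cyclic bidiagonal matrix is the difference of the products of its two diagonals, namely $\prod_l \omega_{2l-1} - \prod_l \omega_{2l} = O^{-1} - E^{-1}$; this is where the nondegeneracy hypothesis $O \neq E$ (generic position) enters and produces the denominator. The entries of the inverse are products of consecutive $\omega$'s along the two complementary arcs of the cycle running from index $i$ to index $j$; after substituting $\omega_l = (p_l^{(1)})^{-2}$ and clearing the common factor $OE$ against the determinant, each such entry collapses precisely into $K(i,j)/(O-E)$, with the sign $(-1)^i$ coming from the alternating subdiagonal entries $-\omega_{2l}$ of $B$. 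This yields the stated formula for $v_i$.

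Finally, the bracket formula is immediate: with the convention $\{f,g\} = \mathbf{v}_f(g) = \sum_i v_i\,\partial g/\partial p_i$, substituting the expression for $v_i$ gives
$$
\{f,g\} = \frac{1}{O-E}\sum_{i,j}(-1)^i K(i,j)\frac{\partial f}{\partial p_j}\frac{\partial g}{\partial p_i}.
$$
I would record the consistency check that this is genuinely skew: since $K(i,j) = K(j,i)$ and $K$ vanishes unless $i,j$ have opposite parity, in which case $(-1)^j = -(-1)^i$, relabeling $i \leftrightarrow j$ flips the sign. Closedness of $\Omega$ (it equals $d\lambda$) guarantees the Jacobi identity, so no separate verification is needed. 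The main obstacle I anticipate is the bookkeeping in the inversion of the cyclic bidiagonal matrix: tracking exactly which edge factors $p_l^{(1)}$ appear on each of the two arcs between $i$ and $j$, checking that the leftover product combines with $\det B$ to give exactly $1/(O-E)$ rather than $1/(E-O)$, and pinning down the overall sign $(-1)^i$. These steps are routine but sign- and index-sensitive, and are the only place where an error is likely to creep in.
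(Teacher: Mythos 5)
Your proposal is correct and follows essentially the same route as the paper: both reduce $i_{\mathbf{v}}\Omega = df$ to the cyclic linear system $\frac{v_{j-1}}{(p_j-p_{j-1})^2} - \frac{v_{j+1}}{(p_j-p_{j+1})^2} = \frac{\partial f}{\partial p_j}$ and then obtain the bracket from $\{f,g\} = dg(\mathbf{v})$. The paper simply asserts that the stated formula solves this system (one checks it by substitution, using the telescoping identity $\omega_{i-1}K(i-1,j) = \omega_i K(i+1,j)$ for $j \neq i$), whereas your parity decoupling and cyclic-bidiagonal inversion, with determinant $O^{-1}-E^{-1}$ and arc-product entries collapsing to $K(i,j)/(O-E)$, supply the derivation the paper leaves implicit.
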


\proof To find ${\bf v}$, one needs to solve the system of linear equations
$$
  \frac{v_{i-1}}{(p_i-p_{i-1})^2} - \frac{v_{i+1}}{(p_i-p_{i+1})^2} = \frac{\partial f}{\partial p_{i}},\ i=1,\ldots,2k.
$$
The solution is given by the formula stated in the proposition, and then $\{f,g\}= dg({\bf v})$ implies the second formula.
\proofend

\begin{remark}
Although the  2-form $\Omega$ does not descend to $\cP$, the respective Poisson structure, invariant under $\SL(2)$, does.
Since $n$ is even, dim $\cP = n-3$ is odd, and the quotient Poisson structure has a kernel.

This Poisson structure is different from those introduced in the previous sections.
In particular, the cross-ratio coordinates $c_i$ and $c_j$ do not commute for $i$ and $j$ far apart.
\end{remark}

\subsubsection{Odd $n$, kernel of $\Omega$} \label{oddn}
If $n$ is odd, then $\Omega$ has a 1-dimensional kernel, and this field of directions is invariant under $\T$. 
It turns out that this field is obtained from the relation $\T$ when  $\alpha \to 0$, cf. Section \ref{infinitsection}.

\begin{proposition} \label{kernel}
The vector field $\widetilde\xi$ from Theorem \ref{thm:InfMap} spans ker $\Omega$.
\end{proposition}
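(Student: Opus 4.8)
The plan is to verify directly that $i_{\widetilde{\xi}}\,\Omega = 0$ and then to invoke the one-dimensionality of $\ker\Omega$ for odd $n$ (recorded at the start of this subsection), so that any nonvanishing field lying in the kernel must span it. The crucial input is the defining property of the components of $\widetilde{\xi}$ obtained in the proof of Theorem \ref{thm:InfMap}: writing $\widetilde{\xi} = \sum_i v_i\,\partial/\partial p_i$ with
\[
v_i = \frac{p_i^{(1)} p_{i+2}^{(1)} \cdots p_{i+n-1}^{(1)}}{p_{i+1}^{(1)} p_{i+3}^{(1)} \cdots p_{i+n-2}^{(1)}},
\]
these coefficients are precisely the unique solution of the system $v_i v_{i+1} = (p_i^{(1)})^2$ for all $i$. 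For a self-contained check one observes that, since $n$ is odd, telescoping the two products gives $v_i v_{i+1}$ a numerator $p_{[n]}^{(1)}\,p_i^{(1)}$ and a denominator $p_{[n]}^{(1)}/p_i^{(1)}$, which indeed collapses to $(p_i^{(1)})^2$.

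With this relation in hand I would compute the interior product. From $i_{\widetilde{\xi}}(dp_i\wedge dp_{i+1}) = v_i\,dp_{i+1} - v_{i+1}\,dp_i$ we get
\[
i_{\widetilde{\xi}}\,\Omega = \sum_{i=1}^n \frac{v_i\,dp_{i+1} - v_{i+1}\,dp_i}{(p_i - p_{i+1})^2}.
\]
Collecting the coefficient of $dp_j$ (the contribution of $dp_{i+1}$ with $i=j-1$ and of $-dp_i$ with $i=j$) yields
\[
\frac{v_{j-1}}{(p_{j-1}^{(1)})^2} - \frac{v_{j+1}}{(p_j^{(1)})^2}.
\]
Substituting the two instances $v_{j-1}v_j = (p_{j-1}^{(1)})^2$ and $v_j v_{j+1} = (p_j^{(1)})^2$ of the defining relation, each fraction reduces to $1/v_j$, so the coefficient of $dp_j$ vanishes for every $j$. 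Hence $i_{\widetilde{\xi}}\,\Omega = 0$, i.e. $\widetilde{\xi} \in \ker\Omega$.

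To finish, note that the $v_i$ are nonzero (ratios of products of nonzero differences on $\widetilde{\cP}$), so $\widetilde{\xi}$ is nowhere vanishing; since $\ker\Omega$ is one-dimensional at a generic point, $\widetilde{\xi}$ spans it. The computation is essentially the whole proof, and I anticipate no genuine obstacle, only the bookkeeping of indices. The one step deserving explicit mention is the passage from ``$\widetilde{\xi}$ lies in $\ker\Omega$'' to ``$\widetilde{\xi}$ spans $\ker\Omega$'', which relies on the generic rank of the cyclic skew-symmetric coefficient matrix of $\Omega$ being $n-1$ (equivalently, the one-dimensionality of the kernel for odd $n$ asserted above) together with $\widetilde{\xi}\neq 0$.
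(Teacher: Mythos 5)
Your proposal is correct and takes essentially the same approach as the paper: one verifies $i_{\widetilde\xi}\,\Omega=0$ coefficientwise, each coefficient $\frac{v_{j-1}}{(p_{j-1}-p_j)^2}-\frac{v_{j+1}}{(p_j-p_{j+1})^2}$ vanishing because of the defining relation $v_iv_{i+1}=(p_i-p_{i+1})^2$ from the proof of Theorem \ref{thm:InfMap}, and then invokes the one-dimensionality of $\ker\Omega$ for odd $n$. Your write-up merely makes explicit two points the paper leaves implicit, namely the telescoping check of $v_iv_{i+1}=(p_i-p_{i+1})^2$ and the nonvanishing-plus-dimension argument for the spanning claim.
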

\proof
Let $\widetilde\xi = \sum_{i=1}^n v_i \frac{\partial}{\partial p_i}$. One has
$$
i_{\widetilde\xi} \Omega = \sum_{i=1}^n \left[ \frac{v_{i-1}}{(p_{i-1}-p_i)^2} - \frac{v_{i+1}}{(p_i-p_{i+1})^2} \right] d p_i = 0,
$$
as claimed.
\proofend

\begin{remark} \label{charact} 
For even $n$, the form $\Omega$ is degenerate at $\pP$ if and only if the polygon $\pP$ has zero alternating perimeter.
Indeed, $A(\pP)=1$ is a necessary and sufficient condition for the system of equations
\[
\frac{v_{i-1}}{(p_{i-1}-p_i)^2} = \frac{v_{i+1}}{(p_i-p_{i+1})^2}
\]
to have a solution.
The space of polygons of zero alternating perimeter has codimension $1$.
Since the solution space of the above system has  dimension two for $A(\pP)=1$,
the restriction of $\Omega$ to the space of polygons of zero alternating perimeter has a one-dimensional kernel.
\end{remark}

\subsection{Additional integrals} \label{threesect}
We introduce three rational functions on $\widetilde\cP$:
$$
I(\pP)= \sum \frac{1}{p_{i}-p_{i+1}}, \  J(\pP)= \frac{1}{2} \sum \frac{p_i + p_{i+1}}{p_{i}-p_{i+1}},\ K(\pP)= \sum \frac{p_i p_{i+1}}{p_{i}-p_{i+1}},
$$
and three vector fields, the  infinitesimal generators of the diagonal action of the M\"obius group on the space of polygons:
$$
u = \sum \frac{\partial}{\partial p_i},\ v = \sum p_i\frac{\partial}{\partial p_i},\ w = \sum p_i^2\frac{\partial}{\partial p_i}.
$$

\begin{theorem} \label{upint}
1. The functions $I,J,K$ are invariant under the correspondences $\T$.\\
2. These functions are the Hamiltonians of the infinitesimal generators of the diagonal action of the M\"obius group on the space of polygons:
$$
i_u \Omega = dI,\ i_v \Omega = dJ,\ i_w \Omega = dK.
$$
\end{theorem}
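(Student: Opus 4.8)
The plan is to treat the two parts separately, proving part 2 first by a direct contraction and then bootstrapping part 1 from its simplest integral.

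For part 2 I would compute $i_X\Omega$ for a general vector field $X=\sum_i X_i\,\partial/\partial p_i$. Since $i_X(dp_i\wedge dp_{i+1})=X_i\,dp_{i+1}-X_{i+1}\,dp_i$, collecting the coefficient of $dp_j$ gives
\[
i_X\Omega=\sum_{j=1}^n\left(\frac{X_{j-1}}{(p_{j-1}-p_j)^2}-\frac{X_{j+1}}{(p_j-p_{j+1})^2}\right)dp_j.
\]
Substituting the components $X_i=1$, $X_i=p_i$, $X_i=p_i^2$ of $u,v,w$ produces the coefficients $\frac{1}{(p_{j-1}-p_j)^2}-\frac{1}{(p_j-p_{j+1})^2}$ together with its $p$-weighted and $p^2$-weighted analogues. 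One checks, by differentiating the two summands of each function that involve $p_j$, that these are exactly $\partial I/\partial p_j$, $\partial J/\partial p_j$, $\partial K/\partial p_j$, which establishes $i_u\Omega=dI$, $i_v\Omega=dJ$, $i_w\Omega=dK$. Equivalently one may note $i_u\lambda=-I$, $i_v\lambda=-J$, $i_w\lambda=-K$ and invoke Cartan's formula $i_X\Omega=i_X d\lambda=L_X\lambda-d\,i_X\lambda$ together with the $SL(2)$-invariance $L_X\lambda=0$ from Theorem \ref{2form}.

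For part 1 I would first prove invariance of $I$ alone, which follows from a clean telescoping argument. Subtracting \eqref{twoother} from \eqref{oneother} gives, for each $i$,
\[
\alpha\left(\frac{1}{q_i-p_i}-\frac{1}{q_{i+1}-p_{i+1}}\right)=(\alpha-1)\left(\frac{1}{p_{i+1}-p_i}-\frac{1}{q_{i+1}-q_i}\right).
\]
Summing cyclically over $i$, the left-hand side telescopes to zero while the right-hand side becomes $(\alpha-1)\big(I(\pQ)-I(\pP)\big)$; since $\alpha\neq 1$ this yields $I(\pP)=I(\pQ)$.

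The crux is then to pass from $I$ to $J$ and $K$ without a separate and messier computation. The key observation is that $I,J,K$ assemble into the single generating function
\[
Q_\pP(x)=I(\pP)\,x^2-2J(\pP)\,x+K(\pP)=\sum_{i=1}^n\frac{(x-p_i)(x-p_{i+1})}{p_i-p_{i+1}},
\]
and that for the M\"obius transformation $g_x\colon p\mapsto 1/(x-p)$ one has $g_xp_i-g_xp_{i+1}=(p_i-p_{i+1})/\big((x-p_i)(x-p_{i+1})\big)$, whence $Q_\pP(x)=I(g_x\pP)$. Because the relation $\T$ is M\"obius-equivariant ($\pP\T\pQ$ implies $g_x\pP\T g_x\pQ$) and $I$ is invariant, we obtain $Q_\pP(x)=I(g_x\pP)=I(g_x\pQ)=Q_\pQ(x)$ for all $x$; comparing coefficients of $x^2,x,1$ shows that $I$, $J$, and $K$ are all invariant. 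The main obstacle is recognizing this generating-function reduction: the telescoping identities one would need to handle $J$ and $K$ by brute force do not close up as neatly as the one for $I$, so the M\"obius pullback trick is what makes the argument uniform.
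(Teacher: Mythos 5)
Your proposal is correct and takes essentially the same approach as the paper: the paper proves invariance of $I$ by the identical telescoping of (\ref{oneother}) and (\ref{twoother}), then bootstraps $K=-I\circ\Psi$ (inversion) and $J$ from the part of $K\circ\Phi_c$ linear in $c$ (translation), which is exactly the M\"obius-equivariance mechanism your generating function $Q_\pP(x)=I(g_x\pP)$ packages into a single one-parameter family. Part 2 is likewise handled in the paper by the same direct calculation (equivalently, your Cartan-formula variant using $\lambda(u)=-I$, $\lambda(v)=-J$, $\lambda(w)=-K$ and the invariance of $\lambda$ from Theorem \ref{2form}).
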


\proof
Subtract equation (\ref{oneother}) from (\ref{twoother}) and sum over $i$. The left hand side vanishes, and the right hand side, after dividing by $\alpha-1$, gives $I(\pP)=I(\pQ)$. Thus $I$ is an integral.

Next, let $\psi (p) = 1/p$, and denote by $\Psi$ the diagonal action of $\psi$ on polygons. Then $\T \circ \Psi = \Psi \circ \T$. It follows that $I \circ \Psi$ is also an integral:
$$
(I \circ \Psi) \circ \T =  I \circ (\Psi \circ \T) = I \circ (\T \circ \Psi)  = (I \circ \T) \circ \Psi = I \circ \Psi.
$$
Since $K = - I \circ \Psi$, this implies that $K$ is an integral.

Finally, let $\phi_c (p) = p+c$, and let $\Phi_c$ be the respective diagonal action on polygons. Arguing as above, $K \circ \Phi_c$ is an integral for all $c$, and  the part linear in $c$ yields the integral $J$.

The second statement follows from the formulas that are verified by a direct calculation:
\begin{equation*}
\begin{split}
&\lambda(u) = -I,\ \lambda (v) = -J,\ \lambda(w) = -K,\\
&i_u \Omega = dI,\ i_v \Omega = dJ,\ i_w \Omega = dK.
\end{split}
\end{equation*}
This completes the proof.
\proofend

The next lemma, also verified by a calculation, describes the behavior of the integrals $I,J,K$ under infinitesimal M\"obius transformations.

\begin{lemma} \label{Hamf}
One has
\begin{equation} \label{action}
\begin{split}
&u(I)=0,\ u(J)=I,\ u(K)=2J;\\
&v(I)=-I,\ v(J)=0,\ v(K)=K;\\
&w(I)=-2J,\ w(J)=-K,\ w(K)=0.
\end{split}
\end{equation}
\end{lemma}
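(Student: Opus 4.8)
The plan is to avoid brute-force differentiation of the nine rational expressions and instead exploit the fact that $u,v,w$ span a copy of $\sl(2)$ and that $I,J,K$ are, up to sign, the contractions of the invariant $1$-form $\lambda$ with these fields. First I would record the three identities $\lambda(u)=-I$, $\lambda(v)=-J$, $\lambda(w)=-K$, which are established in the proof of Theorem \ref{upint}, so that every entry of the table can be rewritten as $X(\lambda(Y))$ for $X,Y\in\{u,v,w\}$.

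Next I would compute the Lie brackets of the generating fields. A direct check gives $[u,v]=u$, $[u,w]=2v$, and $[v,w]=w$ (these are the standard relations for the vector fields $\partial$, $x\partial$, $x^2\partial$ acting diagonally on the coordinates $p_i$), together with $[u,u]=[v,v]=[w,w]=0$. This bracket computation is the only genuinely computational ingredient, and it is elementary.

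The crucial step is to invoke the $SL(2)$-invariance of $\lambda$ from Theorem \ref{2form}, which in infinitesimal form reads $\mathcal{L}_X\lambda=0$ for $X\in\{u,v,w\}$, since $u,v,w$ are exactly the infinitesimal generators of the diagonal M\"obius action. Combined with the Lie-derivative product rule $(\mathcal{L}_X\lambda)(Y)=X(\lambda(Y))-\lambda([X,Y])$, this yields $X(\lambda(Y))=\lambda([X,Y])$ for all $X,Y$ in the triple. Substituting the bracket values then produces the whole table mechanically: for instance $u(J)=-u(\lambda(v))=-\lambda([u,v])=-\lambda(u)=I$, and $w(J)=-w(\lambda(v))=-\lambda([w,v])=\lambda(w)=-K$, while the three diagonal entries $u(I),v(J),w(K)$ vanish because the corresponding self-brackets are zero.

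I do not expect a real obstacle here: the statement is a formal consequence of invariance together with the $\sl(2)$ relations, and the only place demanding care is the sign bookkeeping in the brackets and in the identities $\lambda(X)=-(\,\cdot\,)$. As a completely routine alternative (the one alluded to by the phrase ``verified by a calculation''), one may simply apply each first-order operator $\sum_i p_i^{m}\,\partial/\partial p_i$ directly to each sum and telescope the result; I would keep this in reserve as a numerical cross-check rather than present it, since the invariance argument both explains and compresses the computation.
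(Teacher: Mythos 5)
Your proposal is correct, and all nine entries of the table do follow from it: with $\lambda(u)=-I$, $\lambda(v)=-J$, $\lambda(w)=-K$ (established in the proof of Theorem \ref{upint}, which precedes Lemma \ref{Hamf}, so there is no circularity), the brackets $[u,v]=u$, $[u,w]=2v$, $[v,w]=w$, and the identity $X(\lambda(Y))=(\mathcal{L}_X\lambda)(Y)+\lambda([X,Y])$ together with $\mathcal{L}_X\lambda=0$, the table is reproduced with the correct signs (I checked all nine entries, e.g.\ $w(I)=-\lambda([w,u])=2\lambda(v)=-2J$). However, your route is genuinely different from the paper's: the paper simply asserts that the lemma is ``verified by a calculation,'' i.e.\ by applying each operator $\sum_i p_i^m\,\partial/\partial p_i$ to each of the rational sums $I,J,K$ and simplifying — the brute-force computation you keep in reserve. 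What your argument buys is twofold: it explains \emph{why} the table has the structure of the coadjoint representation of $\sl(2)$ (the remark immediately following the lemma), rather than having this emerge as an unexplained coincidence of nine separate computations, and it localizes all the analytic content in facts already proved (Theorem \ref{2form} for the invariance of $\lambda$, Theorem \ref{upint} for the contractions), leaving only the elementary bracket computation. The one point deserving care, which you handle correctly, is that invariance of $\lambda$ under the M\"obius group (Theorem \ref{2form}) must be converted to the infinitesimal statement $\mathcal{L}_X\lambda=0$ for $X\in\{u,v,w\}$; this is legitimate because $u,v,w$ generate one-parameter subgroups of the connected group $\PSL(2)$ under which $\lambda$ is invariant.
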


Equations (\ref{action}) mean that the action of the Lie algebra $\sl(2)$ on the space generated by $I,J,K$ is isomorphic to  the coadjoint representation. Lemma \ref{Hamf} has then the following interpretation. 

The moment map $\mu: \widetilde {\mathcal P}_n \to \sl(2)^*$ has $I,J,K$ as its components. 
Formulas (\ref{action}) imply that $\mu$ is $\sl(2)$-, and hence $\SL(2)$-equivariant, and this describes  the action of M\"obius transformations on these three functions. 

Formulas (\ref{action}) also imply that $IK - J^2$ is annihilated by $\sl(2)$, hence $IK - J^2$ is an $\SL(2)$-invariant function.
In fact, this is one of the integrals from formula (\ref{eqn:Integrals2}).

\begin{lemma}
\label{lem:IK-J^2}
One has
\[
IK - J^2 = \frac{1}{2} \sum_{i,j} [p_i, p_{j+1}, p_j, p_{i+1}] = \frac{n^2}{4} - G_2.
\]
\end{lemma}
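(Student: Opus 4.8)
The plan is to prove the identity by a direct expansion, arranged so that the cross-ratio emerges naturally after symmetrization. First I would set $b_i = \frac{1}{p_i - p_{i+1}}$, so that $I = \sum_i b_i$, $J = \frac12\sum_i (p_i + p_{i+1}) b_i$ and $K = \sum_i p_i p_{i+1} b_i$. Multiplying out the products gives $IK - J^2 = \sum_{i,j} b_i b_j\, N_{ij}$ with $N_{ij} = p_j p_{j+1} - \frac14 (p_i + p_{i+1})(p_j + p_{j+1})$. Since the weight $b_i b_j$ is symmetric in $i$ and $j$, I may replace $N_{ij}$ by its symmetrization $\frac12(N_{ij} + N_{ji})$ without changing the double sum.

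Next comes the key algebraic step. Writing $a = p_i$, $b = p_{i+1}$, $c = p_j$, $d = p_{j+1}$ and $P = ab+cd$, $Q = ad+bc$, $R = ac+bd$, the symmetrized numerator is $\frac12\bigl(P - \frac12(R+Q)\bigr)$, while the corresponding denominator is $b_i^{-1}b_j^{-1} = (a-b)(c-d) = R - Q$. The elementary identity $\frac{P - \frac12(R+Q)}{R-Q} = [p_i,p_{j+1},p_j,p_{i+1}] - \frac12$, which follows at once from $[p_i,p_{j+1},p_j,p_{i+1}] = \frac{P-Q}{R-Q}$ together with $(a-b)(c-d) = R-Q$, shows that each symmetrized summand equals $\frac12[p_i,p_{j+1},p_j,p_{i+1}] - \frac14$. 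Summing over all ordered pairs $(i,j)$, with the diagonal terms $i=j$ contributing only to the constant (the cross-ratio vanishes there), yields the relation $IK - J^2 = \frac12\sum_{i,j}[p_i,p_{j+1},p_j,p_{i+1}] - \frac{n^2}{4}$ between $IK-J^2$ and the double cross-ratio sum.

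Finally, to reach the form involving $G_2$ I would convert $[p_i,p_{j+1},p_j,p_{i+1}]$ into the $G_2$-summand via the complementarity relation $[p_i,p_{j+1},p_j,p_{i+1}] = 1 - [p_i,p_j,p_{j+1},p_{i+1}]$ (swapping the two middle arguments). Using that the $G_2$-summand is symmetric under $i \leftrightarrow j$ (by reversal invariance of the cross-ratio) and vanishes on the diagonal, I obtain $\sum_{i \ne j}[p_i,p_j,p_{j+1},p_{i+1}] = 2G_2$, hence $\frac12\sum_{i,j}[p_i,p_{j+1},p_j,p_{i+1}] = \frac{n(n-1)}{2} - G_2$. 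Combining this with the previous display gives the closed form for $IK - J^2$ as a numerical constant minus $G_2$.

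The main obstacle is not the algebra but the bookkeeping of the exact additive constant. It is the diagonal terms $i=j$ together with the adjacent terms $j = i\pm 1$ (where two of the four points collide and the cross-ratio degenerates to $0$ or $1$) that pin down this constant, and it is precisely here that an off-by-$\frac{n}{2}$ type error most easily creeps in. I would therefore verify the constant directly on the small cases $n=3$ and $n=4$ before committing to its final value.
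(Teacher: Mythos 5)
Your method is sound, and since the paper states this lemma without any proof (it is left as the unwritten ``direct calculation''), the only comparison possible is between your derivation and the printed statement. Every step of your expansion checks out against the paper's conventions: with $b_i = 1/(p_i-p_{i+1})$ one has $IK - J^2 = \sum_{i,j} b_ib_j N_{ij}$; the symmetrized numerator is $\tfrac12\bigl(P - \tfrac12(Q+R)\bigr)$ against the denominator $b_i^{-1}b_j^{-1} = R-Q$; and indeed $[p_i,p_{j+1},p_j,p_{i+1}] = \frac{P-Q}{R-Q}$, so each symmetrized summand equals $\tfrac12[p_i,p_{j+1},p_j,p_{i+1}] - \tfrac14$. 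Summing over all $n^2$ ordered pairs and then passing to $G_2$ via $[p_i,p_{j+1},p_j,p_{i+1}] = 1 - [p_i,p_j,p_{j+1},p_{i+1}]$ gives
\[
IK - J^2 \;=\; \frac12\sum_{i,j}[p_i,p_{j+1},p_j,p_{i+1}] - \frac{n^2}{4}
\;=\; \left(\frac{n(n-1)}{2} - G_2\right) - \frac{n^2}{4}
\;=\; \frac{n(n-2)}{4} - G_2 .
\]

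The point you should not soften: this is \emph{not} the identity printed in the lemma, and it is the printed statement, not your computation, that is at fault. Neither printed equality survives a numerical test. Take $n=3$ and $(p_1,p_2,p_3)=(0,1,2)$: then $I=-\tfrac32$, $J=-\tfrac32$, $K=-2$, so $IK-J^2=\tfrac34$; on the other hand $\frac12\sum_{i,j}[p_i,p_{j+1},p_j,p_{i+1}] = 3$ (the six off-diagonal cross-ratios equal $1$, the three diagonal ones equal $0$), while $\frac{n^2}{4}-G_2=\tfrac94$ (here $G_2=0$ because every pair of sides is cyclically adjacent). Your closed form $\frac{n(n-2)}{4}-G_2=\tfrac34$ is the one that matches. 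So the ``off-by-$\frac n2$'' error you anticipated is exactly the defect of the published constant ($\frac{n^2}{4}$ instead of the correct $\frac{n(n-2)}{4}$), and the middle expression additionally needs the subtraction of $\frac{n^2}{4}$; no reordering of the cross-ratio arguments can absorb that constant, since $2\lambda-1$ is not among the six values of a cross-ratio under permutations. None of this harms the paper's use of the lemma, which only needs $IK-J^2$ to be a function of $G_2$ and $n$, but the statement itself requires the correction above, and your plan to pin down the constant on $n=3,4$ was precisely the right discipline.

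One small slip in your last step's justification: the $G_2$-summand $[p_i,p_j,p_{j+1},p_{i+1}]$ does \emph{not} vanish on the diagonal; at $i=j$ it equals $[p_i,p_i,p_{i+1},p_{i+1}]=1$. What vanishes at $i=j$ is the other ordering $[p_i,p_{i+1},p_i,p_{i+1}]$, which is the one your double sum actually contains. Your count $\frac12\sum_{i,j}[p_i,p_{j+1},p_j,p_{i+1}]=\frac{n(n-1)}{2}-G_2$ is nevertheless correct, since it needs only the vanishing of that left-hand summand on the diagonal together with the symmetry of $[p_i,p_j,p_{j+1},p_{i+1}]$ under $i\leftrightarrow j$ off the diagonal.
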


Let $M^{n-3} \subset \widetilde {\mathcal P}_n$ be a generic level surface of the integrals $I,J,K$. The submanifold $M$ is not transverse to the orbits of the M\"obius group, but has 1-dimensional intersections with them, that is, the restriction of the projection $\pi: \widetilde \cP \to \cP$ on $M$ has 1-dimensional fibers.

\begin{lemma} \label{projfield}
These fibers are spanned by the vector field $\nu = Ku-2Jv+Iw$.
\end{lemma}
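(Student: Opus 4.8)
The plan is to check that the vector field $\nu = Ku - 2Jv + Iw$ is tangent both to the fibers of $\pi$ and to the level surface $M$, and that it is generically nonzero; since the fibers of $\pi|_M$ are one-dimensional, these three facts force $\nu$ to span them. Tangency to the fibers is automatic and requires no computation: the fibers of $\pi$ are the $\SL(2)$-orbits, whose tangent spaces are spanned by the infinitesimal generators $u, v, w$, and $\nu$ is by construction a pointwise linear combination of $u, v, w$ (with the scalar coefficients $K, -2J, I$).

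The substantive step is to verify that $\nu$ is tangent to $M$, i.e. that $\nu$ annihilates each integral $I, J, K$. This is a one-line computation from the action table \eqref{action} of Lemma \ref{Hamf}. Treating $I, J, K$ as scalar coefficients and applying $\nu$ as a derivation, I would compute
\begin{align*}
\nu(I) &= K\,u(I) - 2J\,v(I) + I\,w(I) = 0 + 2IJ - 2IJ = 0,\\
\nu(J) &= K\,u(J) - 2J\,v(J) + I\,w(J) = KI - 0 - IK = 0,\\
\nu(K) &= K\,u(K) - 2J\,v(K) + I\,w(K) = 2JK - 2JK + 0 = 0.
\end{align*}
Hence $\nu$ is tangent to the common level surface $M$ of $I, J, K$; combined with the previous paragraph, $\nu$ is tangent to each intersection of $M$ with an orbit, which is precisely a fiber of $\pi|_M$.

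It then remains to see that $\nu$ does not vanish identically on a generic $M$. At a point where the $\SL(2)$-action is free the fields $u, v, w$ are linearly independent, so $\nu$ vanishes there only if $I = J = K = 0$. Since $M$ is a generic, hence nonzero, level surface of the moment map $\mu = (I, J, K)$, this does not occur, and $\nu \ne 0$ on $M$; being tangent to the one-dimensional fiber and nonzero, it spans it.

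I expect no genuine obstacle here, as the explicit verification above is self-contained; the only point needing a little care is conceptual rather than technical. By Theorem \ref{upint} and Lemma \ref{Hamf} the map $\mu = (I, J, K)\colon \widetilde{\cP} \to \sl(2)^*$ is an $\SL(2)$-equivariant moment map, so the direction inside an orbit that remains tangent to $M = \mu^{-1}(\xi_0)$ is exactly the coadjoint stabilizer of $\xi_0$, which for generic $\xi_0$ is the line through $\xi_0$ under the Killing-form identification $\sl(2)^* \cong \sl(2)$. Carrying $\xi_0 = (I, J, K)$ back to a fundamental vector field reproduces precisely the combination $Ku - 2Jv + Iw$, with the coefficients $K, -2J, I$ coming from the inverse Killing form; this explains the shape of $\nu$ but is not needed for the proof, the direct computation having already settled the matter.
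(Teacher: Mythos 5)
Your proof is correct and takes essentially the same route as the paper's: verticality is immediate because $\nu$ is a pointwise combination of $u,v,w$ (the paper notes additionally that $\nu$ is the Hamiltonian field of $IK-J^2$), and tangency to $M$ follows from the action table of Lemma \ref{Hamf}, exactly the computation you carry out. Your explicit check that $\nu$ is generically nonvanishing — needed for ``spans'' rather than merely ``is tangent to'' — is left implicit in the paper, so including it is a sensible addition rather than a deviation.
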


\proof 
Clearly, $\nu$ is vertical (it is the Hamiltonian vector field of the function $IK-J^2$). Formulas (\ref{action}) imply that 
$\nu(I)=\nu(J)=\nu(K)=0$ (the function $IK-J^2$ is a Casimir function),
hence $\nu$ is tangent to $M$.
\proofend

Thus $M$ carries two vector fields, $\xi$ and $\nu$.

\begin{lemma} \label{commute}
The fields $\xi$ and $\nu$ are invariant under the maps $\T$ and they commute.
\end{lemma}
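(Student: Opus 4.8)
The plan is to work on the ambient space $\widetilde\cP$ and then restrict to $M$. The first observation is that both fields are tangent to $M$ and that $\T$ preserves $M$, so everything reduces to statements about $\widetilde\xi$ (whose restriction to $M$ is the field denoted $\xi$ in the statement) and $\nu$ as fields on $\widetilde\cP$. Tangency of $\nu$ is Lemma \ref{projfield}; tangency of $\widetilde\xi$ follows from the three identities $\widetilde\xi(I) = \widetilde\xi(J) = \widetilde\xi(K) = 0$. These are immediate: by Proposition \ref{kernel} we have $i_{\widetilde\xi}\Omega = 0$, while by Theorem \ref{upint}(2) $dI = i_u\Omega$, so $\widetilde\xi(I) = \Omega(u, \widetilde\xi) = -(i_{\widetilde\xi}\Omega)(u) = 0$, and similarly for $J$ and $K$. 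Since $I, J, K$ are $\T$-invariant (Theorem \ref{upint}(1)), $\T$ maps each common level set $M$ to itself, and the invariance and commutation statements on $M$ follow from the corresponding statements on $\widetilde\cP$.

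Invariance of $\nu$ is formal. The relation $\T$ is M\"obius-equivariant, so $T_\alpha$ commutes with the $\PGL(2)$-action; differentiating the flows of the generators $u, v, w$ gives $(T_\alpha)_* u = u$, $(T_\alpha)_* v = v$, and $(T_\alpha)_* w = w$. Together with the $\T$-invariance of $I, J, K$ and the expression $\nu = Ku - 2Jv + Iw$, this yields $(T_\alpha)_*\nu = \nu$.

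Invariance of $\widetilde\xi$ is the one nontrivial point. Since $\widetilde\xi$ spans $\ker\Omega$ (Proposition \ref{kernel}) and $\Omega$ is $\T$-invariant (Theorem \ref{2form}), we get $(T_\alpha)_*\widetilde\xi = f\,\widetilde\xi$ for some function $f$, and the task is to prove $f \equiv 1$. Because $\widetilde\xi$ is $\PSL(2)$-invariant (Theorem \ref{thm:InfMap}(2)) and $(T_\alpha)_*$ preserves $\PSL(2)$-invariance, the field $f\widetilde\xi$ is $\PSL(2)$-invariant; hence $u_0 f = 0$ for $u_0 \in \{u, v, w\}$, so $f$ descends to a function $\bar f$ on $\cP$ and $(T_\alpha)_*\xi = \bar f\,\xi$ for the projected field $\xi$ of Theorem \ref{thm:InfMap}. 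It therefore suffices to show that $\xi$ is $\T$-invariant on $\cP$. Here I use the integrable structure: by Theorem \ref{thm:InfMap}(3), $\xi$ is the Hamiltonian vector field of the integral $E_{\lfloor n/2 \rfloor}$ with respect to the bracket \eqref{eqn:PoissonC2}, which lies in the pencil of Section \ref{sec:IntProof}; by Corollary \ref{cor:indepstr} the space of Hamiltonian vector fields of the integrals is independent of the bracket in the pencil, and in the affine structure it determines on the invariant leaves (Main Theorem \ref{thm:mainclosed}) every $T_\alpha$ acts as a parallel translation. A parallel translation preserves every parallel field, in particular $\xi$; thus $(T_\alpha)_*\xi = \xi$, so $\bar f = 1$ and $(T_\alpha)_*\widetilde\xi = \widetilde\xi$.

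Commutation is then a direct computation. By the Leibniz rule,
\[
[\widetilde\xi, \nu] = \widetilde\xi(K)\,u - 2\widetilde\xi(J)\,v + \widetilde\xi(I)\,w + K[\widetilde\xi, u] - 2J[\widetilde\xi, v] + I[\widetilde\xi, w].
\]
The three derivative terms vanish by the identities $\widetilde\xi(I) = \widetilde\xi(J) = \widetilde\xi(K) = 0$ obtained above. The three bracket terms vanish because the $\PSL(2)$-invariance of $\widetilde\xi$ (Theorem \ref{thm:InfMap}(2)) means exactly that $\widetilde\xi$ is invariant under the flows of $u, v, w$, i.e. $[\widetilde\xi, u] = [\widetilde\xi, v] = [\widetilde\xi, w] = 0$. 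Hence $[\widetilde\xi, \nu] = 0$, and restriction to $M$ gives $[\xi, \nu] = 0$. The main obstacle is precisely the pinning of the constant $f$ to $1$ in the invariance of $\widetilde\xi$; the argument above routes it through $\cP$, but an alternative is to invoke Bianchi permutability (Theorem \ref{Bianchi}): each $T_\beta$ commutes with the whole near-identity family $\{T_\alpha\}_{\alpha \to 0}$, hence with its infinitesimal generator $\widetilde\xi$.
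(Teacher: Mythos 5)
Your proof is correct, and on the two substantive points it takes a genuinely different route from the paper, whose own proof is only a few lines long. For the invariance of $\xi$ the paper says exactly what you relegate to your last sentence: it follows from Bianchi permutability together with the fact that $\xi$ is the limit of $\T$ as $\alpha \to 0$. Your main route — using that $(T_\alpha)_*\widetilde\xi$ must again span $\ker\Omega$ by Theorem \ref{2form} and Proposition \ref{kernel}, that the resulting factor $f$ is $\PSL(2)$-invariant and so descends to $\cP$, and that on $\cP$ the projected field $\xi$ is parallel for the $\alpha$-independent affine structure, so that the parallel translations $T_\alpha$ of Main Theorem \ref{thm:mainclosed} fix it — is heavier, since it invokes Theorem \ref{thm:InfMap}(3), Corollary \ref{cor:indepstr} and Main Theorem \ref{thm:mainclosed}; but it is legitimate (those results are established earlier and independently of Lemma \ref{commute}, so there is no circularity), and it replaces the somewhat informal ``limit of commuting correspondences'' reasoning by precise statements about vector fields. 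For the commutation, the paper again passes to the limit $\alpha \to 0$ in the statement that $\T$ commutes with the M\"obius flow generated by $\nu$; your Leibniz-rule computation, resting on $\widetilde\xi(I)=\widetilde\xi(J)=\widetilde\xi(K)=0$ and $[\widetilde\xi,u]=[\widetilde\xi,v]=[\widetilde\xi,w]=0$, is a direct and fully rigorous substitute, and it has the side benefit of actually proving that $\widetilde\xi$ is tangent to $M$ (via $\widetilde\xi(I)=\Omega(u,\widetilde\xi)=0$, etc.), a point the paper leaves implicit when it asserts that $M$ carries the two vector fields. Your treatment of the invariance of $\nu$ (a combination of the M\"obius generators with $\T$-invariant coefficients $I,J,K$, plus M\"obius-equivariance of $\T$) coincides in substance with the paper's.
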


\proof
That $\xi$ is invariant under $\T$ follows from Bianchi permutability and the fact the $\xi$ is a  limit of $\T$ as $\alpha \to 0$. On $M$, the functions $I,J,K$ are constant, so $\nu$ is an infinitesimal M\"obius transformation. Since $\T$ commute with M\"obius transformations, $\nu$ is invariant under $\T$. In the limit $\alpha \to 0$, one obtains $[\xi,\nu]=0$.
\proofend

\subsubsection{Geometric interpretation: axis of an ideal polygon} \label{axis}
Now we present a geometric interpretation of the two additional integrals of the maps $\T$ on $\cP$.

Consider the M\"obius transformation $A_t(\pP)$ for $t$ close to $1$.
We have
\[
A_{1+\epsilon}(u, v) =
\Id + \frac{\epsilon}{u-v}
\begin{pmatrix}
-v & uv\\
-1 & u
\end{pmatrix}
\]
Therefore, ignoring the terms of order two or higher in $\eps$,
\begin{multline*}
A_{1+\epsilon}(\pP) = \Id + \epsilon \sum_{i=1}^n \frac{1}{p_i - p_{i+1}}
\begin{pmatrix}
-p_{i+1} & p_ip_{i+1}\\
-1 & p_i
\end{pmatrix}
\\
= \Id + \epsilon
\begin{pmatrix}
-J + \frac{n}2 & K\\
-I & J + \frac{n}2
\end{pmatrix}.
\end{multline*}
It follows that, as $\epsilon \to 0$, the axis of the loxodromic transformation $A_{1+\epsilon}(\pP)$ converges to the line
through the eigenvectors of the matrix
\begin{equation}
\label{eqn:3Inv}
\begin{pmatrix}
-J + \frac{n}2 & K\\
-I & J + \frac{n}2
\end{pmatrix}
\end{equation}
(viewed as points of the circle or the sphere at infinity). 

Thus this axis is an invariant of $\T$ for all $\alpha$. The space of lines in $\HH^2$ is 2-dimensional, and the space of lines in $\HH^3$ is 4-dimensional. This correspond to two additional real or complex integrals. 

\begin{remark}
{\rm
If $\pP \T \pQ$, then the loxodromic transformations $A_t(\pP)$ and $A_t(\pQ)$ have the same parameters. Since their first-order behavior for $\epsilon \to 0$ is determined by the matrix \eqref{eqn:3Inv}, this matrix is the same for $\pP$ and $\pQ$.
This provides another way to prove Theorem \ref{upint}. Also the ``infinitesimal parameter'' of $A_t(\pP)$ is M\"obius invariant, and this 
corresponds to the M\"obius invariance of $IJ - K^2$.
}
\end{remark}

\begin{remark}
{\rm 
In the real case, we have three real invariants: the ``barycenter'' of an ideal polygon and its ``moment of inertia''.
The barycenter is the fixed point of the composition of infinitesimal translations
along the edges of the polygon (all translations by the same small distance).
Since the norm of the velocity field of a translation along a line is proportional to the hyperbolic cosine of the distance from this line,
the barycenter is the point that minimizes the sum of the hyperbolic sines of distances from the lines
(the gradient of the distance is perpendicular to the velocity field of the translation and has the same norm).

The moment of inertia is equal to the sum of hyperbolic cosines of distances to the sides of the polygon.
This is the angular velocity of the infinitesimal rotation about the barycenter, obtained by composing infinitesimal translations
along the sides.
}
\end{remark}

\section{Small-gons, exceptional polygons, and loxogons} \label{smallsection}

In this section we are interested in closed ideal polygons. First we describe the dynamics of $\T$ on ideal triangles, quadrilaterals, and pentagons, and then discuss polygons that are in the relation $\T$ with themselves.

\subsection{Triangles, quadrilaterals, and pentagons}
\label{345gons}

\subsubsection{Triangles} \label{trisect}
There is not much to say about triangles: all ideal triangles are isometric, so the moduli space consists of one point and there are no dynamics on $\mathcal{P}_3$.

As to $\widetilde {\mathcal{P}_3}$, in the Poincar\'e disk model of $\HH^2$, consider an ideal triangle $\pP$ whose vertices divide the boundary circle into three equal (in the Euclidean sense) arcs. If $\pP \T \pQ$, then $\pQ$ is obtained from $\pP$ by a rotation about the center of the disk, with the angle of rotation depending on $\alpha$. 

\subsubsection{Quadrilaterals} \label{quadsect} 
Space $\mathcal{P}_4$ is 1-dimensional, but there are still no dynamics on it. Let us discuss the situation in $\HH^3$.

An ideal quadrilateral $\pP$ in $\HH^3$ is represented by four cyclically ordered points in $\CP^1$.
The diagonals of $\pP$ have a unique common perpendicular $\ell$.
Let us call it the \emph{axis} of $\pP$.
(If the diagonals intersect, then we define the axis as the line through the intersection point perpendicular to both diagonals).
The rotation by $\pi$ about $\ell$ exchanges the endpoints of diagonals, thus maps $\pP$ to itself.

The common perpendicular of the lines $z_1z_3$ and $z_2z_4$ is the line $uv$ such that
\[
[u,v,z_1,z_3] = -1 = [u,v,z_2,z_4].
\]
This leads to the following system of equations:
$$
uv + \frac{z_1+z_3}2 (u+v) + z_1z_3 = 0,\ 
uv + \frac{z_2+z_4}2 (u+v) + z_2z_4 = 0.
$$

\begin{theorem} \label{thm:quad}
If $\pP \T \pQ$, then the ideal quadrilaterals $\pP$ and $\pQ$ are isometric, and they share the axis.
\end{theorem}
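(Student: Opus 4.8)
The plan is to treat the two assertions—that $\pP$ and $\pQ$ are isometric, and that they share an axis—by separate arguments after fixing a convenient normalization. Using the $\PGL(2,\C)$-action I would place the axis $\ell$ of $\pP$ at $\{0,\infty\}$, so that the half-turn $R$ about $\ell$ becomes the involution $z\mapsto -z$ of $\CP^1$. By the defining property of the axis, $R$ exchanges the endpoints of the diagonals of $\pP$, so that $\pP=(a,b,-a,-b)$ for suitable $a,b$ and $R(p_i)=p_{i+2}$.

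For the isometry claim I would argue entirely through the integrals. For a closed quadrilateral the relations of Lemma \ref{lem:DRelations} read $c_1=c_3$, $c_2=c_4$ and $c_1+c_2=1$, so $\mathcal{P}_4$ is one-dimensional and the isometry type is recorded by the unordered pair $\{c_1,c_2\}=\{c_1,1-c_1\}$. By Corollary \ref{cor:FIntegrals} the polynomial $F_2=c_1c_3+c_2c_4=c_1^2+c_2^2=1-2c_1c_2$ is invariant under $\T$; combined with the closure relation $c_1+c_2=1$, which holds for both polygons, this shows that the symmetric functions $c_1+c_2$ and $c_1c_2$ agree for $\pP$ and for $\pQ$. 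Hence $\{c_1(\pP),c_2(\pP)\}=\{c_1(\pQ),c_2(\pQ)\}$, so $\pP$ and $\pQ$ have the same cross-ratio up to relabeling and are isometric. In passing this explains the absence of genuine dynamics on $\mathcal{P}_4$: the map $\T$ can only fix $c_1$ or exchange it with $1-c_1$.

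For the shared-axis claim I would exploit the symmetry $R$ of $\pP$. Since $R$ is a Möbius map it preserves all cross-ratios, so $\pP\T\pQ$ gives $R(\pP)\T R(\pQ)$; but $R(\pP)=\pP^{+2}$, so shifting indices back by $2$ shows that $\Theta(\pQ):=R(\pQ)^{+2}$ is again $\alpha$-related to $\pP$. Because $R^2=\Id$ and $n=4$, the map $\Theta$ is an involution of the set $\{\pQ':\pP\T\pQ'\}$, which by Corollary \ref{cor:2-2} has one or two elements away from the exceptional locus. A polygon is fixed by $\Theta$ precisely when $q_{i+2}=-q_i$ for all $i$, which by the computation $[0,\infty,q_1,q_3]=q_1/q_3$ is equivalent to $\ell$ being the common perpendicular of the diagonals of $\pQ$, i.e. to $\pQ$ sharing the axis $\ell$. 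Thus it suffices to produce one $\Theta$-fixed point: an involution of a set of at most two elements that fixes one element fixes all.

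To exhibit such a fixed point I would use Lemma \ref{loxodr} and Lemma \ref{lem:FixedPoint}: writing $q_2=L_{\alpha^{-1}}(p_1,p_2)(q_1)$ and $q_3=L_{\alpha^{-1}}(p_2,p_3)(q_2)=:g(q_1)$, the polygon is $R$-symmetric as soon as $g(q_1)=-q_1=R(q_1)$, that is, as soon as $q_1$ is a fixed point of the Möbius transformation $R\circ g$; one then checks, using $R\,L_{\alpha^{-1}}(p_1,p_2)\,R=L_{\alpha^{-1}}(p_3,p_4)$, that $q_3=-q_1$ automatically forces $q_4=-q_2$. Over $\C$ any Möbius transformation has a fixed point, so a symmetric solution always exists, $\Theta$ is the identity, and every $\pQ$ with $\pP\T\pQ$ shares the axis of $\pP$. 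The main obstacle is exactly this last step—ruling out that $R$ interchanges the two $\alpha$-related polygons instead of fixing each; the fixed-point argument for $R\circ g$ closes this gap, and it is also where working over $\C$, with fixed points guaranteed, keeps the reasoning clean (the exceptional polygons with infinitely many relatives being set aside).
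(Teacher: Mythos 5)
Your proof is correct, but it follows a genuinely different route from the paper's on both claims. The paper normalizes $\pP=(z,w,-z,-w)$ exactly as you do, but then handles the axis by direct computation: it substitutes the symmetric ansatz $q_3=-q_1$, $q_4=-q_2$ into the four cross-ratio equations, reduces them to $zq_1+wq_2=0$ and $(1-\alpha)(q_1q_2+zw)=wq_1+zq_2$, obtains a quadratic equation in $q_1$, and concludes from the count in Corollary \ref{cor:2-2} that the two symmetric solutions exhaust all $\alpha$-relatives; isometry is then read off from the identity $[z,w,-z,-w]=4\lambda/(\lambda^2-1)$, $\lambda=z/w$, which is invariant under $\lambda\mapsto-1/\lambda$, since the first equation gives $q_1/q_2=-1/\lambda$. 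Your argument replaces this computation by structure: for the axis, the involution $\Theta(\pQ)=R(\pQ)^{+2}$ on the set of $\alpha$-relatives together with a fixed point of $R\circ g$; for the isometry, conservation of $F_2=c_1c_3+c_2c_4=1-2c_1c_2$ combined with the closure relations $c_1=c_3$, $c_2=c_4$, $c_1+c_2=1$ of Lemma \ref{lem:DRelations}. What your route buys: no equations to solve, a uniform treatment of the parabolic case (a single $\alpha$-relative), where the paper's phrase ``two solutions'' quietly assumes genericity, and a conceptual explanation of the absence of dynamics on $\mathcal{P}_4$. What it costs: it imports the machinery behind Corollary \ref{cor:FIntegrals}, whereas the paper's Section 6 proof is elementary and self-contained.

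Two small points would tighten your write-up. First, the exceptional case you set aside is in fact vacuous here: by Theorem \ref{thm:InfAlpha}, a closed quadrilateral with infinitely many $\alpha$-relatives would require $c_i+c_{i+1}=\alpha$, while closure forces $c_i+c_{i+1}=1$, so this occurs only for the excluded value $\alpha=1$; hence your proof, like the paper's, covers every case of the theorem. Second, your last step is cleanest if run at the level of fixed points rather than polygons: the conjugation identities $L_{\alpha^{-1}}(p_3,p_4)=R\,L_{\alpha^{-1}}(p_1,p_2)\,R$ and $L_{\alpha^{-1}}(p_4,p_1)=R\,L_{\alpha^{-1}}(p_2,p_3)\,R$ give $L_{\alpha^{-1}}(\pP)=(R\circ g)^2$, and any M\"obius transformation $h$ with $h^2\neq\Id$ satisfies $\mathrm{Fix}(h)=\mathrm{Fix}(h^2)$; applied to $h=R\circ g$, this shows directly that the first vertex $q_1$ of the \emph{given} $\pQ$ is fixed by $R\circ g$, i.e. $q_3=-q_1$ and then $q_4=-q_2$. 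This sidesteps the need to verify that the symmetric solution you construct is a non-degenerate polygon before invoking the involution-with-a-fixed-point argument, and it makes the closure $q_5=q_1$ automatic via Lemma \ref{lem:FixedPoint}.
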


\proof
Send $\pP$ to a quadrilateral of the form $(z, w, -z, -w)$ by a M\"obius transformation.
Then the axis of $\pP$ is the line $0\infty$. We want to show that if $\pP \T \pQ$, then $\pQ$ also has axis $0\infty$.

In order to find the vertices of $\pQ$, we have to solve for $x, y, x', y'$ the system of equations
\[
[z,w, x,y] = \alpha, \ [w,-z, y,x'] = \alpha, \ [-z,-w, x',y'] = \alpha, \ [-w,z, y',x] = \alpha.
\]
Let us use the ansatz $x'=-x, y'=-y$ and show that we  have two solutions.
Indeed, the system becomes equivalent to
\[
[z,w, x,y] = \alpha, \quad [-z,w, -x,y] = \alpha,
\]
which, in an explicit form, is
\begin{equation}
\label{eqn:Quad3}
zx + yw = 0, \quad (1-\alpha)(xy + zw) = wx + zy.
\end{equation}
Expressing $y$ through $x$ from the first equation and substituting this in the second one we obtain a quadratic equation for $x$.

Thus both quadrilaterals $\pQ$ with $\pQ \T \pP=(z,w,-z,-w)$ have the form $(x,y,-x,-y)$, that is,  have axis $0\infty$.

Note that the first equation in \eqref{eqn:Quad3} implies $\frac{x}{y} = -\frac{w}{z}$.
We have
\[
[z,w,-z,-w] = \frac{4zw}{(z+w)^2} = \frac{4\lambda}{\lambda^2-1} \ {\rm with}\  \lambda = \frac{z}{w}.
\]
The value of $\frac{4\lambda}{\lambda^2-1}$ is invariant under $\lambda \mapsto -\frac{1}{\lambda}$.
This shows that the quadrilaterals $\pP$ and $\pQ$ have the same cross-ratio and hence are isometric. \proofend

\subsubsection{Pentagons} \label{pentasect}
We think of the moduli space ${\mathcal P}_5$ as the space of frieze patterns of width 2; denote the diagonal coordinates by  $x,y$. One has an area form
$$
\omega=\frac{dx\wedge dy}{xy},
$$
see formula (\ref{sympfr}).

\begin{theorem} \label{areapr}
The correspondences $\T$ are area-preserving.
\end{theorem}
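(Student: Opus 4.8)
The plan is to realize each correspondence $\T$ on the two-dimensional surface $\mathcal{P}_5$ as a translation along the level curves of a single integral, and then to observe that any such translation preserves $\omega$ automatically.

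First I would pin down the common integral. For $n=5$, Theorem \ref{thm:IntegralsClosed} gives $\Tr A_\lambda(\pP) = G_0 + G_1(\lambda-1) + G_2(\lambda-1)^2$ with $G_0=2$ and $G_1 = 5$ constant, so $H := G_2$ is the unique nonconstant integral and is shared by $\T$ for all $\alpha$. The same theorem identifies $H = G_2 = F_2/\sqrt{c_{[5]}} = E_2$. The level curves of $H$ are one-dimensional; since $\lfloor (n-3)/2\rfloor = 1$, these are exactly the leaves of the invariant foliation of Main Theorem \ref{thm:mainclosed}, each preserved by every $\T$.

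Next I would pass to adapted coordinates. By Theorem \ref{Cluster} the form $\omega$ is the symplectic form of the bracket \eqref{eqn:PoissonC2}, so near a regular point of $H$ I can take coordinates $(s,H)$ with $\omega = ds\wedge dH$ and $\partial_s = X_H$, the Hamiltonian vector field of $H$ with respect to $\omega$. The decisive claim is that in these coordinates every $\T$ acts by $(s,H)\mapsto (s+\tau(H),H)$, a shift in $s$ by an amount depending only on the leaf. Granting this, the conclusion is immediate:
\[
\T^{*}\omega = d\bigl(s+\tau(H)\bigr)\wedge dH = \bigl(ds+\tau'(H)\,dH\bigr)\wedge dH = ds\wedge dH = \omega .
\]

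The step I expect to be the main obstacle is justifying that $\T$ is a translation in precisely the Hamiltonian-time coordinate $s$ supplied by $\omega$. By Main Theorem \ref{thm:mainclosed} each $\T$ is a parallel translation for an invariant affine structure on the leaf, and this affine structure is the same for every $\alpha$. I would identify it through the limit $\alpha\to 0$: by Theorem \ref{thm:InfMap} the infinitesimal generator of $\T$ as $\alpha\to 0$ is the field $\xi$, which is the Hamiltonian vector field of $E_2 = F_2/\sqrt{c_{[5]}} = H$ with respect to \eqref{eqn:PoissonC2}, hence $\xi = X_H = \partial_s$. An infinitesimal parallel translation is generated by a parallel vector field, so $\partial_s$ is parallel for the affine structure; therefore the affine coordinate agrees with $s$ up to a leaf-constant rescaling, and a parallel translation is exactly a map $s\mapsto s+\tau(H)$. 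This closes the argument. An alternative, purely computational route is to write $\T$ explicitly in the frieze coordinates $(x,y)$ of Example \ref{ex:fr5} and verify $\frac{dx'\wedge dy'}{x'y'} = \frac{dx\wedge dy}{xy}$ directly, which I would keep in reserve as a check.
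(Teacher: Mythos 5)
Your proof is correct, and it shares the paper's essential inputs --- that $G_2$ is the only nonconstant integral (Theorem \ref{thm:IntegralsClosed}), that $\omega$ is the symplectic form of the bracket \eqref{eqn:PoissonC2} (Theorem \ref{Cluster}), and that the infinitesimal version $\xi$ of $\T$ is the Hamiltonian field $X_H$ of $H=G_2$ (Theorem \ref{thm:InfMap}) --- but the concluding mechanism is genuinely different. The paper never writes $\T$ in adapted coordinates: it uses only that $T_\alpha$ commutes with the flow $\Phi_t$ of $\xi$ (via Bianchi permutability, Theorem \ref{Bianchi}), deduces that $T_\alpha^*\omega$ is $\xi$-invariant and hence equals $J\omega$ with $J$ constant on level curves of $H$, and then forces $J\equiv 1$ by an area argument: the thin annulus between two nearby \emph{closed} level curves is carried to itself by $T_\alpha$, so its area, which gets multiplied by $J$, is unchanged. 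You instead invoke Main Theorem \ref{thm:mainclosed} to realize $T_\alpha$ as a shift $s\mapsto s+\tau(H)$ in the Hamiltonian time coordinate of $X_H$, in which $\omega = ds\wedge dH$, making invariance a one-line computation. (The two key claims are in fact equivalent: on a one-dimensional leaf, preserving $H$ and commuting with the transitive flow of $X_H$ is the same as being a translation in $s$.) Your route buys locality --- it works near any regular point with $dH\neq 0$ and needs no compactness of level curves, which the paper's annulus argument does require --- at the price of leaning on the heavier machinery of Main Theorem \ref{thm:mainclosed}, whereas the paper's proof is self-contained within Section \ref{smallsection}. One refinement: rather than identifying the $\alpha$-independent affine structure through the limit $\alpha\to 0$ (which tacitly requires smooth dependence of the translation amount on the parameter, a point the paper also glosses over), you can note that in the proof of Main Theorem \ref{thm:mainclosed} the affine structure is generated by Hamiltonian vector fields of the integrals; for $n=5$ this space is one-dimensional along each leaf and contains $\xi = X_H$ up to leaf-constant factors, since $H$ is a function of the integrals and \eqref{eqn:PoissonC2} lies in the Poisson pencil. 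This closes the identification without any limiting argument.
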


\proof
According to Theorem \ref{thm:IntegralsClosed}, we have only one integral
$$
G_2 = \frac{F_2}{\sqrt{c_{[n]}}} = \sum_{i=1}^5 a_i = x+\frac{1+y}{x}+\frac{1+x}{y}+y+\frac{1+x+y}{xy},
$$
see Example \ref{ex:fr5}.

Making a choice of a direction, consider $\T$ as a map $T_\alpha$, and let $\Phi_t$ be the time-$t$ flow along the vector field $\xi$ corresponding to infinitesimal $\alpha$, see Section~\ref{infinitsection}. The maps $T_\alpha$ and $\Phi_t$ commute and preserve the level curves of the integral $I$. One has 
$$
 \Phi_t^* T_\alpha^* (\omega) = T_\alpha^*  \Phi_t^* (\omega) = T_\alpha^* (\omega),
 $$
 hence the 2-form $T_\alpha^* (\omega)$ is invariant under $\xi$ (see Section \ref{infinitsection}). It follows that $T_\alpha^* (\omega) = J \omega$, where $J$ is an integral (of the family of maps $T_\alpha$ and the field $\xi$).
We need to show that $J \equiv 1$. 

\begin{figure}[ht]
\centering
\includegraphics[height=2.3in]{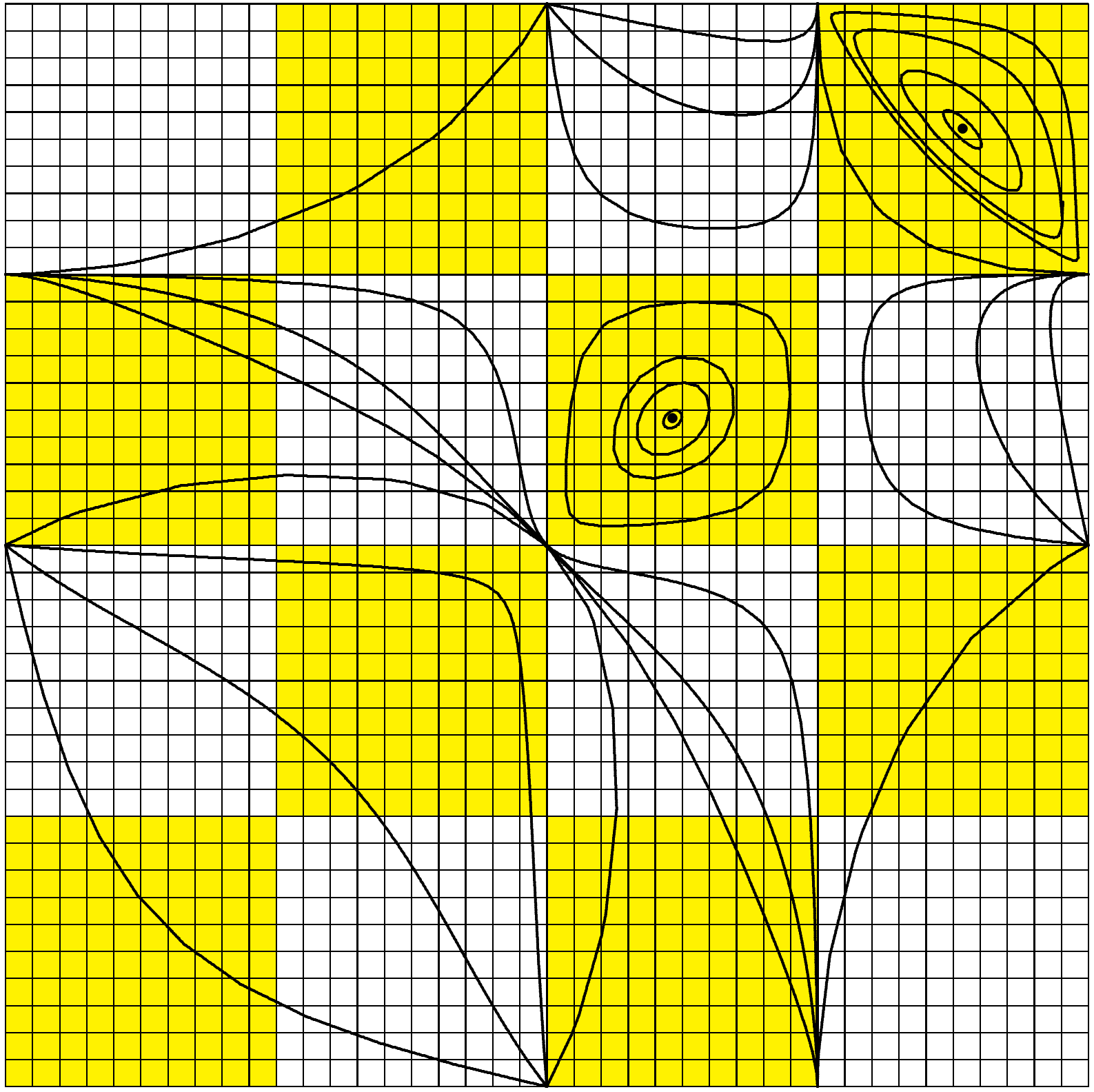}
\caption{Level curves of the integral $I$. The coordinates are $\arctan [p_1,p_2,p_3,p_4]$ and $\arctan [p_2,p_3,p_4,p_5]$.
}
\label{chessboard}
\end{figure}

Consider a closed level curve  $I=c$, see Figure \ref{chessboard}, and an infinitesimally close curve $I = c + \eps$. On the one hand, both curves are preserved by $T_\alpha$, so the area of the annulus bounded by them remains the same. On the other hand, this area is multiplied by the value of the integral $J$ on the curve $I=c$. Hence this value equals 1, as needed. 
\proofend

As a consequence of Theorem \ref{areapr}, we have

\begin{corollary}
Each map $T_\alpha$ equals the map $\Phi_t$ for some $t$.
\end{corollary}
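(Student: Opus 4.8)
The plan is to prove the corollary exactly as stated: to produce a single real number $t$ with $T_\alpha = \Phi_t$ as maps of the surface $\mathcal{P}_5$. I will use that $\mathcal{P}_5$ is two–dimensional, that $\omega$ is an area (hence symplectic) form, that $G_2$ is the unique integral of all the relations $\T$ (Theorem \ref{thm:IntegralsClosed}), that $\xi$ is the Hamiltonian vector field $X_{G_2}$ with flow $\Phi_t$ (Theorem \ref{thm:InfMap}, using $G_2 = F_2/\sqrt{c_{[5]}} = E_2$), and that $T_\alpha$ preserves $\omega$ (Theorem \ref{areapr}) and $G_2$ and commutes with $\Phi_t$ (Bianchi permutability, Theorem \ref{Bianchi}, cf. the proof of Theorem \ref{areapr}). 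The proof splits into two steps: first, that $T_\alpha$ is a time shift of the flow along each level curve of $G_2$; second, the essential point, that the shift is the \emph{same} time on every level curve.

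For the first step I would pass to the infinitesimal generator $\eta := \frac{d}{d\alpha}T_\alpha\circ T_\alpha^{-1}$ of the family. Since $T_\alpha$ preserves $G_2$, the field $\eta$ is tangent to the level curves, so $\eta(G_2)=0$. Differentiating the commutation relation $T_\alpha\,\Phi_t\,T_\alpha^{-1}=\Phi_t$ in $\alpha$ gives $[\eta,\xi]=0$. On the open dense set where $\xi$ is nonvanishing each level curve $\{G_2 = c\}$ is a single $\xi$–orbit, and a field tangent to this $1$–dimensional orbit and commuting with $\xi$ must be a scalar multiple of $\xi$ whose coefficient is constant along the flow; thus $\eta = g_\alpha(G_2)\,\xi$. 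Integrating in $\alpha$ yields $T_\alpha|_{\{G_2=c\}} = \Phi_{\tau(c)}|_{\{G_2=c\}}$ with $\tau(c)=\int_0^\alpha g_{\alpha'}(c)\,d\alpha'$ constant along each curve; equivalently, by area–preservation $T_\alpha$ is a rigid translation in the flow–time parameter on each leaf, and commutativity with the transitive flow $\Phi_t$ forces it to be a time shift.

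The hard part will be the second step: showing that $\tau(c)$, i.e. the scalar $g_\alpha(G_2)$, does not depend on the leaf, so that $t=\int_0^\alpha g_{\alpha'}\,d\alpha'$ is a genuine constant. This is the real content, since area–preservation and commutativity by themselves constrain $\eta$ only to be proportional to $\xi$ leaf by leaf. The route I would take is to identify the flow directly with a cross–ratio map, proving that $\Phi_t(\pP)$ is $\alpha(t)$–related to $\pP$ with a value of the cross–ratio $[p_i,p_{i+1},\Phi_t(p)_i,\Phi_t(p)_{i+1}]$ that is independent of $i$, so that $\Phi_t = T_{\alpha(t)}$ for a reparametrisation $\alpha(t)$; Theorem \ref{thm:InfMap}(1) already furnishes this to second order, with the common value $-t^2+o(t^2)$ matching the defining property of $T_\alpha$, and the task is to upgrade it to an exact identity. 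To close this I would exploit the Lax–matrix conjugation of Theorem \ref{thm:LaxMatrix}: both $T_\alpha$ and $\Phi_t$ act isospectrally on $A_\lambda(\pP)$, so on the one–dimensional leaves of $\mathcal{P}_5$ the two motions are translations of the same spectral data, with the $T_\alpha$–translation pinned to the distinguished spectral value $\lambda=\alpha^{-1}$; comparing these translations determines the matching time uniformly. Should the spectral argument prove delicate, the fallback is a direct verification in the frieze coordinates $(x,y)$ of Section \ref{pentasect}: with $\xi$ given explicitly by Theorem \ref{thm:InfMap}(2) and $\eta$ obtained by differentiating the (algebraic, quadratic) map $T_\alpha$ in $\alpha$, one checks that $\eta\wedge\xi\equiv 0$ and that $\eta/\xi$ is a function of $\alpha$ alone, which is a finite computation. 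Either verification exhibits a single $t$ with $T_\alpha=\Phi_t$, proving the corollary.
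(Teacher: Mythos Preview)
You have over-read the corollary. The paper's statement and proof are meant leaf-by-leaf: on each level curve $\{G_2=c\}$ of the integral, Arnold--Liouville produces an affine coordinate $x$ with $\xi=\partial/\partial x$, and since $T_\alpha$ preserves both $\omega$ and $G_2$ it acts on that curve as a translation $x\mapsto x+\mathrm{const}$, i.e.\ as $\Phi_t$ for some $t$ depending on the curve. That is the entire content of the paper's proof, obtained in one line. Your ``first step'' arrives at the same conclusion $T_\alpha|_{\{G_2=c\}}=\Phi_{\tau(c)}$ by a longer route; the differentiation $\eta=\tfrac{d}{d\alpha}T_\alpha\circ T_\alpha^{-1}$ and the deduction $[\eta,\xi]=0$ are correct, but unnecessary once you invoke Arnold--Liouville directly.

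Your ``second step'' --- proving $\tau(c)$ is independent of $c$ so that a single global $t$ works --- is not what the paper claims, and you correctly observe that area-preservation plus commutation with $\Phi_t$ do not force it: any map $(G_2,\theta)\mapsto(G_2,\theta+g(G_2))$ satisfies both. Your proposed closures of this gap are not convincing as written: the Lax/isospectrality argument only tells you both $T_\alpha$ and $\Phi_t$ preserve the spectrum, which is again a leaf-wise statement and says nothing about matching translation lengths across leaves; the appeal to Theorem~\ref{thm:InfMap}(1) gives agreement only to second order in $t$; and the ``direct computation'' is asserted, not done. If you want the global statement, you would need an argument genuinely specific to $n=5$ (or a counterexample), but for the paper's corollary none of this is needed.
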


\proof
By the  Arnold-Liouville theorem, the (Lagrangian) foliation on the level curves of the integral $I$  defines  a coordinate $x$ on each invariant curve $I=c$ such that the vector field is constant: $\xi = d/dx$, and the maps $T_\alpha$ are parallel translations $x \mapsto x + c$. This implies the result. 
\proofend

Next, consider the dynamics on $\widetilde {{\mathcal P}_5}$. We again have Liouville integrability.

\begin{theorem} \label{transl}
An open dense subspace of the space $\widetilde {{\mathcal P}_5}$ is foliated by surfaces that are invariant under the maps $\T$. These surfaces carry  flat structures in which the maps $\T$ are parallel translations.
\end{theorem}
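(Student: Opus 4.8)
The plan is to realize the invariant surfaces as the common level sets of the three $\T$-invariant functions $I,J,K$ of Theorem \ref{upint}, and to produce the flat structure from the two commuting, $\T$-invariant vector fields already available on them. Concretely, I would first consider the map $\mu=(I,J,K)\colon \widetilde{\mathcal P}_5\to\R^3$, the moment map of the M\"obius action (Theorem \ref{upint}). I claim its fibers are, on an open dense set, $2$-dimensional surfaces $M$, and that each such $M$ is invariant under $\T$; invariance is immediate since $I,J,K$ are integrals of $\T$. For the dimension count I would show $dI,dJ,dK$ are generically independent: if $a\,dI+b\,dJ+c\,dK=0$ with constants $a,b,c$, then $i_{au+bv+cw}\Omega=0$, so the M\"obius generator $au+bv+cw$ lies in $\ker\Omega=\langle\widetilde\xi\rangle$ (Proposition \ref{kernel}). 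Since $au+bv+cw$ is vertical while $\widetilde\xi$ is generically non-vertical (its projection to $\mathcal P_5$ is the nonzero Hamiltonian field of $G_2$ by Theorem \ref{thm:InfMap}), this forces $au+bv+cw=0$, hence $a=b=c=0$ wherever the action is free.

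Next I would place the two vector fields $\widetilde\xi$ and $\nu=Ku-2Jv+Iw$ on $M$. Both are tangent to $M$: $\widetilde\xi$ annihilates $I,J,K$ because these are invariant under every $T_\alpha$ and $\widetilde\xi$ is the limit of $\T$ as $\alpha\to0$, while $\nu(I)=\nu(J)=\nu(K)=0$ by Lemma \ref{projfield}. By Lemma \ref{commute} they are $\T$-invariant and commute, $[\widetilde\xi,\nu]=0$. They are moreover linearly independent on an open dense subset of $M$: $\nu$ is vertical (Lemma \ref{projfield}) whereas $d\pi(\widetilde\xi)\neq0$ generically (Theorem \ref{thm:InfMap}), so neither is a multiple of the other. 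Thus on an open dense set $\{\widetilde\xi,\nu\}$ is a frame of $TM$.

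Finally, two commuting, pointwise independent vector fields on a surface integrate to a locally free $\R^2$-action $(s,t)\mapsto \Phi^{\widetilde\xi}_s\circ\Phi^{\nu}_t$, endowing $M$ with a flat (translation) affine structure in which $\widetilde\xi=\partial_s$ and $\nu=\partial_t$ and the transition maps are translations. Since a chosen branch $T_\alpha$ maps $M$ to itself and commutes with both flows (Lemma \ref{commute}), in these coordinates it acts as $(s,t)\mapsto(s+a,t+b)$, i.e. as a parallel translation. The main obstacle is the genericity input: establishing that $dI,dJ,dK$, and then $\widetilde\xi$ and $\nu$, are independent on a dense open set, which is precisely what legitimizes the phrases ``open dense subspace'' and ``surfaces''. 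The vertical/horizontal splitting relative to $\pi\colon\widetilde{\mathcal P}_5\to\mathcal P_5$, together with the description of $\ker\Omega$, reduces this obstacle to the facts recorded above.
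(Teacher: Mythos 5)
Your proposal is correct and takes essentially the same route as the paper: the invariant surfaces are the common level sets of the integrals $I,J,K$, and the flat structure comes from the commuting, $\T$-invariant vector fields $\widetilde\xi$ and $\nu$ (Lemmas \ref{projfield} and \ref{commute}), with respect to which any $T_\alpha$ is a translation. The paper's proof is a three-line version of exactly this; your genericity arguments (independence of $dI,dJ,dK$ via $\ker\Omega=\langle\widetilde\xi\rangle$ and the vertical/non-vertical splitting, then pointwise independence of $\widetilde\xi$ and $\nu$) just make explicit what the paper leaves implicit.
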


\proof 
We have three integrals, $I,J,K$ of the maps $\T$, so $\widetilde {{\mathcal P}_5}$ is foliated by their level surfaces. These surfaces carry a flat structure given by the commuting vector fields $\xi$ and $\nu$, see Lemma \ref{commute}. The correspondences $\T$ preserve this flat structure, hence they are parallel translations.
\proofend

If the level surface is compact, it must be a torus.  
Theorem  \ref{transl} is illustrated in Figure \ref{penta}. 
 
\begin{figure}[ht]
\centering
\includegraphics[height=2.1in]{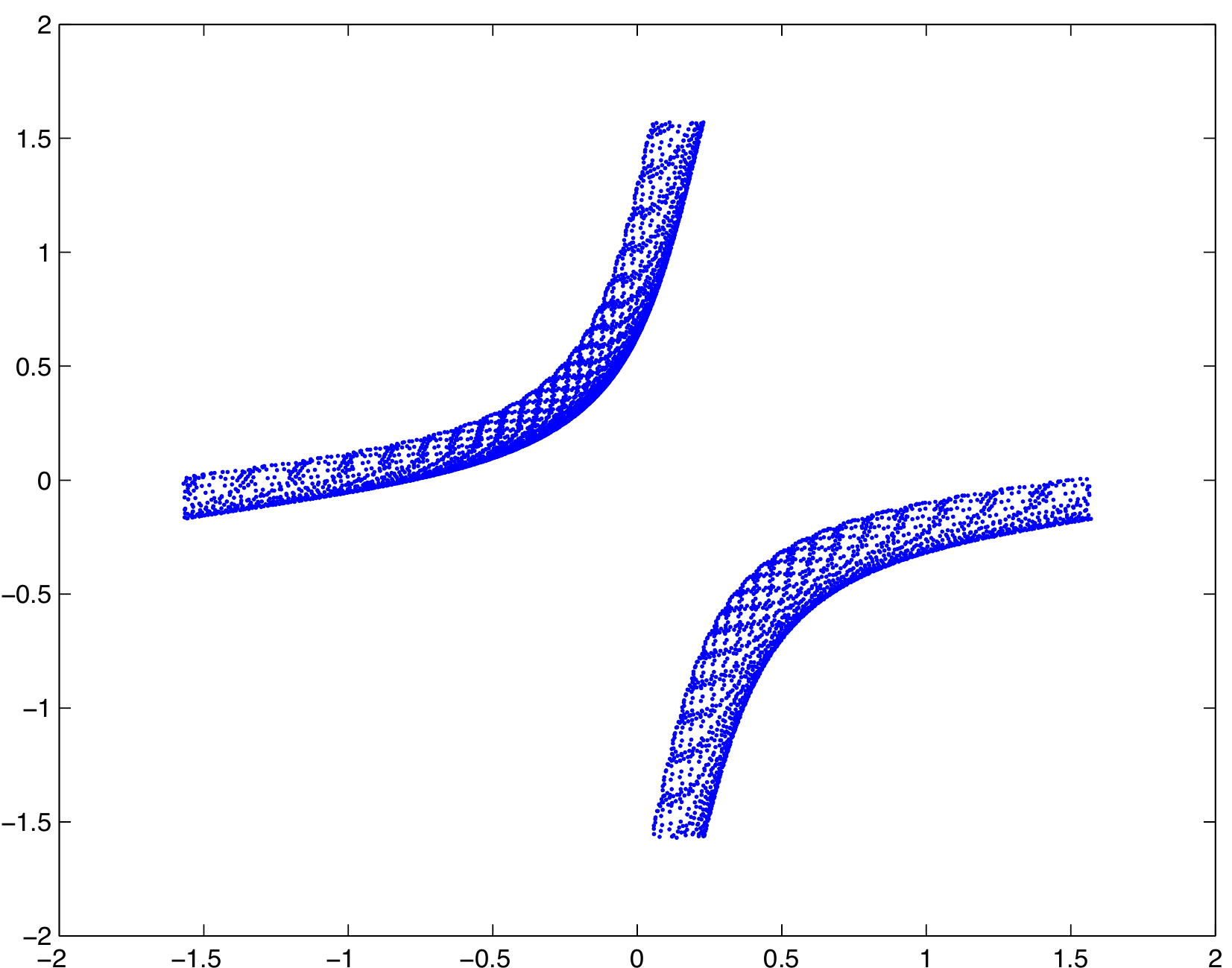}\ 
\includegraphics[height=2.1in]{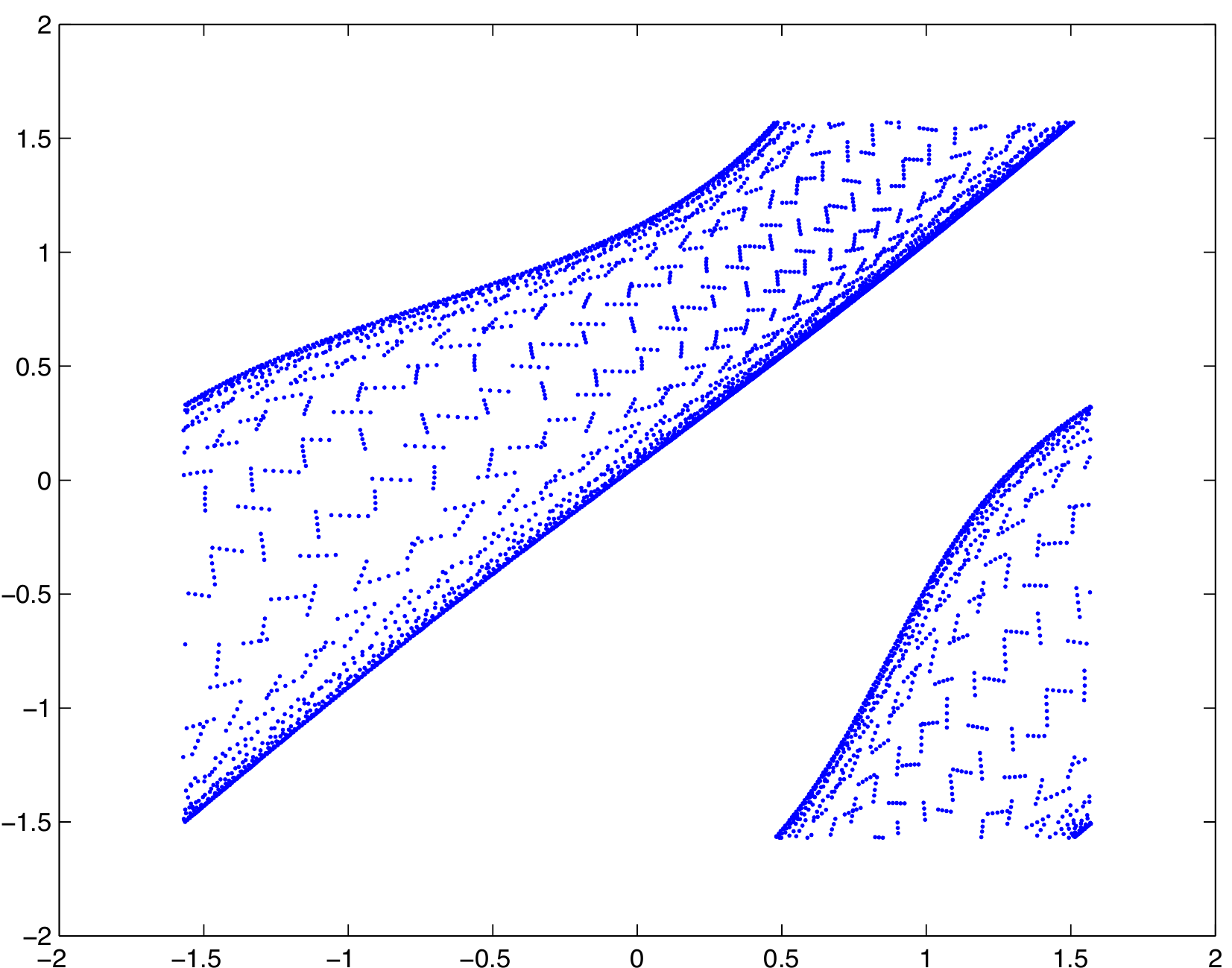}
\caption{An orbit of $\T$ in $\widetilde {{\mathcal P}_5}$ with $\alpha = -0.2$ and $\alpha = -1$.}
\label{penta}
\end{figure}

\subsection{Exceptional polygons} \label{subsect:except}
A closed ideal polygon is called \emph{$\alpha$-exceptional}, if there are infinitely many closed ideal polygons $\alpha$-related to it.

\begin{theorem}
\label{thm:PentaInf}
The only closed ideal $\alpha$-exceptional pentagons are
the regular ideal pentagon for $\alpha = \frac{3-\sqrt{5}}{3+\sqrt{5}}$ and the regular ideal pentagram for $\alpha = \frac{3+\sqrt{5}}{3-\sqrt{5}}$, see Figure \ref{excpenta}.
\end{theorem}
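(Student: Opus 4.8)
The plan is to translate $\alpha$-exceptionality into a system of polynomial equations in the cross-ratio coordinates $c = (c_1,\dots,c_5)$ and then solve it explicitly. Since $\pP$ is a closed pentagon, its monodromy is trivial, so $L_{\alpha^{-1}}(\pP)$ is just the composition of the loxodromic transformations around the five edges. By Lemma \ref{lem:FixedPoint} and Corollary \ref{cor:2-2} there are infinitely many $\pQ$ with $\pP \T \pQ$ precisely when $L_{\alpha^{-1}}(\pP) = \Id$; and when this happens every choice of $q_1$ produces a $\pQ$ that closes up (because $\Phi = \Id$ makes the edge-composition the identity), so these $\pQ$ are again closed pentagons. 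Thus $\pP$ is $\alpha$-exceptional iff $L_{\alpha^{-1}}(\pP) = \Id$, which by Theorem \ref{thm:InfAlpha} is equivalent to $\alpha^{-1}c$ satisfying the closure relations $D_{i,i+2}(\alpha^{-1}c) = 0$ for all $i$.

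First I would write out the two closure systems. For $n = 5$, Lemma \ref{lem:CoCont} gives $D_{i,i+2}(c) = 1 - c_i - c_{i+1} - c_{i+2} + c_i c_{i+2}$. Closure of $\pP$ itself (Lemma \ref{lem:DRelations}) reads
\begin{equation*}
c_i + c_{i+1} + c_{i+2} = 1 + c_i c_{i+2}, \qquad i \in \Z/5,
\end{equation*}
while closure of $\alpha^{-1}c$, after multiplying through by $\alpha$, reads
\begin{equation*}
c_i + c_{i+1} + c_{i+2} = \alpha + \alpha^{-1} c_i c_{i+2}, \qquad i \in \Z/5.
\end{equation*}
Subtracting the first system from the second gives $0 = (\alpha - 1) + (\alpha^{-1} - 1) c_i c_{i+2}$; since $\alpha \neq 1$ and the $c_i$ are nonzero, this simplifies to $c_i c_{i+2} = \alpha$ for every $i$.

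The key simplification is then purely combinatorial: from $c_{i+2} = \alpha / c_i$ we get $c_{i+4} = \alpha / c_{i+2} = c_i$, and because $\gcd(2,5) = 1$ we have $i + 4 \equiv i - 1 \pmod 5$, so $c_i = c_{i-1}$ for all $i$. Hence all five coordinates coincide, $c_1 = \cdots = c_5 =: c$ with $c^2 = \alpha$, which means $\pP$ is projectively regular. Substituting back into the closure relation yields $3c = 1 + c^2$, i.e. $c^2 - 3c + 1 = 0$, with roots $c = \tfrac{3\pm\sqrt5}{2}$ and correspondingly $\alpha = c^2 = \tfrac{7\pm3\sqrt5}{2} = \tfrac{3\pm\sqrt5}{3\mp\sqrt5}$.

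It remains to match the two roots with the regular pentagon and the regular pentagram. Taking five equally spaced ideal points $\theta_k = 2\pi k/5$ in the chart $p = \tan(\theta/2)$, the cosine factors cancel in the cross-ratio and one computes $c = [p_1,p_2,p_0,p_3] = \sin^2(\pi/5)/\sin^2(2\pi/5) = 1/\varphi^2 = \tfrac{3-\sqrt5}{2}$, where $\varphi = \tfrac{1+\sqrt5}{2}$; this is the convex regular pentagon, giving $\alpha = \tfrac{3-\sqrt5}{3+\sqrt5}$. The pentagram is the same five points traversed with step $2$, which replaces $c$ by the reciprocal $\varphi^2 = \tfrac{3+\sqrt5}{2}$ and hence $\alpha = \tfrac{3+\sqrt5}{3-\sqrt5}$. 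The only genuinely delicate points are the reduction to Theorem \ref{thm:InfAlpha} (checking that the infinitely many related polygons actually close up rather than merely being twisted) and this final trigonometric identification of which root is which; the intervening algebra is routine.
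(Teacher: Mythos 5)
Your proof is correct and follows essentially the same route as the paper's: both reduce $\alpha$-exceptionality, via Lemma \ref{lem:DRelations} and Theorem \ref{thm:InfAlpha}, to the simultaneous closure conditions for $c$ and $\alpha^{-1}c$, subtract the two systems to force all $c_i$ to be equal, and then solve $c^2-3c+1=0$. The only differences are cosmetic: your normalization of the second system cancels the linear rather than the quadratic terms (yielding $c_ic_{i+2}=\alpha$ instead of constant consecutive sums, though both give the same conclusion), and you justify the assignment of the two roots to the pentagon and the pentagram by an explicit trigonometric computation where the paper simply asserts it.
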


\begin{proof}
By Lemma \ref{lem:DRelations} and Theorem \ref{thm:InfAlpha}, a pentagon is closed and is $\alpha$-related to infinitely many pentagons if and only if
\begin{align*}
1 - (c_{i-1} + c_i + c_{i+1}) + c_{i-1} c_{i+1} &= 0,\\
\alpha^{-2} - \alpha^{-1}(c_{i-1} + c_i + c_{i+1}) + c_{i-1} c_{i+1} &= 0,
\end{align*}
for $i = 1, \ldots, 5$, where the indices are taken modulo $5$.
Subtracting the equations and dividing by $\alpha - 1$ one obtains
\[
c_{i-1} + c_i + c_{i+1} = \alpha + 1,
\]
which implies that all $c_i$ are equal.
It follows that there is a M\"obius transformation that cyclically permutes the vertices of the pentagon,
that is the pentagon is either regular or star-regular.

If $c_i = c$ for all $i$, then $c$ is found by solving the quadratic equation
\[
c^2 - 3c + 1 = 0.
\]
The solution $c = \frac{3-\sqrt{5}}2$ corresponds to the regular pentagon, the solution $c = \frac{3+\sqrt{5}}2$ corresponds to the regular pentagram.
\end{proof}

\begin{figure}[ht]
\centering
\includegraphics[height=2.5in]{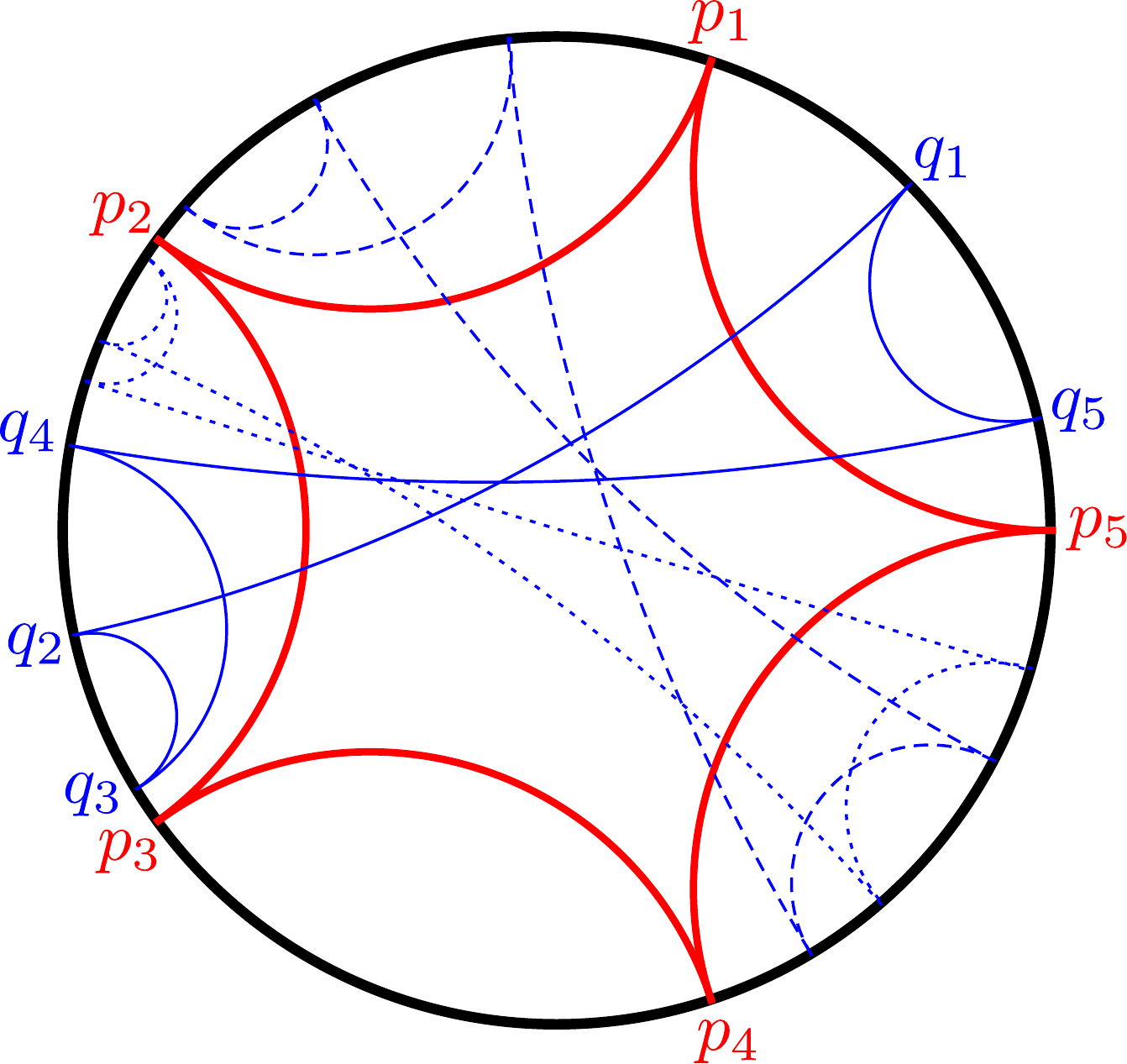}\quad
\includegraphics[height=2.5in]{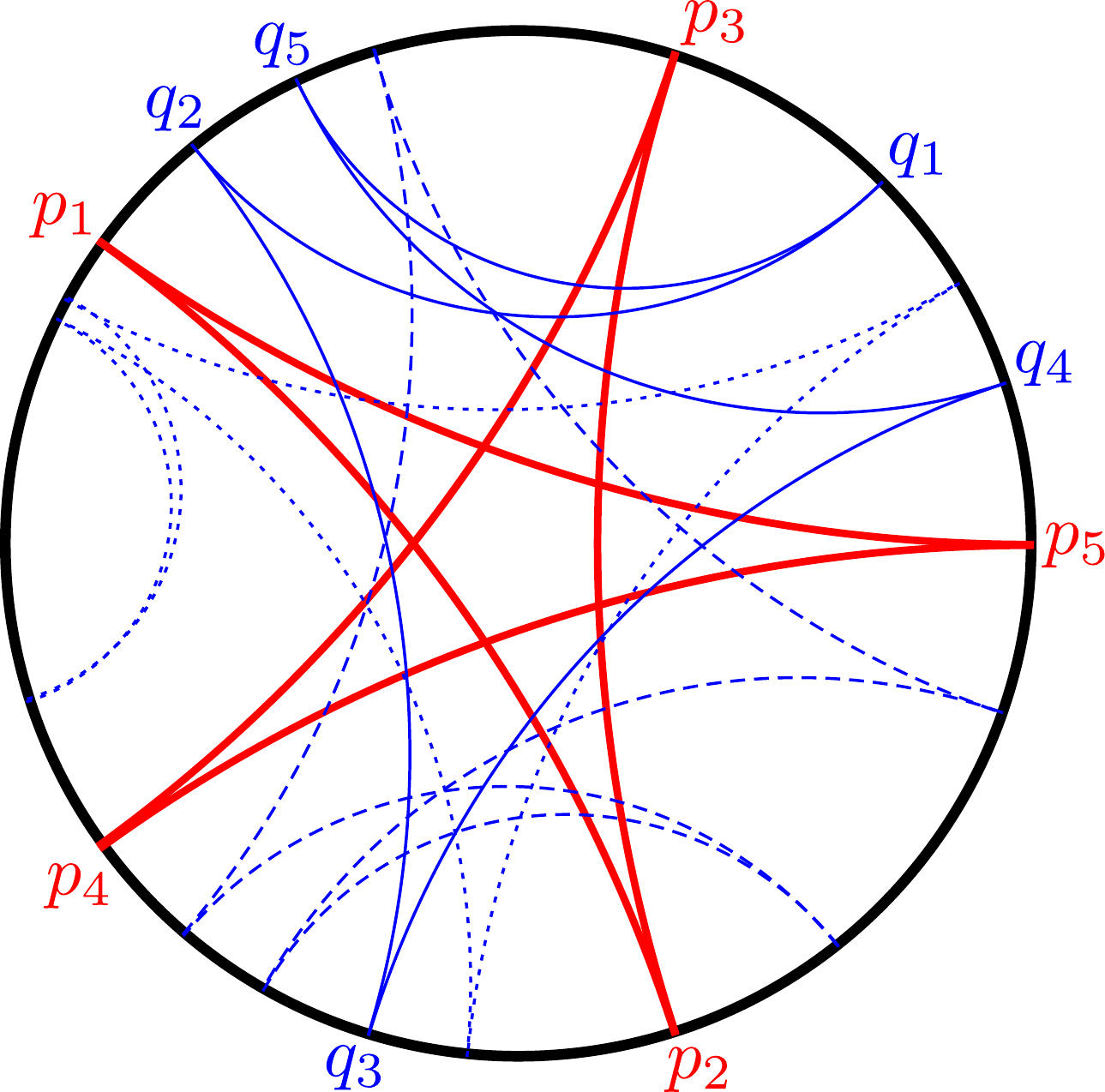}
\caption{A regular ideal pentagon and a regular ideal pentagram, along with
three $\alpha$-related ideal pentagons with $\alpha = \frac{3-\sqrt{5}}{3+\sqrt{5}}$ on the left, and 
$\alpha = \frac{3+\sqrt{5}}{3-\sqrt{5}}$ on the right.}
\label{excpenta}
\end{figure}

\begin{corollary}
\label{cor:PentaEquidist}
Let $p_1p_2p_3p_4p_5$ be a regular ideal pentagon in the hyperbolic plane.
Take any ideal point $q_1$.
Of the two lines through $q_1$ at distance $\frac12 \log 5$ from the line $p_1p_2$
choose the one that meets the absolute inside the arc $q_1p_2$ and denote by $q_2$
the other intersection point of this line with the absolute.
Repeat the construction until you get the point $q_6$: for all $i$ the line $q_iq_{i+1}$ is at the distance $\frac12 \log 5$ from the line $p_ip_{i+1}$
and the pairs $\{p_i, q_{i+1}\}$, $\{p_{i+1}, q_i\}$ are linked.
Then $q_6 = q_1$, independently of the choice of the point $q_1$.
\end{corollary}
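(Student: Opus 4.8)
The plan is to recognize the iterative construction as the $2$--$2$ relation $\T$ with $\alpha=\frac{3-\sqrt5}{3+\sqrt5}$ applied to the regular ideal pentagon, and then to invoke Theorem \ref{thm:PentaInf} together with Corollary \ref{cor:2-2} to conclude that the associated M\"obius transformation is the identity, whence the orbit closes up after five steps.

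First I would translate the geometric recipe into cross-ratio language. Since $\alpha=\frac{3-\sqrt5}{3+\sqrt5}$ lies in $(0,1)$, formula \eqref{eqn:DistCR} forces the lines $p_ip_{i+1}$ and $q_iq_{i+1}$ to be ultraparallel, so their complex distance $\chi$ is the real number $d=\frac12\log 5$. A direct computation gives $\tanh(d/2)=\tanh(\frac{1}{4}\log 5)=\frac{\sqrt5-1}{\sqrt5+1}$, and hence $\tanh^2(d/2)=\frac{(\sqrt5-1)^2}{(\sqrt5+1)^2}=\frac{3-\sqrt5}{3+\sqrt5}=\alpha$. Thus the distance condition amounts to $[p_i,p_{i+1},q_i,q_{i+1}]=\alpha$ up to the ambiguity inherent in squaring: through $q_i$ there are two lines at distance $d$ from $p_ip_{i+1}$, and the requirement that the pairs $\{p_i,q_{i+1}\}$ and $\{p_{i+1},q_i\}$ be linked (equivalently, that $q_{i+1}$ meet the absolute inside the prescribed arc) is precisely what selects the one with cross-ratio equal to $\alpha$ rather than the conjugate configuration. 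By Lemma \ref{loxodr} this is equivalent to $q_{i+1}=L_{\alpha^{-1}}(p_i,p_{i+1})(q_i)$.

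Then I would chain these relations. Iterating for $i=1,\ldots,5$ and using $p_6=p_1$, one gets $q_6=L_{\alpha^{-1}}(p_5,p_6)\cdots L_{\alpha^{-1}}(p_1,p_2)(q_1)$; since the pentagon is closed, its monodromy is the identity, so this composition is exactly the Lax transformation $L_{\alpha^{-1}}(\pP)$. By Theorem \ref{thm:PentaInf} the regular ideal pentagon is $\alpha$-exceptional for this value of $\alpha$, and by Corollary \ref{cor:2-2} together with Lemma \ref{lem:FixedPoint} an ideal polygon admits infinitely many $\alpha$-related polygons exactly when $L_{\alpha^{-1}}(\pP)$ has infinitely many fixed points, i.e. is the identity M\"obius transformation. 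Hence $q_6=L_{\alpha^{-1}}(\pP)(q_1)=q_1$ for every $q_1$, as claimed.

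The main obstacle is the careful bookkeeping in the second step: the passage from the metric datum (a distance together with the linking/arc convention) to a single, correctly-signed cross-ratio value. One must check that the arc convention ``inside the arc $q_1p_2$'' consistently picks out the solution $[p_i,p_{i+1},q_i,q_{i+1}]=\alpha$ at every index, so that the \emph{same} loxodromic transformation $L_{\alpha^{-1}}(p_i,p_{i+1})$ is used throughout the construction; once this is settled, the closing-up of the orbit is an immediate consequence of the already-established exceptionality of the regular pentagon.
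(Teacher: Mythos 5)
Your proposal is correct and follows essentially the same route as the paper: reduce the metric construction to the cross-ratio relation $[p_i,p_{i+1},q_i,q_{i+1}]=\alpha$ via equation \eqref{eqn:DistCR} (with the computation $\tanh^2\bigl(\tfrac14\log 5\bigr)=\tfrac{3-\sqrt5}{3+\sqrt5}$), and then invoke the $\alpha$-exceptionality of the regular pentagon from Theorem \ref{thm:PentaInf} to close up the chain for every $q_1$. The only difference is that you make explicit the step the paper leaves implicit — that exceptionality forces $L_{\alpha^{-1}}(\pP)$ to be the identity via Lemma \ref{lem:FixedPoint} and Corollary \ref{cor:2-2} — which is a faithful unpacking, not a different argument.
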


\begin{proof}
Theorem \ref{thm:PentaInf} implies that $q_6 = q_1$ whenever we define $q_i$ recursively by
\[
[p_i, p_{i+1}, q_i, q_{i+1}] = \alpha, \text{ where } \alpha = \frac{3-\sqrt{5}}{3+\sqrt{5}}.
\]
According to equation \eqref{eqn:DistCR}, this means
\[
\tanh^2 \frac{\dist(p_ip_{i+1}, q_iq_{i+1})}2 = \alpha,
\]
and that the lines $p_ip_{i+1}$, $q_iq_{i+1}$ have the same direction.
With the help of the formula for $\artanh$, we obtain
\[
\dist(p_ip_{i+1}, q_iq_{i+1}) = \log \frac{1+\sqrt{\alpha}}{1-\sqrt{\alpha}}.
\]
By substituting the value of $\alpha$ we obtain $\frac12 \log 5$ on the right hand side.
\end{proof}

There is a similar corollary for pentagons equidistant from the regular ideal pentagram.
In this case, the distance $\dist(p_ip_{i+1}, q_iq_{i+1})$ is the same, but the lines $p_ip_{i+1}$ and $q_iq_{i+1}$ are oppositely directed.

Lines equidistant from a given line are tangent to a hypercycle about this line.
In the projective model of the hyperbolic plane, hypercycles are conics inscribed into the absolute
(the two points of tangency are the endpoints of the central line).
This leads to the following reformulation of Corollary \ref{cor:PentaEquidist}.

Inscribe into a circle five ellipses (of a specific size that can be computed from the formulas above)
tangent at the consecutive vertices of a regular pentagon.
From any point on the circle draw a tangent to one of the ellipses,
from the intersection point of this tangent with the circle draw a tangent to the next ellipse, and so on.
The choice between two possible tangents is explained above.
Then, independently of the choice of the initial point, one obtains a closed pentagon.


\begin{theorem} \label{Thm:exchexa}
The only $(-1)$-exceptional hexagon is formed by the vertices of a regular ideal octahedron
and a cycle of six edges remaining after removal of the boundaries of two opposite faces.
\end{theorem}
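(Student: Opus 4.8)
The plan is to reduce the problem to a polynomial system in the cross-ratio coordinates by means of Theorem~\ref{thm:InfAlpha}, solve that system, and then recognize the resulting configuration as the regular ideal octahedron. Set $n=6$ and $\alpha=-1$. A closed hexagon $\pP$ with coordinates $c=(c_1,\dots,c_6)$ is $(-1)$-exceptional iff it is closed and admits infinitely many closed $(-1)$-related hexagons. Since a closed $\pP$ has trivial monodromy, the polygons produced in Lemma~\ref{lem:FixedPoint} from the fixed points of $L_{-1}(\pP)$ again have trivial monodromy, hence are closed; so the $(-1)$-related polygons are automatically closed. By Theorem~\ref{thm:InfAlpha} (with $\alpha^{-1}=-1$) together with Lemma~\ref{lem:DRelations}, the condition is therefore that \emph{both} $c$ and $-c$ satisfy the closure relations $D_{i,i+3}=0$.

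First I would write out the relevant continuant. By Lemma~\ref{lem:CoCont}, for $n=6$,
\[
D_{i,i+3}=1-(c_i+c_{i+1}+c_{i+2}+c_{i+3})+c_ic_{i+2}+c_ic_{i+3}+c_{i+1}c_{i+3}.
\]
The pair of systems $D_{i,i+3}(c)=0$ and $D_{i,i+3}(-c)=0$ is then equivalent, via half-sum and half-difference, to
\[
c_i+c_{i+1}+c_{i+2}+c_{i+3}=0,\qquad 1+c_ic_{i+2}+c_ic_{i+3}+c_{i+1}c_{i+3}=0
\]
for all $i$ (indices mod $6$). Subtracting consecutive copies of the first relation gives $c_{i+4}=c_i$, forcing the alternating pattern $c_1=c_3=c_5=a$, $c_2=c_4=c_6=b$. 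The first relation now yields $b=-a$, and the second yields $a^2=-1$, so $a=\pm i$. Thus there is at most one exceptional projective class (the two signs being complex conjugate, equivalently related by an index shift), since the coordinates $c$ determine the projective class of the hexagon uniquely.

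Finally I would identify this class geometrically. Taking the regular ideal octahedron with vertices $\{0,\infty,\pm1,\pm i\}$ and the hexagon $\pP=(0,-1,i,\infty,1,-i)$, a direct cross-ratio computation gives $c=(-i,i,-i,i,-i,i)$, matching the solution with $a=-i$; hence this hexagon realizes the unique exceptional class. It then remains to verify the combinatorial assertion. Labelling the apices $N=0$, $S=\infty$ and the equatorial $4$-cycle $E_1=1,E_2=i,E_3=-1,E_4=-i$, the hexagon reads $(N,E_3,E_2,S,E_1,E_4)$, and one checks that its six sides are genuine octahedron edges while the six deleted edges are exactly the boundaries $\{NE_1,NE_2,E_1E_2\}$ and $\{SE_3,SE_4,E_3E_4\}$ of the two opposite faces $NE_1E_2$ and $SE_3E_4$ (opposite because $N\!\leftrightarrow\!S$, $E_1\!\leftrightarrow\!E_3$, $E_2\!\leftrightarrow\!E_4$ are the antipodal pairs).

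The algebraic part is routine once the continuant is expanded; the step requiring the most care is the final geometric identification — confirming that the abstract solution $c=(a,-a,\dots)$ with $a^2=-1$ is realized by the octahedral hexagon and that its cyclic edge sequence is precisely the Petrie-type $6$-cycle complementary to two opposite faces. I would also note that $a^2=-1$ has no real root, so there are no $(-1)$-exceptional hexagons in $\HH^2$, consistent with the octahedron living in $\HH^3$.
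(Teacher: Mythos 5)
Your proof is correct and follows essentially the same route as the paper's: both reduce, via Theorem \ref{thm:InfAlpha} and Lemma \ref{lem:DRelations}, to the simultaneous closure conditions $D_{i,i+3}(c)=0$ and $D_{i,i+3}(-c)=0$, solve them by taking sums and differences to obtain the alternating pattern $c_1=c_3=c_5=a$, $c_2=c_4=c_6=-a$ with $a^2=-1$, and then identify the resulting projective class with a hexagon on the vertices of a regular ideal octahedron. Your write-up is in fact a bit more careful on two points the paper passes over quickly: the identification of the two sign choices $a=\pm i$ via an index shift, and the explicit combinatorial check that the six unused octahedron edges are exactly the boundaries of two opposite faces.
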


\begin{proof}
By Lemma \ref{lem:DRelations} and Theorem \ref{thm:InfAlpha}, we are looking for all sextuples $(c_1, \ldots, c_6)$ satisfying the equations
\begin{align*}
1 - (c_i + c_{i+1} + c_{i+2} + c_{i+3}) + (c_ic_{i+2} + c_ic_{i+3} + c_{i+1}c_{i+3}) &= 0,\\
1 + (c_i + c_{i+1} + c_{i+2} + c_{i+3}) + (c_ic_{i+2} + c_ic_{i+3} + c_{i+1}c_{i+3}) &= 0.
\end{align*}
The system is easily solved: by subtracting the two equations we obtain $c_i + c_{i+1} + c_{i+2} + c_{i+3} = 0$, which implies
\[
c_1 = c_3 = c_5 = c_7 =: c, \quad c_2 = c_4 = c_6 = c_8 = -c.
\]
By summing the same pair of equations we obtain $c^2 = -1$.

Acting by a M\"obius transformation we can assume $p_1 = 0$, $p_2 = 1$, $p_3 = i$.
Then for $c=i$ the above hexagon has vertices $(0, 1, i, \infty, -1, -i)$.
These are vertices of a regular ideal octahedron.
\end{proof}

The cross-ratio coordinates of an octagon orthogonal to infinitely many other octagons must satisfy
\begin{gather*}
1 + \sum_{\substack{i,j=1\\ i<j+1}}^6 c_i c_j = 0,\\
c_1 + \cdots + c_6 + c_1c_3c_5 + c_1c_3c_6 + c_1c_4c_6 + c_2c_4c_6 = 0,
\end{gather*}
and 14 more equations obtained by cyclic shifts.
Among these, only six equations are independent (closure conditions for $c_i$ and closure conditions for $-c_i$),
therefore one expects a two-parameter family of octagons with this property.

\subsection{Loxogons} \label{loxosection}

Here we are interested in closed ideal polygons that are in the relation $\T$ with themselves. We call such polygons {\it loxogons}.

\begin{definition}
An $(n,k)$-loxogon is an ideal $n$-gon $\pP$ such that 
$$
[p_i,p_{i+1},p_{i+k},p_{i+k+1}]=\alpha
$$ 	
for all $i=1,\ldots,n$ and some constant $\alpha$; here $2\leq k \leq n-2$. A loxogon is called trivial if it is projectively regular (possibly, star-shaped).
\end{definition}

Clearly, a regular $n$-gon is a $(n,k)$-loxogon for all $k$. The problem is to describe the non-trivial loxogons, if any.

\begin{remark}
{\rm
Bianchi permutability implies that the maps $\T$ act on the space of $(n,k)$-loxogons.
}
\end{remark}

\begin{theorem} \label{triv}
In the following cases every $(n,k)$-loxogon is trivial:\\
1) $n$ arbitrary, $k=2$;\\
2) $n$ odd, $k=3$;\\
3) $k$ arbitrary, $n=2k+1$.\\
On the other hand, for every even $n$ and odd $k$ there exist a 1-parameter family of (projective equivalence classes of) non-trivial $(n,k)$-loxogons, see Figure \ref{loxogon}.
\end{theorem}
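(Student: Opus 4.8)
The plan is to rephrase the loxogon condition as the self-relation $\pP \T \pP^{+k}$ and then, for odd $n$, convert it via the lift $V_i \in \KK^2$ of Section \ref{fripat} into a transparent statement about the ``distance-$k$'' entries $N_i := [V_i,V_{i+k}]$ of the associated frieze. A closed $n$-gon is an $(n,k)$-loxogon precisely when $t_i := [p_i,p_{i+1},p_{i+k},p_{i+k+1}] = \alpha$ for all $i$, i.e. $\pP \T \pP^{+k}$. Taking the antiperiodic unimodular lift (available for odd $n$), one has $t_i = \frac{[V_i,V_{i+k}][V_{i+1},V_{i+k+1}]}{[V_i,V_{i+k+1}][V_{i+1},V_{i+k}]}$, and the three-term Pl\"ucker relation of Section \ref{fripat} shows the numerator exceeds the denominator by $[V_i,V_{i+1}][V_{i+k},V_{i+k+1}] = 1$, whence $t_i = \frac{N_iN_{i+1}}{N_iN_{i+1}-1}$. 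Thus $\pP$ is a loxogon iff $N_iN_{i+1}$ is independent of $i$, which forces $N_i$ to be $2$-periodic; since $n$ is odd, $N_i$ is in fact \emph{constant}.

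For $k=2$ and arbitrary $n$ I would bypass the lift altogether: the four points defining $t_i$ are, up to reordering, those defining $c_{i+1} = [p_{i+1},p_{i+2},p_i,p_{i+3}]$, so $t_i$ is a fixed element of the anharmonic group applied to $c_{i+1}$; hence $t_i \equiv \alpha$ forces all $c_i$ equal, i.e. $\pP$ is projectively regular (this argument is valid for even $n$ too). For $k=3$ with $n$ odd, the continuant description in Section \ref{fripat} gives $N_i = [V_i,V_{i+3}] = a_{i+1}a_{i+2}-1$, so $N_i$ constant means $a_ja_{j+1}$ constant, hence $a_{j+2}=a_j$; as $n$ is odd all $a_j$ coincide and $\pP$ is regular.

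For $n=2k+1$ I would exploit the glide symmetry $[V_i,V_j] = [V_j,V_{i+n}]$, which gives $[V_i,V_{i+k+1}] = N_{i+k+1} = N$: the distance-$(k+1)$ entries are constant as well. Substituting $V_{i+k+1} = a_{i+k}V_{i+k}-V_{i+k-1}$ and $V_{i-1}=a_iV_i-V_{i+1}$ into $[V_i,V_{i+k+1}]=N$ and $[V_{i-1},V_{i+k}]=N$ expresses the two distance-$(k-1)$ entries $[V_i,V_{i+k-1}]$ and $[V_{i+1},V_{i+k}]$ through the $a_j$ and $N$; comparing them yields $a_{j+k+1}=a_j$ for all $j$. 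Since $\gcd(k+1,2k+1)=1$, all $a_j$ are equal and $\pP$ is regular, establishing part (3).

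For the existence statement (even $n=2m$, odd $k$) I would give an explicit family with $\Z/m$ rotational symmetry: fix $\zeta=e^{2\pi i/m}$ and set $p_{2l+1}=\zeta^l$, $p_{2l+2}=\zeta^l s$ with a free parameter $s$, so that $p_{i+2}=\zeta p_i$ and therefore $t_{i+2}=t_i$ by M\"obius invariance. It then suffices to check the single identity $t_{\mathrm{odd}}=t_{\mathrm{even}}$; because $k$ is odd, the shift $i\mapsto i+k$ interchanges the two parity classes, and one verifies directly that a single M\"obius map carries the quadruple defining $t_1$ onto that of $t_2$, so $t_{\mathrm{odd}}=t_{\mathrm{even}}$ holds identically in $s$. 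Every $s$ then yields an $(n,k)$-loxogon, with $s=e^{\pi i/m}$ giving the regular polygon and generic $s$ giving a non-regular one, producing the asserted $1$-parameter family (for even $n$ the antiperiodic lift need not exist, cf. Remark \ref{liftev}, which is why this case is handled by direct construction rather than through the $N_i$). I expect the main obstacle to be exactly this last step: proving $t_{\mathrm{odd}}=t_{\mathrm{even}}$ \emph{automatically} for every odd $k$, and seeing conceptually why odd $k$ is the precise parity making the M\"obius matching exist, while for even $k$ it fails, in harmony with the rigidity of parts (1)--(3). Verifying non-degeneracy and genuine non-regularity of the generic member is then routine.
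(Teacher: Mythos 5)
Your rigidity arguments, parts (1)--(3), are correct and essentially coincide with the paper's proof. The paper also passes to the antiperiodic unimodular lift for odd $n$, observes that the loxogon condition makes the products $[V_i,V_{i+k}][V_{i+1},V_{i+k+1}]$ constant (hence, $n$ being odd, the row $N_i=[V_i,V_{i+k}]$ is constant), treats $k=3$ via the frieze row $a_{i+1}a_{i+2}-1$, and for $n=2k+1$ uses the glide symmetry to make the two middle rows constant. Your variants are fine and arguably cleaner in places: for $k=2$ you use the anharmonic-group relation between $t_i$ and $c_{i+1}$ where the paper directly exhibits the M\"obius map $p_i\mapsto p_{i+1}$ (note your last step, ``all $c_i$ equal $\Rightarrow$ regular'', is that same map in disguise, via the fact that the $c_i$ determine the projective class); for $n=2k+1$ you derive $a_{j+k+1}=a_j$ and invoke $\gcd(k+1,2k+1)=1$, where the paper propagates the unimodular relation row by row. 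You should record the harmless nonvanishing facts used in the divisions ($a_j\ne 0$ since $p_{j-1}\ne p_{j+1}$; $N\ne 0$ since $N=0$ with $\gcd(k,n)=1$ would force all vertices to coincide); the paper's own proof makes the same implicit assumptions.

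The genuine gap is in the existence half. Your configuration (two interleaved rotated copies of a regular $m$-gon) is exactly the paper's, but the entire content of this half is the identity $t_{\mathrm{odd}}=t_{\mathrm{even}}$, which you assert (``one verifies directly that a single M\"obius map carries the quadruple of $t_1$ onto that of $t_2$'') and then yourself flag as the expected main obstacle; no such map is produced, so as written the construction is not proved to be a loxogon. The missing ingredient is the paper's ``dihedral symmetry'' involution. In your coordinates, writing $k=2k'+1$, the relevant quadruples are $t_1=[1,\,s,\,\zeta^{k'}s,\,\zeta^{k'+1}]$ and $t_2=[s,\,\zeta,\,\zeta^{k'+1},\,\zeta^{k'+1}s]$; the inversion $\iota(z)=\zeta^{k'+1}s/z$ sends the first quadruple, in order, to $(\zeta^{k'+1}s,\,\zeta^{k'+1},\,\zeta,\,s)$, which is the second quadruple reversed. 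Since the cross-ratio is M\"obius invariant and satisfies $[z_1,z_2,z_3,z_4]=[z_4,z_3,z_2,z_1]$, this gives $t_1=t_2$ identically in $s$, and the $\zeta$-rotational symmetry finishes the argument. This also answers your conceptual question about parity: odd $k$ is precisely what makes the quadruple of $t_1$ have the symmetric class pattern (power of $\zeta$, $s$-point, $s$-point, power of $\zeta$), which an inversion can reverse; for even $k$ the classes alternate and this matching is unavailable. Non-regularity of the generic member is indeed routine (the $c_i$ are visibly not all equal), and the paper is equally brief on that point.
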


\begin{figure}[ht]
\centering
\includegraphics[height=2in]{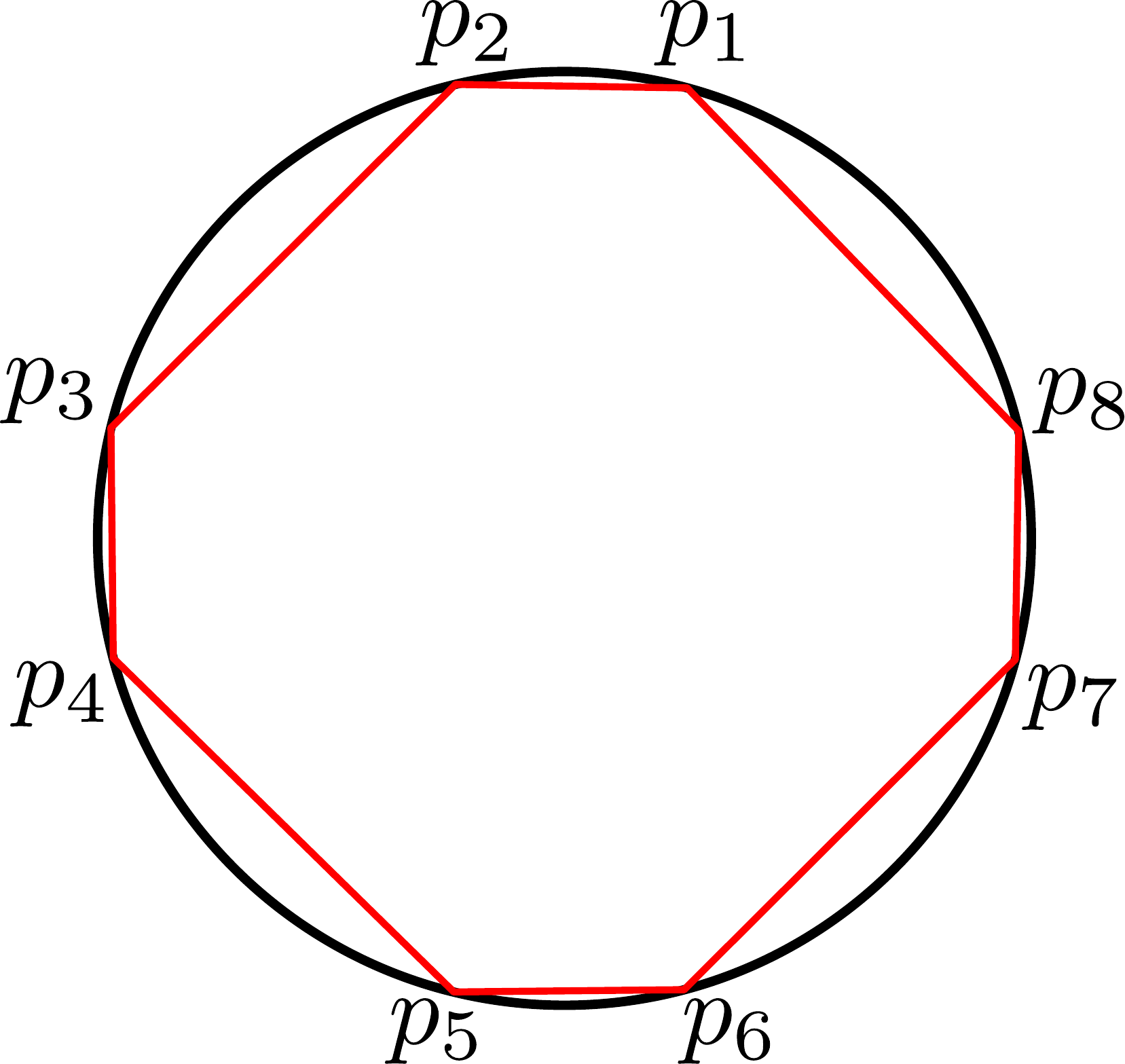}
\caption{An $(8,3)$-loxogon: one has $[p_1,p_2,p_4,p_5]=[p_2,p_3,p_5,p_6]$, and  the dihedral symmetry.}
\label{loxogon}
\end{figure}

\begin{remark}
{\rm 
This result is similar to Theorems 9 and 10 of \cite{Tab}, where a class of plane polygons, called bicycle $(n,k)$-gons, is studied. 
}
\end{remark}

\proof 
Let $k=2$. Consider the projective transformation $\varphi$ that takes $p_1,p_2,p_3$ to $p_2,p_3,p_4$. Since $[p_1,p_2,p_3,p_4]= [p_2,p_3,p_4,p_5]$, this transformation takes $p_4$ to $p_5$, and so on. Hence $\varphi$ takes $\pP$ to itself, cyclically permuting its vertices. Therefore $P$ is projectively regular.

Let $n$ be odd. Consider the respective lifted $n$-gon in the affine plane with the vertices $P_i$. The loxogon condition implies that $[P_i,P_{i+k}] [P_{i+1},P_{i+k+1}] = \alpha$, hence $[P_i,P_{i+k}] = [P_{i+2},P_{i+k+2}]$, and since $n$ is odd, the determinants $[P_i,P_{i+k}]$ are equal for all $i$. 

The rows of the corresponding frieze pattern are made of the determinants $[P_i,P_{i+k}], i=1,\ldots,n$. If  $k=3$, then the second non-trivial row is constant. Then the frieze relation  implies that the first non-trivial row is also constant (using that $n$ is odd).

If $k=(n-1)/2$, then the two middle rows of the frieze pattern are constant, and then, applying the frieze relation consecutively, we conclude that all rows are constant.

In both cases, the first row is constant, and this is the case of $k=2$, already considered.

Now we construct non-trivial $(n,k)$-loxogons for even $n$ and odd $k$. Start with a regular $n$-gon $\pP$ in $\RP^1 = \R \cup \infty$ with $p_j=\tan (\pi j/n) , j=1,\ldots,n$. Consider the $2n$-gon $\pQ$ with the vertices $p^-_1,p^+_1,p^-_2,p^+_2,\ldots$, where
$$
p^-_j = \tan \left(\frac{\pi j}{n} - \beta\right),\ p^+_j = \tan \left(\frac{\pi j}{n} + \beta\right),\ j=1,\ldots,n.
$$
Here $\beta$ is a parameter of the construction.

The polygons $\pQ$ has an $n$-fold rotational symmetry $(p^-_j,p^+_j) \mapsto (p^-_{j+1},p^+_{j+1})$. 
One has
$$
[p^-_j,p^+_j, p^+_{j+k}, p^-_{j+k+1}] = [p^+_j,p^-_{j+1}, p^-_{j+k+1},p^+_{j+k+1}], 
$$
which follows from the existence of the projective involution (dihedral symmetry)
$$
(p^-_j,p^+_j, p^+_{j+k}, p^-_{j+k+1}) \mapsto (p^+_{j+k+1},p^-_{j+k+1},p^-_{j+1},p^+_j)
$$
and the symmetry of cross-ratio. This, along with the rotational symmetry, implies that $\pQ$ is $(n,2k+1)$-loxogon. 
\proofend

\begin{conjecture}
There are no non-trivial $(n,k)$-loxogons for $n$ odd. Equivalently, if a frieze pattern of even width has a constant row then all rows are constant.
\end{conjecture}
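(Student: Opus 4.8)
The plan is to pass to the frieze side of the stated equivalence, turn the loxogon relation into the assertion that a single \emph{row} of the frieze is constant, and then attempt to propagate constancy to every row. Lift the closed $n$-gon (with $n$ odd) to an antiperiodic sequence $V_i$ normalized by $[V_i,V_{i+1}]=1$, $V_{i+n}=-V_i$, as in Section~\ref{fripat}, and write the loxogon cross-ratio in determinants:
\[
[p_i,p_{i+1},p_{i+k},p_{i+k+1}]=\frac{[V_i,V_{i+k}]\,[V_{i+1},V_{i+k+1}]}{[V_i,V_{i+k+1}]\,[V_{i+1},V_{i+k}]}.
\]
By the Ptolemy--Pl\"ucker relation the denominator equals the numerator minus $[V_i,V_{i+1}][V_{i+k},V_{i+k+1}]=1$, so the cross-ratio has the form $N/(N-1)$ with $N=[V_i,V_{i+k}][V_{i+1},V_{i+k+1}]$. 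Hence the loxogon condition is equivalent to $[V_i,V_{i+k}]\,[V_{i+1},V_{i+k+1}]=\beta$ (a constant) for all $i$. This forces $[V_{i+2},V_{i+k+2}]=[V_i,V_{i+k}]$, i.e.\ the $k$-th row of the frieze is $2$-periodic; since it is also $n$-periodic and $\gcd(2,n)=1$ for $n$ odd, the $k$-th row is in fact \emph{constant}. Because ``all rows constant'' is exactly projective regularity (constant $a_i$), the conjecture becomes: a frieze of even width $n-3$ with one constant row has all rows constant.

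Next I would spread constancy using the unimodular (diamond) relation, written in the rows as
\[
[V_i,V_{i+m}]\,[V_{i+1},V_{i+m+1}]-[V_i,V_{i+m+1}]\,[V_{i+1},V_{i+m}]=1.
\]
Two elementary consequences drive the argument. \textbf{(a)} If two \emph{adjacent} rows $m,m+1$ are constant ($=t,t'$), then substituting into the relation centered at row $m$ gives that row $m-1$ is constant, equal to $(t^2-1)/t'$, and symmetrically row $m+2$ is constant; iterating, every row is constant (no parity needed). \textbf{(b)} If two rows at \emph{distance two}, $m$ and $m+2$, are constant, then the entries $e_i$ of row $m+1$ satisfy $e_ie_{i+1}=1+tt'$, so they are $2$-periodic and hence, for $n$ odd, constant. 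Now the glide symmetry $[V_i,V_j]=[V_j,V_{i+n}]$ of Section~\ref{fripat} shows that constancy of row $k$ forces constancy of the mirror row $n-k$, while the two border rows $1$ and $n-1$ are identically $1$. Thus we always start from constant rows at positions $1,k,n-k,n-1$, and the cascade (a)--(b) ignites precisely when two of these lie at distance $\le 2$, that is when $k-1\in\{1,2\}$ or $n-2k=1$. These are exactly the three cases $k=2$, ($n$ odd and $k=3$), and $n=2k+1$ already settled in Theorem~\ref{triv}, which strongly suggests this is the right framework.

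The hard part, and the reason the statement is only a conjecture, is the generic range $k\ge 4$, $n-2k\ge 3$: there the four guaranteed constant rows $1,k,n-k,n-1$ are pairwise at distance $\ge 3$, and no single diamond relation forces an intermediate row to be constant, so the elementary propagation stalls. The main obstacle is to manufacture a new constant row out of constant rows at distance $\ge 3$. I would first try the longer-range continuant identity \eqref{contident}, hoping to express an intermediate-row entry as a product of entries in the two known constant rows; absent a closed propagation, the fallback is a global argument, treating the odd-$n$ $(n,k)$-loxogons as the subvariety of $\cP$ cut out by the constant-row equations, using Theorem~\ref{infinites} to see that the regular polygon is an isolated point, and then trying to show this variety is $0$-dimensional with no further components (via a degree or monodromy computation, or by exploiting that the maps $T_\alpha$ act on it through Bianchi permutability). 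The decisive subtlety is that parity enters only through step (b): since the conclusion genuinely fails for even width (the dihedral counterexamples in Theorem~\ref{triv}), any correct proof must use the oddness of $n$ globally rather than in a single $2$-periodicity step, and bridging gaps of size $\ge 3$ without it is exactly where the problem remains open.
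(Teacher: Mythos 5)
The statement you were handed is a \emph{conjecture}: the paper contains no proof of it, only evidence --- the partial cases of Theorem \ref{triv}, the odd-width frieze counterexample attributed to Morier-Genoud, and the infinitesimal rigidity of Theorem \ref{infinites}. So there is no paper proof to compare against, and your proposal, which openly stops short of a complete argument, cannot be faulted for failing to close the problem. What you do establish is correct, and it coincides with the paper's own route to its partial results: your determinant reformulation (cross-ratio $=N/(N-1)$ with $N=[V_i,V_{i+k}][V_{i+1},V_{i+k+1}]$, hence the loxogon condition is equivalent, for $n$ odd, to constancy of the $k$-th row of the frieze) is exactly the computation inside the proof of Theorem \ref{triv}; your propagation steps (a) and (b) are the same frieze-relation arguments the paper uses for the cases $k=3$ with $n$ odd and $n=2k+1$; and your observation that the cascade ignites precisely in the cases settled by Theorem \ref{triv} is accurate. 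Your delineation of the genuinely open region ($k\ge 4$ and $n-2k\ge 3$, where the four guaranteed constant rows $1,k,n-k,n-1$ are pairwise at distance $\ge 3$) is a correct and useful description of where the difficulty lies.

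Two remarks. First, a terminological slip: the known counterexamples (the dihedral loxogons of Theorem \ref{triv} and the width-$3$ frieze displayed after the conjecture) occur for $n$ \emph{even}, i.e., for friezes of \emph{odd} width $n-3$; you wrote that the conclusion "fails for even width," but the conjecture concerns even width, which is exactly where no counterexample is known. Second, none of your fallback strategies (the continuant identity \eqref{contident}, a dimension or degree count on the loxogon subvariety of $\cP$, the action of the maps $T_\alpha$ via Bianchi permutability, or leveraging the isolatedness furnished by Theorem \ref{infinites}) is carried out, so your text is best read as a correct reduction plus a research plan, not a proof --- the conjecture remains open, in your write-up just as in the paper.
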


This  fails for frieze patterns of odd width: for example (pointed out by S. Morier-Genoud), 
$$
 \begin{array}{ccccccccccccccccccccccc}
&&1&&1&&1&&1&&1&&1&&1&&1
 \\[4pt]
&1&&3&&1&&3&&1&&3&&1&&3&
 \\[4pt]
&&2&&2&&2&&2&&2&&2&&2&&2&
 \\[4pt]
&3&&1&&3&&1&&3&&1&&3&&1&
\\[4pt]
&&1&&1&&1&&1&&1&&1&&1&&1
\end{array}
$$
The next result provides an evidence toward the above conjecture.

\begin{theorem} \label{infinites}
For odd $n$ and every $k$, there do not exist non-trivial deformations of a regular ideal $n$-gon in the class of $(n,k)$-loxogons.
\end{theorem}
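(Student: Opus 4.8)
My plan is to linearize the loxogon condition at the regular $n$-gon and diagonalize the linearized operator by means of the $n$-fold rotational symmetry, reducing everything to a single non-vanishing statement for a trigonometric symbol. Concretely, I would lift the regular $n$-gon to the antiperiodic frame $V_j=(\cos j\theta,\sin j\theta)$, $\theta=\pi/n$, of Section~\ref{fripat}, so that $V_{j+1}=RV_j$ for the rotation $R$ by $\theta$. An infinitesimal deformation is recorded by the transverse components $\dot V_j=s_jV_j^{\perp}$, a single $n$-periodic scalar sequence $s_j$; the combined symmetry ``rotate by $R$ and shift the index'' acts on $(s_j)$ as the ordinary cyclic shift. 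A short computation gives $\frac{d}{dt}\log[V_a,V_b]=(s_b-s_a)\cot((b-a)\theta)$, so that $\frac{d}{dt}\log[p_i,p_{i+1},p_{i+k},p_{i+k+1}]$ is an explicit circulant expression in $(s_j)$, and the deformation remains an $(n,k)$-loxogon to first order exactly when this expression is independent of $i$.

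I would then expand $s_j=\sum_\ell\hat s_\ell\zeta^{j\ell}$ with $\zeta=e^{2\pi i/n}$. The loxogon condition becomes $\hat s_\ell\,\Lambda(\ell)=0$ for every $\ell\neq0$, where, using $2\cos\phi\,\sin(k\phi)=\sin((k+1)\phi)+\sin((k-1)\phi)$ and $\cot A-\cot B=\sin(B-A)/(\sin A\sin B)$, the symbol $\Lambda(\ell)$ is a nonzero multiple of $\frac{\sin((k+1)\ell\theta)}{\sin((k+1)\theta)}-\frac{\sin((k-1)\ell\theta)}{\sin((k-1)\theta)}$. The three modes $\ell\in\{0,1,n-1\}$ are precisely the infinitesimal M\"obius directions, namely the rotation and the two parabolic generators, and $\Lambda$ vanishes on them automatically, since M\"obius maps fix every cross-ratio. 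Writing $\Delta_\ell(m)=\sin(\ell m\theta)/\sin(m\theta)=U_{\ell-1}(\cos m\theta)$, the theorem thus reduces to the assertion that for $n$ odd and $2\le k,\ell\le n-2$ one has $\Delta_\ell(k-1)\neq\Delta_\ell(k+1)$; that is, the Chebyshev polynomial $U_{\ell-1}$ does not repeat its value at the two points $\cos((k\mp1)\theta)$.

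This non-vanishing is the crux and the step I expect to be hardest. Clearing denominators and applying product-to-sum repeatedly, the forbidden equality $\Delta_\ell(k-1)=\Delta_\ell(k+1)$ collapses to the clean identity $\tan(k\ell\,\theta)\tan\theta=\tan(k\theta)\tan(\ell\theta)$; passing to tangents is legitimate precisely because $n$ is odd, which forces each of $\cos\theta$, $\cos(k\theta)$, $\cos(\ell\theta)$, $\cos(k\ell\,\theta)$ to be nonzero (vanishing of any would require $2(\cdot)\equiv n\pmod{2n}$). When $\min(k,\ell)\le3$ the difference $\Delta_\ell(k-1)-\Delta_\ell(k+1)$ reduces to a single product of sines and non-vanishing is immediate; the subcase $\min=3$ is exactly where oddness is needed, through $2k\not\equiv0\pmod n$, recovering the cases already handled in Theorem~\ref{triv}.

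For the general case I would apply the Cayley transform $\tan(m\theta)=\frac1i\frac{\zeta^m-1}{\zeta^m+1}$, turning the putative identity into the vanishing of a sum of eight $2n$-th roots of unity, with positive exponents $\{\ell,k,k\ell+\ell+1,k\ell+k+1\}$ and negative exponents $\{1,k\ell,\ell+k+1,k\ell+\ell+k\}$ modulo $n$. By the Conway--Jones/Mann description of vanishing sums of roots of unity, such a short sum decomposes into regular $p$-gons for primes $p\mid 2n$: the prime $2$ forces one positive and one negative exponent to coincide, and a direct inspection shows that a complete such matching forces $k$ or $\ell\equiv\pm1\pmod n$, while the odd-prime polygons are excluded by a residue count — each contradicting $2\le k,\ell\le n-2$. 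Assembling these steps yields $\Lambda(\ell)\neq0$ for all $\ell\notin\{0,1,n-1\}$, so every loxogon-preserving infinitesimal deformation of the regular $n$-gon is M\"obius, i.e. trivial, which is the theorem.
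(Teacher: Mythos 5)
Your linearization is, in substance, the paper's own proof in a different gauge: the paper lifts the regular $n$-gon to the plane, reduces the first-order $(n,k)$-loxogon condition to the circulant system \eqref{linc}, and computes the circulant eigenvalues, while you diagonalize the same operator by Fourier modes $\hat s_\ell$; your symbol $\Lambda(\ell)$, the identification of the modes $\ell\in\{0,1,n-1\}$ with the $\sl(2)$-directions, and the reduction of the kernel condition to the trigonometric equation \eqref{tangs} all coincide with the paper's steps (your $\Delta_\ell(k+1)=\Delta_\ell(k-1)$ is exactly the vanishing of the paper's eigenvalue $\lambda_j$). The divergence is only at the very end: the paper does \emph{not} prove that \eqref{tangs} has no non-trivial solutions for odd $n$; it quotes the Connelly--Csik\'os classification \cite{CC} (non-trivial solutions exist iff $n=2(j+k)$ and $n\mid (j-1)(k-1)$), whereas you propose to prove the needed special case from scratch via vanishing sums of roots of unity --- which is in fact the method of \cite{CC}, following Conway--Jones.

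That final step, which you yourself flag as the crux, is where the proposal has a genuine gap. Your eight-term vanishing sum (whose exponents I checked and which are correct) is claimed to ``decompose into regular $p$-gons for primes $p\mid 2n$''; this is false as a general principle. Vanishing sums decompose into \emph{minimal} vanishing sums, and minimal sums need not be rotated $p$-gons: for instance, when $15\mid n$ there is a minimal relation of length $7$ among $30$th roots of unity (coming from Conway--Jones's relation $\cos\frac{\pi}{5}-\cos\frac{\pi}{15}+\cos\frac{4\pi}{15}=\frac12$) that is not a $p$-gon. To legitimize your claim one must invoke the Conway--Jones classification of minimal vanishing sums of length at most $8$ (lengths $2,3,5,7$ only, with the exotic length-$7$ sums excluded here solely because $8-7=1$ admits no vanishing sum), and then run a sign/parity count on the admissible partitions $2+2+2+2$, $2+3+3$, $3+5$: rotated odd-prime gons have all terms of one sign, which kills $3+5$, but $2+3+3$ survives the parity count and is neither excluded by your ``residue count'' nor reduced to a matching in your sketch (it can be handled, e.g., by comparing sums of exponents to show the two $3$-gons occupy the same residues, so that multiset equality of positive and negative exponents mod $n$ still follows). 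Finally, the concluding ``direct inspection'' --- that a complete matching forces $k$ or $\ell\equiv\pm1\pmod n$ --- is asserted rather than carried out; it is actually short (each branch ends in $2\ell\equiv 0$ or $2k\equiv 0$, where oddness of $n$ is used), but it must be done over all matchings, with care about zero divisors since $k,\ell$ need not be coprime to $n$. In short: everything up to \eqref{tangs} is correct and identical to the paper; beyond that point you are attempting to inline the content of the cited reference \cite{CC}, and the argument as written misstates the structure theorem it relies on and leaves the decisive case analysis unproved.
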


\proof
We  work with polygons in the affine plane. Let
$$
P_j = \left( \cos\left( \frac{\pi j}{n} \right), \sin\left( \frac{\pi j}{n} \right)\right),\ j=1,\ldots,n,
$$
be the vertices of a regular $n$-gon. We fix the normalization $[P_j,P_{j+1}]=\sin(\pi/n)$, and the respective second-order linear recurrence is 
\begin{equation} \label{rec}
P_{j+1}=2\cos\left( \frac{\pi}{n} \right) P_j - P_{j-1}.
\end{equation}

Consider an infinitesimal deformation $P_j + \eps V_j$, where $V_j$ is an $n$-anti-periodic sequence of vectors, and assume that the resulting polygon is an $(n,k)$-loxogon. Calculating modulo $\eps^2$, we obtain two systems of equations
\begin{equation} \label{norm}
[P_j,V_{j+1}] + [V_j,P_{j+1}]=0,\ j=1,\ldots,n,
\end{equation} 
and
\begin{equation} \label{lox}
[P_j,V_{j+k}] + [V_j, P_{j+k}] = C,\ j=1,\ldots,n.
\end{equation}
The first is the normalization, and the second is the loxogon condition, with $C$ a constant.

Consider  system (\ref{norm}). Let
$$
V_j=a_jP_j+b_jP_{j+1}=c_jP_j+d_jP_{j-1}.
$$
Then  recurrence (\ref{rec}) implies that
$$
\frac{c_j-a_j}{b_j} = 2\cos\left( \frac{\pi}{n} \right), \ \frac{d_j}{b_j} =-1.
$$
Substitute vectors $V_j$ into (\ref{norm}) to obtain
\begin{equation} \label{abd}
a_j=-c_{j+1},\ b_j = \frac{c_j+c_{j+1}}{2\cos(\pi/n)},\ d_j = - \frac{c_j+c_{j+1}}{2\cos(\pi/n)},
\end{equation}
where $c_j$ is an $n$-periodic sequence to be determined.

Now consider  system (\ref{lox}). Substituting vectors $V_j$, using (\ref{abd}), and collecting terms yields the linear system
\begin{equation} \label{linc}
\mu_{k-1} c_j - \mu_{k+1} c_{j+1} + \mu_{k+1} c_{j+k} - \mu_{k-1} c_{j+k+1} = C,\ j=1,\ldots,n,
\end{equation}
where $\mu_k = \sin(\pi k/n)$.

First, we note that $C$ must be zero. Indeed, add  equations (\ref{linc}): the left hand side vanishes, and so must the right hand side.

Second, the system (\ref{linc}) has a 3-dimensional space of trivial solutions that correspond to the action of the Lie algebra $\sl(2)$. These solutions are given by the formulas
$$
c_j = 1;\ c_j = \cos\left( \frac{\pi (2j-1)}{n} \right);\ c_j = \sin\left( \frac{\pi (2j-1)}{n} \right).
$$
We wish to prove that there are no other solutions.

To this end, consider the eigenvalues of the matrix defining  system (\ref{linc}). This is a circulant matrix, and its eigenvalues are given by the formula
$$
\lambda_j = \mu_{k-1} c_j - \mu_{k+1} \omega_j + \mu_{k+1} \omega_j^k - \mu_{k-1} \omega_j^{k+1},\ j=0,\ldots,n-1,
$$
where 
$\omega_j = e^{i\frac{2\pi j}{n}}$ is the $j$th rood of unity, 
see, e.g., \cite{Dav}. 

We are interested in zero eigenvalues. One has $\lambda_j=0$ if and only if
$$
\omega_j^{k+1} = \frac{\mu_{k-1} - \mu_{k+1} \omega_j}{\mu_{k-1} - \mu_{k+1} \overline \omega_j}.
$$
Let $2\alpha$ be the argument of the unit complex number on the right. A direct calculation yields
$$
\tan \alpha = - \frac{\sin\left( \frac{\pi (k+1)}{n} \right) \sin\left( \frac{2\pi j}{n} \right)}{\sin \left( \frac{\pi (k-1)}{n}\right) - \sin\left( \frac{\pi (k+1)}{n} \right) \cos\left( \frac{2\pi j}{n} \right)}.
$$
The argument of $\omega_j^{k+1}$ is $2\pi j(k+1)/n$, hence (after cleaning up the formulas)
$$
\sin\left( \frac{\pi j(k+1)}{n} \right) \sin\left( \frac{\pi (k-1)}{n} \right) = \sin\left( \frac{\pi j(k-1)}{n} \right) \sin\left( \frac{\pi (k+1)}{n} \right),
$$
or, equivalently,
\begin{equation}  \label{tangs}
\tan \left( \frac{\pi j}{n} \right) \tan \left( \frac{\pi k}{n} \right) = \tan \left( \frac{\pi jk}{n} \right) \tan \left( \frac{\pi}{n} \right).
\end{equation}
Note the trivial solutions $j=0,\pm1$.

This equation appeared in \cite{Tab} and in \cite{ASTW}, and it was solved in \cite{CC}. This equation has non-trivial solutions if and only if $n=2(j+k)$ and $n$ divides $(j-1)(k-1)$. In particular, there are no non-trivial solutions for odd $n$, and this concludes the proof.
\proofend

\appendix
\section{Loxodromic transformations along the edges of an ideal tetrahedron}
\subsection{Consistent labelings} \label{consist}
Consider an ideal hyperbolic tetrahedron with vertices $u_0, u_1, u_2, u_3 \in \CP^1$.
The tetrahedron may degenerate (this happens when $u_0, u_1, u_2, u_3$ lie on a circle or, equivalently, when their cross-ratio is real).
Consider a labeling $\{c_{ij}\}$ of the edges of the tetrahedron, where $c_{ij} = c_{ji}$.
Denote by $L_{ij}$ the loxodromic transformation with the axis $u_iu_j$ and parameter $c_{ij}$ (see Section \ref{sec:Loxodromics} for a definition).
We have $L_{ij} = L^{-1}_{ji}$.

\begin{definition}
A labeling $\{c_{ij}\}$ is called \emph{consistent} if, for every $i \in \{0, 1, 2, 3\}$, we have
\begin{equation}
\label{eqn:Consist}
L_{ij}  L_{ik}  L_{il} = \Id,
\end{equation}
where $(i,j,k,l)$ is any even permutation of $(0, 1, 2, 3)$.
\end{definition}

\begin{theorem}
\label{thm:ConsistLabel}
A labeling is consistent if and only if the labels satisfy the following system of equations:
\begin{subequations}
\begin{align}
&c_{ij} = c_{kl},\label{eqn:LoxoConsist0}\\
&c_{ij} c_{ik} c_{il} = 1,\label{eqn:LoxoConsist1}\\
&[u_i, u_j, u_k, u_l] c_{ij} + [u_i, u_k, u_j, u_l] c_{ik}^{-1} = 1,\label{eqn:LoxoConsist2}
\end{align}
\end{subequations}
where in the last equation $(i, j, k, l)$ is any even permutation of $(0, 1, 2, 3)$.

Besides, every label $c_{ij}$ can take any non-zero value except $[u_i, u_j, u_k, u_l]^{-1}$ (where $ijkl$ is an even permutation)
and determines all other labels uniquely.
\end{theorem}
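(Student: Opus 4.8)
The plan is to analyze the relation $L_{ij}L_{ik}L_{il}=\Id$ one vertex at a time. Since each of $L_{ij},L_{ik},L_{il}$ fixes $u_i$, I will first conjugate by a Möbius transformation sending $u_i$ to $\infty$; then all three become affine maps, and by the matrix formulas of Section~\ref{sec:Loxodromics} the map $L_{im}$ takes the form $z\mapsto c_{im}^{-1}(z-u_m)+u_m$ for $m\in\{j,k,l\}$. Composing the three in the prescribed (even) cyclic order and demanding that the result be the identity splits into two scalar conditions. The linear part gives $c_{ij}^{-1}c_{ik}^{-1}c_{il}^{-1}=1$, which is exactly \eqref{eqn:LoxoConsist1}. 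The constant part, after using \eqref{eqn:LoxoConsist1}, reads $c_{ij}(u_j-u_l)+c_{ik}^{-1}(u_l-u_k)=u_j-u_k$; dividing by $u_j-u_k$ and recognizing the two quotients as $[u_i,u_j,u_k,u_l]$ and $[u_i,u_k,u_j,u_l]$ (here I will record the elementary identity $[u_i,u_j,u_k,u_l]+[u_i,u_k,u_j,u_l]=1$) yields precisely \eqref{eqn:LoxoConsist2}. Thus consistency at the single vertex $i$ is equivalent to \eqref{eqn:LoxoConsist1} together with \eqref{eqn:LoxoConsist2} for the chosen ordering.

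Next I will assemble the global picture. The three cyclic re-orderings of the triple at a fixed vertex are conjugate products, hence encode the same group relation; reading off their constant parts produces \eqref{eqn:LoxoConsist2} for all three pairs of edges at that vertex, so ranging over the four vertices yields \eqref{eqn:LoxoConsist2} for every even permutation. To obtain \eqref{eqn:LoxoConsist0} I will compare the equations at two vertices joined by an edge: the vertex-$i$ relation in the pair $(c_{ij},c_{ik})$ and the vertex-$k$ relation in the pair $(c_{kl},c_{ki})$ both contain the term $(1-\chi)c_{ik}^{-1}$, where $\chi=[u_i,u_j,u_k,u_l]$, because the Klein four-group symmetry of the cross-ratio gives $[u_k,u_l,u_i,u_j]=\chi$ and $[u_k,u_i,u_l,u_j]=1-\chi$. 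Subtracting the two relations cancels this term and leaves $\chi\,(c_{ij}-c_{kl})=0$, i.e.\ $c_{ij}=c_{kl}$, since $\chi\neq0$ for distinct vertices. This is \eqref{eqn:LoxoConsist0}, and it completes the implication that consistency forces the whole system.

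For the converse and the parametrization I will reverse these steps: if \eqref{eqn:LoxoConsist0}--\eqref{eqn:LoxoConsist2} hold then, at each vertex, \eqref{eqn:LoxoConsist1} and the relevant instance of \eqref{eqn:LoxoConsist2} hold, so the affine computation above runs backwards and gives $L_{ij}L_{ik}L_{il}=\Id$. To exhibit the one-parameter family, I will prescribe a single label, say $c_{ij}$, solve \eqref{eqn:LoxoConsist2} for $c_{ik}=\tfrac{1-\chi}{1-\chi c_{ij}}$, then recover $c_{il}$ from \eqref{eqn:LoxoConsist1} and the remaining three labels from \eqref{eqn:LoxoConsist0}; here $c_{ik}$ is finite and non-zero exactly when $c_{ij}\neq\chi^{-1}=[u_i,u_j,u_k,u_l]^{-1}$ (using $\chi\neq1$ for distinct vertices), which identifies the forbidden value. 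The step I expect to require the most care is the closure check at the end: verifying that the labels produced in this way automatically satisfy the remaining instances of \eqref{eqn:LoxoConsist2} at the other vertices. This is a compatibility statement rather than a single-vertex computation, and it rests on the anharmonic relations among the six cross-ratios of $u_0,u_1,u_2,u_3$ (the same Klein four-group and $\chi\mapsto1-\chi$ identities used above), so the bookkeeping of which even permutation produces which pair of labels is where the argument must be handled most carefully.
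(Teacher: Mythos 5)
Your proposal is correct, but it reaches the theorem by a genuinely different mechanism than the paper. The paper's proof pivots on Lemma \ref{lem:Loxo2Meaning} (equation \eqref{eqn:LoxoConsist2} is equivalent to $L_{ij}L_{ik}L_{il}$ fixing $u_l$, proved by cross-ratio manipulation) and then on fixed-point counting for M\"obius transformations: for the converse, \eqref{eqn:LoxoConsist2} supplies the two fixed points $u_i, u_l$ while \eqref{eqn:LoxoConsist1} forces parabolic type, hence the identity; for the parametrization, any two of the equations \eqref{eqn:LoxoConsist2Prime} produce three fixed points $u_0, u_1, u_3$, hence the identity, hence all remaining equations. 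You instead normalize $u_i=\infty$ and read the single-vertex relation $L_{ij}L_{ik}L_{il}=\Id$ directly off the affine composition: the linear part gives \eqref{eqn:LoxoConsist1}, the constant part (after substituting \eqref{eqn:LoxoConsist1}) gives \eqref{eqn:LoxoConsist2}, and since this is an equivalence, both directions of the single-vertex statement come out of one computation, where the paper needs the lemma for one direction and the parabolicity argument for the other. (I checked your constant-part identity $c_{ij}(u_j-u_l)+c_{ik}^{-1}(u_l-u_k)=u_j-u_k$ and the identification of the quotients as $[u_i,u_j,u_k,u_l]$ and $[u_i,u_k,u_j,u_l]$; both are right in the paper's cross-ratio convention.) Your derivation of \eqref{eqn:LoxoConsist0} by comparing the relations at the two ends of an edge is essentially the paper's Remark \ref{rem:ConsistLabel} — the paper compares vertices $i$ and $j$, you compare $i$ and $k$; both hinge on the Klein four-group invariance and on $\chi=[u_i,u_j,u_k,u_l]\notin\{0,1\}$ for distinct points. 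What your route buys is a fully elementary, self-contained computation; what it costs is the closure check in the parametrization, which the paper dispatches with the three-fixed-point argument and which you correctly flag as the delicate step but leave as a sketch.

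That flagged step does go through by exactly the identities you name, and more easily than you anticipate. After imposing \eqref{eqn:LoxoConsist0}, each of the twelve instances of \eqref{eqn:LoxoConsist2} reduces, via the double-transposition invariance (for example $[u_k,u_l,u_i,u_j]=\chi$, $[u_k,u_i,u_l,u_j]=1-\chi$), to one of the three scalar equations attached to the three pairs of edges at the single vertex $i$; in other words, the equations at vertices $j,k,l$ are verbatim repetitions of those at vertex $i$ once \eqref{eqn:LoxoConsist0} is in force. Those three equations at vertex $i$, in turn, are linked by your own single-vertex equivalence: given \eqref{eqn:LoxoConsist1}, the three conjugate (cyclically reordered) products are simultaneously the identity, so any one instance of \eqref{eqn:LoxoConsist2} at vertex $i$ implies the other two. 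Hence one instance of \eqref{eqn:LoxoConsist2}, together with \eqref{eqn:LoxoConsist1} and \eqref{eqn:LoxoConsist0}, implies the entire system, which is precisely what your construction of the one-parameter family (with the forbidden value $c_{ij}=\chi^{-1}$) requires.
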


\begin{remark}
\label{rem:ConsistLabel}
There are in total $12$ equations of the form \eqref{eqn:LoxoConsist2}: one equation for every pair of adjacent edges.
Equation for the pair $(ji, jl)$ has the form
\[
[u_j, u_i, u_l, u_k] c_{ij} + [u_j, u_l, u_i, u_k] c_{jl}^{-1} = 1.
\]
Due to the symmetries of the cross-ratio, the coefficients here are the same as in \eqref{eqn:LoxoConsist2}.
It follows that $c_{ik} = c_{jl}$: the system \eqref{eqn:LoxoConsist2} implies the system \eqref{eqn:LoxoConsist0}.

On the other hand, \eqref{eqn:LoxoConsist0} implies that from any of the equations \eqref{eqn:LoxoConsist1} the other ones follow,
and that the $12$ equations \eqref{eqn:LoxoConsist2} can be reduced to just three.
Thus the system \eqref{eqn:LoxoConsist0} -- \eqref{eqn:LoxoConsist2} is equivalent to the system
\begin{subequations}
\begin{align}
&c_{23} = c_{01}, \quad c_{13} = c_{02}, \quad c_{12} = c_{03},\label{eqn:LoxoConsist0Prime}\\
&c_{01} c_{02} c_{03} = 1,\label{eqn:LoxoConsist1Prime}\\
&[u_0, u_i, u_{i+1}, u_{i+2}] c_{0i} + [u_0, u_{i+1}, u_i, u_{i+2}] c_{0,i+1}^{-1} = 1, \quad i = 1, 2, 3.\label{eqn:LoxoConsist2Prime}
\end{align}
\end{subequations}
(The indices in the last line are taken modulo $3$.)

These are seven equations on six variables but, as we will show, they have a one-parameter solution set.
\end{remark}

\begin{lemma}
\label{lem:Loxo2Meaning}
Equation \eqref{eqn:LoxoConsist2} is equivalent to $L_{ij}L_{ik}L_{il}(u_l) = u_l$.
\end{lemma}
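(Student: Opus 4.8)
The plan is to reduce the three-fold composition to a two-fold one and then verify the equivalence by a short computation in a convenient projective frame. First I would observe that the edge transformation $L_{il}$ has axis $u_iu_l$ and therefore fixes its endpoint $u_l$; hence
\[
L_{ij}L_{ik}L_{il}(u_l) = L_{ij}L_{ik}(u_l),
\]
and the condition to be analysed is simply $L_{ij}L_{ik}(u_l) = u_l$, equivalently $L_{ik}(u_l) = L_{ij}^{-1}(u_l)$.

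Next, since loxodromic transformations are conjugated by M\"obius maps and cross-ratios are M\"obius-invariant, both sides of the asserted equivalence are invariant under the diagonal action of $\PGL(2)$ on the quadruple $(u_i,u_j,u_k,u_l)$. I would therefore normalize $u_i = \infty$ and $u_l = 0$, with $u_j, u_k$ finite and distinct (possible for a nondegenerate tetrahedron). In this frame both $L_{ij}$ and $L_{ik}$ fix $\infty$, so by the matrix formula for $A_\lambda(\infty,q)$ in Section \ref{sec:Loxodromics} they act as the affine maps
\[
L_{ij}(z) = \frac{z}{c_{ij}} + u_j(1 - c_{ij}^{-1}), \qquad L_{ik}(z) = \frac{z}{c_{ik}} + u_k(1 - c_{ik}^{-1}).
\]
Then $L_{ik}(0) = u_k(1 - c_{ik}^{-1})$, and imposing $L_{ij}(L_{ik}(0)) = 0$ and clearing denominators gives, after a one-line simplification, the scalar equation
\[
u_j c_{ij} - u_k c_{ik}^{-1} = u_j - u_k.
\]

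Finally I would match this against \eqref{eqn:LoxoConsist2}. Using the elementary identity $[u_i,u_j,u_k,u_l] + [u_i,u_k,u_j,u_l] = 1$, equation \eqref{eqn:LoxoConsist2} reads $\beta c_{ij} + (1-\beta)c_{ik}^{-1} = 1$ with $\beta = [u_i,u_j,u_k,u_l]$. In the chosen frame one computes directly $\beta = [\infty,u_j,u_k,0] = \frac{u_j}{u_j-u_k}$ and $1-\beta = \frac{u_k}{u_k-u_j}$, so that multiplying $\beta c_{ij} + (1-\beta)c_{ik}^{-1} = 1$ by $(u_j - u_k)$ reproduces exactly $u_j c_{ij} - u_k c_{ik}^{-1} = u_j - u_k$, and the equivalence follows. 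There is no serious obstacle here: the only points requiring care are the initial reduction using that $L_{il}$ fixes $u_l$, and keeping track of the parameter conventions (so that the inverse of $L_{ij}$ corresponds to $c_{ij} \mapsto c_{ij}^{-1}$). I would also note that the same equivalence can be obtained without coordinates, by setting $w = L_{ik}(u_l)$, applying Lemma \ref{loxodr} to get $[u_i,u_k,u_l,w] = c_{ik}^{-1}$ together with $[u_i,u_j,u_l,w'] = c_{ij}$ for $w' = L_{ij}^{-1}(u_l)$, and eliminating the auxiliary points via the multiplicativity of the cross-ratio; the coordinate computation above is simply the most transparent way to carry this out.
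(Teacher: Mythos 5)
Your proof is correct, and its main route is genuinely different from the paper's. Both arguments begin with the same forced reduction (since $L_{il}$ fixes $u_l$, the condition becomes $L_{ij}L_{ik}(u_l) = u_l$), but from there the paper stays coordinate-free: it applies Lemma \ref{loxodr} to the points $L_{ji}(u_l)$ and $L_{ik}(u_l)$, uses multiplicativity of the cross-ratio to rewrite the two terms of \eqref{eqn:LoxoConsist2} as $[u_i,u_j,u_k,L_{ji}(u_l)]$ and $1 - [u_i,u_j,u_k,L_{ik}(u_l)]$, and concludes because $z \mapsto [u_i,u_j,u_k,z]$ is injective. You instead normalize $u_i = \infty$, $u_l = 0$ and verify the equivalence by an explicit affine computation; this is legitimate because both sides of the asserted equivalence are manifestly M\"obius-invariant (the loxodromics conjugate covariantly, the cross-ratios are invariant, and the labels are unchanged), so checking in one frame suffices. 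Your details check out: the fixed-point condition and \eqref{eqn:LoxoConsist2} both reduce in that frame to $u_j c_{ij} - u_k c_{ik}^{-1} = u_j - u_k$, and the identity $[u_i,u_j,u_k,u_l] + [u_i,u_k,u_j,u_l] = 1$ that you invoke does hold for the paper's cross-ratio convention (it is also used implicitly in the paper's proof). What the paper's argument buys is that it never leaves the cross-ratio formalism, so the exact shape of the coefficients in \eqref{eqn:LoxoConsist2} emerges conceptually rather than by matching a computation; what yours buys is brevity and a mechanical, easily checkable verification. The coordinate-free variant you sketch in your closing sentences is essentially the paper's proof, so you have in effect produced both arguments.
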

\begin{proof}
Since $u_l$ is a fixed point of the transformation $L_{il}$, and because of $L_{ij}^{-1} = L_{ji}$, we have
\[
L_{ij}L_{ik}L_{il}(u_l) = u_l \Leftrightarrow L_{ij}L_{ik}(u_l) = u_l \Leftrightarrow L_{ji}(u_l) = L_{ik}(u_l).
\]
On the other hand, Lemma \ref{loxodr} implies
\[
[u_j, u_i, u_l, L_{ji}(u_l)] = c_{ij}^{-1}, \quad [u_i, u_k, u_l, L_{ik}(u_l)] = c_{ik}^{-1}.
\]
From these formulas and the properties of cross-ratio, we infer
\[
[u_i, u_j, u_k, u_l] c_{ij} = [u_i, u_j, u_k, u_l] [u_i, u_j, u_l, L_{ji}(u_l)] = [u_i, u_j, u_k, L_{ji}(u_l)],
\]
and similarly,
\begin{multline*}
[u_i, u_k, u_j, u_l] c_{ik}^{-1} = [u_i, u_k, u_j, u_l] [u_i, u_k, u_l, L_{ik}(u_l)]\\
= [u_i, u_k, u_j, L_{ik}(u_l)] = 1 - [u_i, u_j, u_k, L_{ik}(u_l)].
\end{multline*}
Thus $L_{ji}(u_l) = L_{ik}(u_l)$ if and only if \eqref{eqn:LoxoConsist2} holds.
\end{proof}

\begin{proof}[Proof of Theorem \ref{thm:ConsistLabel}]
By Lemma \ref{lem:Loxo2Meaning}, for every consistent labeling,  equations \eqref{eqn:LoxoConsist2} hold.
Equations \eqref{eqn:LoxoConsist2} imply equations \eqref{eqn:LoxoConsist0}, see Remark \ref{rem:ConsistLabel}.
To prove the validity of \eqref{eqn:LoxoConsist1}, note that consistent labelings are M\"obius-invariant,
so  we may assume that $u_i = \infty$.
Since $L_c(\infty, u)$ is a complex affine map with the coefficient $c$,
the composition $L_{ij}L_{ik}L_{il}$ is a complex affine map with the coefficient $c_{ij}c_{ik}c_{il}$.
Thus $L_{ij}L_{ik}L_{il} = \Id$ implies $c_{ij}c_{ik}c_{il} = 1$,
and for every consistent labeling, all the equations \eqref{eqn:LoxoConsist0} -- \eqref{eqn:LoxoConsist2} hold.

Vice versa, equations \eqref{eqn:LoxoConsist2} imply that, for every $i$,
the composition $L_{ij}L_{ik}L_{il}$ has at least two fixed points: $u_i$ and $u_l$,
and equations \eqref{eqn:LoxoConsist1} imply that $L_{ij}L_{ik}L_{il}$ is of parabolic type
(affine with coefficient $1$ under assumption $u_i = \infty$).
Together this implies $L_{ij}L_{ik}L_{il} = \Id$.

We now proceed to the second part of Theorem \ref{thm:ConsistLabel}, the parametrization of the space of admissible labelings.
Due to Remark \ref{rem:ConsistLabel}, it suffices to solve the system of four equations \eqref{eqn:LoxoConsist1Prime} -- \eqref{eqn:LoxoConsist2Prime}
on three variables.
We claim that any two of the equations \eqref{eqn:LoxoConsist2Prime} imply the third one, as well as \eqref{eqn:LoxoConsist1Prime}.
Using the interpretation of these equations given in Lemma \ref{lem:Loxo2Meaning},
we assume, without loss of generality, that $L_{01}L_{02}L_{03}(u_3) = u_3$ and $L_{02}L_{03}L_{01}(u_1) = u_1$.
From $L_{01}(u_1) = u_1$, it follows that
\[
L_{01}L_{02}L_{03}(u_1) = L_{01}L_{02}L_{03}L_{01}(u_1) = L_{01}(L_{02}L_{03}L_{01}(u_1)) = L_{01}(u_1) = u_1.
\]
Thus the M\"obius tranformation $L_{01}L_{02}L_{03}$ has three distinct fixed points $u_0$, $u_1$, and $u_3$, and is therefore the identity.
It follows that the third of equations \eqref{eqn:LoxoConsist2Prime} and equation \eqref{eqn:LoxoConsist1Prime} are satisfied.

From \eqref{eqn:LoxoConsist2} one can express $c_{ik}$ as a function of $c_{ij}$:
\[
c_{ik} = \frac{1-[u_i, u_j, u_k, u_l]}{1-[u_i, u_j, u_k, u_l]c_{ij}}, \quad ijkl \text{ even}.
\]
The values $c_{ij} = 0, \infty, [u_i, u_j, u_k, u_l]^{-1}$ correspond to the values $c_{ik} = [u_i, u_k, u_l, u_j]^{-1}$, $0$, $\infty$, respectively.
This describes the admissible values of each label, as $c_{ij} \in \C \setminus \{0, [u_i, u_j, u_k, u_l]^{-1}\}$ for an even permutation $ijkl$.
\end{proof}

The transformations $L_{ij}$ are elements of the group $\PGL(2, \C)$.
They can be represented by matrices from $\GL(2, \C)$ as described at the end of Section \ref{sec:Loxodromics}.
If we denote $A_{ij} = A_{c_{ij}}(u_i, u_j)$, then Theorem \ref{thm:ConsistLabel} says that, for any even permutation $ijkl$ of $0123$, the matrix $A_{ij}A_{ik}A_{il}$ is a scalar multiple of the identity matrix.
In fact, more is true.

\begin{lemma}
\label{lem:ConsistA}
If $\{c_{ij}\}$ is a consistent labeling of the edges of an ideal tetrahedron, then for every even permutation $ijkl$ of $0123$ we have
\[
A_{ij}A_{ik}A_{il} = \Id, \quad A_{ij}A_{ik} = A_{jl}A_{kl}.
\]
\end{lemma}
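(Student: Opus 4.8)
The plan is to reduce the first identity to the determination of a single sign, fix that sign by a connectedness argument, and then deduce the second identity formally from the first together with Lemma~\ref{loxoprop}.

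Since the labeling is consistent, $L_{ij}L_{ik}L_{il}=\Id$ as a M\"obius transformation, so $A_{ij}A_{ik}A_{il}=s\,\Id$ for some scalar $s=s(u_0,u_1,u_2,u_3;\{c_{ij}\})\in\C^*$. Using $\det A_\lambda(p,q)=\lambda$ we get $\det(A_{ij}A_{ik}A_{il})=c_{ij}c_{ik}c_{il}$, which equals $1$ by \eqref{eqn:LoxoConsist1}; on the other hand $\det(s\,\Id)=s^2$. Hence $s^2=1$ and $s=\pm1$. The entire content of the first identity is therefore the claim that $s=+1$, and this is the main obstacle: projective consistency together with the determinant cannot distinguish $+1$ from $-1$.

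To pin down the sign I would argue by continuity. For a fixed ideal tetrahedron $u_0,\dots,u_3$, Theorem~\ref{thm:ConsistLabel} shows that the consistent labelings are parametrized by a single label, say $c_{01}$, ranging over the connected set $\C\setminus\{0,[u_0,u_1,u_2,u_3]^{-1}\}$; each entry of $A_{ij}$, and hence $s=\tfrac12\Tr(A_{ij}A_{ik}A_{il})$, is a rational function of this parameter, so the locally constant $\{\pm1\}$-valued function $s$ is in fact constant. It then suffices to evaluate $s$ at one labeling. The cross-ratio identity $[u_i,u_j,u_k,u_l]+[u_i,u_k,u_j,u_l]=1$ shows that $c_{ij}\equiv 1$ is a consistent labeling (it satisfies \eqref{eqn:LoxoConsist0}--\eqref{eqn:LoxoConsist2}), and $1$ is an admissible value because the cross-ratio of four distinct points is never $1$. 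At this labeling $A_\lambda(p,q)$ with $\lambda=1$ is the identity matrix, so $A_{ij}A_{ik}A_{il}=\Id$ and $s=1$. Hence $s=1$ for every consistent labeling and every even permutation $ijkl$, which proves the first identity. (Alternatively one can evaluate $s$ after conjugating $u_i$ to $\infty$, where the three matrices become upper-triangular and their product is $\bigl(\begin{smallmatrix}1&*\\0&1\end{smallmatrix}\bigr)$ representing the identity M\"obius map, hence equal to $\Id$; the connectedness argument has the advantage of avoiding the scaling ambiguity introduced by conjugation.)

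For the second identity I would apply the first one at two different base vertices. At vertex $i$ it gives $A_{ij}A_{ik}A_{il}=\Id$, hence $A_{ij}A_{ik}=A_{il}^{-1}$. Since $(l,i,k,j)$ is again an even permutation of $0123$ whenever $(i,j,k,l)$ is (the rearrangement is the $3$-cycle of positions $(1\,2\,4)$), the first identity at vertex $l$ gives $A_{li}A_{lk}A_{lj}=\Id$. Now Lemma~\ref{loxoprop}, in the form $A_\lambda(q,p)=\lambda A_\lambda^{-1}(p,q)$, together with $c_{ab}=c_{ba}$, lets me rewrite $A_{li}=c_{il}A_{il}^{-1}$, $A_{lk}=c_{kl}A_{kl}^{-1}$, $A_{lj}=c_{jl}A_{jl}^{-1}$, so that
\[
\Id=A_{li}A_{lk}A_{lj}=c_{il}c_{kl}c_{jl}\,(A_{jl}A_{kl}A_{il})^{-1}.
\]
By \eqref{eqn:LoxoConsist1} at vertex $l$ we have $c_{il}c_{kl}c_{jl}=c_{li}c_{lk}c_{lj}=1$, whence $A_{jl}A_{kl}A_{il}=\Id$ and therefore $A_{jl}A_{kl}=A_{il}^{-1}$. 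Combining the two computations gives $A_{ij}A_{ik}=A_{il}^{-1}=A_{jl}A_{kl}$, which is the second identity. The only routine verifications left are the parity of $(l,i,k,j)$ and the bookkeeping of the scalars, both of which are immediate.
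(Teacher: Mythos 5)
Your proof is correct, and it differs from the paper's in one interesting place. Both arguments share the same skeleton: the determinant computation $\det(A_{ij}A_{ik}A_{il})=c_{ij}c_{ik}c_{il}=1$ pins the product down to $\pm\Id$, and the second identity is then deduced formally from the first identity applied at the two vertices $i$ and $l$ together with Lemma~\ref{loxoprop}; your derivation of the second identity (routing everything through $A_{il}^{-1}$, after checking that $(l,i,k,j)$ is even) is essentially the paper's computation rearranged. The genuine difference is how the sign is fixed. The paper fixes the tetrahedron at $u_i=\infty$, where the explicit upper-triangular form of $A_\lambda(\infty,q)$ makes the top-left entry of the product equal to $1$, and then extends to general position by continuity or by the exact conjugation-equivariance of the matrices $A_\lambda(u,v)$. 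You instead fix the tetrahedron and deform the \emph{labeling}, using the parametrization of Theorem~\ref{thm:ConsistLabel} to see that consistent labelings form a connected one-parameter family, and you anchor at the trivial labeling $c_{ij}\equiv 1$, where every $A_{ij}$ is literally the identity matrix. This buys you a sign determination with no matrix computation whatsoever, at the modest cost of verifying that the all-ones labeling is consistent (the cross-ratio identity $[u_i,u_j,u_k,u_l]+[u_i,u_k,u_j,u_l]=1$, which indeed holds with the paper's conventions) and admissible (the cross-ratio of four distinct points is never $1$, so $c_{01}=1$ avoids the excluded value $[u_0,u_1,u_2,u_3]^{-1}$); your proof also leans on the classification of consistent labelings, which the paper's proof of this lemma does not need. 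One small remark: your stated reason for preferring connectedness over the conjugation route — a ``scaling ambiguity introduced by conjugation'' — is not actually an issue, since $A_\lambda(\Psi p,\Psi q)=N A_\lambda(p,q)N^{-1}$ holds exactly (both sides have the same eigenvectors with the same eigenvalues $1$ and $\lambda$), which is precisely what the paper means by the matrices behaving well under conjugation.
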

\begin{proof}
We have $\det (A_{ij}A_{ik}A_{il}) = c_{ij}c_{ik}c_{il} = 1$.
This implies that the product in question is $\pm \Id$.
In order to check that the sign is positive, consider the case $u_i = \infty$.
Then it is easy to see that the top left entry of the product is $1$, thus the product equals $\Id$.
The general case holds by continuity (or by observing that matrices $A_t(u,v)$ behave well under conjugation).

The second equation follows from the first and the relation $(A_\lambda(p,q))^{-1} = \lambda^{-1}A_\lambda(q,p)$.
From $A_{li}A_{lk}A_{lj} = \Id$ and $A_{ij}A_{ik}A_{il} = \Id$ it follows that
\begin{gather*}
A_{li} = A_{lj}^{-1}A_{lk}^{-1} = c_{jl}^{-1}c_{kl}^{-1}A_{jl}A_{kl} = c_{il} A_{jl}A_{kl}\\
c_{il}^{-1}A_{li} = A^{-1}_{il} = A_{ij}A_{ik},
\end{gather*}
hence $A_{ij}A_{ik} = A_{jl}A_{kl}$ as claimed.
\end{proof}

\begin{proof}[Proof of Lemma \ref{lem:LaxConjug}]
The second of the equations of Lemma \ref{lem:ConsistA} implies $A_{kl} = A^{-1}_{jl} A_{ij} A_{ik}$.
Substituting $(u_i, u_j, u_k, u_l) = (p_i, p_{i+1}, q_i, q_{i+1})$ we obtain
\[
A_\lambda(q_i, q_{i+1}) = A_\mu^{-1}(p_{i+1}, q_{i+1}) A_\lambda(p_i, p_{i+1}) A_\mu(p_i, q_i)
\]
with $\lambda = c_{kl} = c_{ij}$ and $\mu = c_{ik}$.
The consistency equation \eqref{eqn:LoxoConsist2} implies that $\lambda$ and $\mu$ are related by
\[
\alpha \lambda + (1-\alpha)\mu^{-1} = 1.
\]
This is equivalent to $\mu = \frac{1-\alpha}{1-\alpha\lambda}$, and the lemma is proved.
\end{proof}

\subsection{From tetrahedron to cube}
\label{sec:TetraToCube}
In the previous section we explained how an ideal tetrahedron gives rise to a Lax representation for the cross-ratio dynamics.
In \cite{BS, BSb} a different geometric picture, based on a cube, was used to derive a different Lax representation.
Let us explain a connection between both pictures.

Let $(c_{ij})$ be any consistent labeling of an ideal tetrahedron $u_0u_1u_2u_3$.
Take any $v_0 \in \CP^1$, different from $u_1, u_2, u_3$, and put
\[
v_1 = L_{32}(v_0), \quad v_2 = L_{13}(v_0), \quad v_3 = L_{21}(v_0).
\]
The consistency condition \eqref{eqn:Consist} implies
$
v_i = L_{jk}(v_l)
$
for every even permutation $ijkl$.
Due to Lemma \ref{loxodr}, this can be rewritten as
\begin{equation}
\label{eqn:CubeFaces}
[u_i, u_j, v_k, v_l] = c_{ij}.
\end{equation}

The points $u_i, v_i$, $i = 0, 1, 2, 3$, can be viewed as the vertices of a (combinatorial) cube, see Figure \ref{fig:Cube}.
Each of the equations \eqref{eqn:CubeFaces} corresponds to a face of the cube.
The faces are planar if and only if the numbers $c_{ij}$ are real.

\begin{figure}[ht]
\begin{center}
\begin{picture}(0,0)%
\includegraphics{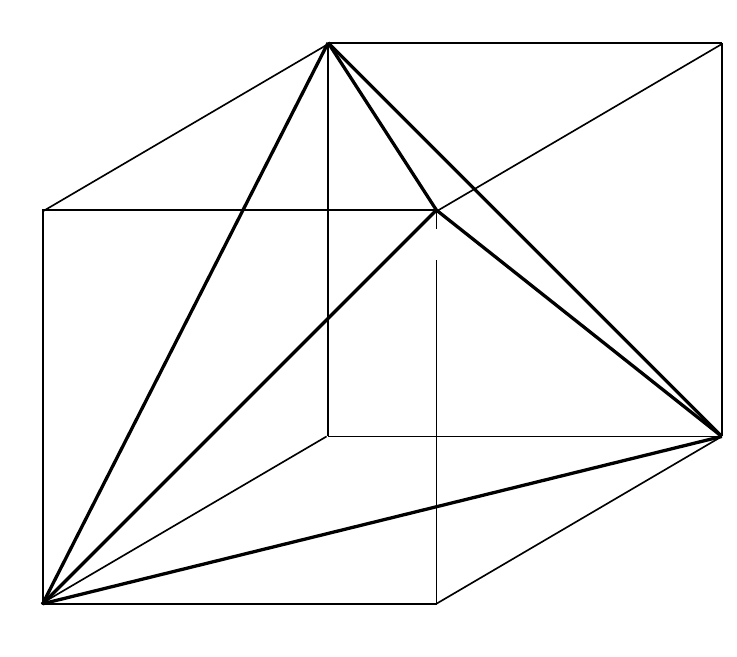}%
\end{picture}%
\setlength{\unitlength}{4144sp}%
\begingroup\makeatletter\ifx\SetFigFont\undefined%
\gdef\SetFigFont#1#2#3#4#5{%
  \reset@font\fontsize{#1}{#2pt}%
  \fontfamily{#3}\fontseries{#4}\fontshape{#5}%
  \selectfont}%
\fi\endgroup%
\begin{picture}(3360,2985)(-194,-1201)
\put(-134,-1141){\makebox(0,0)[lb]{\smash{{\SetFigFont{10}{12.0}{\rmdefault}{\mddefault}{\updefault}{\color[rgb]{0,0,0}$u_0$}%
}}}}
\put(1801,-1096){\makebox(0,0)[lb]{\smash{{\SetFigFont{10}{12.0}{\rmdefault}{\mddefault}{\updefault}{\color[rgb]{0,0,0}$v_2$}%
}}}}
\put(3151,-331){\makebox(0,0)[lb]{\smash{{\SetFigFont{10}{12.0}{\rmdefault}{\mddefault}{\updefault}{\color[rgb]{0,0,0}$u_1$}%
}}}}
\put(3106,1649){\makebox(0,0)[lb]{\smash{{\SetFigFont{10}{12.0}{\rmdefault}{\mddefault}{\updefault}{\color[rgb]{0,0,0}$v_0$}%
}}}}
\put(1216,1649){\makebox(0,0)[lb]{\smash{{\SetFigFont{10}{12.0}{\rmdefault}{\mddefault}{\updefault}{\color[rgb]{0,0,0}$u_2$}%
}}}}
\put(-179,884){\makebox(0,0)[lb]{\smash{{\SetFigFont{10}{12.0}{\rmdefault}{\mddefault}{\updefault}{\color[rgb]{0,0,0}$v_1$}%
}}}}
\put(1351,-151){\makebox(0,0)[lb]{\smash{{\SetFigFont{10}{12.0}{\rmdefault}{\mddefault}{\updefault}{\color[rgb]{0,0,0}$v_3$}%
}}}}
\put(1756,659){\makebox(0,0)[lb]{\smash{{\SetFigFont{10}{12.0}{\rmdefault}{\mddefault}{\updefault}{\color[rgb]{0,0,0}$u_3$}%
}}}}
\end{picture}%
\end{center}
\caption{A cube associated with a consistent labeling of an ideal tetrahedron.}
\label{fig:Cube}
\end{figure}

\begin{lemma}
For any choice of $v_0$, we have $[u_0, u_1, u_2, u_3] = [v_0, v_1, v_2, v_3]$.
\end{lemma}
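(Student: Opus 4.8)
The plan is to recognize the eight points $u_0,\dots,u_3,v_0,\dots,v_3$ as the vertices of the combinatorial cube of Figure \ref{fig:Cube}, in which, by the six face relations \eqref{eqn:CubeFaces}, each $u_i$ is antipodal to $v_i$ (they share no face); thus $(u_i)$ and $(v_i)$ are its two inscribed tetrahedra and the assertion is that they carry equal cross-ratio. The cleanest route I would try first is to show that the very same labeling $\{c_{ij}\}$ is a consistent labeling of the $v$-tetrahedron, and then read off the conclusion from Theorem \ref{thm:ConsistLabel}: that theorem exhibits $[u_i,u_j,u_k,u_l]$ as the unique solution of \eqref{eqn:LoxoConsist2} in terms of the labels, so if the $v$-tetrahedron satisfies the same consistency equations with the same labels, then $[v_0,v_1,v_2,v_3]$ must solve the identical equation and hence equal $[u_0,u_1,u_2,u_3]$.

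For the matching of labels I would use that opposite faces of the cube carry equal cross-ratios: the face $\{u_k,u_l,v_i,v_j\}$ opposite to $\{u_i,u_j,v_k,v_l\}$ gives, via the symmetry $[z_1,z_2,z_3,z_4]=[z_3,z_4,z_1,z_2]$, the identity $[v_i,v_j,u_k,u_l]=[u_k,u_l,v_i,v_j]=c_{kl}=c_{ij}$, the last step by \eqref{eqn:LoxoConsist0}. Hence the natural label on the edge $v_iv_j$ already equals $c_{ij}$, and equations \eqref{eqn:LoxoConsist0} and \eqref{eqn:LoxoConsist1} hold verbatim for $(v_i)$. What remains is consistency equation \eqref{eqn:LoxoConsist2} for the $v$-tetrahedron; by Lemma \ref{lem:Loxo2Meaning} this amounts to checking that $L_{c_{ij}}(v_i,v_j)L_{c_{ik}}(v_i,v_k)$ fixes $v_l$, after which the two-fixed-points-plus-parabolicity argument from the proof of Theorem \ref{thm:ConsistLabel} upgrades it to full consistency. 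The whole scheme is powered by the fact that the six face equations \eqref{eqn:CubeFaces} are symmetric under the cube's antipodal relabeling $u_i\leftrightarrow v_i$, so every consequence of them for $(u_i)$ has a mirror statement for $(v_i)$.

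As a concrete engine (and a fallback if closing the symmetry argument proves delicate), I would normalize by a M\"obius transformation so that $u_0=\infty$, $u_1=0$, $u_2=1$, whence $u_3=[u_0,u_1,u_2,u_3]=:m$; solve the consistency equations \eqref{eqn:LoxoConsist2Prime} to get the labels $c_{01}=a$, $c_{02}=\frac{1-m}{1-ma}$, $c_{03}=\frac{1-ma}{a(1-m)}$ (with $a$ the one free parameter and $c_{ij}=c_{kl}$); and then solve the three face relations defining $v_1,v_2,v_3$ from $v_0=:t$, each a fractional-linear equation giving $v_i$ explicitly as a M\"obius function of $t$. A direct substitution then yields $[t,v_1,v_2,v_3]=m$ identically. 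The main obstacle is precisely this last simplification: one must verify that the resulting rational function of $t$ is constant and independent of the free label $a$. The useful observation is that, since each $v_i$ is fractional-linear in $t$, the cross-ratio is an a priori low-degree rational function of $t$, so its constancy and value $m$ can be pinned down by evaluating at a few convenient points, or, equivalently, by the symmetric argument above, which bypasses the degree bookkeeping altogether.
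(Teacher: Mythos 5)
Your main route is exactly the paper's proof: both arguments exploit the invariance of the face equations \eqref{eqn:CubeFaces} under the antipodal swap $u_i \leftrightarrow v_i$ to conclude that $(c_{ij})$ is also a consistent labeling of the tetrahedron $v_0v_1v_2v_3$, and then observe that \eqref{eqn:LoxoConsist2} determines the cross-ratio of a consistently labeled tetrahedron from the labels alone, so the two cross-ratios must coincide. The explicit-coordinate fallback you describe is not needed.
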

\begin{proof}
Condition \eqref{eqn:CubeFaces} is invariant under exchanging $u_0, u_1, u_2, u_3$ with $v_0, v_1, v_2, v_3$.
It follows that $(c_{ij})$ is also a consistent labeling for the tetrahedron $v_0v_1v_2v_3$.
But the labels determine the cross-ratio: equation \eqref{eqn:LoxoConsist2} can be rewritten as
\[
[u_0, u_1, u_2, u_3] = \frac{1 - c_{ik}}{1 - c_{ij}c_{ik}}.
\]
It follows that the two cross-ratios coincide.
\end{proof}

In \cite{BS}, \cite[Section 6.6]{BSb} the following property of the cross-ratio system,
called \emph{three-dimensional consistency}, is formulated
(we state it modulo some variable changes).

Let $(c_{ij})$ be given such that $c_{ij} = c_{kl}$ and $c_{ij}c_{ik}c_{il} = 1$.
Choose any four points $u_0, v_1, v_2, v_3 \in \CP^1$.
Define $u_1, u_2, u_3$ through those of the equations \eqref{eqn:CubeFaces} that contain $u_0$.
Then there is $v_0 \in \CP^1$ such that all three equations \eqref{eqn:CubeFaces} that contain $v_0$ are simultaneously satisfied.

While in \cite{BS, BSb} this is proved by a direct computation, the arguments from the beginning of this section lead to a more geometric proof.
The numbers $(c_{ij})$ form a consistent labeling of the tetrahedron:
$L_{01}L_{02}L_{03} = \Id$ holds by construction, the compositions at the other vertices are the identities due to the symmetries of the ideal tetrahedron.
It follows that the point
\[
L_{23}(v_1) = L_{31}(v_2) = L_{12}(v_3) =: v_0
\]
satisfies all three equations.

\subsection{Another proof of Bianchi permutability}
\label{sec:BianchiAgain}
\begin{proof}[Proof of Theorem \ref{Bianchi}]
The proof is based on the consistency of the system \eqref{eqn:CubeFaces}.
Consider the combinatorial cube shown on Figure \ref{fig:Bianchi}.
Six of its vertices are given; our goal is to define $s_i$ as a function of $p_i, q_i, r_i$ (this function being the same for all $i$)
in such a way that the equations of Theorem \ref{Bianchi}
are satisfied.

\begin{figure}[ht]
\begin{center}
\begin{picture}(0,0)%
\includegraphics{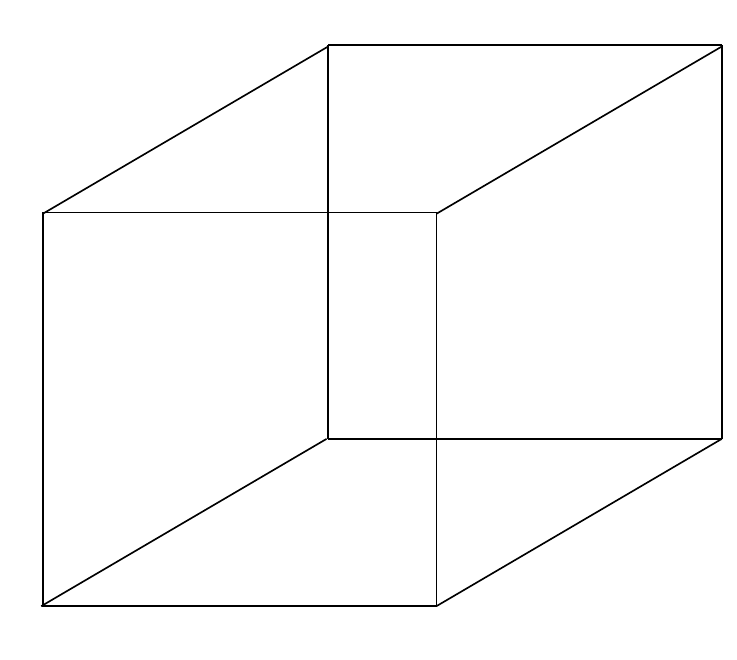}%
\end{picture}%
\setlength{\unitlength}{4144sp}%
\begingroup\makeatletter\ifx\SetFigFont\undefined%
\gdef\SetFigFont#1#2#3#4#5{%
  \reset@font\fontsize{#1}{#2pt}%
  \fontfamily{#3}\fontseries{#4}\fontshape{#5}%
  \selectfont}%
\fi\endgroup%
\begin{picture}(3360,3001)(-194,-1205)
\put(-134,-1141){\makebox(0,0)[lb]{\smash{{\SetFigFont{10}{12.0}{\rmdefault}{\mddefault}{\updefault}{\color[rgb]{0,0,0}$p_i$}%
}}}}
\put(1801,-1096){\makebox(0,0)[lb]{\smash{{\SetFigFont{10}{12.0}{\rmdefault}{\mddefault}{\updefault}{\color[rgb]{0,0,0}$p_{i+1}$}%
}}}}
\put(3151,-331){\makebox(0,0)[lb]{\smash{{\SetFigFont{10}{12.0}{\rmdefault}{\mddefault}{\updefault}{\color[rgb]{0,0,0}$q_{i+1}$}%
}}}}
\put(3106,1649){\makebox(0,0)[lb]{\smash{{\SetFigFont{10}{12.0}{\rmdefault}{\mddefault}{\updefault}{\color[rgb]{0,0,0}$s_{i+1}$}%
}}}}
\put(1216,1649){\makebox(0,0)[lb]{\smash{{\SetFigFont{10}{12.0}{\rmdefault}{\mddefault}{\updefault}{\color[rgb]{0,0,0}$s_i$}%
}}}}
\put(-179,884){\makebox(0,0)[lb]{\smash{{\SetFigFont{10}{12.0}{\rmdefault}{\mddefault}{\updefault}{\color[rgb]{0,0,0}$r_i$}%
}}}}
\put(1351,-151){\makebox(0,0)[lb]{\smash{{\SetFigFont{10}{12.0}{\rmdefault}{\mddefault}{\updefault}{\color[rgb]{0,0,0}$q_i$}%
}}}}
\put(1846,749){\makebox(0,0)[lb]{\smash{{\SetFigFont{10}{12.0}{\rmdefault}{\mddefault}{\updefault}{\color[rgb]{0,0,0}$r_{i+1}$}%
}}}}
\end{picture}%
\end{center}
\caption{Three-dimensional consistency and Bianchi permutability.}
\label{fig:Bianchi}
\end{figure}

If we superpose Figure \ref{fig:Bianchi} with Figure \ref{fig:Cube}, then we have
\begin{gather*}
[u_0, u_1, v_2, v_3] = [p_i, q_{i+1}, p_{i+1}, q_i] = \frac{\alpha - 1}{\alpha},\\
[u_0, u_3, v_1, v_2] = [p_i, r_{i+1}, r_i, p_{i+1}] = \frac{\beta}{\beta - 1}.
\end{gather*}
Put $c_{01} = \frac{\alpha - 1}{\alpha}$, $c_{03} = \frac{\beta}{\beta - 1}$, and define $c_{02} = \frac{1}{c_{01}c_{03}}$.
The three-dimensional consistency of the cross-ratio system (as formulated at the end of Section \ref{sec:TetraToCube})
implies that for $s_i$ and $s_{i+1}$, defined by
\[
[p_i, s_i, q_i, r_i] = [p_{i+1}, s_{i+1}, q_{i+1}, r_{i+1}] = c_{02},
\]
the equations in Theorem \ref{Bianchi} will be satisfied.
\end{proof}

\end{document}